\documentclass[10pt]{article}

 \setlength{\textwidth}{6.5in}
 \setlength{\textheight}{8.5in}
 \setlength{\footskip}{0.8in}
 \setlength{\unitlength}{1mm}
 \setlength{\evensidemargin}{0pt}
 \setlength{\oddsidemargin}{0pt}
 \setlength{\topmargin}{-0.5in}

\usepackage{enumerate,xspace}
\usepackage{amsmath,amssymb,wasysym}
\usepackage[all]{xy}
\usepackage{proof}
\usepackage[svgnames]{xcolor}
\usepackage{tikz}
\usepackage[sort,nocompress]{cite}
\usepackage{stmaryrd} 
\usepackage{mathtools}
\usepackage{latexsym}
\usepackage{dsfont}
\usepackage{multicol}
\usepackage{cmll}
\usepackage{multirow}
\usepackage{longtable}

\usepackage{lscape}
\usepackage{array}

\delimitershortfall-1sp

\usepackage{hyperref} 
\hypersetup{
    colorlinks,
    citecolor=red,
    filecolor=red,
    linkcolor=blue,
    urlcolor=red
}

\newtheorem{observation}{Remark}[section]
\newtheorem{lemma}[observation]{Lemma}  
\newtheorem{theorem}[observation]{Theorem}
\newtheorem{definition}[observation]{Definition}
\newtheorem{example}[observation]{Example}

\newtheorem{proposition}[observation]{Proposition} 
\newtheorem{corollary}[observation]{Corollary}

\newcommand\Item[1][]{%
      \ifx\relax#1\relax  \item \else \item[#1] \fi
      \abovedisplayskip=0pt\abovedisplayshortskip=0pt~\vspace*{-\baselineskip}
}

\makeatletter


\newdimen\w@dth

\def\setw@dth#1#2{\setbox\z@\hbox{\scriptsize $#1$}\w@dth=\wd\z@
\setbox\@ne\hbox{\scriptsize $#2$}\ifnum\w@dth<\wd\@ne \w@dth=\wd\@ne \fi
\advance\w@dth by 1.2em}

\def\t@^#1_#2{\allowbreak\def\n@one{#1}\def\n@two{#2}\mathrel
{\setw@dth{#1}{#2}
\mathop{\hbox to \w@dth{\rightarrowfill}}\limits
\ifx\n@one\empty\else ^{\box\z@}\fi
\ifx\n@two\empty\else _{\box\@ne}\fi}}
\def\t@@^#1{\@ifnextchar_ {\t@^{#1}}{\t@^{#1}_{}}}

\def\t@left^#1_#2{\def\n@one{#1}\def\n@two{#2}\mathrel{\setw@dth{#1}{#2}
\mathop{\hbox to \w@dth{\leftarrowfill}}\limits
\ifx\n@one\empty\else ^{\box\z@}\fi
\ifx\n@two\empty\else _{\box\@ne}\fi}}
\def\t@@left^#1{\@ifnextchar_ {\t@left^{#1}}{\t@left^{#1}_{}}}

\def\two@^#1_#2{\def\n@one{#1}\def\n@two{#2}\mathrel{\setw@dth{#1}{#2}
\mathop{\vcenter{\hbox to \w@dth{\rightarrowfill}\kern-1.7ex
                 \hbox to \w@dth{\rightarrowfill}}%
       }\limits
\ifx\n@one\empty\else ^{\box\z@}\fi
\ifx\n@two\empty\else _{\box\@ne}\fi}}
\def\tw@@^#1{\@ifnextchar_ {\two@^{#1}}{\two@^{#1}_{}}}

\def\tofr@^#1_#2{\def\n@one{#1}\def\n@two{#2}\mathrel{\setw@dth{#1}{#2}
\mathop{\vcenter{\hbox to \w@dth{\rightarrowfill}\kern-1.7ex
                 \hbox to \w@dth{\leftarrowfill}}%
       }\limits
\ifx\n@one\empty\else ^{\box\z@}\fi
\ifx\n@two\empty\else _{\box\@ne}\fi}}
\def\t@fr@^#1{\@ifnextchar_ {\tofr@^{#1}}{\tofr@^{#1}_{}}}


\newdimen\W@dth
\def\setW@dth#1#2{\setbox\z@\hbox{$#1$}\W@dth=\wd\z@
\setbox\@ne\hbox{$#2$}\ifnum\W@dth<\wd\@ne \W@dth=\wd\@ne \fi
\advance\W@dth by 1.2em}

\def\T@^#1_#2{\allowbreak\def\N@one{#1}\def\N@two{#2}\mathrel
{\setW@dth{#1}{#2}
\mathop{\hbox to \W@dth{\rightarrowfill}}\limits
\ifx\N@one\empty\else ^{\box\z@}\fi
\ifx\N@two\empty\else _{\box\@ne}\fi}}
\def\T@@^#1{\@ifnextchar_ {\T@^{#1}}{\T@^{#1}_{}}}

\def\T@left^#1_#2{\def\N@one{#1}\def\N@two{#2}\mathrel{\setW@dth{#1}{#2}
\mathop{\hbox to \W@dth{\leftarrowfill}}\limits
\ifx\N@one\empty\else ^{\box\z@}\fi
\ifx\N@two\empty\else _{\box\@ne}\fi}}
\def\T@@left^#1{\@ifnextchar_ {\T@left^{#1}}{\T@left^{#1}_{}}}

\def\Tofr@^#1_#2{\def\N@one{#1}\def\N@two{#2}\mathrel{\setW@dth{#1}{#2}
\mathop{\vcenter{\hbox to \W@dth{\rightarrowfill}\kern-1.7ex
                 \hbox to \W@dth{\leftarrowfill}}%
       }\limits
\ifx\N@one\empty\else ^{\box\z@}\fi
\ifx\N@two\empty\else _{\box\@ne}\fi}}
\def\T@fr@^#1{\@ifnextchar_ {\Tofr@^{#1}}{\Tofr@^{#1}_{}}}

\def\Two@^#1_#2{\def\N@one{#1}\def\N@two{#2}\mathrel{\setW@dth{#1}{#2}
\mathop{\vcenter{\hbox to \W@dth{\rightarrowfill}\kern-1.7ex
                 \hbox to \W@dth{\rightarrowfill}}%
       }\limits
\ifx\N@one\empty\else ^{\box\z@}\fi
\ifx\N@two\empty\else _{\box\@ne}\fi}}
\def\Tw@@^#1{\@ifnextchar_ {\Two@^{#1}}{\Two@^{#1}_{}}}

\def\to{\@ifnextchar^ {\t@@}{\t@@^{}}}
\def\from{\@ifnextchar^ {\t@@left}{\t@@left^{}}}
\def\tofro{\@ifnextchar^ {\t@fr@}{\t@fr@^{}}}
\def\To{\@ifnextchar^ {\T@@}{\T@@^{}}}
\def\From{\@ifnextchar^ {\T@@left}{\T@@left^{}}}
\def\Two{\@ifnextchar^ {\Tw@@}{\Tw@@^{}}}
\def\Tofro{\@ifnextchar^ {\T@fr@}{\T@fr@^{}}}

\def\p{\partial}

\def\s{\mathfrak{s}}

\def\t{\mathfrak t}

\makeatother

\title{Cartesian Differential Comonads and New Models of Cartesian Differential Categories}
\author{Sacha Ikonicoff and Jean-Simon Pacaud Lemay}

\begin{document}
\allowdisplaybreaks

\maketitle
\begin{abstract}  Cartesian differential categories come equipped with a differential combinator that formalizes the derivative from multi-variable differential calculus, and also provide the categorical semantics of the differential $\lambda$-calculus. An important source of examples of Cartesian differential categories are the coKleisli categories of the comonads of differential categories, where the latter concept provides the categorical semantics of differential linear logic. In this paper, we generalize this construction by introducing Cartesian differential comonads, which are precisely the comonads whose coKleisli categories are Cartesian differential categories, and thus allows for a wider variety of examples of Cartesian differential categories. As such, we construct new examples of Cartesian differential categories from Cartesian differential comonads based on power series, divided power algebras, and Zinbiel algebras. 
\end{abstract}

\noindent \small \textbf{Acknowledgements.} The authors would like to thank Kristine Bauer and Robin Cockett for useful discussions. The authors would also like to thank Emily Riehl and the anonymous reviewers for editorial comments and suggestions. For this research, the first author was financially supported by a PIMS--CNRS Postdoctoral Fellowship, and the second author was financially supported by a NSERC Postdoctoral Fellowship - Award \#: 456414649. 

\tableofcontents

\section{Introduction}

Cartesian differential categories, introduced by Blute, Cockett, and Seely in \cite{blute2009cartesian}, formalize the theory of multivariable differential calculus by axiomatizing the (total) derivative, and also provide the categorical semantics of the differential $\lambda$-calculus, as introduced by Ehrhard and Regnier in \cite{ehrhard2003differential}. Briefly, a Cartesian differential category (Definition \ref{cartdiffdef}) is a category with finite products such that each homset is a commutative monoid, which allows for zero maps and sums of maps (Definition \ref{CLACdef}), and equipped with a differential combinator $\mathsf{D}$, which for every map ${f: A \to B}$ produces its derivatives $\mathsf{D}[f]: A \times A \to B$. The differential combinator satisfies seven axioms, known as \textbf{[CD.1]} to \textbf{[CD.7]}, which formalize the basic identities of the (total) derivative from multi-variable differential calculus such as the chain rule, linearity in the vector argument, symmetry of the partial derivatives, etc. Two main examples of Cartesian differential categories are the category of Euclidean spaces and real smooth functions between them (Example \ref{ex:smooth}), and the Lawvere Theory of polynomials over a commutative (semi)ring (Example \ref{ex:CDCPOLY}). An important class of examples of Cartesian differential categories, especially for this paper, are the coKleilsi categories of the comonads of differential categories \cite[Propostion 3.2.1]{blute2009cartesian}. 

Differential categories, introduced by Blute, Cockett, and Seely in \cite{blute2006differential}, provide the algebraic foundations of differentiation and the categorical semantics of differential linear logic \cite{ehrhard2017introduction}. Briefly, a differential category (Example \ref{ex:diffcat}) is a symmetric monoidal category with a comonad $\oc$, with comonad structure maps $\delta_A: \oc(A) \to \oc\oc(A)$ and ${\varepsilon_A: \oc(A) \to A}$, such that for each object $A$, $\oc(A)$ is a cocommutative comonoid with comultiplication ${\Delta_A: \oc(A) \to \oc(A) \otimes \oc(A)}$ and counit $e_A: \oc(A) \to I$, and equipped with a deriving transformation, which is a natural transformation ${\mathsf{d}_A: \oc(A) \otimes A \to \oc(A)}$. The deriving transformation satisfies five axioms, this time called \textbf{[d.1]} to \textbf{[d.5]}, which formalize basic identities of differentiation such as the chain rule and the product rule. In the opposite category of a differential category, called a codifferential category, the deriving transformation is a derivation in the classical algebra sense. Examples of differential categories include the opposite category of the category of vector spaces over a field where $\oc$ is induced by the free symmetric algebra \cite{blute2006differential,Blute2019}, as well as the opposite category of the category of real vector spaces where $\oc$ is instead induced by free $\mathcal{C}^\infty$-rings \cite{cruttwell2019integral}. 

In a differential category, a smooth map from $A$ to $B$ is a map of type $\oc(A) \to B$. In other words, the (infinitely) differentiable maps are precisely the coKleisli maps. The interpretation of coKleisli maps as smooth can be made precise when the differential category has finite (bi)products where one uses the deriving transformation to define a differential combinator on the coKleisli category. Briefly, for a coKleisli map $f: \oc(A) \to B$ (which is a map of type $A \to B$ in the coKleisli category), its derivative $\mathsf{D}[f]: \oc(A\times A) \to B$ (which is a map of type $A \times A \to B$ in the coKleisli category) is defined as the following composite (in the base category): 
\[ \mathsf{D}[f] :=  \xymatrixcolsep{3pc}\xymatrix{\oc(A \times A) \ar[r]^-{\Delta_{A \times A}} & \oc(A \times A) \otimes \oc(A \times A) \ar[r]^-{\oc(\pi_0) \otimes \oc(\pi_1)} &  \oc(A) \otimes \oc(A) \ar[r]^-{1_{\oc(A)} \otimes \varepsilon_A} & \oc(A) \otimes A \ar[r]^-{\mathsf{d}_A} & \oc(A) \ar[r]^-{ f } & B 
 } \]
where $\pi_i$ are the product projection maps. One then uses the five deriving transformations axioms \textbf{[d.1]} to \textbf{[d.5]} to prove that $\mathsf{D}$ satisfies the seven differential combinator axioms \textbf{[CD.1]} to \textbf{[CD.7]}. Thus, for a differential category with finite (bi)products, its coKleisli category is a Cartesian differential category. For the examples where $\oc$ is the free symmetric algebra or given by free $\mathcal{C}^\infty$-rings, the resulting coKleisli category can respectively be interpreted as the category of polynomials or real smooth functions over possibly infinite variables (but that will still only depend on a finite number of them), of which the Lawvere theory of polynomials or category of real smooth functions is a sub-Cartesian differential category. 

Taking another look at the construction of the differential combinator for the coKleisli category, if we define the natural transformation $\partial_A: \oc(A \times A) \to \oc(A)$ as the following composite: 
\[ \partial_A := \xymatrixcolsep{3.75pc}\xymatrix{\oc(A \times A) \ar[r]^-{\Delta_{A \times A}} & \oc(A \times A) \otimes \oc(A \times A) \ar[r]^-{\oc(\pi_0) \otimes \oc(\pi_1)} & \oc(A) \otimes \oc(A) \ar[r]^-{1_{\oc(A)} \otimes \varepsilon_A} & \oc(A) \otimes A \ar[r]^-{\mathsf{d}_A} & \oc(A)
 } \]
then the differential combinator is simply defined by precomposing a coKleisli map $f: \oc(A) \to B$ with $\partial$: 
  \[ \mathsf{D}[f] := \xymatrixcolsep{5pc}\xymatrix{\oc(A \times A) \ar[r]^-{\partial_A} & \oc(A) \ar[r]^-{f} & B} \]
It is important to stress that this is the composition in the base category and not the composition in the coKleisli category. Thus, the properties of the differential combinator $\mathsf{D}$ in the coKleisli category are fully captured by the properties of the natural transformation $\partial$ in the base category, which in turn are a result of the axioms of the deriving transformation $\mathsf{d}$. However, observe that the type of $\partial_A: \oc(A \times A) \to \oc(A)$ does not involve any monoidal structure. In fact, if one starts with a comonad whose coKleisli category is a Cartesian differential category, it is always possible to construct $\partial$, and to show that $\mathsf{D}[-] = - \circ \partial$, but it is not always possible to extract a monoidal structure on the base category. Thus, if one's goal is simply to build Cartesian differential categories from coKleisli categories, then a monoidal structure $\otimes$ or a deriving transformation $\mathsf{d}$, or even a comonoid structure $\Delta$ and $e$, are not always necessary. Therefore, the objective of this paper is to precisely characterize the comonads whose coKleisli categories are Cartesian differential categories. To this end, in this paper we introduce the novel notion of a Cartesian differential comonad.

Cartesian differential comonads are precisely the comonads whose coKleisli categories are Cartesian differential categories. Briefly, a Cartesian differential comonad is a comonad $\oc$ on a category with finite biproducts equipped with a differential combinator transformation, which is a natural transformation $\partial_A: \oc(A \times A) \to \oc(A)$ which satisfies six axioms called \textbf{[dc.1]} to \textbf{[dc.6]} (Definition \ref{def:cdcomonad}). The axioms of a differential combinator transformation are analogues of the axioms of a differential combinator. Thus, the coKleisli category of a Cartesian differential comonad is a Cartesian differential category where the differential combinator is defined by precomposition with the differential combinator transformation (Theorem \ref{thm1}). This is proven by reasonably straightforward calculations, but one must be careful when translating back and forth between the base category and the coKleisli category. Conversely, a comonad on a category with finite biproduct whose coKleisli category is a Cartesian differential category is in fact a Cartesian differential comonad, where the differential combinator transformation is the derivative of the identity map ${1_{\oc(A)}: \oc(A) \to \oc(A)}$ seen as a coKleisli map $A \to \oc(A)$ (Proposition \ref{prop1}). Using this, since we already know that the coKleisli category of a differential category is a Cartesian differential category, it immediately follows that the comonad of a differential category is a Cartesian differential comonad, where the differential combinator transformation is precisely the one defined above. Therefore, Cartesian differential comonads and differential combinator transformations are indeed generalizations of differential categories and deriving transformations. However, Cartesian differential comonads are a strict generalization since, as mentioned, they can be defined without the need of a monoidal structure. A very simple separating example is the identity comonad on any category with finite biproducts, where the differential combinator transformation is simply the second projection map (Example \ref{ex:identity}). While this example is trivial, it recaptures the fact that any category with finite biproducts is a Cartesian differential category and this example clearly works without any extra monoidal structure, and thus is not a differential category example. Therefore, Cartesian differential comonads allow for a wider variety of examples of Cartesian differential categories. As such, in this paper we present three new interesting examples of Cartesian differential comonads, which are not differential categories, and their induced Cartesian differential categories. These three examples are respectively based on formal power series, divided power algebras, and Zinbiel algebras. It is worth mentioning that these new examples arise more naturally as coCartesian differential monads (Example \ref{ex:CDM}), the dual notion of Cartesian differential comonads, and thus it is the opposite of the Kleisli category which is a Cartesian differential category. 

The first example (Section \ref{sec:PWex}) is based on reduced power series. Recall that a formal power series is said to be reduced if it has no constant/degree 0 term. While the composition of arbitrary multivariable formal power series is not always well defined, due to their constant terms, the composition of reduced multivariable power series is always well-defined \cite[Section 4.1]{brewer2014algebraic}, and so we may construct categories of reduced power series. Also, it is well known that power series are always and easily differentiable, similarly to polynomials, and that the derivative of a reduced multivariable power series is again reduced. Motivated by capturing power series differentiation, we show that the free reduced power series algebra monad \cite[Section 1.4.3]{Fresse98} is a coCartesian differential monad (Corollary \ref{cor:POW}) whose monad structure is based on reduced power series composition (Lemma \ref{lem:powmonad}) and whose differential combinator transformation is induced by standard power series differentiation (Proposition \ref{prop:powpartial}). Furthermore, the Lawvere theory of reduced power series (Example \ref{ex:CDCPOW}) is a sub-Cartesian differential category of the opposite category of the resulting Kleisli category. 

The second new example (Section \ref{secpuisdiv}) is based on divided power algebras. Divided power algebras, defined by Cartan \cite{cartan1954puissances}, are commutative non-unital associative algebras equipped with additional operations $(-)^{[n]}$ for all strictly positive integer $n$, satisfying some relations (Definition \ref{defipuisdiv}). In characteristic $0$, divided power algebras correspond precisely to commutative non-unital associative algebras. In positive characteristics, however, the two notions diverge. There exist free divided power algebras and we show that the free divided power algebra monad \cite[Section 10, Th{\'e}or{\`e}me 1 and 2]{roby65} is a coCartesian differential monad (Corollary \ref{cor:DIV}). Free divided power algebras correspond to the algebra of reduced divided power polynomials. Thus the differential combinator transformation of this example (Proposition \ref{Gammacomb}) captures differentiating divided power polynomials \cite{keigher2000}. In particular, the Lawvere theory of reduced divided power polynomials (Example \ref{ex:CDCdiv}) is a sub-Cartesian differential category of the opposite category of the Kleisli category of the free divided power algebra monad.  

The third new example (Section \ref{sec:ZAex}), and perhaps the most exotic example in this paper, is based on Zinbiel algebras. The notion of Zinbiel algebra was introduced by Loday \cite{loday95} and also further studied by Dokas \cite{dokas09}. A Zinbiel algebra is a vector space $A$ endowed with a non-associative and non-commutative bilinear operation $<$. Using the Zinbiel product, every Zinbiel algebra can be turned into a commutative non-unital associative algebra. The underlying vector space of free Zinbiel algebras is the same as the underlying vector space of the non-unital tensor algebra. Therefore, free Zinbiel algebras are spanned by (non-empty) associative words and equipped with a product $<$ (which is sometimes referred to as the semi-shuffle product). The resulting commutative associative algebra is then precisely the non-unital shuffle algebra over $V$. We show that the free Zinbiel algebra monad \cite[Proposition 1.8]{loday95} is a coCartesian differential monad (Corollary \ref{cor:ZIN}) whose differential combinator transformation (Proposition \ref{difcombzin}) corresponds to differentiating non-commutative polynomials with respect to the Zinbiel product. The resulting Cartesian differential category can be understood as the category of reduced non-commutative polynomials where the composition is defined using the Zinbiel product, which we simply call Zinbiel polynomials. As such, the Lawvere theory of Zinbiel polynomials is a new exotic example of a Cartesian differential category. It is worth mentioning that the shuffle algebra has been previously studied as an example of another generalization of differential categories in \cite{bagnol2016shuffle}, but not from the point of view of Zinbiel algebras. 

An important class of maps in a Cartesian differential category are the $\mathsf{D}$-linear maps (Definition \ref{linmapdef}), also often simply called linear maps \cite{blute2009cartesian}. A map $f: A \to B$ is $\mathsf{D}$-linear if its derivative $\mathsf{D}[f]: A \times A \to B$ is equal to $f$ evaluated in its second argument, that is, $\mathsf{D}[f] = f \circ \pi_1$ (where $\pi_1$ is the projection map of the \emph{second} argument). A $\mathsf{D}$-linear map should be thought of as being of degree 1, and thus does not have any higher-order derivative. Thus, in many examples, $\mathsf{D}$-linearity often coincides with the classical notion of linearity. For example, in the Cartesian differential category of real smooth functions, a smooth function is $\mathsf{D}$-linear if and only if it is $\mathbb{R}$-linear. For a Cartesian differential comonad, every map of the base category provides a $\mathsf{D}$-linear map in the coKleisli category. However, it is not necessarily the case that the base category is isomorphic to the subcategory of $\mathsf{D}$-linear maps of the coKleisli category. Indeed, a simple example of such a case is the trivial Cartesian differential comonad which maps every object to the zero object and thus every coKleisli map is a zero map. Clearly, if the base category is non-trivial it will not be equivalent to the subcategory of $\mathsf{D}$-linear maps. Instead, it is possible to provide necessary and sufficient conditions for the base category to be isomorphic to the subcategory of $\mathsf{D}$-linear maps of the coKleisli category. It turns out that this is precisely the case when the Cartesian differential comonad comes equipped with a $\mathsf{D}$-linear unit, which is a natural transformation ${\eta_A: A \to \oc(A)}$ satisfying two axioms \textbf{[du.1]} and \textbf{[du.2]} (Definition \ref{def:Dunit}). If it exists, a $\mathsf{D}$-linear unit is unique and it is equivalent to an isomorphism between the base category and the subcategory of $\mathsf{D}$-linear maps of the coKleisli category (Proposition \ref{etaFlem1}). In the context of differential categories, specifically in categorical models of differential linear logic, the $\mathsf{D}$-linear unit is precisely the codereliction \cite{blute2006differential,Blute2019,ehrhard2017introduction}. The Cartesian differential comonads based on power series, or divided power algebras, or Zinbiel algebras all come equipped with $\mathsf{D}$-linear units. 

In \cite{blute2015cartesian}, Blute, Cockett, and Seely give a characterization of the Cartesian differential categories which are the coKleisli categories of differential categories. Generalizing their approach, it is also possible to precisely characterize the Cartesian differential categories which are the coKleisli categories of Cartesian differential comonads (Section \ref{sec:abstract}). To this end, we must work with abstract coKleisli categories (Definition \ref{def:abstract}), which gives a description of coKleisli categories without starting from a comonad. Abstract coKleisli categories are the dual notion of F\"{u}hrmann's thunk-force-categories \cite{fuhrmann1999direct}, which instead do the same for Kleisli categories. Every abstract coKleisli category is canonically the coKleisli category of a comonad on a certain subcategory (Lemma \ref{lem:ep-com}), and conversely, the coKleisli category of any comonad is an abstract coKleisli category (Lemma \ref{cokleisliabstractlem}). In this paper, we introduce Cartesian differential abstract coKleisli categories (Definition \ref{def:abCDC}) which are abstract coKleisli categories that are also Cartesian differential categories such that the differential combinator and the abstract coKleisli structure are compatible. Every Cartesian differential abstract coKleisli category is canonically the coKleisli category of a Cartesian differential comonad over a certain subcategory of $\mathsf{D}$-linear maps (Proposition \ref{propab1}), and conversely, the coKleisli category of a Cartesian differential comonad is a Cartesian differential abstract category (Proposition \ref{propabcok}).  

In conclusion, Cartesian differential comonads give a minimum general construction to build coKleisli categories which are Cartesian differential categories. The theory of Cartesian differential comonads also highlights the interaction between the coKleisli structure and the differential combinator. While Cartesian differential comonads recapture some of the notions of differential categories, they are more general. Therefore, Cartesian differential comonads open the door to a variety of new, interesting, and exotic examples of Cartesian differential categories. New examples will be particularly important and of interest, especially since applications of Cartesian differential categories keep being developed, especially in the fields of machine learning and automatic differentiation.

\paragraph{Conventions:} In an arbitrary category, we use the classical notation for composition as opposed to diagrammatic order which was used in other papers on differential categories (such as in \cite{blute2009cartesian,lemay2018tangent} for example). The composite map ${g \circ f: A \to C}$ is the map that first does $f: A\to B$ then $g: B \to C$. We denote identity maps as ${1_A: A \to A}$. 

\section{Cartesian Differential Categories}\label{sec:CDC-background}

In this background section we quickly review Cartesian differential categories \cite{blute2009cartesian}. 

The underlying structure of a Cartesian differential category is that of a Cartesian left additive category, which in particular allows one to have zero maps and sums of maps, while also allowing for maps which do not preserve said sums or zeros. Maps which do preserve the additive structure are called \emph{additive} maps. Categorically speaking, a left additive category is a category which is \emph{skew}-enriched over the category of commutative monoids and monoid morphisms \cite{garner2020cartesian}. Then a Cartesian left additive category is a left additive category with finite products such that the product structure is compatible with the commutative monoid structure, that is, the projection maps are additive. Note that since we are working with commutative monoids, we do not assume that our Cartesian left additive categories necessarily come equipped with additive inverses, or in other words negatives. For a category with (chosen) finite products we denote the (chosen) terminal object as $\top$, the binary product of objects $A$ and $B$ by $A \times B$ with projection maps $\pi_0: A \times B \to A$ and $\pi_1: A \times B \to B$ and pairing operation $\langle -, - \rangle$, so that for maps $f: C \to A$ and $g: C \to B$, $\langle f,g \rangle: C \to A \times B$ is the unique map such that $\pi_0 \circ \langle f, g \rangle = f$ and $\pi_1 \circ \langle f, g \rangle = g$. As such, the product of maps $h: A \to B$ and $k: C \to D$ is the map $h \times k: A \times C \to B \times D$ defined as $h \times k = \langle h \circ \pi_0, k \circ \pi_1 \rangle$. 

\begin{definition}\label{CLACdef} A \textbf{left additive category} \cite[Definition 1.1.1]{blute2009cartesian} is a category $\mathbb{X}$ such that each hom-set $\mathbb{X}(A,B)$ is a commutative monoid, with binary addition $+: \mathbb{X}(A,B) \times \mathbb{X}(A,B) \to \mathbb{X}(A,B)$, $(f,g) \mapsto f +g$ and zero $0 \in \mathbb{X}(A,B)$, and such that pre-composition preserves the additive structure, that is, for any maps $f: A \to B$, $g: A \to B$, and $x: A^\prime \to A$, the following equality holds:
\begin{align*}
(f+g) \circ x = f \circ x + g \circ x && 0 \circ x = 0
\end{align*} 
A map $f: A \to B$ is said to be \textbf{additive} \cite[Definition 1.1.1]{blute2009cartesian} if post-composition by $f$ preserves the additive structure, that is, for any maps $x: A^\prime \to A$ and $y: A^\prime \to A$, the following equality holds:
\begin{align*}
f \circ (x + y) = f \circ x + f \circ y && f \circ 0 = 0
\end{align*}
 A \textbf{Cartesian left additive category} \cite[Definition 2.3]{lemay2018tangent} is a left additive category $\mathbb{X}$ which has finite products and such that all the projection maps $\pi_0: A \times B \to A$ and $\pi_1: A \times B \to B$ are additive. 
\end{definition}

We note that the definition of a Cartesian left additive category presented here is not precisely that given in \cite[Definition 1.2.1]{blute2009cartesian}, but was shown to be equivalent in \cite[Lemma 2.4]{lemay2018tangent}. Also note that in a Cartesian left additive category, the unique map to the terminal object $\top$ is the zero map ${0: A \to \top}$. Here are now some important maps for Cartesian differential categories that can be defined in any Cartesian left additive category: 

\begin{definition}\label{CLACmapsdef} In a Cartesian left additive category $\mathbb{X}$: 
\begin{enumerate}[{\em (i)}]
\item \label{injdef} For each pair of objects $A$ and $B$, define the \textbf{injection maps} $\iota_0: A \to A \times B$ and $\iota_1: B \to A \times B$ respectively as $\iota_0 := \langle 1_A, 0 \rangle$ and $\iota_1 := \langle 0, 1_B \rangle$
\item \label{nabladef} For each object $A$, define the \textbf{sum map} $\nabla_A: A \times A \to A$ as $\nabla_A := \pi_0 + \pi_1$. 
\item \label{elldef} For each object $A$, define the \textbf{lifting map} $\ell_A: A \times A \to (A \times A) \times (A \times A)$ as follows $\ell := \iota_0 \times \iota_1$. 
\item \label{cdef} For each object $A$ define the \textbf{interchange map} $c_A: (A \times A) \times (A \times A) \to (A \times A) \times (A \times A)$ as follows $c_A : = \left \langle \pi_0 \times \pi_0, \pi_1 \times \pi_1 \right \rangle$. 
\end{enumerate}
\end{definition}

It is important to note that while $c$ is natural in the expected sense, the injection maps $\iota_j$, the sum map $\nabla$, and the lifting map $\ell$ are not natural transformations. Instead, they are natural only with respect to additive maps. In particular, since the injection maps are not natural map for arbitrary maps, it follows that these injection maps do not make the product a coproduct, and therefore not a biproduct. However, the biproduct identities still hold in a Cartesian left additive category in the sense that the following equalities hold: 
\begin{align*}
\pi_0 \circ \iota_0 = 1_A && \pi_0 \circ \iota_1 = 0 && \pi_1 \circ \iota_0 = 0 && \pi_1 \circ \iota_1 = 1_B && \iota_0 \circ \pi_0 + \iota_1 \circ \pi_1 = 1_{A \times B}
\end{align*}
With all this said, it turns out that a category with finite biproducts is precisely a Cartesian left additive category where every map is additive \cite[Example 2.3.(ii)]{garner2020cartesian}. In that case, note the injection maps of Definition \ref{CLACmapsdef}.(\ref{injdef}) are precisely the injection maps of the coproduct, while the sum map of Definition \ref{CLACmapsdef}.(\ref{nabladef}) is the co-diagonal map of the coproduct. 

Cartesian differential categories are Cartesian left additive categories which come equipped with a differential combinator, which in turn is axiomatized by the basic properties of the directional derivative from multivariable differential calculus. There are various equivalent ways of expressing the axioms of a Cartesian differential category. Here we have chosen the one found in \cite[Definition 2.6]{lemay2018tangent} (using the notation for Cartesian left additive categories introduced above). It is important to notice that in the following definition, unlike in the original paper \cite{blute2009cartesian} and other early works on Cartesian differential categories, we use the convention used in the more recent works where the linear argument of $\mathsf{D}[f]$ is its second argument rather than its first argument.  

\begin{definition}\label{cartdiffdef} A \textbf{Cartesian differential category} \cite[Definition 2.1.1]{blute2009cartesian} is a Cartesian left additive category $\mathbb{X}$ equipped with a \textbf{differential combinator} $\mathsf{D}$, which is a family of operators:
\begin{align*} \mathsf{D}: \mathbb{X}(A,B) \to \mathbb{X}(A \times A,B) && \frac{f : A \to B}{\mathsf{D}[f]: A \times A \to B}
\end{align*}
and such that the following seven axioms hold:  
\begin{enumerate}[{\bf [CD.1]}]
\item \label{CDCax1} $\mathsf{D}[f+g] = \mathsf{D}[f] + \mathsf{D}[g]$ and $\mathsf{D}[0] = 0$ 
\item \label{CDCax2} $\mathsf{D}[f] \circ \left(1_A \times \nabla_A \right) = \mathsf{D}[f] \circ (1_A \times \pi_0) + \mathsf{D}[f] \circ (1_A \times \pi_1)$ and $\mathsf{D}[f] \circ \iota_0 = 0$
\item \label{CDCax3} $\mathsf{D}[1_A]=\pi_1$, $\mathsf{D}[\pi_0] = \pi_0 \circ \pi_1$, and $\mathsf{D}[\pi_1] = \pi_0 \circ \pi_1$
\item \label{CDCax4} $\mathsf{D}[\left\langle f,g \right \rangle] = \left \langle  \mathsf{D}[f], \mathsf{D}[g] \right \rangle$
\item \label{CDCax5} $\mathsf{D}[g \circ f] = \mathsf{D}[g] \circ \langle f \circ \pi_0, \mathsf{D}[f] \rangle$
\item \label{CDCax6} $\mathsf{D}\left[\mathsf{D}[f] \right] \circ \ell_A =  \mathsf{D}[f]$
\item \label{CDCax7} $\mathsf{D}\left[\mathsf{D}[f] \right] \circ c_A = \mathsf{D}\left[\mathsf{D}[f] \right]$
\end{enumerate}
For a map $f: A \to B$, $\mathsf{D}[f]: A \times A \to B$ is called the derivative of $f$. 
\end{definition}

A discussion on the intuition for the differential combinator axioms can be found in \cite[Remark 2.1.3]{blute2009cartesian}. It is also worth mentioning that there is a sound and complete term logic for Cartesian differential categories \cite[Section 4]{blute2009cartesian}.  

An important class of maps in a Cartesian differential category is the class of linear maps. In this paper, however, we borrow the terminology from \cite{garner2020cartesian} and will instead call them $\mathsf{D}$-linear maps. This terminology will help distinguish between the classical notion of linearity from commutative algebra and the Cartesian differential category notion of linearity.  

\begin{definition}\label{linmapdef} In a Cartesian differential category $\mathbb{X}$ with differential combinator $\mathsf{D}$, a map $f$ is said to be \textbf{$\mathsf{D}$-linear} \cite[Definition 2.2.1]{blute2009cartesian} if $\mathsf{D}[f]= f \circ \pi_1$. Define the subcategory of linear maps $\mathsf{D}\text{-}\mathsf{lin}[\mathbb{X}]$ to be the category whose objects are the same as $\mathbb{X}$ and whose maps are $\mathsf{D}$-linear in $\mathbb{X}$, and let $\mathsf{U}: \mathsf{D}\text{-}\mathsf{lin}[\mathbb{X}] \to \mathbb{X}$ be the obvious forgetful functor. 
\end{definition}

By \cite[Lemma 2.2.2]{blute2009cartesian}, every $\mathsf{D}$-linear is additive, and therefore it follows that $\mathsf{D}\text{-}\mathsf{lin}[\mathbb{X}]$ has finite biproducts, and is thus also a Cartesian left additive category (where every map is additive) such that the forgetful functor ${\mathsf{U}: \mathsf{D}\text{-}\mathsf{lin}[\mathbb{X}] \to \mathbb{X}}$ preserves the Cartesian left additive structure strictly. It is important to note that although additive and linear maps often coincide in many examples of Cartesian differential category, in an arbitrary Cartesian differential category, not every additive map is necessarily linear. Here are now some useful properties of linear maps: 

\begin{lemma}\label{linlem} \cite[Lemma 2.2.2, Corollary 2.2.3]{blute2009cartesian} In a Cartesian differential category $\mathbb{X}$ with differential combinator $\mathsf{D}$:
\begin{enumerate}[{\em (i)}]
\item \label{linlem.add} If $f: A \to B$ is $\mathsf{D}$-linear then $f$ is additive;
\item\label{linlemimportant1} For any map $f: A \to B$, define $\mathsf{L}[f]: A \to A$ (called the linearization of $f$ \cite[Definition 3.1]{cockett2020linearizing}) as the following composite: 
  \[ \mathsf{L}[f] := \xymatrixcolsep{5pc}\xymatrix{A \ar[r]^-{\iota_1} & A \times A \ar[r]^-{\mathsf{D}[f]} & B   
  } \]
Then $\mathsf{L}[f]$ is $\mathsf{D}$-linear. 
\item\label{linlemimportant2} A map $f: A \to B$ is $\mathsf{D}$-linear if and only if $f = \mathsf{L}[f]$.
\item  \label{linlem.pre} If $f: A \to B$ is $\mathsf{D}$-linear then for every map $g: B \to C$, $\mathsf{D}[g \circ f] = \mathsf{D}[g] \circ (f \times f)$;
\item  \label{linlem.post} If $g: B \to C$ is $\mathsf{D}$-linear then for every map $f: A \to B$, $\mathsf{D}[g \circ f] = g \circ \mathsf{D}[f]$. 
\end{enumerate}
\end{lemma}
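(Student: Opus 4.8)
The plan is to prove the five items in the order (iv), (v), (i), (ii), (iii), since the verification of (ii) will invoke (iv), and the equivalence (iii) will invoke (ii). I would begin with (iv) and (v), which are immediate consequences of the chain rule \textbf{[CD.5]} combined with the defining equation of $\mathsf{D}$-linearity. For (iv), if $f$ is $\mathsf{D}$-linear then $\mathsf{D}[f] = f \circ \pi_1$, so \textbf{[CD.5]} gives $\mathsf{D}[g\circ f] = \mathsf{D}[g]\circ\langle f\circ\pi_0, \mathsf{D}[f]\rangle = \mathsf{D}[g]\circ\langle f\circ\pi_0, f\circ\pi_1\rangle = \mathsf{D}[g]\circ(f\times f)$. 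For (v), if instead $g$ is $\mathsf{D}$-linear then $\mathsf{D}[g] = g\circ\pi_1$, and \textbf{[CD.5]} gives $\mathsf{D}[g\circ f] = g\circ\pi_1\circ\langle f\circ\pi_0,\mathsf{D}[f]\rangle = g\circ\mathsf{D}[f]$.

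For (i) I would use \textbf{[CD.2]}. Writing $\mathsf{D}[f] = f\circ\pi_1$ and expanding $\nabla_A = \pi_0 + \pi_1$, the first equation of \textbf{[CD.2]} becomes an identity of maps $A\times(A\times A)\to B$ which, after precomposing with $\langle 0,\langle x,y\rangle\rangle$ and using left additivity (so that precomposition distributes over $+$), collapses to $f\circ(x+y) = f\circ x + f\circ y$ for all $x,y\colon A'\to A$. Preservation of $0$ comes from the second equation $\mathsf{D}[f]\circ\iota_0 = 0$ of \textbf{[CD.2]} together with $\pi_1\circ\iota_0 = 0$, which yields $f\circ 0 = 0$; this establishes additivity of $f$.

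The heart of the lemma is (ii), and this is the step I expect to be the main obstacle, as it is the only one requiring the second-order axioms \textbf{[CD.6]} and \textbf{[CD.7]} together with additivity. First I would observe that $\iota_1 = \langle 0,1_A\rangle$ is itself $\mathsf{D}$-linear: by \textbf{[CD.4]}, \textbf{[CD.1]} and \textbf{[CD.3]}, $\mathsf{D}[\iota_1] = \langle\mathsf{D}[0],\mathsf{D}[1_A]\rangle = \langle 0,\pi_1\rangle = \iota_1\circ\pi_1$. Hence (iv) applies to $\mathsf{L}[f] = \mathsf{D}[f]\circ\iota_1$ and gives $\mathsf{D}[\mathsf{L}[f]] = \mathsf{D}[\mathsf{D}[f]]\circ(\iota_1\times\iota_1)$; the goal is to show this equals $\mathsf{L}[f]\circ\pi_1$. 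Using the biproduct identities one checks $c_A\circ(\iota_1\times\iota_1) = \langle 0,1_{A\times A}\rangle$, so \textbf{[CD.7]} rewrites the expression as $\mathsf{D}[\mathsf{D}[f]]\circ\langle 0,1_{A\times A}\rangle$. I would then split $1_{A\times A} = \iota_0\circ\pi_0 + \iota_1\circ\pi_1$ and use additivity of $\mathsf{D}[\mathsf{D}[f]]$ in its second argument (a consequence of \textbf{[CD.2]} applied to the map $\mathsf{D}[f]$) to express this as a sum of two terms. The term $\mathsf{D}[\mathsf{D}[f]]\circ\langle 0,\iota_1\circ\pi_1\rangle$ equals $\mathsf{D}[\mathsf{D}[f]]\circ\ell_A\circ\iota_1\circ\pi_1 = \mathsf{D}[f]\circ\iota_1\circ\pi_1 = \mathsf{L}[f]\circ\pi_1$ by \textbf{[CD.6]}. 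The remaining term $\mathsf{D}[\mathsf{D}[f]]\circ\langle 0,\iota_0\circ\pi_0\rangle$ vanishes: a second application of \textbf{[CD.7]} moves the zero into the direction slot, since $c_A\circ\langle 0,\iota_0\circ\pi_0\rangle$ factors through $\iota_0\colon A\times A\to(A\times A)\times(A\times A)$, whence the term is $0$ by the second equation of \textbf{[CD.2]}. This shows $\mathsf{L}[f]$ is $\mathsf{D}$-linear.

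Finally, (iii) is then formal. If $f$ is $\mathsf{D}$-linear, then $\mathsf{L}[f] = \mathsf{D}[f]\circ\iota_1 = f\circ\pi_1\circ\iota_1 = f$, since $\pi_1\circ\iota_1 = 1_A$; conversely, if $f = \mathsf{L}[f]$, then $f$ is $\mathsf{D}$-linear by (ii). The only delicate bookkeeping throughout is keeping track of which copy of the product a given $\pi_j$, $\iota_j$, or $\nabla$ acts on, but once the types are fixed all identities reduce to the biproduct equations and the listed axioms.
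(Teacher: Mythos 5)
Your proof is correct, and since the paper itself gives no proof of this lemma (it is quoted from Blute--Cockett--Seely, Lemma 2.2.2 and Corollary 2.2.3), there is nothing internal to compare against: your argument is exactly the standard derivation, with the right dependency order (iv), (v), (i), (ii), (iii), the key step (ii) handled via $c_A \circ (\iota_1 \times \iota_1) = \langle 0, 1_{A \times A}\rangle$, the splitting $1_{A \times A} = \iota_0 \circ \pi_0 + \iota_1 \circ \pi_1$ with additivity in the second argument from \textbf{[CD.2]}, the \textbf{[CD.6]} collapse of the $\iota_1 \circ \pi_1$ term, and the vanishing of the $\iota_0 \circ \pi_0$ term by a second use of \textbf{[CD.7]} and $\mathsf{D}[\mathsf{D}[f]] \circ \iota_0 = 0$ — all of which I have checked and which go through. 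One remark worth recording: in item (v) you prove $\mathsf{D}[g \circ f] = g \circ \mathsf{D}[f]$, which is the correct identity; the statement as printed reads $f \circ \mathsf{D}[f]$, a typo, so your derivation silently corrects it.
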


We conclude this section with some examples of well-known Cartesian differential categories and their $\mathsf{D}$-linear maps. The first three examples are based on the standard notions of differentiating linear functions, polynomials, and smooth functions respectively. 

\begin{example}\label{ex:CDCbiproduct} \normalfont Any category $\mathbb{X}$ with finite biproduct is a Cartesian differential category where the differential combinator is defined by precomposing with the second projection map: $ \mathsf{D}[f] = f \circ \pi_1$. 
In this case, every map is $\mathsf{D}$-linear by definition and so $\mathsf{D}\text{-}\mathsf{lin}[\mathbb{X}] =  \mathbb{X}$. 
As a particular example, let $\mathbb{F}$ be a field and let $\mathbb{F}\text{-}\mathsf{VEC}$ be the category of $\mathbb{F}$-vector spaces and $\mathbb{F}$-linear maps between them. Then $\mathbb{F}\text{-}\mathsf{VEC}$ is a Cartesian differential category where for an $\mathbb{F}$-linear map ${f: V \to W}$, its derivative $\mathsf{D}[f]: V \times V \to W$ is defined as $\mathsf{D}[f](v,w) = f(w)$. 
\end{example}

\begin{example} \normalfont \label{ex:CDCPOLY} Let $\mathbb{F}$ be a field. Define the category $\mathbb{F}\text{-}\mathsf{POLY}$ whose object are $n \in \mathbb{N}$, where a map ${P: n \to m}$ is a $m$-tuple of polynomials in $n$ variables, that is, $P = \langle p_1(\vec x), \hdots, p_m(\vec x) \rangle$ with $p_i(\vec x) \in \mathbb{F}[x_1, \hdots, x_n]$. $\mathbb{F}\text{-}\mathsf{POLY}$ is a Cartesian differential category where the differential combinator is given by the standard differentiation of polynomials, that is, for a map ${P: n \to m}$, with $P = \langle p_1(\vec x), \hdots, p_m(\vec x) \rangle$, its derivative $\mathsf{D}[P]: n \times n \to m$ is defined as the tuple of the sum of the partial derivatives of the polynomials $p_i(\vec x)$:
\begin{align*}
\mathsf{D}[P](\vec x, \vec y) := \left( \sum \limits^n_{i=1} \frac{\partial p_1(\vec x)}{\partial x_i} y_i, \hdots, \sum \limits^n_{i=1} \frac{\partial p_n(\vec x)}{\partial x_i} y_i \right) && \sum \limits^n_{i=1} \frac{\partial p_j (\vec x)}{\partial x_i} y_i \in \mathbb{F}[x_1, \hdots, x_n, y_1, \hdots, y_n]
\end{align*} 
A map $P: n \to m$ is $\mathsf{D}$-linear if it of the form: 
\begin{align*}
P = \left \langle \sum \limits^{n}_{i=0} r_{i,1}x_{i}, \hdots, \sum \limits^{n}_{i=0} r_{i,m}x_{i} \right \rangle && r_{i,j} \in \mathbb{F} 
\end{align*}
In other words, $P =  \langle p_1(\vec x), \hdots, p_m(\vec x) \rangle$ is $\mathsf{D}$-linear if and only if each $p_i(\vec x)$ induces an $\mathbb{F}$-linear map ${\mathbb{F}^n \to \mathbb{F}}$. As such, $\mathsf{D}\text{-}\mathsf{lin}[\mathbb{F}\text{-}\mathsf{POLY}]$ is equivalent to the category $\mathbb{F}\text{-}\mathsf{LIN}$ whose objects are the finite powers $\mathbb{F}^n$ for each $n \in \mathbb{N}$ (including the singleton $\mathbb{F}^0 = \lbrace 0 \rbrace$) and whose maps are $\mathbb{F}$-linear maps ${\mathbb{F}^n \to \mathbb{F}^m}$. We note that this example can be generalized to the category of polynomials over an arbitrary commutative (semi)ring.
\end{example}

\begin{example}\label{ex:smooth} \normalfont  Let $\mathbb{R}$ be the set of real numbers. Define $\mathsf{SMOOTH}$ as the category whose objects are the Euclidean real vector spaces $\mathbb{R}^n$ and whose maps are the real smooth functions ${F: \mathbb{R}^n \to \mathbb{R}^m}$ between them. $\mathsf{SMOOTH}$ is a Cartesian differential category, arguably the canonical example, where the differential combinator is defined as the directional derivative of a smooth function. So for a smooth function $F: \mathbb{R}^n \to \mathbb{R}^m$, its derivative ${\mathsf{D}[F]: \mathbb{R}^n \times \mathbb{R}^n \to \mathbb{R}^m}$ is then defined as:
\[\mathsf{D}[F](\vec x, \vec y) := \left \langle \sum \limits^n_{i=1} \frac{\partial f_1}{\partial x_i}(\vec x) y_i, \hdots, \sum \limits^n_{i=1} \frac{\partial f_n}{\partial x_i}(\vec x) y_i \right \rangle\]
Note that $\mathbb{R}\text{-}\mathsf{POLY}$ is a sub-Cartesian differential category of $\mathsf{SMOOTH}$. A smooth function $F: \mathbb{R}^n \to \mathbb{R}^m$ is $\mathsf{D}$-linear if and only if it is $\mathbb{R}$-linear in the classical sense. Therefore, $\mathsf{D}\text{-}\mathsf{lin}[\mathsf{SMOOTH}]= \mathbb{R}\text{-}\mathsf{LIN}$. 
\end{example}

\begin{example} \normalfont An important source of examples of Cartesian differential categories, especially for this paper, are those which arise as the coKleisli category of a differential category \cite{blute2006differential,blute2015cartesian}. We will review this example in Example \ref{ex:diffcat}. 
\end{example}

There are many other interesting (and sometimes very exotic) examples of Cartesian differential categories in the literature. See \cite{garner2020cartesian,cockett2020linearizing} for lists of more examples of Cartesian differential categories. Interesting generalizations of Cartesian differential categories include $R$-linear Cartesian differential categories \cite{garner2020cartesian} (which adds the ability of scalar multiplication by a commutative ring $R$), generalized Cartesian differential categories \cite{cruttwell2017cartesian} (which generalizes the notion of differential calculus of smooth functions between open subsets), differential restriction categories \cite{cockett2011differential} (which generalizes the notion of differential calculus of partially defined smooth functions), and tangent categories \cite{cockett2014differential} (which generalizes the notion of differential calculus over smooth manifolds).

\section{Cartesian Differential Comonads}\label{sec:CDComonad}

In this section, we introduce the main novel concept of study in this paper: Cartesian differential comonads, which are precisely the comonads whose coKleisli category is a Cartesian differential category. This is a generalization of \cite[Proposition 3.2.1]{blute2009cartesian}, which states that the coKleisli category of the comonad of a differential category is a Cartesian differential category. The generalization comes from the fact that a Cartesian differential comonad can be defined without the need for a monoidal product or cocommutative comonoid structure on the comonad's coalgebras. As such, this allows for a wider variety of examples of Cartesian differential categories. Briefly, a Cartesian differential comonad is a comonad on a category with finite biproducts, which comes equipped with a differential combinator transformation, which generalizes the notion of a deriving transformation in a differential category \cite{blute2006differential,Blute2019}. The induced differential combinator is defined by precomposing a coKleisli map with the differential combinator transformation (with respect to composition in the base category). Conversely, a comonad whose coKleisli category is a Cartesian differential category is a Cartesian differential comonad, where the differential combinator transformation is defined using the coKleisli category's differential combinator. We point out that this statement, regarding comonads whose coKleisli categories are Cartesian differential categories, is a novel observation and shows us that even if one cannot extract a monoidal product on the base category from the coKleisli category, it is possible to obtain a natural transformation which captures differentiation. Lastly, we will also study the case where the $\mathsf{D}$-linear maps of the coKleisli category correspond to the maps of the base category. The situation arises precisely in the presence of what we call a $\mathsf{D}$-linear unit, which generalizes the notion of a codereliction from differential linear logic \cite{blute2006differential,Blute2019,fiore2007differential,ehrhard2017introduction}.

If only to introduce notation, recall that a comonad on a category $\mathbb{X}$ is a triple $(\oc, \delta, \varepsilon)$ consisting of a functor ${\oc: \mathbb{X} \to \mathbb{X}}$, and two natural transformations $\delta_A: \oc(A) \to \oc \oc (A)$, called the comonad comultiplication, and ${\varepsilon_A: \oc(A) \to A}$, called the comonad counit, and such that the following equalities hold: 
  \begin{equation}\label{comonadeq}\begin{gathered} 
\delta_{\oc(A)} \circ \delta_A = \oc(\delta_A) \circ \delta_A \quad \quad \quad \varepsilon_{\oc(A)} \circ \delta_A = 1_{\oc(A)} = \oc(\varepsilon_A) \circ \delta_A
       \end{gathered}\end{equation}

\begin{definition}\label{def:cdcomonad} For a comonad $(\oc, \delta, \varepsilon)$ on a category $\mathbb{X}$ with finite biproducts, a \textbf{differential combinator transformation} on $(\oc, \delta, \varepsilon)$ is a natural transformation $\partial_A: \oc(A \times A) \to \oc(A)$ such that the following diagrams commute: 
\begin{enumerate}[{\bf [dc.1]}] 
\item Zero Rule: 
  \[  \xymatrixcolsep{5pc}\xymatrix{ \oc(A) \ar[dr]_-{0} \ar[r]^-{\oc(\iota_0)} & \oc(A \times A) \ar[d]^-{\partial_A} \\
  & \oc(A)  
  } \]
  where $\iota_0: A \to A \times A$ is defined as in Definition \ref{CLACmapsdef}.(\ref{injdef}). 
 \item Additive Rule: 
    \[  \xymatrixcolsep{7pc}\xymatrix{ \oc\!\left( A \times (A \times A) \right) \ar[d]_-{\oc(1_A \times \pi_0) + \oc(1_A \times \pi_1)} \ar[r]^-{\oc(1_A \times \nabla_A)} & \oc(A \times A) \ar[d]^-{\partial_A} \\
 \oc(A \times A) \ar[r]_-{\partial_A}  & \oc(A)  
  } \]
  where $\nabla_A: A \times A \to A$ is defined as in Definition \ref{CLACmapsdef}.(\ref{nabladef}). 
\item Linear Rule: 
  \[  \xymatrixcolsep{5pc}\xymatrix{ \oc(A \times A) \ar[r]^-{\partial_A} \ar[d]_-{\varepsilon_{A \times A}} & \oc(A) \ar[d]^-{\varepsilon_A} \\  
  A \times A \ar[r]_-{\pi_1} & A 
  } \]  
 \item Chain Rule: 
   \[  \xymatrixcolsep{5pc}\xymatrix{ \oc(A \times A) \ar[d]_-{\delta_{A \times A}} \ar[rr]^-{\partial_A} && \oc(A)  \ar[d]^-{\delta_A} \\
   \oc\oc(A \times A) \ar[r]_-{\oc\left(\langle \oc(\pi_0), \partial_{A} \rangle \right)} & \oc\!\left( \oc(A) \times \oc(A) \right) \ar[r]_-{\partial_{\oc(A)}} & \oc\oc(A)
  } \]
 \item Lift Rule: 
   \[  \xymatrixcolsep{5pc}\xymatrix{\oc\left( A \times A \right) \ar[ddr]_-{\partial_{A}} \ar[r]^-{\oc(\ell_A)} & \oc\!\left( (A \times A) \times (A \times A) \right) \ar[d]^-{\partial_{A \times A}} \\
   & \oc(A \times A) \ar[d]^-{\partial_A} \\
& \oc(A)  
  } \]
   where $\ell_A: A \times A \to (A \times A) \times (A \times A)$ is defined as in Definition \ref{CLACmapsdef}.(\ref{elldef}).  
 \item Symmetry Rule: 
   \[  \xymatrixcolsep{5pc}\xymatrix{\oc\left( (A \times A) \times (A \times A) \right) \ar[dd]_-{\partial_{A \times A}} \ar[r]^-{\oc(c_A)} & \oc\!\left( (A \times A) \times (A \times A) \right) \ar[d]^-{\partial_{A \times A}} \\
   & \oc(A \times A) \ar[d]^-{\partial_A} \\
 \oc(A \times A) \ar[r]_-{\partial_A}  & \oc(A)  
  } \]
     where $c_A: (A \times A) \times (A \times A) \to (A \times A) \times (A \times A)$ is defined as in Definition \ref{CLACmapsdef}.(\ref{cdef}).  
\end{enumerate}
A \textbf{Cartesian differential comonad} on a category $\mathbb{X}$ with finite biproducts is a quadruple $(\oc, \delta, \varepsilon, \partial)$ consisting of a comonad $(\oc, \delta, \varepsilon)$ and a differential combinator transformation $\partial$ on $(\oc, \delta, \varepsilon)$.  
\end{definition}

As the name suggests, the differential combinator transformations axioms correspond to some of the axioms a differential combinator. The zero rule \textbf{[dc.1]} and the additive rule \textbf{[dc.2]} correspond to \textbf{[CD.2]}, the linear rule \textbf{[dc.3]} corresponds to \textbf{[CD.3]}, the chain rule  \textbf{[dc.4]} corresponds to \textbf{[CD.5]}, the lift rule corresponds to \textbf{[CD.6]}, and lastly the symmetry rule \textbf{[dc.6]} corresponds to \textbf{[CD.7]}. 

Our goal is now to show that the coKleisli category of a Cartesian differential comonad is a Cartesian differential category. As we will be working with coKleisli categories, we will use the notation found in \cite{blute2015cartesian} and use interpretation brackets $\llbracket - \rrbracket$ to help distinguish between composition in the base category and coKleisli composition. So for a comonad $(\oc, \delta, \varepsilon)$ on a category $\mathbb{X}$, let $\mathbb{X}_\oc$ denote its coKleisli category, which is the category whose objects are the same as $\mathbb{X}$ and where a map $A \to B$ in the coKleisli category is map of type $\oc(A) \to B$ in the base category, that is, $\mathbb{X}_\oc(A,B) = \mathbb{X}(\oc(A), B)$. Composition of coKleisli maps ${\llbracket f \rrbracket: \oc(A) \to B}$ and $\llbracket g \rrbracket: \oc(B) \to C$ is defined as follows: 
\begin{align*}
\llbracket g \circ f \rrbracket :=   \xymatrixcolsep{3pc}\xymatrix{\oc (A) \ar[r]^-{\delta_A} & \oc\oc(A) \ar[r]^-{\oc \left( \llbracket f \rrbracket \right) } & \oc(B) \ar[r]^-{\llbracket g \rrbracket} & C } && \llbracket g \circ f \rrbracket = \llbracket g \rrbracket \circ \oc\left( \llbracket f \rrbracket \right) \circ \delta_A
\end{align*}
The identity maps in the coKleisli category is given by the comonad counit: 
\begin{align*}
\llbracket 1_A \rrbracket  := \xymatrixcolsep{5pc}\xymatrix{\oc (A) \ar[r]^-{\varepsilon_A} & A }
\end{align*}
Let $\mathsf{F}_\oc: \mathbb{X} \to \mathbb{X}_\oc$ be the standard inclusion functor which is defined on objects as $\mathsf{F}_\oc(A)=A$ and on maps ${f: A \to B}$ as follows: 
\begin{align*}
\llbracket \mathsf{F}_\oc(f) \rrbracket :=   \xymatrixcolsep{3pc}\xymatrix{\oc (A) \ar[r]^-{\varepsilon_A} & A \ar[r]^-{f} & B } && \llbracket \mathsf{F}_\oc(f) \rrbracket = f \circ \varepsilon_A
\end{align*}
A key map in this story is the coKleisli map whose interpretation is the identity map in the base category. So for every object $A$, define the map $\varphi_A: A \to \oc(A)$ in the coKleisli category as follows: 
  \begin{equation}\label{varphidef}\begin{gathered} 
\llbracket \varphi_A \rrbracket  := \xymatrixcolsep{5pc}\xymatrix{\oc (A) \ar[r]^-{1_{\oc(A)}} & \oc(A) }
\end{gathered}\end{equation}
Here are now some useful identities in the coKleisli category: 

\begin{lemma}\label{cokleislilem1}  Let $(\oc, \delta, \varepsilon)$ be a comonad on a category $\mathbb{X}$. Then the following equalities hold: 
\begin{enumerate}[{\em (i)}]
\item \label{cokleislilem1.right} $\llbracket g \circ \mathsf{F}_\oc(f) \rrbracket = \llbracket g \rrbracket \circ \oc(f)$ 
\item\label{cokleislilem1.left}  $\llbracket \mathsf{F}_{\oc}(g) \circ f \rrbracket = g \circ \llbracket f \rrbracket$
\item \label{cokleislilem1.varphi}  $\llbracket f \rrbracket = \llbracket \mathsf{F}_\oc\left( \llbracket f \rrbracket \right) \circ \varphi_A \rrbracket$ 
\item  \label{cokleislilem1.varphi2}  $\llbracket \varphi_B \circ \mathsf{F}_\oc(f) \rrbracket = \oc(f)$ 
\item \label{cokleislilem1.varphi3}  $\llbracket \varphi_{\oc(A)} \circ \varphi_A \rrbracket = \delta_A$ 
\end{enumerate}
\end{lemma}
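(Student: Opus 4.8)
The plan is to prove the two structural identities (i) and (ii) directly from the definitions of coKleisli composition and the inclusion functor $\mathsf{F}_\oc$, using only the comonad counit laws from \eqref{comonadeq} together with naturality of $\varepsilon$; the remaining three identities then drop out as one-line corollaries. For (i), I would unfold the coKleisli composite. Since $\llbracket \mathsf{F}_\oc(f) \rrbracket = f \circ \varepsilon_A$, the definition of composition together with functoriality of $\oc$ gives
\[ \llbracket g \circ \mathsf{F}_\oc(f) \rrbracket = \llbracket g \rrbracket \circ \oc(f \circ \varepsilon_A) \circ \delta_A = \llbracket g \rrbracket \circ \oc(f) \circ \oc(\varepsilon_A) \circ \delta_A. \]
Applying the counit identity $\oc(\varepsilon_A) \circ \delta_A = 1_{\oc(A)}$ then collapses this to $\llbracket g \rrbracket \circ \oc(f)$, as claimed.

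For (ii), I would similarly unfold and obtain $\llbracket \mathsf{F}_\oc(g) \circ f \rrbracket = g \circ \varepsilon_B \circ \oc(\llbracket f \rrbracket) \circ \delta_A$. The key step here is the naturality of the counit applied to the base-category map $\llbracket f \rrbracket : \oc(A) \to B$, which yields $\varepsilon_B \circ \oc(\llbracket f \rrbracket) = \llbracket f \rrbracket \circ \varepsilon_{\oc(A)}$; substituting this and then invoking the other counit identity $\varepsilon_{\oc(A)} \circ \delta_A = 1_{\oc(A)}$ produces $g \circ \llbracket f \rrbracket$.

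The last three identities are then immediate. For (iii), apply (ii) with the $\mathbb{X}$-map $g := \llbracket f \rrbracket : \oc(A) \to B$ and the coKleisli map $\varphi_A$, so that, using $\llbracket \varphi_A \rrbracket = 1_{\oc(A)}$ from \eqref{varphidef}, one gets $\llbracket \mathsf{F}_\oc(\llbracket f \rrbracket) \circ \varphi_A \rrbracket = \llbracket f \rrbracket \circ 1_{\oc(A)} = \llbracket f \rrbracket$. For (iv), apply (i) with $g := \varphi_B$, giving $\llbracket \varphi_B \circ \mathsf{F}_\oc(f) \rrbracket = \llbracket \varphi_B \rrbracket \circ \oc(f) = \oc(f)$. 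Finally, (v) is a direct computation from the definition of coKleisli composition: since both occurrences of $\varphi$ interpret as identities, $\llbracket \varphi_{\oc(A)} \circ \varphi_A \rrbracket = 1_{\oc\oc(A)} \circ \oc(1_{\oc(A)}) \circ \delta_A = \delta_A$. Since every step reduces to one of the two counit laws or to naturality of $\varepsilon$, there is no genuine obstacle; the only thing to watch carefully is the bookkeeping of which bracketed map lives in the base category versus the coKleisli category, since $\mathsf{F}_\oc$ and $\varphi$ shuttle objects between $A$, $\oc(A)$, and $\oc\oc(A)$.
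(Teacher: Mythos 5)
Your proof is correct. The paper states this lemma without proof, treating it as a routine verification; your argument --- unfolding the coKleisli composite and invoking the two counit laws $\oc(\varepsilon_A) \circ \delta_A = 1_{\oc(A)} = \varepsilon_{\oc(A)} \circ \delta_A$ together with naturality of $\varepsilon$ for (i) and (ii), then deriving (iii)--(v) from these and $\llbracket \varphi_A \rrbracket = 1_{\oc(A)}$ --- is precisely the standard computation the paper leaves implicit.
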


It is a well-known result that if the base category has finite products, then so does the coKleisli category. 

\begin{lemma} \label{cokleisliproduct} \cite[Dual of Proposition 2.2]{szigeti1983limits} Let $(\oc, \delta, \varepsilon)$ be a comonad on a category $\mathbb{X}$ with finite products. Then the coKleisli category $\mathbb{X}_\oc$ has finite products where: 
\begin{enumerate}[{\em (i)}]
\item The product $\times$ on objects is defined as as in $\mathbb{X}$;
\item The projection maps $\llbracket \pi_0 \rrbracket: \oc(A \times B) \to A$ and $\llbracket \pi_1 \rrbracket: \oc(A \times B) \to B$ are defined respectively as follows: 
\begin{align*}
\llbracket \pi_0 \rrbracket:=   \xymatrixcolsep{3pc}\xymatrix{\oc (A \times B) \ar[r]^-{\varepsilon_{A \times B}} & A \times B \ar[r]^-{\pi_0} & A } &&\llbracket \pi_0 \rrbracket = \pi_0 \circ \varepsilon_{A \times B} \\
\llbracket \pi_1 \rrbracket:=   \xymatrixcolsep{3pc}\xymatrix{\oc (A \times B) \ar[r]^-{\varepsilon_{A \times B}} & A \times B \ar[r]^-{\pi_1} & B } &&\llbracket \pi_1 \rrbracket = \pi_1 \circ \varepsilon_{A \times B} 
\end{align*}
\item The pairing of coKleisli maps $\llbracket f \rrbracket: \oc(C) \to A$ and $\llbracket g \rrbracket: \oc(C) \to B$ is defined as in $\mathbb{X}$, that is:
\begin{align*}
 \llbracket \langle f, g \rangle \rrbracket := \xymatrixcolsep{5pc}\xymatrix{\oc (C) \ar[r]^-{\left \langle \llbracket f \rrbracket, \llbracket g \rrbracket \right \rangle} & A \times B }
\end{align*}
\item The terminal object $\top$ is the same as in $\mathbb{X}$. 
\item For coKleisli maps $\llbracket f \rrbracket: \oc(A) \to C$ and $\llbracket g \rrbracket: \oc(B) \to D$, their product is equal to the following composite:
 \begin{align*}
\llbracket f \times g \rrbracket:=   \xymatrixcolsep{3.5pc}\xymatrix{\oc (A \times B) \ar[r]^-{\langle \oc(\pi_0), \oc(\pi_1) \rangle} & \oc(A) \times \oc(B) \ar[r]^-{\llbracket f \rrbracket \times \llbracket g \rrbracket} & C \times D } &&\llbracket f \times g \rrbracket = \left( \llbracket f \rrbracket \times \llbracket g \rrbracket \right) \circ \langle \oc(\pi_0), \oc(\pi_1) \rangle
\end{align*}
\item \label{cokleisliproduct.F} $\mathsf{F}_\oc: \mathbb{X} \to \mathbb{X}_\oc$ preserves the finite product strictly, that is, the following equalities hold: 
\begin{align*}
\mathsf{F}_\oc(A \times B) &= A \times B && &\mathsf{F}_\oc(\top) &= \top \\
\llbracket \mathsf{F}_\oc(\pi_0) \rrbracket &= \llbracket \pi_0 \rrbracket &&& \llbracket \mathsf{F}_\oc(\pi_1) \rrbracket&= \llbracket \pi_1 \rrbracket \\
\llbracket \mathsf{F}_\oc\left(\langle f, g \rangle \right) \rrbracket &= \llbracket \langle \mathsf{F}_\oc(f), \mathsf{F}_\oc(g) \rangle \rrbracket &&& \llbracket \mathsf{F}_\oc\left( f \times g \right) \rrbracket &= \llbracket \mathsf{F}_\oc(f) \times \mathsf{F}_\oc(g) \rrbracket
\end{align*}
\end{enumerate}
\end{lemma}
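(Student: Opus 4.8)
The plan is to verify directly that the proposed data satisfies the universal property of finite products in $\mathbb{X}_\oc$, using the composition identities of Lemma \ref{cokleislilem1} to convert coKleisli composites into base-category composites. The key observation that makes everything routine is that each coKleisli projection is the image of a base projection under the inclusion functor: unwinding the definitions, $\llbracket \pi_0 \rrbracket = \pi_0 \circ \varepsilon_{A \times B} = \llbracket \mathsf{F}_\oc(\pi_0) \rrbracket$, and likewise for $\pi_1$. This lets me apply Lemma \ref{cokleislilem1}.(\ref{cokleislilem1.left}), namely $\llbracket \mathsf{F}_\oc(g) \circ f \rrbracket = g \circ \llbracket f \rrbracket$, to pull projections out of coKleisli composites whenever they appear on the left.

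For the terminal object, a coKleisli map $A \to \top$ is by definition a base map $\oc(A) \to \top$, of which there is exactly one since $\top$ is terminal in $\mathbb{X}$; hence $\top$ remains terminal in $\mathbb{X}_\oc$. For the binary product, I would first check the two projection equations. Forming the coKleisli composite of $\llbracket \pi_0 \rrbracket = \llbracket \mathsf{F}_\oc(\pi_0) \rrbracket$ with the pairing $\llbracket \langle f, g \rangle \rrbracket = \langle \llbracket f \rrbracket, \llbracket g \rrbracket \rangle$, Lemma \ref{cokleislilem1}.(\ref{cokleislilem1.left}) gives
\[ \llbracket \pi_0 \circ \langle f, g \rangle \rrbracket = \pi_0 \circ \llbracket \langle f, g \rangle \rrbracket = \pi_0 \circ \langle \llbracket f \rrbracket, \llbracket g \rrbracket \rangle = \llbracket f \rrbracket, \]
using the base-category product equation in the last step, and symmetrically for $\pi_1$. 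For uniqueness, if a coKleisli map $\llbracket h \rrbracket : \oc(C) \to A \times B$ satisfies the coKleisli projection equations, then these unwind (again by Lemma \ref{cokleislilem1}.(\ref{cokleislilem1.left})) to $\pi_0 \circ \llbracket h \rrbracket = \llbracket f \rrbracket$ and $\pi_1 \circ \llbracket h \rrbracket = \llbracket g \rrbracket$ in the base, whence the universal property of the base product forces $\llbracket h \rrbracket = \langle \llbracket f \rrbracket, \llbracket g \rrbracket \rangle$, i.e. $h = \langle f, g \rangle$ in $\mathbb{X}_\oc$.

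The formula for the product of maps and the strict preservation of products by $\mathsf{F}_\oc$ are then bookkeeping consequences. The product-of-maps formula follows by expanding $f \times g = \langle f \circ \pi_0, g \circ \pi_1 \rangle$ in $\mathbb{X}_\oc$ and using Lemma \ref{cokleislilem1}.(\ref{cokleislilem1.right}), namely $\llbracket g \circ \mathsf{F}_\oc(f) \rrbracket = \llbracket g \rrbracket \circ \oc(f)$, to rewrite $\llbracket f \circ \pi_0 \rrbracket = \llbracket f \rrbracket \circ \oc(\pi_0)$ before reassembling the pairing. The preservation equations for $\mathsf{F}_\oc$ reduce to the fact that $\mathsf{F}_\oc(h) = h \circ \varepsilon$ commutes with $\langle -, - \rangle$ and $\times$, since $\varepsilon$ is simply a fixed map precomposed throughout.

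The calculations are all elementary; the only point requiring genuine care, and the main potential source of error, is consistently distinguishing coKleisli composition from base composition, which is precisely what the bracket notation and Lemma \ref{cokleislilem1} are engineered to manage. I note that this result is the dual of \cite[Proposition 2.2]{szigeti1983limits}, so one could alternatively simply invoke that reference.
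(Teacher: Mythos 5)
Your proof is correct, but note that it does not parallel an argument in the paper, because the paper offers no proof of this lemma at all: it is stated with a citation as the dual of \cite[Proposition 2.2]{szigeti1983limits}, exactly the escape route you mention in your last sentence. Your direct verification is therefore the genuinely self-contained alternative, and it is consistent with the paper's toolkit: the mechanism you isolate --- that the coKleisli projections coincide with $\llbracket \mathsf{F}_\oc(\pi_i) \rrbracket$, so that composites with them unwind to base-category composites via Lemma \ref{cokleislilem1}.(\ref{cokleislilem1.left}) and (\ref{cokleislilem1.right}) --- is precisely how the paper itself manipulates this product structure later (e.g.\ throughout the proofs of Theorem \ref{thm1} and Proposition \ref{prop1}, where Lemma \ref{cokleisliproduct}.(\ref{cokleisliproduct.F}) is invoked in tandem with Lemma \ref{cokleislilem1}). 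One point you gloss slightly: strict preservation of $f \times g$ by $\mathsf{F}_\oc$ is not \emph{only} a matter of precomposing a fixed map, since matching $(f \times g) \circ \varepsilon_{A \times B}$ against $\left( (f \circ \varepsilon_A) \times (g \circ \varepsilon_B) \right) \circ \langle \oc(\pi_0), \oc(\pi_1) \rangle$ uses the naturality of $\varepsilon$ (giving $\varepsilon_A \circ \oc(\pi_0) = \pi_0 \circ \varepsilon_{A \times B}$); alternatively, it follows formally from functoriality of $\mathsf{F}_\oc$ together with the preservation of pairings and projections you have already established, by expanding $f \times g = \langle f \circ \pi_0, g \circ \pi_1 \rangle$. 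Either repair is one line, so this is a presentational gap rather than a mathematical one.
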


If the base category is also Cartesian left additive, then so is the coKleisli category in a canonical way, that is, where the additive structure is simply that of the base category. 

\begin{lemma}\label{cokleisliCLAC} \cite[Proposition 1.3.3]{blute2009cartesian} Let $(\oc, \delta, \varepsilon)$ be a comonad on a Cartesian left additive category $\mathbb{X}$ with finite products. Then the coKleisli category $\mathbb{X}_\oc$ is a Cartesian left additive category where the finite product structure is given in Lemma \ref{cokleisliproduct} and where:
\begin{enumerate}[{\em (i)}]
\item The sum of coKleisli maps $\llbracket f \rrbracket: \oc(A) \to B$ and $\llbracket g \rrbracket: \oc(A) \to B$ is defined as in $\mathbb{X}$, that is:
\[\llbracket f+g \rrbracket = \llbracket f \rrbracket + \llbracket g \rrbracket\] 
\item The zero $\llbracket 0 \rrbracket: \oc(A) \to B$ is the same as in $\mathbb{X}$, that is: 
\[\llbracket 0 \rrbracket = 0\] 
\item \label{cokleisliCLAC.F1} $\mathsf{F}_\oc: \mathbb{X} \to \mathbb{X}_\oc$ preserves the additive structure strictly, that is, the following equalities hold: 
\begin{align*}
\llbracket \mathsf{F}_\oc(f + g) \rrbracket= \llbracket \mathsf{F}_\oc(f) + \mathsf{F}_\oc(g) \rrbracket && \llbracket \mathsf{F}_\oc(0) \rrbracket= 0
\end{align*}
\item \label{cokleisliCLAC.F2} The following equalities hold: 
\begin{align*}
\llbracket \iota_0 \rrbracket = \iota_0 \circ \varepsilon_{A} = \llbracket \mathsf{F}_\oc(\iota_0) \rrbracket &&  \llbracket \iota_1 \rrbracket = \iota_1 \circ \varepsilon_{B} =  \llbracket \mathsf{F}_\oc(\iota_1) \rrbracket\\
 \llbracket \nabla_A \rrbracket = \nabla_A \circ \varepsilon_{A \times A} =  \llbracket \mathsf{F}_\oc(\nabla_A) \rrbracket && \llbracket \ell_A \rrbracket = \ell_A \circ \varepsilon_{A \times A} =  \llbracket \mathsf{F}_\oc(\ell_A) \rrbracket 
\end{align*}
\[ \llbracket c_A \rrbracket = c_A \circ \varepsilon_{(A \times A) \times (A \times A)} = \llbracket \mathsf{F}_\oc(c_A) \rrbracket  \] 
where $\iota_j$, $\nabla$, $\ell$, and $c$ are defined as in Definition \ref{CLACmapsdef}.
\end{enumerate}
\end{lemma}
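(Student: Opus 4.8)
The plan is to transport the Cartesian left additive structure of $\mathbb{X}$ to $\mathbb{X}_\oc$ by observing that every additivity requirement at the coKleisli level unfolds, via the coKleisli composition formula and the definition of $\mathsf{F}_\oc$, into an additivity requirement already satisfied in the base category. The commutative monoid structure on each hom-set costs nothing: since $\mathbb{X}_\oc(A,B) = \mathbb{X}(\oc(A),B)$, it is literally the commutative monoid structure that $\mathbb{X}(\oc(A),B)$ already carries, so the equations $\llbracket f + g \rrbracket = \llbracket f \rrbracket + \llbracket g \rrbracket$ and $\llbracket 0 \rrbracket = 0$ are definitions and the monoid axioms are inherited.

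First I would check that coKleisli precomposition preserves the additive structure. For coKleisli maps $\llbracket f \rrbracket, \llbracket g \rrbracket \colon \oc(A) \to B$ and $\llbracket x \rrbracket \colon \oc(A^\prime) \to A$, the coKleisli composition formula gives $\llbracket (f+g) \circ x \rrbracket = (\llbracket f \rrbracket + \llbracket g \rrbracket) \circ \oc(\llbracket x \rrbracket) \circ \delta_{A^\prime}$. This is ordinary precomposition in $\mathbb{X}$ by the single fixed map $\oc(\llbracket x \rrbracket) \circ \delta_{A^\prime}$, and precomposition in $\mathbb{X}$ preserves sums and zeros by left additivity of $\mathbb{X}$; hence it equals $\llbracket f \circ x \rrbracket + \llbracket g \circ x \rrbracket$, and likewise $\llbracket 0 \circ x \rrbracket = 0$. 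The key point, which explains why no hypothesis on $\oc$ or $\delta$ is needed, is that left additivity concerns precomposition by an \emph{arbitrary} map, so the coefficient $\oc(\llbracket x \rrbracket) \circ \delta_{A^\prime}$ is irrelevant.

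Next I would establish that $\mathsf{F}_\oc$ preserves the additive structure strictly (item (iii)): since $\llbracket \mathsf{F}_\oc(h) \rrbracket = h \circ \varepsilon_A$, left additivity of $\mathbb{X}$ yields $\llbracket \mathsf{F}_\oc(f+g) \rrbracket = (f+g) \circ \varepsilon_A = f \circ \varepsilon_A + g \circ \varepsilon_A = \llbracket \mathsf{F}_\oc(f) \rrbracket + \llbracket \mathsf{F}_\oc(g) \rrbracket$ and $\llbracket \mathsf{F}_\oc(0) \rrbracket = 0 \circ \varepsilon_A = 0$. To see that $\mathbb{X}_\oc$ is \emph{Cartesian} left additive, I must show the coKleisli projections are additive. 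Using Lemma \ref{cokleisliproduct}.(\ref{cokleisliproduct.F}) we have $\llbracket \pi_0 \rrbracket = \llbracket \mathsf{F}_\oc(\pi_0) \rrbracket$, so by Lemma \ref{cokleislilem1}.(\ref{cokleislilem1.left}) postcomposition by $\llbracket \pi_0 \rrbracket$ in $\mathbb{X}_\oc$ corresponds to postcomposition by $\pi_0$ in $\mathbb{X}$, namely $\llbracket \pi_0 \circ x \rrbracket = \pi_0 \circ \llbracket x \rrbracket$. Since $\pi_0$ is additive in $\mathbb{X}$ (the Cartesian left additive hypothesis), it follows that $\llbracket \pi_0 \circ (x+y) \rrbracket = \llbracket \pi_0 \circ x \rrbracket + \llbracket \pi_0 \circ y \rrbracket$ and $\llbracket \pi_0 \circ 0 \rrbracket = 0$, and identically for $\pi_1$.

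Finally I would verify the structural identities of item (iv). Each of $\iota_j$, $\nabla_A$, $\ell_A$, $c_A$ is built in Definition \ref{CLACmapsdef} out of identities, zeros, projections, sums, and pairings; computing its coKleisli interpretation using the product formulas of Lemma \ref{cokleisliproduct} and the additive formulas just established collapses it to $\mathsf{F}_\oc$ of the same base-category map. For instance $\llbracket \iota_0 \rrbracket = \langle \llbracket 1_A \rrbracket, \llbracket 0 \rrbracket \rangle = \langle \varepsilon_A, 0 \rangle = \iota_0 \circ \varepsilon_A = \llbracket \mathsf{F}_\oc(\iota_0) \rrbracket$, and $\llbracket \nabla_A \rrbracket = \llbracket \pi_0 \rrbracket + \llbracket \pi_1 \rrbracket = (\pi_0 + \pi_1) \circ \varepsilon_{A \times A} = \nabla_A \circ \varepsilon_{A \times A}$; the remaining identities for $\ell_A$ and $c_A$ then follow from the strict preservation of products and pairings by $\mathsf{F}_\oc$ recorded in Lemma \ref{cokleisliproduct}.(\ref{cokleisliproduct.F}). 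I do not expect any genuine obstacle here: the entire argument is bookkeeping that reduces coKleisli additivity to base additivity. The only place demanding care is the additivity of the projections, where one must keep in mind that \emph{additivity} is a statement about postcomposition and therefore invoke Lemma \ref{cokleislilem1}.(\ref{cokleislilem1.left}) rather than the coKleisli composition formula used for the left additive axioms.
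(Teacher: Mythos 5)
Your proof is correct and is essentially the standard verification that the paper itself omits, deferring instead to the citation \cite[Proposition 1.3.3]{blute2009cartesian}: inheriting the commutative monoid structure from $\mathbb{X}(\oc(A),B)$, reducing coKleisli left additivity to ordinary precomposition by the fixed map $\oc(\llbracket x \rrbracket) \circ \delta_{A^\prime}$, invoking Lemma \ref{cokleislilem1}.(\ref{cokleislilem1.left}) to transfer additivity of the projections (correctly recognizing that additivity is a postcomposition condition), and collapsing $\iota_j$, $\nabla_A$, $\ell_A$, $c_A$ via the strict product preservation of $\mathsf{F}_\oc$ from Lemma \ref{cokleisliproduct}.(\ref{cokleisliproduct.F}) is exactly the bookkeeping that citation covers. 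Your observation that no additivity hypothesis on $\oc$ or $\delta$ is needed also matches the paper's remark immediately following the lemma, namely that $\oc(f+g)$ and $\oc(0)$ need not equal $\oc(f) + \oc(g)$ and $0$.
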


Now since every category $\mathbb{X}$ with finite biproducts is a Cartesian left additive category, it follows that for every comonad $(\oc, \delta, \varepsilon)$ on $\mathbb{X}$, the coKleisli category $\mathbb{X}_\oc$ is a Cartesian left additive category. It is important to point out that even if all maps in $\mathbb{X}$ are additive maps, the same is not true for $\mathbb{X}_\oc$. This is due to the fact that $\oc(f +g)$ and $\oc(0)$ do not necessarily equal $\oc(f) + \oc(g)$ and $0$ respectively.

We now provide the first main result of this paper: that the coKleisli category of a Cartesian differential comonad is a Cartesian differential category. 

\begin{theorem}\label{thm1} Let $(\oc, \delta, \varepsilon, \partial)$ be a Cartesian differential comonad on a category $\mathbb{X}$ with finite biproducts. Then the coKleisli category $\mathbb{X}_\oc$ is a Cartesian differential category where the Cartesian left additive structure is defined as in Lemma \ref{cokleisliCLAC} and the differential combinator $\mathsf{D}$ is defined as follows: for a map ${\llbracket f \rrbracket: \oc(A) \to B}$, its derivative $\llbracket \mathsf{D}[f] \rrbracket: \oc(A \times A) \to B$ is defined as the following composite:
  \[ \llbracket \mathsf{D}[f] \rrbracket := \xymatrixcolsep{5pc}\xymatrix{\oc(A \times A) \ar[r]^-{\partial_A} & \oc(A) \ar[r]^-{\llbracket f \rrbracket} & B  
  } \]
Furthermore: 
\begin{enumerate}[{\em (i)}]
 \item \label{thm1.varphi} For every object $A$ in $\mathbb{X}$, $\llbracket \mathsf{D}[\varphi_A] \rrbracket = \partial_A$. 
\item \label{thm1.lin}A coKleisli map $\llbracket f \rrbracket: \oc(A) \to B$ is $\mathsf{D}$-linear in $\mathbb{X}_\oc$ if and only if $\llbracket f \rrbracket \circ \partial_A \circ \oc(\iota_1) = \llbracket f \rrbracket$. 
\item For every map $f: A \to B$ in $\mathbb{X}$, $\llbracket \mathsf{F}_\oc(f) \rrbracket$ is $\mathsf{D}$-linear in $\mathbb{X}_\oc$. 
\item \label{Flindef} There is a functor $\mathsf{F}_{\mathsf{D}\text{-}\mathsf{lin}}: \mathbb{X} \to \mathsf{D}\text{-}\mathsf{lin}[\mathbb{X}_\oc]$ which is defined on objects as $\mathsf{F}_{\mathsf{D}\text{-}\mathsf{lin}}(A) = A$ and on maps $f: A \to B$ as $\llbracket \mathsf{F}_{\mathsf{D}\text{-}\mathsf{lin}}(f) \rrbracket = f \circ \varepsilon_A = \llbracket \mathsf{F}_{\oc}(f) \rrbracket$, and such that the following diagram commutes: 
  \[  \xymatrixcolsep{5pc}\xymatrix{ \mathbb{X} \ar[dr]_-{\mathsf{F}_{\mathsf{D}\text{-}\mathsf{lin}}} \ar[rr]^-{\mathsf{F}_\oc} && \mathbb{X}_\oc  \\
  & \mathsf{D}\text{-}\mathsf{lin}[\mathbb{X}_\oc] \ar[ur]_-{\mathsf{U}}
  } \]
\end{enumerate}
\end{theorem}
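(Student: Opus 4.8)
The plan is to verify the seven axioms \textbf{[CD.1]}--\textbf{[CD.7]} for the operator $\mathsf{D}$ directly, translating each coKleisli statement into an equation in the base category $\mathbb{X}$ via the bookkeeping lemmas already established. The observation that makes every translation routine is that the auxiliary maps $\iota_j$, $\nabla_A$, $\ell_A$, $c_A$ and the projections $\pi_0,\pi_1$ of $\mathbb{X}_\oc$ are all of the form $\mathsf{F}_\oc$ of the corresponding base maps (Lemmas \ref{cokleisliproduct} and \ref{cokleisliCLAC}), so that precomposing $\mathsf{D}[f]$ by any of them collapses, via Lemma \ref{cokleislilem1}.(\ref{cokleislilem1.right}), to a composite of the shape $\llbracket f \rrbracket \circ \partial_A \circ \oc(-)$ in $\mathbb{X}$. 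A second structural fact I will lean on repeatedly is that, since $\mathbb{X}$ has finite biproducts, \emph{every} map of $\mathbb{X}$ is additive; in particular each $\llbracket f \rrbracket$ and each $\partial_A$ is additive, so sums may be passed through composites on either side, and $g \circ 0 = 0$ holds because $\mathbb{X}$ has a zero object.

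With these two facts in hand, most axioms reduce to a one- or two-line computation matching a single differential combinator transformation rule. Specifically: \textbf{[CD.1]} and \textbf{[CD.4]} follow purely from the additive and product structure of $\mathbb{X}_\oc$ (Lemmas \ref{cokleisliCLAC} and \ref{cokleisliproduct}) together with the fact that precomposition distributes over sums and pairings; \textbf{[CD.2]} uses the zero rule \textbf{[dc.1]} (with the zero object to conclude $\llbracket f\rrbracket \circ 0 = 0$) for the $\iota_0$ clause and the additive rule \textbf{[dc.2]} for the first clause, where the additivity of both $\partial_A$ and $\llbracket f \rrbracket$ is exactly what lets the sum on the left of \textbf{[dc.2]} be distributed across the composites; \textbf{[CD.3]} follows from the linear rule \textbf{[dc.3]} together with the coKleisli projection and composition identities of Lemmas \ref{cokleisliproduct} and \ref{cokleislilem1}; \textbf{[CD.6]} is an immediate restatement of the lift rule \textbf{[dc.5]}, namely $\partial_A \circ \partial_{A \times A} \circ \oc(\ell_A) = \partial_A$; and \textbf{[CD.7]} is an immediate restatement of the symmetry rule \textbf{[dc.6]}.

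The genuinely computational step, and the one I expect to be the main obstacle, is the chain rule \textbf{[CD.5]}. Writing $h := \llbracket f \rrbracket$, after unfolding coKleisli composition on both sides the claim reduces to the base-category identity
\[ \oc(h) \circ \delta_A \circ \partial_A = \partial_B \circ \oc\!\left(\langle h \circ \oc(\pi_0),\, h \circ \partial_A \rangle\right) \circ \delta_{A \times A}. \]
I would prove this by first rewriting $\delta_A \circ \partial_A$ using the chain rule \textbf{[dc.4]} as $\partial_{\oc(A)} \circ \oc(\langle \oc(\pi_0), \partial_A \rangle) \circ \delta_{A \times A}$, then invoking naturality of $\partial$ in the form $\oc(h) \circ \partial_{\oc(A)} = \partial_B \circ \oc(h \times h)$ to move $\oc(h)$ across, and finally simplifying $(h \times h) \circ \langle \oc(\pi_0), \partial_A \rangle = \langle h \circ \oc(\pi_0), h \circ \partial_A \rangle$ by the universal property of the product. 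The care required here is entirely in keeping the comultiplications $\delta$, the functor $\oc$, and the two composition operations straight; there is no conceptual difficulty once \textbf{[dc.4]} and naturality are applied in the right order.

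For the supplementary claims, part (\ref{thm1.varphi}) is immediate since $\llbracket \varphi_A \rrbracket = 1_{\oc(A)}$, whence $\llbracket \mathsf{D}[\varphi_A] \rrbracket = 1_{\oc(A)} \circ \partial_A = \partial_A$. For part (\ref{thm1.lin}) I would apply Lemma \ref{linlem}.(\ref{linlemimportant2}), so that $\llbracket f \rrbracket$ is $\mathsf{D}$-linear iff $f = \mathsf{L}[f]$; since $\iota_1 = \mathsf{F}_\oc(\iota_1)$, Lemma \ref{cokleislilem1}.(\ref{cokleislilem1.right}) gives $\llbracket \mathsf{L}[f] \rrbracket = \llbracket f \rrbracket \circ \partial_A \circ \oc(\iota_1)$, which is exactly the stated criterion. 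Part (iii) follows by a direct computation: using the linear rule \textbf{[dc.3]} and naturality of $\varepsilon$ one gets $\llbracket \mathsf{D}[\mathsf{F}_\oc(f)] \rrbracket = f \circ \varepsilon_A \circ \partial_A = f \circ \pi_1 \circ \varepsilon_{A \times A} = \llbracket \mathsf{F}_\oc(f) \circ \pi_1 \rrbracket$, so $\mathsf{F}_\oc(f)$ is $\mathsf{D}$-linear. Finally, part (\ref{Flindef}) is formal: by (iii) the functor $\mathsf{F}_\oc$ lands in the $\mathsf{D}$-linear maps, and since the forgetful functor $\mathsf{U}$ is the identity on objects and faithful, $\mathsf{F}_\oc$ factors uniquely through it as a functor $\mathsf{F}_{\mathsf{D}\text{-}\mathsf{lin}}$ with $\mathsf{U} \circ \mathsf{F}_{\mathsf{D}\text{-}\mathsf{lin}} = \mathsf{F}_\oc$, which is the asserted commuting triangle.
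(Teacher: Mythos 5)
Your proposal is correct and follows essentially the same route as the paper's proof: each axiom \textbf{[CD.$j$]} is reduced, via Lemma \ref{cokleislilem1}.(\ref{cokleislilem1.right}) and the identities $\llbracket \mathsf{F}_\oc(-) \rrbracket$ for $\iota_j, \nabla, \ell, c, \pi_j$, to the matching rule \textbf{[dc.$i$]}, with the chain rule handled exactly as in the paper (apply \textbf{[dc.4]}, then naturality of $\partial$, then the universal property of the product), and parts (i)--(iv) argued the same way. The only cosmetic difference is in part (iii), where you verify $\mathsf{D}[\mathsf{F}_\oc(f)] = \mathsf{F}_\oc(f) \circ \pi_1$ directly from \textbf{[dc.3]} rather than checking the linearization criterion $\llbracket \mathsf{F}_\oc(f) \rrbracket \circ \partial_A \circ \oc(\iota_1) = \llbracket \mathsf{F}_\oc(f) \rrbracket$ of part (ii) as the paper does; both computations rest on the same ingredients.
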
 
\begin{proof} We prove the seven axioms of a differential combinator. We make heavy use of Lemma \ref{cokleislilem1}.(\ref{cokleislilem1.right}). 
\begin{enumerate}[{\bf [CD.1]}]
\item Here we use the additive enrichment of $\mathbb{X}$: 
\begin{align*}
\llbracket \mathsf{D}[f +g] \rrbracket &= \llbracket f + g \rrbracket \circ \partial_A  \\
&= \left( \llbracket f \rrbracket + \llbracket g \rrbracket \right) \circ \partial_A \\
&=  \llbracket f \rrbracket \circ \partial_A +  \llbracket g \rrbracket \circ \partial_A \\
&= \llbracket \mathsf{D}[f] \rrbracket + \llbracket \mathsf{D}[g] \rrbracket \\\\
\llbracket \mathsf{D}[0] \rrbracket &= \llbracket 0 \rrbracket \circ \partial_A \\
&= 0 \circ \partial_A  \\
&= 0
\end{align*}
\item Here we use the fact that every map in $\mathbb{X}$ is additive, and both the zero rule \textbf{[dc.1]} and additive rule \textbf{[dc.2]}:  
\begin{align*}
\llbracket  \mathsf{D}[f] \circ (1_A \times \nabla_A) \rrbracket &=~\llbracket  \mathsf{D}[f] \circ \mathsf{F}_{\oc}(1_A \times \nabla_A) \rrbracket \tag{Lem.\ref{cokleisliproduct}.(\ref{cokleisliproduct.F}) + Lem.\ref{cokleisliCLAC}.(\ref{cokleisliCLAC.F2})} \\
&=~\llbracket  \mathsf{D}[f]  \rrbracket \circ \oc(1_A \times \nabla_A)   \tag{Lem.\ref{cokleislilem1}.(\ref{cokleislilem1.right})} \\
&=~\llbracket f \rrbracket \circ \partial_A \circ \oc(1_A \times \nabla_A) \\
&=~\llbracket f \rrbracket \circ \partial_A \circ \left( \oc(1_A \times \pi_0) + \oc(1_A \times \pi_1) \right) \tag{\textbf{[dc.2]}}\\
&=~\llbracket f \rrbracket \circ \partial_A \circ \oc(1_A \times \pi_0) + \llbracket f \rrbracket \circ \partial_A \circ \oc(1_A \times \pi_1) \\
&=~\llbracket  \mathsf{D}[f]  \rrbracket \circ  \oc(1_A \times \pi_0) + \llbracket  \mathsf{D}[f]  \rrbracket \circ  \oc(1_A \times \pi_1) \\
&=~ \llbracket  \mathsf{D}[f] \circ \mathsf{F}_{\oc}(1_A \times \pi_0) \rrbracket + \llbracket  \mathsf{D}[f] \circ \mathsf{F}_{\oc}(1_A \times \pi_1) \rrbracket  \tag{Lem.\ref{cokleislilem1}.(\ref{cokleislilem1.right})} \\
&=~ \llbracket  \mathsf{D}[f] \circ (1_A \times \pi_0) \rrbracket + \llbracket  \mathsf{D}[f] \circ (1_A \times \pi_1) \rrbracket  \tag{Lem.\ref{cokleisliproduct}.(\ref{cokleisliproduct.F}) + Lem.\ref{cokleisliCLAC}.(\ref{cokleisliCLAC.F2})} \\
&=~\llbracket  \mathsf{D}[f] \circ (1_A \times \pi_0) + \mathsf{D}[f] \circ (1_A \times \pi_1) \rrbracket
\end{align*}
\begin{align*}
\llbracket  \mathsf{D}[f] \circ \iota_0 \rrbracket &=~\llbracket  \mathsf{D}[f] \circ \mathsf{F}_\oc(\iota_0) \rrbracket \tag{Lem.\ref{cokleisliCLAC}.(\ref{cokleisliCLAC.F2})} \\
&=~\llbracket  \mathsf{D}[f]  \rrbracket \circ  \oc(\iota_0) \tag{Lem.\ref{cokleislilem1}.(\ref{cokleislilem1.right})} \\
&=~\llbracket f \rrbracket \circ \partial_A \circ  \oc(\iota_0) \\
&=~\llbracket f \rrbracket \circ 0 \tag{\textbf{[dc.1]}}\\
&=~ 0
\end{align*}
\item Here we use the linear rule \textbf{[dc.3]} and Lemma \ref{cokleislilem1}.(\ref{cokleislilem1.left}):
\begin{align*}
\llbracket \mathsf{D}[1_A] \rrbracket &=~ \llbracket 1_A \rrbracket \circ \partial_A \\
&=~\varepsilon_A \circ \partial_A \\
&=~\pi_1 \circ \varepsilon_{A \times A} \tag{\textbf{[dc.3]}}\\
&=~\llbracket \pi_1 \rrbracket \\ \\
\llbracket \mathsf{D}[\pi_j] \rrbracket &=~ \llbracket \pi_0 \rrbracket \circ \partial_{A \times B} \\
&=~\pi_j \circ \varepsilon_{A \times B} \circ \partial_{A \times B} \\
&=~\pi_j \circ \pi_1 \circ \varepsilon_{(A \times B) \times (A \times B)} \tag{\textbf{[dc.3]}}\\
&=~ \pi_j \circ \llbracket \pi_1 \rrbracket \\ 
&=~\llbracket \mathsf{F}_\oc(\pi_j) \circ \pi_1 \rrbracket \tag{Lem.\ref{cokleislilem1}.(\ref{cokleislilem1.left})} \\
&=~ \llbracket \pi_j \circ \pi_1 \rrbracket \tag{Lem.\ref{cokleisliproduct}.(\ref{cokleisliproduct.F})} 
\end{align*}
\item This is mostly straightforward from the product structure: 
\begin{align*}
\llbracket \mathsf{D}\left[ \langle f, g \rangle \right] \rrbracket &=~\llbracket \langle f, g \rangle \rrbracket \circ \partial_A \\
&=~\left \langle \llbracket f \rrbracket, \llbracket g \rrbracket \right \rangle \circ \partial_A \\
&=~\left \langle \llbracket f \rrbracket \circ \partial_A, \llbracket g \rrbracket  \circ \partial_A \right\rangle \\
&=~ \left \langle \llbracket \mathsf{D}[f] \rrbracket, \llbracket \mathsf{D}[g] \rrbracket \right \rangle \\
&=~\llbracket \mathsf{D}[ \langle f,g \rangle] \rrbracket 
\end{align*}
\item Here we use the chain rule \textbf{[dc.4]} and the naturality of $\partial$: 
\begin{align*}
\llbracket \mathsf{D}[g \circ f] \rrbracket &=~\llbracket g \circ f \rrbracket \circ \partial_A \\
&=~  \llbracket g \rrbracket \circ \oc\left( \llbracket f \rrbracket \right) \circ \delta_A \circ \partial_A \\
&=~ \llbracket g \rrbracket \circ \oc\left( \llbracket f \rrbracket \right) \circ \partial_{\oc(A)} \circ \oc\left( \langle \oc(\pi_0), \partial_A \rangle \right) \circ \delta_{A \times A} \tag{\textbf{[dc.4]}} \\
&=~   \llbracket g \rrbracket \circ \partial_{B} \circ \oc\left( \llbracket f \rrbracket \times  \llbracket f \rrbracket \right) \circ \oc\left( \langle \oc(\pi_0), \partial_A \rangle \right) \circ \delta_{A \times A} \tag{Naturality of $\partial$} \\
&=~ \llbracket \mathsf{D}[g] \rrbracket \circ \oc\left( \llbracket f \rrbracket \times  \llbracket f \rrbracket \right) \circ \oc\left( \langle \oc(\pi_0), \partial_A \rangle \right) \circ \delta_{A \times A} \\ 
&=~ \llbracket \mathsf{D}[g] \rrbracket  \circ \oc\left( \left(\llbracket f \rrbracket \times  \llbracket f \rrbracket \right) \circ \langle \oc(\pi_0), \partial_A \rangle \right) \circ \delta_{A \times A} \tag{Functoriality of $\oc$} \\
&=~  \llbracket \mathsf{D}[g] \rrbracket  \circ \oc\left(  \left \langle \llbracket f \rrbracket \circ \oc(\pi_0), \llbracket f \rrbracket \circ \partial_A  \right \rangle \right) \circ \delta_{A \times A}  \\
&=~  \llbracket \mathsf{D}[g] \rrbracket  \circ \oc\left(  \left \langle \llbracket f \rrbracket \circ \oc(\pi_0), \llbracket \mathsf{D}[f] \rrbracket  \right \rangle \right) \circ \delta_{A \times A}  \\
&=~  \llbracket \mathsf{D}[g] \rrbracket \circ \oc\left(  \left \langle \llbracket f \circ \mathsf{F}_\oc(\pi_0) \rrbracket,  \llbracket \mathsf{D}[f] \rrbracket   \right \rangle \right) \circ \delta_{A \times A}   \tag{Lem.\ref{cokleislilem1}.(\ref{cokleislilem1.right})} \\
&=~ \llbracket \mathsf{D}[g] \rrbracket \circ \oc\left(  \left \langle \llbracket f \circ \pi_0 \rrbracket,  \llbracket \mathsf{D}[f] \rrbracket   \right \rangle \right) \circ \delta_{A \times A}  \tag{Lem.\ref{cokleisliproduct}.(\ref{cokleisliproduct.F})} \\ 
&=~ \llbracket \mathsf{D}[g] \rrbracket \circ \oc\left(  \left  \llbracket \left \langle f \circ \pi_0, \mathsf{D}[f]    \right \rangle \right \rrbracket \right) \circ \delta_{A \times A} \\
&=~ \left \llbracket  \mathsf{D}[g] \circ \left \langle f \circ \pi_0, \mathsf{D}[f]    \right \rangle \right \rrbracket
\end{align*}
\item Here we use the lifting rule \textbf{[dc.5]}: 
\begin{align*}
\llbracket \mathsf{D}\left[\mathsf{D}[f] \right] \circ \ell_A \rrbracket &=~\llbracket \mathsf{D}\left[\mathsf{D}[f] \right] \circ \mathsf{F}_\oc(\ell_A) \rrbracket  \tag{Lem.\ref{cokleisliCLAC}.(\ref{cokleisliCLAC.F2})} \\
&=~\llbracket \mathsf{D}\left[\mathsf{D}[f] \right] \rrbracket \circ \oc(\ell_A) \tag{Lem.\ref{cokleislilem1}.(\ref{cokleislilem1.right})} \\
&=~ \llbracket \mathsf{D}[f] \rrbracket \circ \partial_{A \times A} \circ \oc(\ell_A) \\
&=~\llbracket f \rrbracket  \circ \partial_A \circ \partial_{A \times A} \circ \oc(\ell_A) \\
&=~\llbracket f \rrbracket  \circ \partial_A \tag{\textbf{[dc.5]}} \\
&=~ \llbracket \mathsf{D}[f] \rrbracket 
\end{align*}
\item Here we use the symmetry rule \textbf{[dc.6]}: 
\begin{align*}
\llbracket \mathsf{D}\left[\mathsf{D}[f] \right] \circ c_A \rrbracket &=~\llbracket \mathsf{D}\left[\mathsf{D}[f] \right] \circ \mathsf{F}_\oc(c_A) \rrbracket  \tag{Lem.\ref{cokleisliCLAC}.(\ref{cokleisliCLAC.F2})} \\
&=~\llbracket \mathsf{D}\left[\mathsf{D}[f] \right] \rrbracket \circ \oc(c_A) \tag{Lem.\ref{cokleislilem1}.(\ref{cokleislilem1.right})} \\
&=~ \llbracket \mathsf{D}[f] \rrbracket \circ \partial_{A \times A} \circ \oc(c_A) \\
&=~\llbracket f \rrbracket  \circ \partial_A \circ \partial_{A \times A} \circ \oc(c_A) \\
&=~\llbracket f \rrbracket  \circ \partial_A \circ \partial_{A \times A} \tag{\textbf{[dc.6]}} \\
&=~ \llbracket \mathsf{D}[f] \rrbracket \circ \partial_{A \times A} \\
&=~\llbracket \mathsf{D}\left[\mathsf{D}[f] \right] \rrbracket 
\end{align*}
\end{enumerate}
So we conclude that $\mathsf{D}$ is a differential combinator, and therefore that the coKleisli category $\mathbb{X}_\oc$ is a Cartesian differential category. Next we prove the remaining claims. 
\begin{enumerate}[{\em (i)}]
\item This is automatic by definition since: 
\begin{align*}
\llbracket \mathsf{D}[\varphi_A] \rrbracket &=~ \llbracket \varphi_A \rrbracket \circ \partial_A \\
&=~ 1_{\oc(A)} \circ \partial_A \\
&=~\partial_A 
\end{align*}
\item By Lemma \ref{linlem}.(\ref{linlemimportant2}), $\llbracket f \rrbracket$ is $\mathsf{D}$-linear if and only if $\llbracket \mathsf{L}[f] \rrbracket = \llbracket f \rrbracket$. However, expanding the left hand side of the equality we have that: 
\begin{align*}
\llbracket \mathsf{L}[f]  \rrbracket &=~ \llbracket \mathsf{D}[f] \circ \iota_1 \rrbracket \\
&=~\llbracket \mathsf{D}[f] \circ \mathsf{F}_\oc(\iota_1) \rrbracket \tag{Lem.\ref{cokleisliCLAC}.(\ref{cokleisliCLAC.F2})} \\ 
&=~\llbracket \mathsf{D}[f]  \rrbracket \circ \oc(\iota_1) \tag{Lem.\ref{cokleislilem1}.(\ref{cokleislilem1.right})} \\
&=~\llbracket f \rrbracket \circ \partial_A \circ \oc(\iota_1)
\end{align*}
Therefore, $\llbracket f \rrbracket$ is $\mathsf{D}$-linear if and only if $\llbracket f \rrbracket \circ \partial_A \circ \oc(\iota_1) = \llbracket f \rrbracket$. 
\item For any map $f: A \to B$ in $\mathbb{X}$, using the linear rule \textbf{[dc.3]}, naturality of $\varepsilon$, and the biproduct identities, we compute: 
\begin{align*}
\llbracket \mathsf{F}_\oc(f) \rrbracket \circ \partial_A \circ \oc(\iota_1) &=~f \circ \varepsilon_A \circ \partial_A \circ \oc(\iota_1) \\
&=~f \circ \pi_1 \circ \varepsilon_{A \times A} \circ \oc(\iota_1) \\
&=~ f \circ \pi_1 \circ \iota_1 \circ \varepsilon_{A} \tag{Naturality of $\varepsilon$} \\
&=~f \circ \varepsilon_A \tag{Biproduct Identity} \\
&=~\llbracket \mathsf{F}_\oc(f) \rrbracket 
\end{align*}
Therefore, since $\llbracket \mathsf{F}_\oc(f) \rrbracket \circ \partial_A \circ \oc(\iota_1) = \llbracket \mathsf{F}_\oc(f) \rrbracket$, by the above, it follows that $\llbracket \mathsf{F}_\oc(f) \rrbracket $ is $\mathsf{D}$-linear. 
\item By the above, $\mathsf{F}_{\mathsf{D}\text{-}\mathsf{lin}}$ is well-defined and is indeed a functor since $\mathsf{F}_\oc$ is a functor. Furthermore, it is automatic by definition that $\mathsf{U} \circ \mathsf{F}_{\mathsf{D}\text{-}\mathsf{lin}} = \mathsf{F}_\oc$. \end{enumerate}
\end{proof} 

We will now prove the converse of Theorem \ref{thm1} by showing that a comonad whose coKleisli category is a Cartesian differential category is indeed a Cartesian differential comonad. 

\begin{proposition}\label{prop1} Let $\mathbb{X}$ be a category with finite biproducts and let $(\oc, \delta, \varepsilon)$ be a comonad on $\mathbb{X}$. Suppose that the coKleisli category $\mathbb{X}_\oc$ is a Cartesian differential category with differential combinator $\mathsf{D}$ such that: 
\begin{enumerate}[{\em (i)}]
\item The underlying Cartesian left additive structure of $\mathbb{X}_\oc$ is the one from Lemma \ref{cokleisliCLAC};
\item For every map $f: A \to B$ in $\mathbb{X}$, $\llbracket \mathsf{F}_\oc(f) \rrbracket$ is a $\mathsf{D}$-linear map in $\mathbb{X}_\oc$.
\end{enumerate}
Define the natural transformation $\partial_A: \oc(A \times A) \to \oc(A)$ as follows: 
\begin{equation}\label{partialdef}\begin{gathered}\partial_A := \xymatrixcolsep{5pc}\xymatrix{ \oc(A \times A) \ar[r]^-{\llbracket \mathsf{D}[\varphi_A] \rrbracket} & \oc(A) 
  } \end{gathered}\end{equation}
Then $(\oc, \delta, \varepsilon, \partial)$ is a Cartesian differential comonad and furthermore for every coKleisli map $\llbracket f \rrbracket: \oc(A) \to B$, the following diagram commutes (in $\mathbb{X}$): 
\begin{equation}\label{Dvarphi1}\begin{gathered} 
\xymatrixcolsep{5pc}\xymatrix{ \oc(A \times A) \ar[dr]_-{\llbracket \mathsf{D}[f] \rrbracket} \ar[r]^-{\partial_A} & \oc(A) \ar[d]^-{\llbracket f \rrbracket} \\
 & B }
 \end{gathered}\end{equation}
\end{proposition}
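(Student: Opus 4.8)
The plan is to first establish the commuting triangle (\ref{Dvarphi1}), namely that $\llbracket \mathsf{D}[f] \rrbracket = \llbracket f \rrbracket \circ \partial_A$ for every coKleisli map $\llbracket f \rrbracket \colon \oc(A) \to B$, and then to deduce that $\partial$ is a natural transformation satisfying the six axioms \textbf{[dc.1]}--\textbf{[dc.6]} by essentially reversing the computations of Theorem \ref{thm1}. The guiding idea is that each \textbf{[dc]} axiom should be read off from the corresponding \textbf{[CD]} axiom applied to the single coKleisli map $\varphi_A$ (or a small variant), after translating the resulting coKleisli composites back into the base category via Lemma \ref{cokleislilem1} and Lemma \ref{cokleisliCLAC}.(\ref{cokleisliCLAC.F2}).

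To prove (\ref{Dvarphi1}) I would start from the factorization $\llbracket f \rrbracket = \llbracket \mathsf{F}_\oc(\llbracket f \rrbracket) \circ \varphi_A \rrbracket$ of Lemma \ref{cokleislilem1}.(\ref{cokleislilem1.varphi}). Since $\mathsf{F}_\oc(\llbracket f \rrbracket)$ is $\mathsf{D}$-linear by hypothesis~(ii), the post-composition rule Lemma \ref{linlem}.(\ref{linlem.post}) gives $\mathsf{D}[f] = \mathsf{F}_\oc(\llbracket f \rrbracket) \circ \mathsf{D}[\varphi_A]$ in the coKleisli category. Interpreting both sides in $\mathbb{X}$ and using Lemma \ref{cokleislilem1}.(\ref{cokleislilem1.left}) together with the definition $\partial_A = \llbracket \mathsf{D}[\varphi_A] \rrbracket$ then yields $\llbracket \mathsf{D}[f] \rrbracket = \llbracket f \rrbracket \circ \partial_A$. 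This argument does not invoke naturality of $\partial$, so (\ref{Dvarphi1}) is available for all later steps.

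With (\ref{Dvarphi1}) in hand, naturality of $\partial$ follows from the chain rule \textbf{[CD.5]}: applying $\mathsf{D}$ to the coKleisli map $\varphi_B \circ \mathsf{F}_\oc(g)$, whose interpretation is $\oc(g)$ by Lemma \ref{cokleislilem1}.(\ref{cokleislilem1.varphi2}), and using that $\mathsf{F}_\oc(g)$ is $\mathsf{D}$-linear so that $\mathsf{D}[\mathsf{F}_\oc(g)] = \mathsf{F}_\oc(g) \circ \pi_1$, one obtains $\mathsf{D}[\varphi_B \circ \mathsf{F}_\oc(g)] = \mathsf{D}[\varphi_B] \circ \mathsf{F}_\oc(g \times g)$; interpreting both sides, via (\ref{Dvarphi1}) on the left and Lemma \ref{cokleislilem1}.(\ref{cokleislilem1.right}) on the right, gives exactly $\oc(g) \circ \partial_A = \partial_B \circ \oc(g \times g)$. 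The six axioms are then obtained one at a time from the combinator axioms applied to $\varphi$: \textbf{[dc.1]} and \textbf{[dc.2]} from the two halves of \textbf{[CD.2]}, \textbf{[dc.3]} from \textbf{[CD.3]}, \textbf{[dc.4]} from \textbf{[CD.5]}, \textbf{[dc.5]} from \textbf{[CD.6]}, and \textbf{[dc.6]} from \textbf{[CD.7]}. In each case I would rewrite the coKleisli structural maps ($\iota_0$, $\nabla_A$, $\ell_A$, $c_A$) as $\mathsf{F}_\oc$-images using Lemma \ref{cokleisliCLAC}.(\ref{cokleisliCLAC.F2}) so that Lemma \ref{cokleislilem1}.(\ref{cokleislilem1.right}) converts post-composition in $\mathbb{X}_\oc$ into precomposition by $\oc(-)$ in $\mathbb{X}$; for \textbf{[dc.3]} I would instead apply \textbf{[CD.3]} to $1_A = \mathsf{F}_\oc(\varepsilon_A) \circ \varphi_A$, and for \textbf{[dc.5]} and \textbf{[dc.6]} I would apply (\ref{Dvarphi1}) twice to identify $\llbracket \mathsf{D}[\mathsf{D}[\varphi_A]] \rrbracket = \partial_A \circ \partial_{A \times A}$.

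The main obstacle is bookkeeping rather than conceptual: one must scrupulously track which composites live in $\mathbb{X}_\oc$ and which in $\mathbb{X}$, since the additive and product structures agree on objects but the two notions of composition do not. The most delicate single verification is the chain rule \textbf{[dc.4]}, where applying \textbf{[CD.5]} to $\varphi_{\oc(A)} \circ \varphi_A$, whose interpretation is $\delta_A$ by Lemma \ref{cokleislilem1}.(\ref{cokleislilem1.varphi3}), produces the coKleisli composite $\mathsf{D}[\varphi_{\oc(A)}] \circ \langle \varphi_A \circ \pi_0, \mathsf{D}[\varphi_A] \rangle$ whose interpretation must be unwound carefully. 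The key subcomputation is that $\llbracket \varphi_A \circ \pi_0 \rrbracket = \oc(\pi_0)$, which uses the comonad identity $\oc(\varepsilon_{A \times A}) \circ \delta_{A \times A} = 1$; after this, the coKleisli composition formula together with $\partial_{\oc(A)} = \llbracket \mathsf{D}[\varphi_{\oc(A)}] \rrbracket$ delivers precisely $\partial_{\oc(A)} \circ \oc(\langle \oc(\pi_0), \partial_A \rangle) \circ \delta_{A \times A}$, the right-hand leg of \textbf{[dc.4]}, while (\ref{Dvarphi1}) identifies the left leg as $\delta_A \circ \partial_A$. I would also flag the minor point that the passage $\partial_A \circ h + \partial_A \circ k = \partial_A \circ (h+k)$ needed in \textbf{[dc.2]} holds because every map of $\mathbb{X}$ is additive.
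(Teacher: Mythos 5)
Your proposal is correct and follows essentially the same approach as the paper's proof: establish (\ref{Dvarphi1}) first via Lemma \ref{cokleislilem1}.(\ref{cokleislilem1.varphi}), $\mathsf{D}$-linearity of $\mathsf{F}_\oc(\llbracket f \rrbracket)$, and Lemma \ref{linlem}.(\ref{linlem.post}); derive naturality of $\partial$ from $\mathsf{D}$-linearity of $\mathsf{F}_\oc$-images (your route through \textbf{[CD.5]} is precisely the content of Lemma \ref{linlem}.(\ref{linlem.pre}) which the paper cites); and then read off each \textbf{[dc]} axiom from the corresponding \textbf{[CD]} axiom applied to $\varphi_A$, with the same key subcomputations in \textbf{[dc.4]}, namely $\llbracket \varphi_{\oc(A)} \circ \varphi_A \rrbracket = \delta_A$ from Lemma \ref{cokleislilem1}.(\ref{cokleislilem1.varphi3}) and $\llbracket \varphi_A \circ \pi_0 \rrbracket = \oc(\pi_0)$. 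Your flagged point that $\partial_A \circ h + \partial_A \circ k = \partial_A \circ (h+k)$ in \textbf{[dc.2]} rests on every map of $\mathbb{X}$ being additive is also exactly the justification the paper relies on.
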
 
\begin{proof} We begin by proving (\ref{Dvarphi1}) as it will be useful in other parts of the proof: 
\begin{align*}
\llbracket \mathsf{D}[f] \rrbracket &=~\llbracket \mathsf{D}\left[ \mathsf{F}_\oc\left( \llbracket f \rrbracket \right) \circ \varphi_A  \right]  \rrbracket \tag{Lem.\ref{cokleislilem1}.(\ref{cokleislilem1.varphi})} \\
&=~\llbracket \mathsf{F}_\oc\left( \llbracket f \rrbracket \right) \circ \mathsf{D}\left[  \varphi_A  \right]  \rrbracket \tag{$\mathsf{F}_\oc\left( \llbracket f \rrbracket \right)$ is $\mathsf{D}$-linear and Lem \ref{linlem}.(\ref{linlem.post})} \\
&=~ \llbracket f \rrbracket \circ \llbracket \mathsf{D}\left[  \varphi_A  \right] \rrbracket \tag{Lem.\ref{cokleislilem1}.(\ref{cokleislilem1.left})} \\
&=~ \llbracket f \rrbracket \circ \partial_A 
\end{align*}
So $\llbracket \mathsf{D}[f] \rrbracket = \llbracket f \rrbracket \circ \partial_A$. Next we prove that $\partial$ is natural: 
\begin{align*}
\partial_B \circ \oc(f \times f) &=~\llbracket \mathsf{D}[\varphi_B] \rrbracket \circ \oc(f \times f) \\ 
&=~ \llbracket \mathsf{D}[\varphi_B] \circ \mathsf{F}_\oc(f \times f)  \rrbracket  \tag{Lem.\ref{cokleislilem1}.(\ref{cokleislilem1.right})} \\
&=~\llbracket \mathsf{D}[\varphi_B] \circ \left( \mathsf{F}_\oc(f) \times  \mathsf{F}_\oc(f) \right)  \rrbracket  \tag{Lem.\ref{cokleisliproduct}.(\ref{cokleisliproduct.F})} \\ 
&=~\llbracket \mathsf{D}[\varphi_B \circ \mathsf{F}_\oc(f) ]  \rrbracket \tag{$\mathsf{F}_\oc(f)$ is $\mathsf{D}$-linear and Lem \ref{linlem}.(\ref{linlem.pre})} \\
&=~\llbracket \varphi_B \circ \mathsf{F}_\oc(f) \rrbracket \circ \partial_A \tag{\ref{Dvarphi1}} \\
&=~\oc(f) \circ \partial_A \tag{Lem.\ref{cokleislilem1}.(\ref{cokleislilem1.varphi2})} 
\end{align*}
So $\partial$ is a natural transformation. Next we show the six axioms of a differential combinator transformation: 
\begin{enumerate}[{\bf [dc.1]}] 
\item Here we use \textbf{[CD.2]}: 
\begin{align*}
\partial_A \circ \oc(\iota_0) &=~ \llbracket \mathsf{D}[\varphi_A] \rrbracket  \circ \oc(\iota_0) \\
&=~\llbracket \mathsf{D}[\varphi_A] \circ \mathsf{F}_\oc(\iota_0) \rrbracket  \tag{Lem.\ref{cokleislilem1}.(\ref{cokleislilem1.right})} \\
&=~\llbracket \mathsf{D}[\varphi_A] \circ \iota_0 \rrbracket \tag{Lem.\ref{cokleisliCLAC}.(\ref{cokleisliCLAC.F2})} \\
&=~\llbracket 0 \rrbracket \tag{\textbf{[CD.2]}} \\
&=~ 0 
\end{align*}
\item Here we use \textbf{[CD.2]}: 
\begin{align*}
\partial_A \circ \oc(1_A \times \nabla_A) &=~\llbracket \mathsf{D}[\varphi_A] \rrbracket  \circ \oc(1_A \times \nabla_A) \\
&=~\llbracket \mathsf{D}[\varphi_A] \circ \mathsf{F}_\oc(1_A \times \nabla_A) \rrbracket  \tag{Lem.\ref{cokleislilem1}.(\ref{cokleislilem1.right})} \\
&=~\llbracket \mathsf{D}[\varphi_A] \circ (1_A \times \nabla_A) \rrbracket \tag{Lem.\ref{cokleisliproduct}.(\ref{cokleisliproduct.F}) + Lem.\ref{cokleisliCLAC}.(\ref{cokleisliCLAC.F2})} \\
&=~\llbracket \mathsf{D}[\varphi_A] \circ (1_A \times\pi_0) + \mathsf{D}[\varphi_A] \circ (1_A \times\pi_1)  \rrbracket \tag{\textbf{[CD.2]}} \\
&=~ \llbracket \mathsf{D}[\varphi_A] \circ (1_A \times\pi_0) \rrbracket + \llbracket \mathsf{D}[\varphi_A] \circ (1_A \times\pi_1)  \rrbracket \\ 
&=~  \llbracket \mathsf{D}[\varphi_A] \circ \mathsf{F}_\oc(1_A \times\pi_0) \rrbracket + \llbracket \mathsf{D}[\varphi_A] \circ \mathsf{F}_\oc(1_A \times\pi_1)  \rrbracket \tag{Lem.\ref{cokleisliproduct}.(\ref{cokleisliproduct.F}) + Lem.\ref{cokleisliCLAC}.(\ref{cokleisliCLAC.F2})} \\
&=~\llbracket \mathsf{D}[\varphi_A] \rrbracket  \circ \oc(1_A \times \pi_0) +  \llbracket \mathsf{D}[\varphi_A] \rrbracket  \circ \oc(1_A \times \pi_1)  \tag{Lem.\ref{cokleislilem1}.(\ref{cokleislilem1.right})} \\
&=~\partial_A \circ \oc(1_A \times \pi_0) + \partial_A \circ \oc(1_A \times \pi_1) \\
&=~ \partial_A \circ \left( \oc(1_A \times \pi_0) + \oc(1_A \times \pi_0) \right) 
\end{align*}
\item Here we use \textbf{[CD.3]}: 
\begin{align*}
\varepsilon_A \circ \partial_A &=~\llbracket 1_A \rrbracket \circ \partial_A \\
&=~\llbracket \mathsf{D}[1_A] \rrbracket \tag{\ref{Dvarphi1}} \\
&=~\llbracket \pi_1 \rrbracket \tag{\textbf{[CD.3]}} \\
&=~\pi_1 \circ \varepsilon_{A \times A}
\end{align*}
\item Here we use \textbf{[CD.5]}: 
\begin{align*}
\delta_A \circ \partial_A &=~\llbracket \varphi_{\oc(A)} \circ \varphi_A \rrbracket \circ \partial_A \tag{Lem.\ref{cokleislilem1}.(\ref{cokleislilem1.varphi3})} \\
&=~\llbracket \mathsf{D}\left[ \varphi_{\oc(A)} \circ \varphi_A \right] \rrbracket  \tag{\ref{Dvarphi1}} \\
&=~ \llbracket \mathsf{D}\left[ \varphi_{\oc(A)} \right] \circ \langle \varphi_A \circ \pi_0, \mathsf{D}[\varphi_A] \rangle \rrbracket \tag{\textbf{[CD.5]}} \\
&=~\llbracket \mathsf{D}\left[ \varphi_{\oc(A)} \right] \rrbracket \circ \oc\left( \llbracket \langle \varphi_A \circ \pi_0, \mathsf{D}[\varphi_A] \rangle \rrbracket \right) \circ \delta_{A \times A} \\
&=~\partial_{\oc(A)} \circ \oc\left( \llbracket \langle \varphi_A \circ \pi_0, \mathsf{D}[\varphi_A] \rangle \rrbracket \right) \circ \delta_{A \times A} \\
&=~\partial_{\oc(A)} \circ \oc\left( \left \langle \llbracket \varphi_A \circ \pi_0 \rrbracket , \llbracket \mathsf{D}[\varphi_A] \rrbracket \right \rangle \right) \circ \delta_{A \times A} \\
&=~\partial_{\oc(A)} \circ \oc\left( \left \langle \llbracket \varphi_A \circ \pi_0 \rrbracket , \partial_A \right \rangle \right) \circ \delta_{A \times A} \\
&=~\partial_{\oc(A)} \circ \oc\left( \left \langle \llbracket \varphi_A \circ \mathsf{F}_\oc(\pi_0) \rrbracket , \partial_A \right \rangle \right) \circ \delta_{A \times A} \tag{Lem.\ref{cokleisliproduct}.(\ref{cokleisliproduct.F})} \\ 
&=~\partial_{\oc(A)} \circ \oc\left( \left \langle \oc(\pi_0) , \partial_A \right \rangle \right) \circ \delta_{A \times A} \tag{Lem.\ref{cokleislilem1}.(\ref{cokleislilem1.varphi2})} 
\end{align*}
\item Here we use \textbf{[CD.6]}: 
\begin{align*}
\partial_{A} \circ \partial_{A \times A} \circ \oc(\ell_A) &=~\llbracket \mathsf{D}[\varphi_{A}] \rrbracket \circ \partial_{A \times A} \circ \oc(\ell_A) \\
&=~ \llbracket \mathsf{D}\left[ \mathsf{D}[\varphi_{A}] \right] \rrbracket \circ \oc(\ell_A)  \tag{\ref{Dvarphi1}} \\
&=~ \llbracket \mathsf{D}\left[ \mathsf{D}[\varphi_{A}] \circ \mathsf{F}_\oc(\ell_A) \right] \rrbracket  \tag{Lem.\ref{cokleislilem1}.(\ref{cokleislilem1.right})} \\
&=~  \llbracket \mathsf{D}\left[ \mathsf{D}[\varphi_{A}] \circ \ell_A \right] \rrbracket \tag{Lem.\ref{cokleisliCLAC}.(\ref{cokleisliCLAC.F2})} \\
&=~\llbracket \mathsf{D}[\varphi_{A}] \rrbracket \tag{\textbf{[CD.6]}} \\
&=~ \partial_A 
\end{align*}
\item Here we use \textbf{[CD.7]}:
\begin{align*}
\partial_{A} \circ \partial_{A \times A} \circ \oc(c_A) &=~\llbracket \mathsf{D}[\varphi_{A}] \rrbracket \circ \partial_{A \times A} \circ \oc(c_A) \\
&=~ \llbracket \mathsf{D}\left[ \mathsf{D}[\varphi_{A}] \right] \rrbracket \circ \oc(c_A)  \tag{\ref{Dvarphi1}} \\
&=~ \llbracket \mathsf{D}\left[ \mathsf{D}[\varphi_{A}] \circ \mathsf{F}_\oc(c_A) \right] \rrbracket  \tag{Lem.\ref{cokleislilem1}.(\ref{cokleislilem1.right})} \\
&=~  \llbracket \mathsf{D}\left[ \mathsf{D}[\varphi_{A}] \circ c_A \right] \rrbracket \tag{Lem.\ref{cokleisliCLAC}.(\ref{cokleisliCLAC.F2})} \\
&=~\llbracket \mathsf{D}\left[ \mathsf{D}[\varphi_{A}] \right]  \rrbracket \tag{\textbf{[CD.7]}} \\
&=~\llbracket \mathsf{D}[\varphi_{A}] \rrbracket \circ \partial_{A \times A} \rrbracket \tag{\ref{Dvarphi1}} \\ 
&=~\partial_{A} \circ \partial_{A \times A} 
\end{align*}
\end{enumerate}
So we conclude that $\partial$ is a differential combinator transformation and therefore that $(\oc, \delta, \varepsilon, \partial)$ is a Cartesian differential comonad. 

\ \hfill  \end{proof} 

As a result, we obtain a bijective correspondence between differential combinator transformations and differential combinators. 

\begin{corollary} Let $\mathbb{X}$ be a category with finite biproducts and let $(\oc, \delta, \varepsilon)$ be a comonad on $\mathbb{X}$. Then the following are in bijective correspondence: 
\begin{enumerate}[{\em (i)}]
\item Differential combinator transformations $\partial$ on $(\oc, \delta, \varepsilon)$
\item Differential combinators $\mathsf{D}$ on the coKleisli category $\mathbb{X}_\oc$ with respect to the Cartesian left additive structure from Lemma \ref{cokleisliCLAC} and such that for every map $f: A \to B$ in $\mathbb{X}$, $\llbracket \mathsf{F}_\oc(f) \rrbracket: \oc(A) \to B$ is a $\mathsf{D}$-linear map in $\mathbb{X}_\oc$.
\end{enumerate}
via the constructions of Theorem \ref{thm1} and Proposition \ref{prop1}. 
\end{corollary}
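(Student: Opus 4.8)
The plan is to observe that the Corollary is a formal consequence of Theorem \ref{thm1} and Proposition \ref{prop1}, so the task reduces to checking that the two constructions are mutually inverse. Write $\Phi$ for the assignment of Theorem \ref{thm1}, which sends a differential combinator transformation $\partial$ to the differential combinator $\mathsf{D}$ determined by $\llbracket \mathsf{D}[f] \rrbracket = \llbracket f \rrbracket \circ \partial_A$, and write $\Psi$ for the assignment of Proposition \ref{prop1}, which sends a qualifying differential combinator $\mathsf{D}$ to the natural transformation $\partial_A := \llbracket \mathsf{D}[\varphi_A] \rrbracket$. First I would confirm that each assignment lands in the correct set. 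Theorem \ref{thm1} guarantees that $\Phi(\partial)$ is a genuine differential combinator for the canonical Cartesian left additive structure of Lemma \ref{cokleisliCLAC}, and part (iii) of that theorem supplies exactly the required side condition that $\llbracket \mathsf{F}_\oc(f) \rrbracket$ is $\mathsf{D}$-linear for every $f$; dually, Proposition \ref{prop1} guarantees that $\Psi(\mathsf{D})$ is a differential combinator transformation. Hence $\Phi$ and $\Psi$ are well-defined maps between the two sets in question.

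Next I would verify the two round trips. For $\Psi \circ \Phi = \id$, start from a differential combinator transformation $\partial$, form $\mathsf{D} = \Phi(\partial)$, and compute $\Psi(\mathsf{D}) = \llbracket \mathsf{D}[\varphi_A] \rrbracket$; by Theorem \ref{thm1}.(\ref{thm1.varphi}) this equals $\partial_A$, so $\Psi(\Phi(\partial)) = \partial$ on the nose. For $\Phi \circ \Psi = \id$, start from a qualifying differential combinator $\mathsf{D}$, set $\partial_A = \Psi(\mathsf{D}) = \llbracket \mathsf{D}[\varphi_A] \rrbracket$, and note that the commuting diagram (\ref{Dvarphi1}) of Proposition \ref{prop1} says precisely $\llbracket \mathsf{D}[f] \rrbracket = \llbracket f \rrbracket \circ \partial_A$ for every coKleisli map $\llbracket f \rrbracket$. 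But the right-hand side is by definition $\llbracket \Phi(\partial)[f] \rrbracket$, so $\Phi(\Psi(\mathsf{D}))[f] = \mathsf{D}[f]$ for all $f$, i.e.\ $\Phi(\Psi(\mathsf{D})) = \mathsf{D}$.

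Since both composites are the identity, $\Phi$ and $\Psi$ are inverse bijections, which is the claim. In truth there is no real obstacle here: all the substantive content was already discharged in the verification of the six combinator-transformation axioms (Theorem \ref{thm1}) and the seven differential-combinator axioms (Proposition \ref{prop1}). The only point requiring any care is bookkeeping with the side condition in item (ii) of the Corollary --- one must check that the differential combinators appearing in the correspondence are exactly those for which $\mathsf{F}_\oc$ factors through the $\mathsf{D}$-linear maps --- and this is handled by Theorem \ref{thm1}.(iii) in one direction and is a hypothesis of Proposition \ref{prop1} in the other. Everything else is immediate from the two identities $\llbracket \mathsf{D}[\varphi_A] \rrbracket = \partial_A$ and $\llbracket \mathsf{D}[f] \rrbracket = \llbracket f \rrbracket \circ \partial_A$.
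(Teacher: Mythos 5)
Your proof is correct and follows exactly the route the paper takes: the paper's own argument is the one-line observation that the corollary ``follows immediately from Theorem \ref{thm1}.(\ref{thm1.varphi}) and (\ref{Dvarphi1})'', which are precisely the two identities $\llbracket \mathsf{D}[\varphi_A] \rrbracket = \partial_A$ and $\llbracket \mathsf{D}[f] \rrbracket = \llbracket f \rrbracket \circ \partial_A$ that you use for the two round trips. Your write-up merely makes explicit the well-definedness bookkeeping (Theorem \ref{thm1}.(iii) in one direction, the hypothesis of Proposition \ref{prop1} in the other) that the paper leaves implicit, so there is nothing to correct.
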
 
\begin{proof} This follows immediately from Theorem \ref{thm1}.(\ref{thm1.varphi}) and (\ref{Dvarphi1}). 
\end{proof} 

We now turn our attention back to the $\mathsf{D}$-linear maps in the coKleisli category of a Cartesian differential comonad. Specifically, we wish to provide necessary and sufficient conditions for when the subcategory of $\mathsf{D}$-linear maps is isomorphic to the base category. Explicitly, we wish to study when $\mathsf{F}_{\mathsf{D}\text{-}\mathsf{lin}}: \mathbb{X} \to \mathsf{D}\text{-}\mathsf{lin}[\mathbb{X}_\oc]$ as defined in Theorem \ref{thm1}.(\ref{Flindef}) is an isomorphism. The answer, as it turns out, is requiring that the comonad counit has a section. 

\begin{definition}\label{def:Dunit} Let $(\oc, \delta, \varepsilon, \partial)$ be a Cartesian differential comonad on a category $\mathbb{X}$ with finite biproducts. A \textbf{$\mathsf{D}$-linear unit} on $(\oc, \delta, \varepsilon, \partial)$ is a natural transformation $\eta_A: A \to \oc(A)$ such that the following diagrams commute: 
\begin{enumerate}[{\bf [du.1]}] 
\item Linear Rule:
  \[  \xymatrixcolsep{5pc}\xymatrix{ A \ar@{=}[dr]_-{} \ar[r]^-{\eta_A} & \oc(A) \ar[d]^-{\varepsilon_A} \\
  & A
  } \]
\item Linearization Rule: 
  \[  \xymatrixcolsep{5pc}\xymatrix{ \oc(A) \ar[r]^-{\varepsilon_A} \ar[d]_-{\oc(\iota_1)} & A \ar[d]^-{\eta_A} \\  
\oc(A \times A) \ar[r]_-{\partial_A} & \oc(A)
  } \]  
where $\iota_1: A \to A \times A$ is defined as in Definition \ref{CLACmapsdef}.(\ref{injdef}). 
\end{enumerate}
In other words, for every object $A$, $\partial_A \circ \oc(\iota_1)$ is a split idempotent via $\eta_A$ and $\varepsilon_A$. 
\end{definition}

Our first observation is that $\mathsf{D}$-linear units are unique. 

\begin{lemma} For a Cartesian differential comonad, if a $\mathsf{D}$-linear unit exists, then it is unique. 
\end{lemma}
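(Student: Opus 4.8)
The plan is to show that a $\mathsf{D}$-linear unit, if it exists, is completely determined by the comonad and differential combinator transformation data already present, leaving no freedom for two distinct such units. Suppose $\eta$ and $\eta'$ are both $\mathsf{D}$-linear units on $(\oc, \delta, \varepsilon, \partial)$. The key observation is that both axioms \textbf{[du.1]} and \textbf{[du.2]} pin down $\eta_A$ in terms of $\varepsilon_A$ and $\partial_A$.

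**First I would** exploit \textbf{[du.2]}, the linearization rule, which states $\partial_A \circ \oc(\iota_1) \circ \eta_A' \circ \varepsilon_A' \dots$ — more directly, it reads $\eta_A \circ \varepsilon_A = \partial_A \circ \oc(\iota_1)$. Wait: reading the diagram carefully, \textbf{[du.2]} asserts $\eta_A \circ \varepsilon_A = \partial_A \circ \oc(\iota_1)$ as maps $\oc(A) \to \oc(A)$. The crucial point is that the right-hand side $\partial_A \circ \oc(\iota_1)$ depends only on the comonad and $\partial$, not on the choice of unit. Thus if $\eta$ and $\eta'$ are both $\mathsf{D}$-linear units, then
\[
\eta_A \circ \varepsilon_A = \partial_A \circ \oc(\iota_1) = \eta'_A \circ \varepsilon_A.
\]
So $\eta_A$ and $\eta'_A$ agree after precomposition with $\varepsilon_A$.

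**Then I would** use \textbf{[du.1]}, the linear rule $\varepsilon_A \circ \eta_A = 1_A$, which says $\eta_A$ is a section of $\varepsilon_A$. To conclude $\eta_A = \eta'_A$ from $\eta_A \circ \varepsilon_A = \eta'_A \circ \varepsilon_A$, I precompose the latter equality with $\eta_A$ (or $\eta'_A$): since $\varepsilon_A \circ \eta_A = 1_A$, we get
\[
\eta'_A = \eta'_A \circ 1_A = \eta'_A \circ \varepsilon_A \circ \eta_A = \eta_A \circ \varepsilon_A \circ \eta_A = \eta_A \circ 1_A = \eta_A,
\]
where the middle equality uses $\eta'_A \circ \varepsilon_A = \eta_A \circ \varepsilon_A$ and the outer ones use \textbf{[du.1]} for $\eta$. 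This establishes uniqueness.

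**The main subtlety** — rather than a genuine obstacle — is simply reading off the correct direction of the composites from the commuting diagrams in Definition \ref{def:Dunit}, since \textbf{[du.1]} gives $\varepsilon_A \circ \eta_A = 1_A$ (a section, not a retraction of the other order) and \textbf{[du.2]} expresses the split idempotent $\partial_A \circ \oc(\iota_1) = \eta_A \circ \varepsilon_A$. The argument is then just the standard fact that a section of a fixed map realizing a fixed split idempotent is unique, so no heavy computation is required; the entire proof reduces to these two short equational manipulations.
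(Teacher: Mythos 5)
Your proof is correct and is essentially the same argument as the paper's: both use \textbf{[du.2]} to observe that $\eta_A \circ \varepsilon_A = \partial_A \circ \oc(\iota_1) = \eta'_A \circ \varepsilon_A$ and then \textbf{[du.1]} for $\eta$ to collapse the chain $\eta'_A = \eta'_A \circ \varepsilon_A \circ \eta_A = \eta_A \circ \varepsilon_A \circ \eta_A = \eta_A$. No differences worth noting beyond presentation.
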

\begin{proof}  Let $(\oc, \delta, \varepsilon, \partial)$ be a Cartesian differential comonad on a category $\mathbb{X}$ with finite biproducts. Suppose that $\eta$ and $\eta^\prime$ are two $\mathsf{D}$-linear units on $(\oc, \delta, \varepsilon, \partial)$. Combining the linear rule \textbf{[du.1]} and the linearization rule \textbf{[du.2]}, we compute: 
\begin{align*}
\eta^\prime_A &=~\eta^\prime_A \circ 1_A \\ 
&=~\eta^\prime_A \circ \varepsilon_A \circ \eta_A \tag{\textbf{[du.1]} for $\eta$} \\
&=~\partial_A \circ \oc(\iota_1) \circ \eta_A \tag{\textbf{[du.2]} for $\eta^\prime$} \\
&=~\eta_A \circ \varepsilon_A \circ \eta_A \tag{\textbf{[du.2]} for $\eta$} \\
&=~ \eta_A \circ 1_A \tag{\textbf{[du.1]} for $\eta$} \\
&=~\eta_A 
\end{align*}
So $\eta= \eta^\prime$. Therefore we conclude that if it exists, a $\mathsf{D}$-linear unit must be unique. \end{proof} 

We now prove that for a Cartesian differential comonad with a $\mathsf{D}$-linear unit, the $\mathsf{D}$-linear maps in the coKleisli category correspond precisely to the maps in the base category. To do so, we will first need the following useful identity: 

\begin{lemma} \label{Lvarphi}  Let $(\oc, \delta, \varepsilon, \partial)$ be a Cartesian differential comonad on a category $\mathbb{X}$ with finite biproducts. Then $\llbracket \mathsf{L}[\varphi_A] \rrbracket = \partial_A \circ \oc(\iota_1)$. 
\end{lemma}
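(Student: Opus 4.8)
The plan is to unfold the definition of the linearization from Lemma~\ref{linlem}.(\ref{linlemimportant1}) applied to the specific coKleisli map $\varphi_A$, and then translate the resulting coKleisli expression back into the base category $\mathbb{X}$. By definition $\mathsf{L}[\varphi_A] = \mathsf{D}[\varphi_A] \circ \iota_1$, where both the composition and $\iota_1$ are taken in the coKleisli category $\mathbb{X}_\oc$. The whole content of the lemma is therefore to evaluate $\llbracket \mathsf{L}[\varphi_A] \rrbracket$ by pushing this through the coKleisli/base translation already set up in Lemmas~\ref{cokleislilem1} and~\ref{cokleisliCLAC}.

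First I would rewrite $\iota_1$ as $\mathsf{F}_\oc(\iota_1)$ using Lemma~\ref{cokleisliCLAC}.(\ref{cokleisliCLAC.F2}), so that Lemma~\ref{cokleislilem1}.(\ref{cokleislilem1.right}) converts the coKleisli composite $\llbracket \mathsf{D}[\varphi_A] \circ \mathsf{F}_\oc(\iota_1) \rrbracket$ into the base-category composite $\llbracket \mathsf{D}[\varphi_A] \rrbracket \circ \oc(\iota_1)$. I would then invoke Theorem~\ref{thm1}.(\ref{thm1.varphi}), which identifies $\llbracket \mathsf{D}[\varphi_A] \rrbracket = \partial_A$, yielding $\llbracket \mathsf{L}[\varphi_A] \rrbracket = \partial_A \circ \oc(\iota_1)$ immediately. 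Equivalently, one can specialize the general formula $\llbracket \mathsf{L}[f] \rrbracket = \llbracket f \rrbracket \circ \partial_A \circ \oc(\iota_1)$ (established en route to Theorem~\ref{thm1}.(\ref{thm1.lin})) to $f = \varphi_A$ and use $\llbracket \varphi_A \rrbracket = 1_{\oc(A)}$ from \eqref{varphidef}; the leading factor $\llbracket \varphi_A \rrbracket$ collapses to the identity, leaving exactly $\partial_A \circ \oc(\iota_1)$.

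There is no genuine obstacle here: the statement is a direct corollary of identifications already in hand, and the only point requiring care is the routine bookkeeping of translating between coKleisli composition and base-category composition, namely that precomposing in $\mathbb{X}_\oc$ with $\iota_1$, viewed as $\mathsf{F}_\oc(\iota_1)$, amounts to precomposing in $\mathbb{X}$ with $\oc(\iota_1)$. Because $\llbracket \varphi_A \rrbracket$ is the identity on $\oc(A)$, every factor coming from a nontrivial map disappears and the expression reduces cleanly to $\partial_A \circ \oc(\iota_1)$, as claimed.
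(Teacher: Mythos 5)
Your proposal is correct and follows essentially the same route as the paper's own proof: unfold $\mathsf{L}[\varphi_A] = \mathsf{D}[\varphi_A] \circ \iota_1$, rewrite $\iota_1$ as $\mathsf{F}_\oc(\iota_1)$ via Lemma \ref{cokleisliCLAC}.(\ref{cokleisliCLAC.F2}), translate to base-category composition with Lemma \ref{cokleislilem1}.(\ref{cokleislilem1.right}), and conclude with $\llbracket \mathsf{D}[\varphi_A] \rrbracket = \partial_A$ from Theorem \ref{thm1}.(\ref{thm1.varphi}). Your alternative via the formula $\llbracket \mathsf{L}[f] \rrbracket = \llbracket f \rrbracket \circ \partial_A \circ \oc(\iota_1)$ with $\llbracket \varphi_A \rrbracket = 1_{\oc(A)}$ is the same computation in different clothing, so there is nothing to add.
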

\begin{proof} We compute: 
 \begin{align*}
\llbracket \mathsf{L}[\varphi_A] \rrbracket &=~\llbracket \mathsf{D}[\varphi_A] \circ \iota_1 \rrbracket \\
&=~\llbracket \mathsf{D}[\varphi_A] \circ \mathsf{F}_{\mathsf{D}\text{-}\mathsf{lin}}\left( \oc(\iota_1) \right) \rrbracket   \tag{Lem.\ref{cokleisliCLAC}.(\ref{cokleisliCLAC.F2})} \\
&=~ \llbracket \mathsf{D}[\varphi_A]  \rrbracket \circ \oc(\iota_1)  \tag{Lem.\ref{cokleislilem1}.(\ref{cokleislilem1.right})} \\ 
&=~\partial_A \circ \oc(\iota_1) \tag{Theorem \ref{thm1}.(\ref{thm1.varphi})} 
\end{align*}
So the desired equality holds. 
\end{proof}

\begin{proposition}
    \label{etaFlem1} Let $(\oc, \delta, \varepsilon, \partial)$ be a Cartesian differential comonad on a category $\mathbb{X}$ with finite biproducts. Then the following are equivalent: 
\begin{enumerate}[{\em (i)}]
\item $\mathsf{F}_{\mathsf{D}\text{-}\mathsf{lin}}: \mathbb{X} \to \mathsf{D}\text{-}\mathsf{lin}[\mathbb{X}_\oc]$ is an isomorphism (where $\mathsf{F}_{\mathsf{D}\text{-}\mathsf{lin}}$ is defined as in Theorem \ref{thm1}.(\ref{Flindef}));
\item $(\oc, \delta, \varepsilon, \partial)$ has a $\mathsf{D}$-linear unit $\eta_A : A \to \oc (A)$. 
\end{enumerate}
\end{proposition}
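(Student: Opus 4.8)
The plan is to prove the two implications separately. In both directions I use that $\mathsf{F}_{\mathsf{D}\text{-}\mathsf{lin}}$ is the identity on objects (Theorem \ref{thm1}.(\ref{Flindef})), so that ``$\mathsf{F}_{\mathsf{D}\text{-}\mathsf{lin}}$ is an isomorphism of categories'' is equivalent to ``for all $A,B$ the assignment $f \mapsto \llbracket \mathsf{F}_\oc(f)\rrbracket = f\circ\varepsilon_A$ is a bijection $\mathbb{X}(A,B) \to \mathsf{D}\text{-}\mathsf{lin}[\mathbb{X}_\oc](A,B)$''; recall from Theorem \ref{thm1}.(\ref{thm1.lin}) that a coKleisli map $\llbracket g\rrbracket\colon \oc(A)\to B$ lies in the latter hom-set exactly when $g\circ\partial_A\circ\oc(\iota_1)=g$.

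For (ii)$\Rightarrow$(i), assume a $\mathsf{D}$-linear unit $\eta$ exists. Injectivity of $f\mapsto f\circ\varepsilon_A$ follows by precomposing an equality $f\circ\varepsilon_A = f'\circ\varepsilon_A$ with $\eta_A$ and cancelling via \textbf{[du.1]}, $\varepsilon_A\circ\eta_A = 1_A$. For surjectivity, given a $\mathsf{D}$-linear $\llbracket g\rrbracket\colon\oc(A)\to B$, I would check that $f:=g\circ\eta_A$ is a preimage:
\[ f\circ\varepsilon_A = g\circ\eta_A\circ\varepsilon_A = g\circ\partial_A\circ\oc(\iota_1) = g, \]
using \textbf{[du.2]} for the middle equality and $\mathsf{D}$-linearity of $g$ for the last. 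Hence the hom-set maps are bijections and $\mathsf{F}_{\mathsf{D}\text{-}\mathsf{lin}}$ is an isomorphism.

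For (i)$\Rightarrow$(ii), assume $\mathsf{F}_{\mathsf{D}\text{-}\mathsf{lin}}$ is an isomorphism. By Lemma \ref{linlem}.(\ref{linlemimportant1}) the linearization $\mathsf{L}[\varphi_A]\colon A\to\oc(A)$ is a $\mathsf{D}$-linear coKleisli map, so surjectivity on the hom-set $\mathbb{X}(A,\oc(A))\to\mathsf{D}\text{-}\mathsf{lin}[\mathbb{X}_\oc](A,\oc(A))$ produces a unique $\eta_A\colon A\to\oc(A)$ with
\[ \eta_A\circ\varepsilon_A = \llbracket\mathsf{F}_\oc(\eta_A)\rrbracket = \llbracket\mathsf{L}[\varphi_A]\rrbracket = \partial_A\circ\oc(\iota_1), \]
the last step being Lemma \ref{Lvarphi}; this is exactly \textbf{[du.2]}. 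To obtain \textbf{[du.1]} I would compute, using \textbf{[dc.3]}, naturality of $\varepsilon$, and the biproduct identity $\pi_1\circ\iota_1 = 1_A$,
\[ \llbracket\mathsf{F}_\oc(\varepsilon_A\circ\eta_A)\rrbracket = \varepsilon_A\circ\eta_A\circ\varepsilon_A = \varepsilon_A\circ\partial_A\circ\oc(\iota_1) = \pi_1\circ\varepsilon_{A\times A}\circ\oc(\iota_1) = \pi_1\circ\iota_1\circ\varepsilon_A = \varepsilon_A = \llbracket\mathsf{F}_\oc(1_A)\rrbracket, \]
and then invoke faithfulness of $\mathsf{F}_{\mathsf{D}\text{-}\mathsf{lin}}$ to conclude $\varepsilon_A\circ\eta_A = 1_A$.

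The remaining and most delicate point is naturality of $\eta$, which does not follow formally from the two square identities. The plan is to verify it directly: for $f\colon A\to B$, chaining naturality of $\varepsilon$, \textbf{[du.2]} at $B$, the identity $\iota_1\circ f = (f\times f)\circ\iota_1$ (valid since every map of $\mathbb{X}$ is additive), naturality of $\partial$, and \textbf{[du.2]} at $A$ yields $\eta_B\circ f\circ\varepsilon_A = \oc(f)\circ\eta_A\circ\varepsilon_A$; precomposing with $\eta_A$ and cancelling by \textbf{[du.1]} gives $\eta_B\circ f = \oc(f)\circ\eta_A$. I expect this final cancellation --- upgrading an identity that holds after $(-)\circ\varepsilon_A$ to a genuine equality --- to be the crux, and it is precisely where the section property \textbf{[du.1]} of $\varepsilon_A$ is indispensable.
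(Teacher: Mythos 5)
Your proof is correct, and in the direction (i)$\Rightarrow$(ii) it is essentially the paper's argument: the same definition $\eta_A := \mathsf{F}_{\mathsf{D}\text{-}\mathsf{lin}}^{-1}\left(\llbracket \mathsf{L}[\varphi_A]\rrbracket\right)$, with Lemma \ref{Lvarphi} delivering \textbf{[du.2]} on the nose, the same \textbf{[dc.3]}-plus-biproduct computation for \textbf{[du.1]} (the paper pushes it through $\mathsf{F}^{-1}_{\mathsf{D}\text{-}\mathsf{lin}}$ and \textbf{[CD.3]}, you use faithfulness --- same content), and the same core steps for naturality (naturality of $\iota_1$ and of $\partial$); your wrapper differs in that you cancel the split epimorphism $\varepsilon_A$ via \textbf{[du.1]}, where the paper conjugates the whole chain by the functor isomorphism. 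You are also right to flag that naturality must be proved last, after both square identities are in hand, and your use of additivity of all maps to justify $\iota_1 \circ f = (f\times f)\circ \iota_1$ is exactly what is needed. The genuine divergence is in (ii)$\Rightarrow$(i): you observe that $\mathsf{F}_{\mathsf{D}\text{-}\mathsf{lin}}$ is identity on objects and reduce the claim to bijectivity of $f \mapsto f \circ \varepsilon_A$ on each hom-set, with injectivity from \textbf{[du.1]} and surjectivity witnessed by $g \circ \eta_A$ via \textbf{[du.2]} and Theorem \ref{thm1}.(\ref{thm1.lin}); since a functor bijective on objects and on hom-sets is automatically an isomorphism of categories, this spares you the paper's longest verification, namely that the explicitly defined inverse $\llbracket f \rrbracket \mapsto \llbracket f\rrbracket \circ \eta_A$ preserves composition (which in the paper consumes naturality of $\eta$, \textbf{[du.2]}, $\mathsf{D}$-linearity, and the comonad identity $\oc(\varepsilon_A)\circ\delta_A = 1_{\oc(A)}$). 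Note that your surjectivity preimage is exactly the paper's inverse-functor formula, so the two proofs have the same mathematical kernel; what the paper's explicit construction buys is having the inverse functor and its defining identities on display, which it reuses immediately afterwards (e.g.\ in Corollary \ref{etacor1}), whereas your packaging is leaner for the proposition in isolation.
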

\begin{proof} Suppose that $\mathsf{F}_{\mathsf{D}\text{-}\mathsf{lin}}: \mathbb{X} \to \mathsf{D}\text{-}\mathsf{lin}[\mathbb{X}_\oc]$ is an isomorphism. By Lemma \ref{linlem}.(\ref{linlemimportant1}), $\llbracket \mathsf{L}[\varphi_A] \rrbracket: \oc(A) \to \oc(A)$ is a $\mathsf{D}$-linear map from $A$ to $\oc(A)$ in the coKleisli category. Thus, we obtain a map of the desired type ${\mathsf{F}_{\mathsf{D}\text{-}\mathsf{lin}}^{-1}\left( \llbracket \mathsf{L}[\varphi_A] \rrbracket \right): A \to \oc(A)}$ in $\mathbb{X}$. So define ${\eta_A: A \to \oc(A)}$ as: 
\begin{equation}\label{etavarphi}\begin{gathered} 
\eta_A = \mathsf{F}_{\mathsf{D}\text{-}\mathsf{lin}}^{-1}\left( \llbracket \mathsf{L}[\varphi_A] \rrbracket \right)
 \end{gathered}\end{equation}
We will use Lemma \ref{Lvarphi} to show that $\eta$ is indeed a $\mathsf{D}$-linear unit. Starting with the naturality of $\eta$, for any map $f: A \to B$ in $\mathbb{X}$ we compute: 
\begin{align*}
\eta_B \circ f &=~\mathsf{F}_{\mathsf{D}\text{-}\mathsf{lin}}^{-1}\left( \llbracket \mathsf{L}[\varphi_B] \rrbracket \right) \circ f \\
&=~\mathsf{F}_{\mathsf{D}\text{-}\mathsf{lin}}^{-1}\left( \llbracket \mathsf{L}[\varphi_B] \rrbracket \right) \circ \mathsf{F}_{\mathsf{D}\text{-}\mathsf{lin}}^{-1}\left( \mathsf{F}_{\mathsf{D}\text{-}\mathsf{lin}}\left( f \right) \right) \tag{$\mathsf{F}_{\mathsf{D}\text{-}\mathsf{lin}}$ is an isomorphism} \\
&=~\mathsf{F}_{\mathsf{D}\text{-}\mathsf{lin}}^{-1}\left( \left \llbracket \mathsf{L}[\varphi_B] \circ  \mathsf{F}_{\mathsf{D}\text{-}\mathsf{lin}}\left( f \right) \right \rrbracket  \right) \tag{$\mathsf{F}_{\mathsf{D}\text{-}\mathsf{lin}}$ is a functor} \\
&=~\mathsf{F}_{\mathsf{D}\text{-}\mathsf{lin}}^{-1}\left( \left \llbracket \mathsf{L}[\varphi_B] \circ  \mathsf{F}_\oc \left( f \right) \right \rrbracket  \right) \\
&=~\mathsf{F}_{\mathsf{D}\text{-}\mathsf{lin}}^{-1}\left( \llbracket \mathsf{L}[\varphi_B] \rrbracket \circ \oc(f) \right)  \tag{Lem.\ref{cokleislilem1}.(\ref{cokleislilem1.right})} \\ 
&=~ \mathsf{F}_{\mathsf{D}\text{-}\mathsf{lin}}^{-1}\left( \partial_B \circ \oc(\iota_1) \circ \oc(f) \right)  \tag{Lemma \ref{Lvarphi}} \\
&=~\mathsf{F}_{\mathsf{D}\text{-}\mathsf{lin}}^{-1}\left(  \partial_B \circ \oc\left( \iota_1 \circ f \right) \right)  \tag{$\oc$ is a functor} \\
&=~\mathsf{F}_{\mathsf{D}\text{-}\mathsf{lin}}^{-1}\left(  \partial_B \circ \oc\left( (f \times f) \circ \iota_1 \right) \right)  \tag{Naturality of $\iota_1$} \\
&=~\mathsf{F}_{\mathsf{D}\text{-}\mathsf{lin}}^{-1}\left(  \partial_B \circ \oc(f \times f) \circ \oc(\iota_1) \right)   \tag{$\oc$ is a functor} \\
&=~\mathsf{F}_{\mathsf{D}\text{-}\mathsf{lin}}^{-1}\left(  \oc(f) \circ \partial_A \circ \oc(\iota_1) \right) \tag{Naturality of $\partial$} \\
&=~\mathsf{F}_{\mathsf{D}\text{-}\mathsf{lin}}^{-1}\left(  \oc(f) \circ \llbracket \mathsf{L}[\varphi_A] \rrbracket  \right)\tag{Lemma \ref{Lvarphi}} \\
&=~\mathsf{F}_{\mathsf{D}\text{-}\mathsf{lin}}^{-1}\left(   \llbracket \mathsf{F}_\oc\left( \oc(f) \right) \circ \mathsf{L}[\varphi_A] \rrbracket  \right)  \tag{Lem.\ref{cokleislilem1}.(\ref{cokleislilem1.left})} \\ 
&=~\mathsf{F}_{\mathsf{D}\text{-}\mathsf{lin}}^{-1}\left( \left \llbracket  \mathsf{F}_{\mathsf{D}\text{-}\mathsf{lin}}\left( \oc(f) \right) \circ \mathsf{L}[\varphi_A] \right \rrbracket  \right) \\
&=~\mathsf{F}_{\mathsf{D}\text{-}\mathsf{lin}}^{-1}\left( \mathsf{F}_{\mathsf{D}\text{-}\mathsf{lin}}\left( \oc(f) \right) \right) \circ \mathsf{F}_{\mathsf{D}\text{-}\mathsf{lin}}^{-1}\left( \llbracket \mathsf{L}[\varphi_A] \rrbracket \right) \tag{$\mathsf{F}^{-1}_{\mathsf{D}\text{-}\mathsf{lin}}$ is a functor} \\
&=~\oc(f) \circ \mathsf{F}_{\mathsf{D}\text{-}\mathsf{lin}}^{-1}\left( \llbracket \mathsf{L}[\varphi_A] \rrbracket \right) \tag{$\mathsf{F}_{\mathsf{D}\text{-}\mathsf{lin}}$ is an isomorphism} \\
&=~ \oc(f) \circ \eta_A 
\end{align*}
So $\eta$ is a natural transformation. Next we show the two axioms of a $\mathsf{D}$-linear unit: 
\begin{enumerate}[{\bf [du.1]}] 
\item We compute:  
\begin{align*}
\varepsilon_A \circ \eta_A &=~ \mathsf{F}_{\mathsf{D}\text{-}\mathsf{lin}}^{-1}\left( \llbracket \mathsf{F}_{\mathsf{D}\text{-}\mathsf{lin}}\left( \varepsilon_A \right) \rrbracket \right) \circ \eta_A \\
&=~\mathsf{F}_{\mathsf{D}\text{-}\mathsf{lin}}^{-1}\left( \llbracket \mathsf{F}_\oc \left( \varepsilon_A \right) \rrbracket \right) \circ \eta_A \\
&=~\mathsf{F}_{\mathsf{D}\text{-}\mathsf{lin}}^{-1}\left( \llbracket \mathsf{F}_\oc \left( \llbracket 1_A \rrbracket \right) \rrbracket \right) \circ \eta_A \\
&=~ \mathsf{F}_{\mathsf{D}\text{-}\mathsf{lin}}^{-1}\left( \llbracket \mathsf{F}_\oc \left( \llbracket 1_A \rrbracket \right) \rrbracket \right) \circ  \mathsf{F}_{\mathsf{D}\text{-}\mathsf{lin}}^{-1}\left( \llbracket \mathsf{L}[\varphi_A] \rrbracket \right) \\
&=~ \mathsf{F}_{\mathsf{D}\text{-}\mathsf{lin}}^{-1}\left( \llbracket \mathsf{F}_\oc \left( \llbracket 1_A \rrbracket \right) \circ  \mathsf{L}[\varphi_A] \rrbracket \right) \\
&=~  \mathsf{F}_{\mathsf{D}\text{-}\mathsf{lin}}^{-1}\left( \llbracket \mathsf{F}_\oc \left( \llbracket 1_A \rrbracket \right) \circ  \mathsf{D}[\varphi_A] \circ \iota_1 \rrbracket \right) \\
&=~ \mathsf{F}_{\mathsf{D}\text{-}\mathsf{lin}}^{-1}\left( \llbracket \mathsf{D}\left[  \mathsf{F}_\oc \left( \llbracket 1_A \rrbracket \right) \circ \varphi_A \right] \circ \iota_1 \rrbracket \right) \tag{$\mathsf{F}_\oc\left( \llbracket \varepsilon \rrbracket \right)$ is $\mathsf{D}$-linear and Lem \ref{linlem}.(\ref{linlem.post})} \\
&=~\mathsf{F}_{\mathsf{D}\text{-}\mathsf{lin}}^{-1}\left( \llbracket \mathsf{D}[1_A] \circ \iota_1 \rrbracket \right)  \tag{Lem.\ref{cokleislilem1}.(\ref{cokleislilem1.varphi})} \\ 
&=~\mathsf{F}_{\mathsf{D}\text{-}\mathsf{lin}}^{-1}\left( \llbracket \pi_1 \circ \iota_1 \rrbracket \right) \tag{\textbf{[CD.3]}} \\
&=~ \mathsf{F}_{\mathsf{D}\text{-}\mathsf{lin}}^{-1}\left( \llbracket 1_A \rrbracket \right) \tag{Biproduct identities} \\
&=~ 1_A \tag{$\mathsf{F}^{-1}_{\mathsf{D}\text{-}\mathsf{lin}}$ is a functor} 
\end{align*}
\item We compute: 
\begin{align*}
\eta_A \circ \varepsilon_A &=~\mathsf{F}_{\mathsf{D}\text{-}\mathsf{lin}}\left( \eta_A  \right) \\
&=~\mathsf{F}_{\mathsf{D}\text{-}\mathsf{lin}}\left(  \mathsf{F}_{\mathsf{D}\text{-}\mathsf{lin}}^{-1}\left( \llbracket \mathsf{L}[\varphi_A] \rrbracket \right)  \right) \\
&=~ \llbracket \mathsf{L}[\varphi_A] \rrbracket \tag{$\mathsf{F}_{\mathsf{D}\text{-}\mathsf{lin}}$ is an isomorphism} \\
&=~\partial_A \circ \oc(\iota_1)  \tag{Lemma \ref{Lvarphi}} 
\end{align*}
\end{enumerate}
So we conclude that $\eta$ is a $\mathsf{D}$-linear unit. Conversely, suppose that $(\oc, \delta, \varepsilon, \partial)$ comes equipped with a $\mathsf{D}$-linear unit $\eta_A: A \to \oc(A)$. Define the functor $\mathsf{F}_{\mathsf{D}\text{-}\mathsf{lin}}^{-1}: \mathsf{D}\text{-}\mathsf{lin}[\mathbb{X}_\oc] \to \mathbb{X}$ on objects as $\mathsf{F}_{\mathsf{D}\text{-}\mathsf{lin}}^{-1}(A) = A$ and on $\mathsf{D}$-linear coKleisli maps $\llbracket f \rrbracket: \oc(A) \to B$ as the following composite: 
\begin{align*}
\mathsf{F}_{\mathsf{D}\text{-}\mathsf{lin}}^{-1}(\llbracket f \rrbracket ) :=  \xymatrixcolsep{5pc}\xymatrix{A \ar[r]^-{\eta_A} & \oc (A) \ar[r]^-{\llbracket f \rrbracket}  & B
  }  && \mathsf{F}_{\mathsf{D}\text{-}\mathsf{lin}}^{-1}(\llbracket f \rrbracket ) = \llbracket f \rrbracket \circ \eta_A 
\end{align*}
We must show that $\mathsf{F}_{\mathsf{D}\text{-}\mathsf{lin}}^{-1}$ is indeed a functor. We first show that $\mathsf{F}_{\mathsf{D}\text{-}\mathsf{lin}}^{-1}$ preserves identities using the linear rule \textbf{[du.1]}:
\begin{align*}
\mathsf{F}_{\mathsf{D}\text{-}\mathsf{lin}}^{-1}\left( \llbracket 1_A \rrbracket \right) &=~ \llbracket 1_A \rrbracket \circ \eta_A \\
&=~ \varepsilon_A \circ \eta_A \\
&=~ 1_A \tag{\textbf{[du.1]}}
\end{align*}
Next we show that $\mathsf{F}_{\mathsf{D}\text{-}\mathsf{lin}}^{-1}$ also preserves composition using the linearization rule \textbf{[du.2]}:
\begin{align*}
\mathsf{F}_{\mathsf{D}\text{-}\mathsf{lin}}^{-1}\left( \llbracket g\circ f \rrbracket \right) &=~  \llbracket g\circ f \rrbracket \circ \eta_A \\
&=~  \llbracket g \rrbracket \circ \oc\left(  \llbracket  f \rrbracket  \right) \circ \delta_A \circ \eta_A \\
&=~ \llbracket g \rrbracket \circ \oc\left(  \llbracket  f \rrbracket  \circ \partial_A \circ \oc(\iota_1)  \right) \circ \delta_A \circ \eta_A \tag{$ \llbracket  f \rrbracket $ is $\mathsf{D}$-linear and Thm.\ref{thm1}.(\ref{thm1.lin})} \\
&=~  \llbracket g \rrbracket \circ \oc\left(  \llbracket  f \rrbracket \circ \eta_A \circ \varepsilon_A \right) \circ \delta_A \circ \eta_A \tag{\textbf{[du.2]}}  \\
&=~ \llbracket g \rrbracket \circ \oc\left(  \llbracket  f \rrbracket \right) \circ \oc(\eta_A) \circ \oc(\varepsilon_A) \circ \delta_A \circ \eta_A \tag{$\oc$ is a functor} \\
&=~  \llbracket g \rrbracket \circ \oc\left(  \llbracket  f \rrbracket \right) \circ \oc(\eta_A) \circ 1_{\oc(A)} \circ \eta_A \tag{Comonad Identity} \\
&=~ \llbracket g \rrbracket \circ \oc\left(  \llbracket  f \rrbracket \right) \circ \oc(\eta_A) \circ \eta_A \\
&=~\llbracket g \rrbracket \circ \oc\left(  \llbracket  f \rrbracket \right) \circ \eta_{\oc(A)} \circ \eta_A \tag{Naturality of $\eta$} \\
&=~\llbracket g \rrbracket \circ \eta_{B} \circ \llbracket f \rrbracket \circ \eta_A \tag{Naturality of $\eta$} \\
&=~\mathsf{F}_{\mathsf{D}\text{-}\mathsf{lin}}^{-1}\left( \llbracket g \rrbracket \right) \circ \mathsf{F}_{\mathsf{D}\text{-}\mathsf{lin}}^{-1}\left( \llbracket f \rrbracket \right)
\end{align*}
So $\mathsf{F}_{\mathsf{D}\text{-}\mathsf{lin}}^{-1}$ is indeed a functor. Next we show that $\mathsf{F}_{\mathsf{D}\text{-}\mathsf{lin}}$ and $\mathsf{F}_{\mathsf{D}\text{-}\mathsf{lin}}^{-1}$ are inverses. Starting with $\mathsf{F}_{\mathsf{D}\text{-}\mathsf{lin}}^{-1} \circ \mathsf{F}_{\mathsf{D}\text{-}\mathsf{lin}}$, clearly on objects we have that $\mathsf{F}_{\mathsf{D}\text{-}\mathsf{lin}}^{-1}\left( \mathsf{F}_{\mathsf{D}\text{-}\mathsf{lin}}(A) \right) = A$, while for maps we compute: 
\begin{align*}
\mathsf{F}_{\mathsf{D}\text{-}\mathsf{lin}}^{-1}\left( \llbracket \mathsf{F}_{\mathsf{D}\text{-}\mathsf{lin}}(f) \rrbracket \right) &=~\llbracket \mathsf{F}_{\mathsf{D}\text{-}\mathsf{lin}}(f) \rrbracket \circ \eta_A \\
&=~f \circ \varepsilon_A \circ \eta_A \\
&=~ f \circ 1_A \tag{\textbf{[du.1]}} \\
&=~ f
\end{align*}
Therefore, $\mathsf{F}_{\mathsf{D}\text{-}\mathsf{lin}}^{-1} \circ \mathsf{F}_{\mathsf{D}\text{-}\mathsf{lin}} = 1_\mathbb{X}$. Next for $\mathsf{F}_{\mathsf{D}\text{-}\mathsf{lin}} \circ \mathsf{F}^{-1}_{\mathsf{D}\text{-}\mathsf{lin}}$, clearly on objects we have that $\mathsf{F}_{\mathsf{D}\text{-}\mathsf{lin}}\left( \mathsf{F}^{-1}_{\mathsf{D}\text{-}\mathsf{lin}}(A) \right) = A$, while for maps we compute: 
\begin{align*}
\mathsf{F}_{\mathsf{D}\text{-}\mathsf{lin}}\left( \mathsf{F}^{-1}_{\mathsf{D}\text{-}\mathsf{lin}}\left( \llbracket f \rrbracket \right) \right) &=~ \mathsf{F}^{-1}_{\mathsf{D}\text{-}\mathsf{lin}}\left( \llbracket f \rrbracket \right) \circ \varepsilon_A  \\
&=~\llbracket f \rrbracket \circ \eta_A \circ \varepsilon_A \\
&=~ \llbracket f \rrbracket \circ \partial_A \circ \oc(\iota_1) \tag{\textbf{[du.2]}} \\
&=~ \llbracket f \rrbracket \tag{$ \llbracket  f \rrbracket $ is $\mathsf{D}$-linear and Thm.\ref{thm1}.(\ref{thm1.lin})} 
\end{align*}
Therefore, $\mathsf{F}_{\mathsf{D}\text{-}\mathsf{lin}} \circ \mathsf{F}^{-1}_{\mathsf{D}\text{-}\mathsf{lin}} = 1_{\mathsf{D}\text{-}\mathsf{lin}[\mathbb{X}_\oc]}$. So we conclude that $\mathsf{F}_{\mathsf{D}\text{-}\mathsf{lin}}$ is an isomorphism. 
\end{proof} 

As a result, in the presence of a $\mathsf{D}$-linear unit, we obtain the following characterizations of $\mathsf{D}$-linear maps. 

\begin{corollary}\label{etacor1} Let $(\oc, \delta, \varepsilon, \partial)$ be a Cartesian differential comonad on a category $\mathbb{X}$ with finite biproducts. If $(\oc, \delta, \varepsilon, \partial)$ has a $\mathsf{D}$-linear unit $\eta$, then the following are equivalent for a coKleisli map $\llbracket f \rrbracket: \oc(A) \to B$, 
\begin{enumerate}[{\em (i)}]
    \item $\llbracket f \rrbracket$ is $\mathsf{D}$-linear in $\mathbb{X}_\oc$
    \item There exists a (necessarily unique) map $g: A \to B$ in $\mathbb{X}$ such that $\llbracket f \rrbracket = g \circ \varepsilon_A = \llbracket \mathsf{F}_\oc(g ) \rrbracket$. 
    \item $\llbracket f \rrbracket \circ \eta_A \circ \varepsilon_A = \llbracket f \rrbracket$
\end{enumerate}
\end{corollary}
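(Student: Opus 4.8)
The plan is to prove the three-way equivalence by leaning on the split-idempotent characterization of the $\mathsf{D}$-linear unit together with the intrinsic description of $\mathsf{D}$-linearity from Theorem \ref{thm1}.(\ref{thm1.lin}). The cleanest route is to establish (i) $\Leftrightarrow$ (iii) by a direct rewrite, and then close the loop through (ii).

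For (i) $\Leftrightarrow$ (iii), I would start from Theorem \ref{thm1}.(\ref{thm1.lin}), which says that $\llbracket f \rrbracket$ is $\mathsf{D}$-linear precisely when $\llbracket f \rrbracket \circ \partial_A \circ \oc(\iota_1) = \llbracket f \rrbracket$. The linearization rule \textbf{[du.2]} asserts exactly that $\partial_A \circ \oc(\iota_1) = \eta_A \circ \varepsilon_A$, so substituting this identity into the above condition immediately turns it into $\llbracket f \rrbracket \circ \eta_A \circ \varepsilon_A = \llbracket f \rrbracket$, which is (iii). This equivalence is a one-line substitution and requires no case analysis.

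For (ii) $\Rightarrow$ (i), I would suppose $\llbracket f \rrbracket = \llbracket \mathsf{F}_\oc(g) \rrbracket$ for some $g : A \to B$ in $\mathbb{X}$; then $\mathsf{D}$-linearity is immediate from Theorem \ref{thm1}, which establishes that $\llbracket \mathsf{F}_\oc(g) \rrbracket$ is $\mathsf{D}$-linear for every base map $g$. For the converse (i) $\Rightarrow$ (ii), I would exhibit the witness explicitly by setting $g := \llbracket f \rrbracket \circ \eta_A$ (equivalently $g = \mathsf{F}_{\mathsf{D}\text{-}\mathsf{lin}}^{-1}(\llbracket f \rrbracket)$ from Proposition \ref{etaFlem1}). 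Using (iii), which we may assume since it is equivalent to (i), we compute $g \circ \varepsilon_A = \llbracket f \rrbracket \circ \eta_A \circ \varepsilon_A = \llbracket f \rrbracket$, and by the very definition of $\mathsf{F}_\oc$ this composite is $\llbracket \mathsf{F}_\oc(g) \rrbracket$, so (ii) holds. Uniqueness of $g$ follows because \textbf{[du.1]} makes $\varepsilon_A$ a split epimorphism with section $\eta_A$, hence right-cancellable: any two witnesses $g, g'$ with $g \circ \varepsilon_A = g' \circ \varepsilon_A$ satisfy $g = g \circ \varepsilon_A \circ \eta_A = g' \circ \varepsilon_A \circ \eta_A = g'$.

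I expect no genuine obstacle, since all the heavy lifting---the decomposition $\partial_A \circ \oc(\iota_1) = \eta_A \circ \varepsilon_A$ and the isomorphism $\mathsf{F}_{\mathsf{D}\text{-}\mathsf{lin}} \cong \mathsf{F}_{\mathsf{D}\text{-}\mathsf{lin}}^{-1}$---is already packaged in \textbf{[du.2]} and Proposition \ref{etaFlem1}. The only point demanding care is the bookkeeping between composition in the base category $\mathbb{X}$ and in the coKleisli category $\mathbb{X}_\oc$: the equality $\llbracket f \rrbracket \circ \eta_A \circ \varepsilon_A = \llbracket f \rrbracket$ is an equation in $\mathbb{X}$, and one must confirm that $g \circ \varepsilon_A = \llbracket \mathsf{F}_\oc(g) \rrbracket$ holds on the nose via the definition of $\mathsf{F}_\oc$, so that passing between the two categories does not silently introduce a spurious $\delta_A$ or $\oc(-)$.
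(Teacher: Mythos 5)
Your proof is correct and relies on the same essential ingredients as the paper's: Theorem \ref{thm1}.(\ref{thm1.lin}) combined with \textbf{[du.2]} to rewrite the $\mathsf{D}$-linearity criterion as $\llbracket f \rrbracket \circ \eta_A \circ \varepsilon_A = \llbracket f \rrbracket$, the witness $g = \llbracket f \rrbracket \circ \eta_A$, and \textbf{[du.1]} for uniqueness. The only organizational difference is that the paper proves the cycle $(i) \Rightarrow (ii) \Rightarrow (iii) \Rightarrow (i)$ and defers existence and uniqueness of $g$ to the isomorphism of Proposition \ref{etaFlem1}, whereas you establish $(i) \Leftrightarrow (iii)$ by a direct substitution and close the loop with a self-contained split-epimorphism cancellation argument, which is marginally more elementary since it never needs the full functor isomorphism $\mathsf{F}_{\mathsf{D}\text{-}\mathsf{lin}}$.
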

\begin{proof} For $(i) \Rightarrow (ii)$, suppose that $\llbracket f \rrbracket$ is $\mathsf{D}$-linear. Then set $g = \llbracket f \rrbracket \circ \eta_A = \mathsf{F}^{-1}_{\mathsf{D}\text{-}\mathsf{lin}} (\llbracket f \rrbracket)$. By Proposition \ref{etaFlem1}, we clearly have that $\llbracket f \rrbracket = g \circ \varepsilon_A = \llbracket \mathsf{F}_\oc(g ) \rrbracket$, and also that $g$ is unique. For $(ii) \Rightarrow (iii)$, suppose that $\llbracket f \rrbracket = g \circ \varepsilon_A$. Then by \textbf{[du.1]}, we have that: 
\begin{align*}
    \llbracket f \rrbracket \circ \eta_A \circ \varepsilon_A &=~ g \circ \varepsilon_A \circ \eta_A \circ \varepsilon_A \\
    &=~ g \circ 1_A \circ \varepsilon_A \tag{\textbf{[du.1]}} \\
    &=~ g \circ \varepsilon_A \\
    &=~  \llbracket f \rrbracket
\end{align*}
Lastly, for $(iii) \Rightarrow (i)$, suppose that $\llbracket f \rrbracket \circ \eta_A \circ \varepsilon_A = \llbracket f \rrbracket$. By \textbf{[du.2]}, this implies that $\llbracket f \rrbracket \circ \partial_A \circ \oc(\iota_1) = \llbracket f \rrbracket$. However by Theorem \ref{thm1}.(\ref{thm1.lin}), this implies that $\llbracket f \rrbracket$ is $\mathsf{D}$-linear. 
\end{proof}

We conclude with some examples of Cartesian differential comonads and, if it exists, their $\mathsf{D}$-linear units. 

\begin{example} \label{ex:diffcat} \normalfont The main example of a Cartesian differential comonad is the comonad of a differential category. Briefly, a differential category \cite[Definition 2.4]{blute2006differential} is an additive symmetric monoidal category $\mathbb{X}$ equipped with a comonad $(\oc, \delta, \varepsilon)$, two natural transformations $\Delta_A: \oc(A) \to \oc(A) \otimes \oc(A)$ and $e_A: \oc(A) \to I$ such that $\oc(A)$ is a cocommutative comonoid, and a natural transformation called a deriving transformation $\mathsf{d}_A: \oc(A) \otimes A \to \oc(A)$  satisfying certain coherences which capture the basic properties of differentiation \cite[Definition 7]{Blute2019}. By \cite[Proposition 3.2.1]{blute2009cartesian}, for a differential category $\mathbb{X}$ with finite products, its coKleisli category $\mathbb{X}_\oc$ is a Cartesian differential category where the differential combinator is defined using the deriving transformation. For a coKleisli map $\llbracket f \rrbracket: \oc A \to B$, its derivative $\llbracket \mathsf{D}[f] \rrbracket: \oc(A \times A) \to B$ is defined as: 
\[ \llbracket \mathsf{D}[f] \rrbracket :=  \xymatrixcolsep{2.75pc}\xymatrix{\oc(A \times A) \ar[r]^-{\Delta_{A \times A}} & \oc(A \times A) \otimes \oc(A \times A) \ar[r]^-{\oc(\pi_0) \otimes \oc(\pi_1)} &  \oc(A) \otimes \oc(A) \ar[r]^-{1_{\oc(A)} \otimes \varepsilon_A} & \oc(A) \otimes A \ar[r]^-{\mathsf{d}_A} & \oc(A) \ar[r]^-{ f } & B 
 } \]
Applying Proposition \ref{prop1}, we obtain a differential combinator transformation:
\[ \partial_A := \xymatrixcolsep{3.75pc}\xymatrix{\oc(A \times A) \ar[r]^-{\Delta_{A \times A}} & \oc(A \times A) \otimes \oc(A \times A) \ar[r]^-{\oc(\pi_0) \otimes \oc(\pi_1)} & \oc(A) \otimes \oc(A) \ar[r]^-{1_{\oc(A)} \otimes \varepsilon_A} & \oc(A) \otimes A \ar[r]^-{\mathsf{d}_A} & \oc(A)
 } \]
Furthermore, if there exists a natural transformation $u_A: I \to \oc(A)$ such that $e_A \circ u_A = 1_I$ and $u_A \circ e_A = \oc(0)$, then we obtain a $\mathsf{D}$-linear unit defined as follows: 
  \[  \eta_A:= \xymatrixcolsep{5pc}\xymatrix{ A \ar[r]^-{\lambda^{-1}_A} & I \otimes A \ar[r]^-{u_A \otimes 1_A} & \oc(A) \otimes A \ar[r]^-{\mathsf{d}_A} & \oc(A)
  } \]  
Readers familiar with differential linear logic will note that any differential \emph{storage} category \cite[Definition 4.10]{blute2006differential} has such a map $u$ and that in this case the $\mathsf{D}$-linear unit is precisely the codereliction \cite[Section 5]{Blute2019}. However, we stress that it is possible to have a $\mathsf{D}$-linear unit for differential categories that are not differential storage categories. We invite the reader to see \cite[Section 9]{Blute2019} and \cite[Example 4.7]{garner2020cartesian} for lists of examples of differential categories. 
\end{example}

\begin{example} \label{ex:CDM} \normalfont Our three main novel examples of Cartesian differential comonads that we introduce in Sections \ref{sec:PWex}, \ref{secpuisdiv}, and \ref{sec:ZAex} below, arise instead more naturally as the dual notion, which we simply call \textbf{coCartesian differential monads}. Following the convention in the differential category literature for the dual notion of differential categories, we have elected to keep the same terminology and notation for the dual notion of a differential combinator transformation. Briefly, a coCartesian differential monad on a category $\mathbb{X}$ with finite biproducts is a quadruple $(\mathsf{S}, \mu, \eta, \partial)$ consisting of a monad $(\mathsf{S}, \mu, \eta)$ (where  ${\mu_A: \mathsf{S}\mathsf{S}(A) \to \mathsf{S}(A)}$ and $\eta_A: A \to \mathsf{S}(A)$) and a natural transformation $\partial_A: \mathsf{S}(A) \to \mathsf{S}(A \times A)$, again called a differential combinator transformation, such that the dual diagrams of Definition \ref{def:cdcomonad} commute. By the dual statement of Proposition \ref{prop1}, the opposite category of the Kleisli category of a coCartesian differential monad is a Cartesian differential category.  The dual notion of a $\mathsf{D}$-linear unit is called a $\mathsf{D}$-linear counit, which would be a natural transformation $\varepsilon_A: \mathsf{S}(A) \to A$ such that the dual diagrams of Definition \ref{def:Dunit} commute. By the dual statement of Proposition \ref{etaFlem1}, the existence of a $\mathsf{D}$-linear counit implies that the opposite of the base category is isomorphic to the subcategory of the $\mathsf{D}$-linear of the opposite of the Kleisli category. 
\end{example}

The following are two ``trivial'' examples of Cartesian differential categories any category with finite biproducts. While both are ``trivial'' in their own way, they both provide simple separating examples. Indeed, the first is an example of a Cartesian differential comonad without a  $\mathsf{D}$-linear unit, while the second is a Cartesian differential comonad which is not induced by a differential category.

\begin{example} \normalfont Let $\mathbb{X}$ be a category with finite biproducts, and let $\top$ be the chosen zero object. Then the constant comonad $\mathsf{C}$ which sends every object to the zero object $\mathsf{C}(A) = \top$ and every map to zero maps $\mathsf{C}(f) = 0$ is a Cartesian differential comonad whose differential combinator transformation is simply $0$. This Cartesian differential comonad has a $\mathsf{D}$-linear unit if and only if every object of $\mathbb{X}$ is a zero object.  
\end{example}

\begin{example} \normalfont \label{ex:identity} Let $\mathbb{X}$ be a category with finite biproducts. Then the identity comonad $1_{\mathbb{X}}$ is a Cartesian differential comonad whose differential combinator transformation is the second projection $\pi_1: A \times A \to A$ and has a $\mathsf{D}$-linear unit given by the identity map $1_A: A \to A$. The resulting coKleisli category is simply the entire base category $\mathbb{X}$ and whose differential combinator the same as in Example \ref{ex:CDCbiproduct}. As such, this example recaptures Example \ref{ex:CDCbiproduct} that every category with finite biproducts is a Cartesian differential category where every map is $\mathsf{D}$-linear. \end{example}

\section{Cartesian Differential Abstract coKleisli Categories}\label{sec:abstract}

The goal of this section is to give a precise characterization of the Cartesian differential categories which are the coKleisli categories of Cartesian differential comonads. This is a generalization of the work done by Blute, Cockett, and Seely in \cite{blute2015cartesian}, where they characterize which Cartesian differential categories are the coKleisli categories of the comonads of differential categories. This was achieved using the concept of abstract coKleisli categories \cite[Section 2.4]{blute2015cartesian}, which is the dual notion of thunk-force-categories as introduced by F\"{u}hrmann in \cite{fuhrmann1999direct}. Abstract coKleisli categories provide a direct description of the structure of coKleisli categories in such a way that the coKleisli category of a comonad is an abstract coKleisli category and, conversely, every abstract coKleisli category is canonically the coKleisli category of a comonad on a certain subcategory. As such, here we introduced Cartesian differential abstract coKleisli categories which, as the name suggests, are abstract coKleisli categories that are also Cartesian differential categories such that the differential combinator and abstract coKleisli structure are compatible. We show that the coKleisli category of a Cartesian differential comonad is a Cartesian differential abstract coKleisli categories and that, conversely, every Cartesian differential abstract coKleisli category is canonically the coKleisli category of a Cartesian differential comonad on a certain subcategory. We will also study the $\mathsf{D}$-linear maps of Cartesian differential abstract coKleisli categories. 

We will start from the abstract coKleisli side of the story. 

\begin{definition}\label{def:abstract} An \textbf{abstract coKleisli structure} on a category $\mathbb{X}$ is a triple $(\oc, \varphi, \epsilon)$ consisting of an endofunctor $\oc: \mathbb{X} \to \mathbb{X}$, a natural transformation $\varphi_A: A \to \oc(A)$, and a family of maps $\epsilon_A: \oc(A) \to A$ (which are not necessarily natural), such that $\epsilon_{\oc(A)}: \oc\oc(A) \to \oc(A)$ is a natural transformation and the following diagrams commute: 
  \begin{equation}\label{abstracteq}\begin{gathered} \xymatrixcolsep{5pc}\xymatrix{ A \ar@{=}[dr]^-{} \ar[r]^-{\varphi_A} & \oc(A) \ar[d]^-{\epsilon_A} & \oc(A) \ar@{=}[dr]^-{} \ar[r]^-{\oc(\varphi_A)} & \oc\oc(A) \ar[d]^-{\epsilon_{\oc(A)}} & \oc\oc(A) \ar[r]^-{\epsilon_{\oc(A)}} \ar[d]_-{\oc(\epsilon_A)}& \oc(A) \ar[d]^-{\epsilon_A} \\
  & A & & \oc(A) & \oc(A) \ar[r]_-{\epsilon_A} & A 
}\end{gathered}\end{equation}
An \textbf{abstract coKleisli category} \cite[Definition 2.4.1]{blute2015cartesian} is a category $\mathbb{X}$ equipped with an abstract coKleisli structure $(\oc, \varphi, \epsilon)$. 
\end{definition}

Below in Lemma \ref{cokleisliabstractlem}, we will review how every coKleisli category is an abstract coKleisli category. In order to obtain the converse, we first need from an abstract coKleisli category to construct a category with comonad. In an abstract coKleisli category, there are an important class of maps called the $\epsilon$-natural maps (which are the dual of thunkable maps in thunk-force categories \cite[Definition 7]{fuhrmann1999direct}).  These $\epsilon$-natural maps form a subcategory which comes equipped with a comonad, and the coKleisli category of this comonad is the starting abstract coKleisli category. 

\begin{definition} In an abstract coKleisli category $\mathbb{X}$ with abstract coKleisli structure $(\oc, \varphi, \epsilon)$, a map ${f: A \to B}$ is said to \textbf{$\epsilon$-natural} if the following diagram commutes: 
\begin{equation}\label{}\begin{gathered} \xymatrixcolsep{5pc}\xymatrix{ \oc(A) \ar[d]_-{\epsilon_A} \ar[r]^-{\oc(f)} & \oc(B) \ar[d]^-{\epsilon_B} \\
A \ar[r]_-{f} & B
  } \end{gathered}\end{equation}
Define the subcategory of $\epsilon$-natural maps $\epsilon\text{-}\mathsf{nat}[\mathbb{X}]$ to be the category whose objects are the same as $\mathbb{X}$ and whose maps are $\epsilon$-natural in $\mathbb{X}$, and let $\mathsf{U}_{\epsilon}:\epsilon\text{-}\mathsf{nat}[\mathbb{X}] \to \mathbb{X}$ be the obvious forgetful functor.   
\end{definition}

As we will discuss in Lemma \ref{lemexact}, in the context of a coKleisli category of a comonad, these $\epsilon$-natural maps should be thought of as the maps in the base category. Here are now some basic properties of $\epsilon$-natural maps. 

\begin{lemma}\label{eplem} \cite[Section 2.4]{blute2015cartesian} Let $\mathbb{X}$ be an abstract coKleisli category with abstract coKleisli structure $(\oc, \varphi, \epsilon)$. Then: 
\begin{enumerate}[{\em (i)}]
\item Identity maps $1_A: A \to A$ are $\epsilon$-natural;
\item \label{eplem.comp} If $f: A \to B$ and $g: B \to C$ are $\epsilon$-natural, then their composite $g \circ f: A \to C$ is $\epsilon$-natural;
\item \label{eplem.ep} For every object $A$, $\epsilon_A: \oc(A) \to A$ is $\epsilon$-natural; 
\item \label{eplem.oc} For every map $f: A \to B$, $\oc(f): \oc(A) \to \oc(B)$ is $\epsilon$-natural. 
\end{enumerate}
\end{lemma}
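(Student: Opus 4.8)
The plan is to verify each of the four claims directly from the definition of $\epsilon$-naturality — namely that $f: A \to B$ is $\epsilon$-natural precisely when $\epsilon_B \circ \oc(f) = f \circ \epsilon_A$ — together with the three defining identities of an abstract coKleisli structure in (\ref{abstracteq}) and the functoriality of $\oc$. The only point requiring care is correctly identifying the source and target objects when the map in question is itself of the form $\oc(-)$ or $\epsilon_{(-)}$, since $\epsilon$ is indexed by objects; once this bookkeeping is done, each part is essentially a one-line computation.

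For (i), I would use functoriality of $\oc$: since $\oc(1_A) = 1_{\oc(A)}$, both $\epsilon_A \circ \oc(1_A)$ and $1_A \circ \epsilon_A$ reduce to $\epsilon_A$, so the naturality square for $1_A$ commutes trivially. For (ii), I would paste the two given naturality squares: using $\oc(g \circ f) = \oc(g) \circ \oc(f)$, compute $\epsilon_C \circ \oc(g) \circ \oc(f) = g \circ \epsilon_B \circ \oc(f) = g \circ f \circ \epsilon_A$, where the first equality is $\epsilon$-naturality of $g$ and the second is $\epsilon$-naturality of $f$.

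For (iii), I would observe that $\epsilon$-naturality of the map $\epsilon_A: \oc(A) \to A$ (with source $\oc(A)$ and target $A$) unwinds to the equation $\epsilon_A \circ \oc(\epsilon_A) = \epsilon_A \circ \epsilon_{\oc(A)}$, which is exactly the third commuting diagram in (\ref{abstracteq}). For (iv), I would note that $\epsilon$-naturality of $\oc(f): \oc(A) \to \oc(B)$ (source $\oc(A)$, target $\oc(B)$) reads $\epsilon_{\oc(B)} \circ \oc\oc(f) = \oc(f) \circ \epsilon_{\oc(A)}$, which is precisely the naturality square of the transformation $\epsilon_{\oc(-)}: \oc\oc \Rightarrow \oc$ evaluated at $f$ — and this naturality is assumed as part of Definition \ref{def:abstract}.

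The main ``obstacle'' is therefore not computational but a matter of correctly unwinding the $\epsilon$-naturality condition in the cases where the objects are of modal form; once that is done, (iii) and (iv) are seen to be mere restatements of the defining axioms, while (i) and (ii) follow from functoriality and diagram-pasting.
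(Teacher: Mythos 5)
Your proof is correct: each of the four parts is verified exactly as you describe, with (i) and (ii) following from functoriality of $\oc$ and pasting of naturality squares, and (iii) and (iv) unwinding to the third identity of (\ref{abstracteq}) and the assumed naturality of $\epsilon_{\oc(-)}$ from Definition \ref{def:abstract}, respectively. The paper cites this lemma from the literature without reproducing a proof, and your direct verification is precisely the standard argument intended there, so there is nothing further to compare.
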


We now review in detail how every abstract coKleisli category is isomorphic to the coKleisli category of a canonical comonad on the subcategory of $\epsilon$-natural maps. 

\begin{lemma} \label{lem:ep-com} \cite[Dual of Theorem 4]{fuhrmann1999direct} Let $\mathbb{X}$ be an abstract coKleisli category with abstract coKleisli structure $(\oc, \varphi, \epsilon)$. Define the natural transformation $\beta_A: \oc(A) \to \oc\oc(A)$ as follows: 
  \[ \beta_A := \xymatrixcolsep{5pc}\xymatrix{\oc(A) \ar[r]^-{\oc(\varphi_A)} & \oc\oc(A)  
  } \]
Then $(\oc, \beta, \epsilon)$ is a comonad on $\epsilon\text{-}\mathsf{nat}[\mathbb{X}]$ such that the functor $\mathsf{G}_\epsilon: \mathbb{X} \to \epsilon\text{-}\mathsf{nat}[\mathbb{X}]_\oc$ defined on objects as $\mathsf{G}_\epsilon(A)=A$ and on a map ${f: A \to B}$ as the following composite: 
\begin{align*}
\llbracket \mathsf{G}_\epsilon(f) \rrbracket :=   \xymatrixcolsep{3pc}\xymatrix{\oc (A) \ar[r]^-{\oc(f)} & \oc(B) \ar[r]^-{\epsilon_B} & B } && \llbracket \mathsf{G}_\epsilon(f) \rrbracket = \epsilon_B \circ \oc(f)
\end{align*}
is an isomorphism with inverse $\mathsf{G}^{-1}_\epsilon: \epsilon\text{-}\mathsf{nat}[\mathbb{X}]_\oc \to \mathbb{X}$ defined on objects as $\mathsf{G}_\epsilon(A)=A$ and on a coKleisli map ${\llbracket f \rrbracket: \oc(A) \to B}$ as the following composite: 
\begin{align*}
\mathsf{G}^{-1}_\epsilon\left( \llbracket f \rrbracket \right) :=   \xymatrixcolsep{3pc}\xymatrix{A \ar[r]^-{\varphi_A} & \oc(A) \ar[r]^-{\llbracket f \rrbracket} & B } && \mathsf{G}^{-1}_\epsilon\left( \llbracket f \rrbracket \right) = \llbracket f \rrbracket \circ \varphi_A 
\end{align*}
\end{lemma}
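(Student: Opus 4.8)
The plan is to verify the three assertions in turn: that $(\oc, \beta, \epsilon)$ is a comonad on $\epsilon\text{-}\mathsf{nat}[\mathbb{X}]$, that $\mathsf{G}_\epsilon$ and $\mathsf{G}^{-1}_\epsilon$ are well-defined, and that they are mutually inverse functors. First I would check that the comonad data lives in $\epsilon\text{-}\mathsf{nat}[\mathbb{X}]$: the functor $\oc$ restricts to an endofunctor of $\epsilon\text{-}\mathsf{nat}[\mathbb{X}]$ since $\oc(f)$ is always $\epsilon$-natural by Lemma \ref{eplem}.(\ref{eplem.oc}); the map $\beta_A = \oc(\varphi_A)$ is $\epsilon$-natural by that same item; and $\epsilon_A$ is $\epsilon$-natural by Lemma \ref{eplem}.(\ref{eplem.ep}). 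Naturality of $\beta$ and of $\epsilon$ as transformations on $\epsilon\text{-}\mathsf{nat}[\mathbb{X}]$ then holds because the relevant squares already commute in $\mathbb{X}$: for $\epsilon$ this is exactly the defining $\epsilon$-naturality of the morphisms, and for $\beta$ it follows from naturality of $\varphi$ together with functoriality of $\oc$.

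Next I would verify the three comonad equations. The two counit laws are immediate: $\epsilon_{\oc(A)} \circ \beta_A = 1_{\oc(A)}$ is the second identity of \eqref{abstracteq}, and $\oc(\epsilon_A) \circ \beta_A = \oc(\epsilon_A \circ \varphi_A) = \oc(1_A) = 1_{\oc(A)}$ uses the first identity of \eqref{abstracteq} and functoriality. The one computation requiring care is coassociativity $\beta_{\oc(A)} \circ \beta_A = \oc(\beta_A) \circ \beta_A$: using functoriality of $\oc$ I would rewrite the two sides as $\oc(\varphi_{\oc(A)} \circ \varphi_A)$ and $\oc(\oc(\varphi_A) \circ \varphi_A)$ respectively, and then invoke naturality of $\varphi$ evaluated at its own component $\varphi_A \colon A \to \oc(A)$, which yields $\varphi_{\oc(A)} \circ \varphi_A = \oc(\varphi_A) \circ \varphi_A$. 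This ``naturality of $\varphi$ at $\varphi_A$'' identity is the crux of the argument and reappears below.

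For the isomorphism, both assignments are the identity on objects, so it suffices to work on hom-sets. I would first confirm $\mathsf{G}_\epsilon(f) = \epsilon_B \circ \oc(f)$ is $\epsilon$-natural (hence a legitimate coKleisli map) by Lemma \ref{eplem}.(\ref{eplem.ep}), (\ref{eplem.oc}) and (\ref{eplem.comp}), while $\mathsf{G}^{-1}_\epsilon(\llbracket f \rrbracket) = \llbracket f \rrbracket \circ \varphi_A$ is manifestly a map in $\mathbb{X}$. I would then show $\mathsf{G}^{-1}_\epsilon$ is a functor, this being the easier direction since the coKleisli composite against $\beta_A = \oc(\varphi_A)$ is explicit: it sends the coKleisli identity $\epsilon_A$ to $\epsilon_A \circ \varphi_A = 1_A$ by the first identity of \eqref{abstracteq}, and it preserves composition by precisely the naturality-of-$\varphi$-at-$\varphi_A$ identity already used for coassociativity.

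Finally I would verify the two round-trips on morphisms. In one direction, $\mathsf{G}^{-1}_\epsilon(\mathsf{G}_\epsilon(f)) = \epsilon_B \circ \oc(f) \circ \varphi_A = \epsilon_B \circ \varphi_B \circ f = f$, using naturality of $\varphi$ and then the first identity of \eqref{abstracteq}. In the other, for an $\epsilon$-natural $\llbracket f \rrbracket$ we have $\mathsf{G}_\epsilon(\mathsf{G}^{-1}_\epsilon(\llbracket f \rrbracket)) = \epsilon_B \circ \oc(\llbracket f \rrbracket) \circ \oc(\varphi_A) = \llbracket f \rrbracket \circ \epsilon_{\oc(A)} \circ \oc(\varphi_A) = \llbracket f \rrbracket$, where the middle step uses the $\epsilon$-naturality of $\llbracket f \rrbracket$ and the last uses the second identity of \eqref{abstracteq}. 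Since $\mathsf{G}^{-1}_\epsilon$ is a functor and the two assignments are mutually inverse bijections on hom-sets (and on objects), $\mathsf{G}_\epsilon = (\mathsf{G}^{-1}_\epsilon)^{-1}$ is automatically a functor, so $\mathsf{G}_\epsilon$ is an isomorphism of categories with the stated inverse. The main obstacle throughout is simply the bookkeeping of distinguishing base-category composition from coKleisli composition, together with repeatedly recognizing that the single identity $\oc(\varphi_A) \circ \varphi_A = \varphi_{\oc(A)} \circ \varphi_A$ is what makes both coassociativity and the functoriality of $\mathsf{G}^{-1}_\epsilon$ go through.
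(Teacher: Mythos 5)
Your proof is correct and complete: the restriction of the comonad data to $\epsilon\text{-}\mathsf{nat}[\mathbb{X}]$ via Lemma \ref{eplem}, the counit laws from (\ref{abstracteq}), coassociativity from naturality of $\varphi$ evaluated at its own component $\varphi_A$, and the two round-trip computations all check out; the paper itself gives no proof of this lemma, deferring to the dual of F\"{u}hrmann's Theorem 4, of which your argument is precisely the expected direct verification. One cosmetic remark: preservation of composition by $\mathsf{G}^{-1}_\epsilon$ requires one further application of naturality of $\varphi$ (at $\llbracket f \rrbracket$, or equivalently at the composite $\llbracket f \rrbracket \circ \varphi_A$) beyond the identity $\oc(\varphi_A) \circ \varphi_A = \varphi_{\oc(A)} \circ \varphi_A$, but since you invoke naturality of $\varphi$ throughout, nothing is actually missing.
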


We now wish to equip abstract coKleisli categories with Cartesian differential structure. To do so, we must first discuss Cartesian left additive structure for abstract coKleisli categories. We start with the finite product structure:

\begin{definition} A \textbf{Cartesian abstract coKleisli category} \cite[Definition 2.4.1]{blute2015cartesian} is an abstract coKleisli category $\mathbb{X}$ with abstract coKleisli structure $(\oc, \varphi, \epsilon)$ such that $\mathbb{X}$ has finite products and all the projection maps $\pi_0: A \times B \to A$ and $\pi_1: A \times B \to B$ are $\epsilon$-natural. 
\end{definition}

For a Cartesian abstract coKleisli category, it follows that $\epsilon$-natural maps are closed under the finite product structure. 

\begin{lemma} \cite[Section 2.4]{blute2015cartesian} Let $\mathbb{X}$ be a Cartesian abstract coKleisli category with abstract coKleisli structure $(\oc, \varphi, \epsilon)$. Then:
\begin{enumerate}[{\em (i)}]
\item If $f: C \to A$ and $g: C \to B$ are $\epsilon$-natural, then their pairing $\langle f, g \rangle: C \to A \times B$ is $\epsilon$-natural;
\item If $h: A \to C$ and $k: B \to D$ are $\epsilon$-natural, then their product $h \times k: A \times B \to C \times D$ is $\epsilon$-natural.
\end{enumerate}
Therefore, $\epsilon\text{-}\mathsf{nat}[\mathbb{X}]$ has finite products (which is defined as in $\mathbb{X}$). 
\end{lemma}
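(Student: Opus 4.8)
The plan is to prove both closure statements by reducing to the universal property of the binary product in $\mathbb{X}$, using crucially that the projections $\pi_0$ and $\pi_1$ are $\epsilon$-natural by the definition of a Cartesian abstract coKleisli category. Recall that a map $f: A \to B$ is $\epsilon$-natural precisely when $\epsilon_B \circ \oc(f) = f \circ \epsilon_A$.

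For (i), suppose $f: C \to A$ and $g: C \to B$ are $\epsilon$-natural. To show that $\langle f, g \rangle$ is $\epsilon$-natural, that is that $\epsilon_{A \times B} \circ \oc(\langle f, g \rangle) = \langle f, g \rangle \circ \epsilon_C$, I would verify that both sides agree after postcomposition with each projection and then invoke the uniqueness clause of the product. Postcomposing the left-hand side with $\pi_0$, I would first use the $\epsilon$-naturality of $\pi_0$ to rewrite $\pi_0 \circ \epsilon_{A \times B}$ as $\epsilon_A \circ \oc(\pi_0)$, then apply functoriality of $\oc$ together with $\pi_0 \circ \langle f, g \rangle = f$ to reach $\epsilon_A \circ \oc(f)$, and finally use the $\epsilon$-naturality of $f$ to obtain $f \circ \epsilon_C = \pi_0 \circ \langle f, g \rangle \circ \epsilon_C$. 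The computation for $\pi_1$ is entirely symmetric, using the $\epsilon$-naturality of $\pi_1$ and of $g$. Since the two maps agree after composing with both projections, they are equal.

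For (ii), I would not repeat the diagram chase but instead observe that $h \times k = \langle h \circ \pi_0, k \circ \pi_1 \rangle$. Because $\pi_0$ and $\pi_1$ are $\epsilon$-natural by hypothesis and $h, k$ are $\epsilon$-natural, Lemma \ref{eplem}.(\ref{eplem.comp}) shows that $h \circ \pi_0$ and $k \circ \pi_1$ are $\epsilon$-natural; part (i) then shows that their pairing is $\epsilon$-natural.

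Finally, to conclude that $\epsilon\text{-}\mathsf{nat}[\mathbb{X}]$ has finite products computed as in $\mathbb{X}$: the projections lie in $\epsilon\text{-}\mathsf{nat}[\mathbb{X}]$ by hypothesis, and (i) shows that pairings of $\epsilon$-natural maps are again $\epsilon$-natural, so the binary product structure of $\mathbb{X}$ restricts to the subcategory; the required universal property is inherited from $\mathbb{X}$ since the forgetful functor $\mathsf{U}_\epsilon$ is faithful and hence reflects the defining equations. The terminal object is the same $\top$, since the unique map $A \to \top$ is automatically $\epsilon$-natural because any two maps into a terminal object coincide. I do not expect any genuine obstacle here; the only points requiring care are to remember to treat the terminal object separately, and to recognize that (ii) is a purely formal consequence of (i) together with the closure of $\epsilon$-natural maps under composition from Lemma \ref{eplem}, rather than a second diagram chase.
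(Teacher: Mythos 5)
Your proof is correct: the paper states this lemma without proof, citing \cite[Section 2.4]{blute2015cartesian}, and your argument is exactly the standard one the citation relies on — the projection-postcomposition chase for (i), the reduction $h \times k = \langle h \circ \pi_0, k \circ \pi_1 \rangle$ combined with Lemma \ref{eplem}.(\ref{eplem.comp}) for (ii), and terminality handling the map into $\top$. Nothing is missing, and your observation that (ii) needs no second diagram chase is the right economy.
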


Next we discuss Cartesian left additive structure for abstract coKleisli categories, where we require that $\epsilon$-natural maps are closed under the additive structure.  
\begin{definition} A \textbf{Cartesian left additive abstract coKleisli category} is a Cartesian abstract coKleisli category $\mathbb{X}$ with abstract coKleisli structure $(\oc, \varphi, \epsilon)$ such that $\mathbb{X}$ is also a Cartesian left additive category, zero maps $0: A \to B$ are $\epsilon$-natural, and if ${f: A \to B}$ and $g: A \to B$ are $\epsilon$-natural, then their sum ${f+g : A \to B}$ is $\epsilon$-natural.
 \end{definition}
 
For a Cartesian left additive abstract coKleisli category, the subcategory of $\epsilon$-natural maps also form a Cartesian left additive category. It is important to stress however that $\epsilon$-natural maps are not assumed to be additive, and therefore the subcategory of $\epsilon$-natural maps does not necessarily have biproducts. 

\begin{lemma}\label{abstractCLAClem} Let $\mathbb{X}$ be a Cartesian left additive abstract coKleisli category with abstract coKleisli structure $(\oc, \varphi, \epsilon)$. Then $\epsilon\text{-}\mathsf{nat}[\mathbb{X}]$ is a Cartesian left additive category (where the necessary structure is defined as in $\mathbb{X}$). Furthermore,
\begin{enumerate}[{\em (i)}]
\item \label{ep.zero} $\epsilon_A \circ \oc(0) = 0$ 
\item \label{ep.sum} If $f: A \to B$ and $g: A \to B$ are $\epsilon$-natural, then $\varepsilon_B \circ \oc(f + g) = \epsilon_B \circ \oc(f) + \epsilon_B \circ \oc(g)$
\end{enumerate}
\end{lemma}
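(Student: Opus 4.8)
The plan is to show that the Cartesian left additive structure of $\mathbb{X}$ restricts verbatim to the subcategory $\epsilon\text{-}\mathsf{nat}[\mathbb{X}]$, and then to deduce both identities by a single uniform device: using $\epsilon$-naturality to rewrite an expression of the form $\epsilon \circ \oc(-)$ as a \emph{pre}-composition with $\epsilon$ inside $\mathbb{X}$, at which point the left additive axioms of Definition \ref{CLACdef} (namely $0 \circ x = 0$ and $(f+g)\circ x = f\circ x + g\circ x$) become directly applicable.

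For the first claim I would argue that all the relevant data is inherited from $\mathbb{X}$, so that only closure needs checking. By the standing hypotheses defining a Cartesian left additive abstract coKleisli category, zero maps are $\epsilon$-natural and sums of $\epsilon$-natural maps are $\epsilon$-natural; hence each hom-set $\epsilon\text{-}\mathsf{nat}[\mathbb{X}](A,B)$ is a submonoid of $\mathbb{X}(A,B)$, with commutativity, associativity, and the unit law inherited. Lemma \ref{eplem} gives closure under composition and the presence of identities, while the preceding lemma on finite products shows that pairings and products of $\epsilon$-natural maps are again $\epsilon$-natural and that the projections are $\epsilon$-natural, so the finite product structure of $\mathbb{X}$ restricts. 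Since pre-composition preserves the additive structure in $\mathbb{X}$, the same holds in the subcategory, and since the projections are additive in $\mathbb{X}$ they remain additive in $\epsilon\text{-}\mathsf{nat}[\mathbb{X}]$. This establishes that $\epsilon\text{-}\mathsf{nat}[\mathbb{X}]$ is a Cartesian left additive category, with all structure computed as in $\mathbb{X}$.

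For part \emph{(i)}, since zero maps are $\epsilon$-natural, the defining $\epsilon$-naturality square applied to $0$ yields $\epsilon_A \circ \oc(0) = 0 \circ \epsilon$, and the left additive axiom $0 \circ x = 0$ collapses the right-hand side to $0$. For part \emph{(ii)}, given $\epsilon$-natural $f,g \colon A \to B$, their sum $f+g$ is $\epsilon$-natural by hypothesis; the $\epsilon$-naturality of $f+g$ gives $\epsilon_B \circ \oc(f+g) = (f+g)\circ \epsilon_A$, the left additive axiom splits this as $f\circ \epsilon_A + g\circ \epsilon_A$, and finally the $\epsilon$-naturality of $f$ and of $g$ individually rewrites the two summands as $\epsilon_B \circ \oc(f)$ and $\epsilon_B \circ \oc(g)$, as required.

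The only conceptual subtlety — which is in fact the whole point of \emph{(i)} and \emph{(ii)} — is that the endofunctor $\oc$ need not preserve sums or zeros on the nose, exactly as flagged in the remark following Lemma \ref{cokleisliCLAC}; one therefore cannot simply assert $\oc(f+g) = \oc(f) + \oc(g)$. The additive behaviour is recovered only after post-composition with $\epsilon$, and $\epsilon$-naturality is precisely the hypothesis that converts this post-composition into genuine pre-composition in $\mathbb{X}$. I do not expect a real obstacle beyond bookkeeping; the one point to be careful about is that every map whose sum is taken is itself $\epsilon$-natural, which the standing hypotheses guarantee.
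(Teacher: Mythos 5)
Your proof is correct and follows essentially the same route as the paper's: both identities are obtained by using the $\epsilon$-naturality of $0$ and of $f+g$ to convert $\epsilon_B \circ \oc(-)$ into a pre-composition with $\epsilon_A$, and then applying the left additive axioms $0 \circ x = 0$ and $(f+g) \circ x = f \circ x + g \circ x$, exactly as in the paper. Your only addition is to spell out the closure checks behind the paper's remark that $\epsilon\text{-}\mathsf{nat}[\mathbb{X}]$ is ``clearly'' a Cartesian left additive category, which is a fair elaboration rather than a different argument.
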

\begin{proof} It is clear that $\epsilon\text{-}\mathsf{nat}[\mathbb{X}]$ is a Cartesian left additive category. For (\ref{ep.zero}) we use the fact that $0$ is $\epsilon$-natural: 
\begin{align*}
    \epsilon_A \circ \oc(0) &=~ 0 \circ \epsilon_A \tag{$0$ is $\epsilon$-natual} \\
    &=~ 0
\end{align*}
For (\ref{ep.sum}), we use the fact that the sum of $\epsilon$-natural maps is $\epsilon$-natural: 
\begin{align*}
   \epsilon_B \circ \oc(f + g)&=~ (f+g) \circ \epsilon_A \tag{$f+g$ is $\epsilon$-natual} \\
    &=~ f \circ \epsilon_A + g \circ \epsilon_A \\
    &=~ \epsilon_B \circ \oc(f) + \epsilon_B \circ \oc(g) \tag{$f$ and $g$ are $\epsilon$-natural}
\end{align*}
\end{proof}

We are now in a position to define Cartesian differential abstract coKleisli categories. 

\begin{definition}\label{def:abCDC} A \textbf{Cartesian differential abstract coKleisli category} is a Cartesian differential category $\mathbb{X}$, with differential combinator $\mathsf{D}$, such that $\mathbb{X}$ is also a Cartesian left additive abstract coKleisli category with abstract coKleisli structure $(\oc, \varphi, \epsilon)$ and every $\epsilon$-natural map is $\mathsf{D}$-linear. 
\end{definition}

We will now show that for a Cartesian differential abstract coKleisli category, the canonical comonad on the subcategory of $\epsilon$-natural maps is a Cartesian differential comonad and that the coKleisli category is isomorphic to the starting Cartesian differential abstract coKleisli category.

\begin{proposition}\label{propab1} Let $\mathbb{X}$ be a Cartesian differential abstract coKleisli category with differential combinator $\mathsf{D}$ and abstract coKleisli structure $(\oc, \varphi, \epsilon)$. Then $\epsilon\text{-}\mathsf{nat}[\mathbb{X}]$ is a category with finite biproducts and $(\oc, \beta, \epsilon, \partial)$ (where $(\oc, \beta, \epsilon)$ is defined as in Lemma \ref{lem:ep-com}) is a Cartesian differential comonad on $\epsilon\text{-}\mathsf{nat}[\mathbb{X}]$ where the differential combinator transformation $\partial_A: \oc(A) \to \oc(A \times A)$ is defined as follows: 
\begin{equation}\label{partialdef2}\begin{gathered}\partial_A := \xymatrixcolsep{5pc}\xymatrix{ \oc(A \times A) \ar[r]^-{\oc\left( \mathsf{D}[\varphi_A] \right)} & \oc\oc(A) \ar[r]^-{\epsilon_{\oc(A)}} & \oc(A) 
  } \end{gathered}\end{equation}
Furthermore, $\mathsf{G}_\epsilon: \mathbb{X} \to \epsilon\text{-}\mathsf{nat}[\mathbb{X}]_\oc$ is a Cartesian differential isomorphism, so in particular, the following equalities hold: 
\begin{equation}\label{}\begin{gathered} 
\llbracket \mathsf{G}_\epsilon(\mathsf{D}[f] ) \rrbracket = \llbracket \mathsf{D}\left[ \mathsf{G}_\epsilon(f) \right] \rrbracket \quad \quad \quad \mathsf{G}^{-1}_\epsilon\left( \llbracket \mathsf{D}[f] \rrbracket \right) = \mathsf{D}\left[ \mathsf{G}^{-1}_\epsilon\left( \llbracket f \rrbracket \right) \right] 
 \end{gathered}\end{equation}
 where the differential combinator on the coKleisli category $\epsilon\text{-}\mathsf{nat}[\mathbb{X}]_\oc$ is defined as in Theorem \ref{thm1}. 
\end{proposition}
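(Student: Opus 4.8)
The plan is to reduce everything to the correspondence already established in Theorem \ref{thm1} and Proposition \ref{prop1}, transported across the category isomorphism $\mathsf{G}_\epsilon\colon \mathbb{X} \to \epsilon\text{-}\mathsf{nat}[\mathbb{X}]_\oc$ of Lemma \ref{lem:ep-com}. Before that machinery applies, the base category $\epsilon\text{-}\mathsf{nat}[\mathbb{X}]$ must be shown to have finite biproducts. First I would note that, by hypothesis, every $\epsilon$-natural map is $\mathsf{D}$-linear, hence additive by Lemma \ref{linlem}.(\ref{linlem.add}); so once $\epsilon\text{-}\mathsf{nat}[\mathbb{X}]$ is known to be a Cartesian left additive category (with structure computed pointwise as in $\mathbb{X}$, its finite products computed as in $\mathbb{X}$), it is automatically one in which \emph{every} map is additive, and therefore has finite biproducts by the characterization of biproducts recalled after Lemma \ref{linlem}. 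The remaining task is thus to verify that $\mathbb{X}$ is in fact a Cartesian left additive abstract coKleisli category, i.e. that zero maps and sums of $\epsilon$-natural maps are again $\epsilon$-natural, after which Lemma \ref{abstractCLAClem} supplies the Cartesian left additive structure on $\epsilon\text{-}\mathsf{nat}[\mathbb{X}]$.

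This closure is the main obstacle. Since a sum $f+g = \nabla_B \circ \langle f,g\rangle$ of $\epsilon$-natural maps factors through the $\epsilon$-natural pairing $\langle f,g\rangle$, and composites of $\epsilon$-natural maps are $\epsilon$-natural (Lemma \ref{eplem}.(\ref{eplem.comp})), the problem reduces to showing that the structure maps $\nabla_B$, $\iota_0$, $\iota_1$ and the zero maps are $\epsilon$-natural. Expanding the $\epsilon$-naturality square for $\nabla_B$, and using that the projections are $\epsilon$-natural together with the additivity of the (necessarily $\mathsf{D}$-linear) counit $\epsilon_B$, rewrites $\nabla_B \circ \epsilon_{B\times B}$ as $\epsilon_B \circ \oc(\pi_0) + \epsilon_B \circ \oc(\pi_1)$; the outstanding identity with $\epsilon_B \circ \oc(\nabla_B)$ is precisely the statement that $\mathsf{G}_\epsilon$ carries the pointwise additive structure of $\mathbb{X}$ onto that of the coKleisli hom-sets, and this is where the differential structure (additivity of $\epsilon$-natural maps and the controlled behaviour of $\oc$ on sums) enters essentially. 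The zero maps are treated by the analogous computation. I expect this to be the only genuinely delicate point.

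With biproducts in hand, $(\oc,\beta,\epsilon)$ is a comonad on $\epsilon\text{-}\mathsf{nat}[\mathbb{X}]$ and $\mathsf{G}_\epsilon$ is an isomorphism onto $\epsilon\text{-}\mathsf{nat}[\mathbb{X}]_\oc$ (Lemma \ref{lem:ep-com}). I would then check that $\mathsf{G}_\epsilon$ identifies the Cartesian left additive structure of $\mathbb{X}$ with the canonical coKleisli one of Lemma \ref{cokleisliCLAC}: on $\epsilon$-natural maps $\mathsf{G}_\epsilon$ agrees with the coKleisli inclusion $\mathsf{F}_\oc$, products are computed as in $\mathbb{X}$ on both sides, and sums and zeros are pointwise on both sides. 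Transporting the differential combinator $\mathsf{D}$ of $\mathbb{X}$ along $\mathsf{G}_\epsilon$ makes $\epsilon\text{-}\mathsf{nat}[\mathbb{X}]_\oc$ a Cartesian differential category for which $\mathsf{G}_\epsilon$ is by construction a Cartesian differential isomorphism, which is exactly the two displayed equalities. This transported combinator satisfies the two hypotheses of Proposition \ref{prop1}: its underlying Cartesian left additive structure is the canonical one, and for $\epsilon$-natural $h$ the map $\llbracket \mathsf{F}_\oc(h)\rrbracket = \mathsf{G}_\epsilon(h)$ corresponds to the $\mathsf{D}$-linear map $h$ of $\mathbb{X}$, hence is $\mathsf{D}$-linear in $\epsilon\text{-}\mathsf{nat}[\mathbb{X}]_\oc$.

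Finally, Proposition \ref{prop1} yields that $(\oc,\beta,\epsilon,\partial)$ is a Cartesian differential comonad with $\partial_A = \llbracket \mathsf{D}[\varphi_A]\rrbracket$ computed in $\epsilon\text{-}\mathsf{nat}[\mathbb{X}]_\oc$, and it remains to match this with (\ref{partialdef2}). Since $\mathsf{G}_\epsilon^{-1}$ sends $1_{\oc(A)}$ to $\varphi_A$, the distinguished coKleisli map of (\ref{varphidef}) corresponds under $\mathsf{G}_\epsilon$ to the abstract coKleisli transformation $\varphi_A\colon A \to \oc(A)$; hence $\mathsf{D}[\varphi_A]$ in the coKleisli category equals $\mathsf{G}_\epsilon(\mathsf{D}[\varphi_A])$ with $\mathsf{D}[\varphi_A]\colon A\times A \to \oc(A)$ now taken in $\mathbb{X}$. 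Unwinding the definition of $\mathsf{G}_\epsilon$ gives $\partial_A = \epsilon_{\oc(A)} \circ \oc(\mathsf{D}[\varphi_A])$, which is exactly (\ref{partialdef2}).
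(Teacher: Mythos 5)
Your overall strategy --- transport the differential combinator across the isomorphism $\mathsf{G}_\epsilon$ of Lemma \ref{lem:ep-com} and then invoke Proposition \ref{prop1} to manufacture $\partial$ --- is genuinely different from the paper's proof, which instead verifies naturality and the six axioms \textbf{[dc.1]}--\textbf{[dc.6]} by direct (and lengthy) computation, using $\mathsf{D}$-linearity of $\epsilon$-natural maps and the abstract coKleisli identities at each step. Modulo the issue below, your route is sound and arguably cleaner: once $\epsilon\text{-}\mathsf{nat}[\mathbb{X}]$ has biproducts, $\mathsf{G}_\epsilon$ does identify the canonical coKleisli Cartesian left additive structure of Lemma \ref{cokleisliCLAC} with that of $\mathbb{X}$, the hypotheses of Proposition \ref{prop1} do hold for the transported combinator (in particular $\mathsf{G}_\epsilon(h) = h \circ \epsilon_A = \llbracket \mathsf{F}_\oc(h) \rrbracket$ for $\epsilon$-natural $h$, which is $\mathsf{D}$-linear by hypothesis), and since $\mathsf{G}_\epsilon^{-1}(1_{\oc(A)}) = \varphi_A$, the formula $\partial_A = \llbracket \mathsf{D}[\varphi_A] \rrbracket = \mathsf{G}_\epsilon(\mathsf{D}[\varphi_A]) = \epsilon_{\oc(A)} \circ \oc(\mathsf{D}[\varphi_A])$ recovers (\ref{partialdef2}), with the two displayed equalities coming from (\ref{Dvarphi1}). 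The trade-off is that the paper's direct computation yields the axioms without relying on any compatibility between the transported and canonical coKleisli structures, whereas your route gets them for free from Proposition \ref{prop1} but makes that compatibility load-bearing.

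The genuine gap is exactly at the step you flag as ``the only genuinely delicate point,'' and your sketch there does not close. You correctly observe that everything reduces to showing zero maps and sums of $\epsilon$-natural maps are again $\epsilon$-natural (equivalently, that $\nabla_B$ and the zero maps are $\epsilon$-natural), but your argument for this is circular: you appeal to ``$\mathsf{G}_\epsilon$ carrying the pointwise additive structure of $\mathbb{X}$ onto that of the coKleisli hom-sets,'' yet the coKleisli additive structure of Lemma \ref{cokleisliCLAC} on $\epsilon\text{-}\mathsf{nat}[\mathbb{X}]_\oc$ only exists once $\epsilon\text{-}\mathsf{nat}[\mathbb{X}]$ is known to be Cartesian left additive --- i.e.\ once the closure you are trying to prove is already in hand. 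Moreover, there is no ``controlled behaviour of $\oc$ on sums'' available: no axiom of an abstract coKleisli structure or of a Cartesian differential abstract coKleisli category relates $\oc(f+g)$ to $\oc(f)$ and $\oc(g)$, and the required identity $\epsilon_B \circ \oc(f+g) = \epsilon_B \circ \oc(f) + \epsilon_B \circ \oc(g)$ is precisely Lemma \ref{abstractCLAClem}.(\ref{ep.sum}), whose proof \emph{assumes} $f+g$ is $\epsilon$-natural. Nor can you get $\nabla_B$ $\epsilon$-natural from its $\mathsf{D}$-linearity: the hypothesis runs only in the direction $\epsilon$-natural $\Rightarrow$ $\mathsf{D}$-linear, and the converse fails in general (it holds exactly when there is a $\mathsf{D}$-linear unit, as the lemma following Proposition \ref{propab1} shows). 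The correct resolution is the paper's: the closure properties are part of the structure being assumed --- the hypothesis is read as a Cartesian \emph{left additive} abstract coKleisli category, so that Lemma \ref{abstractCLAClem} applies directly --- rather than something to be derived. With that repair, the rest of your transport argument goes through.
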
 
\begin{proof} We must first explain why $\epsilon\text{-}\mathsf{nat}[\mathbb{X}]$ has finite biproducts. By Lemma \ref{abstractCLAClem}, $\epsilon\text{-}\mathsf{nat}[\mathbb{X}]$ is a Cartesian left additive category. However, by assumption, every $\epsilon$-natural map is $\mathsf{D}$-linear, so by Lemma \ref{linlem}.(\ref{linlem.add}), every $\epsilon$-natural map is also additive. Therefore, $\epsilon\text{-}\mathsf{nat}[\mathbb{X}]$ is a Cartesian left additive category such that every map is additive, and so we conclude that $\epsilon\text{-}\mathsf{nat}[\mathbb{X}]$ has finite biproducts. 

Next we must explain why the proposed differential combinator transformation is well defined, that is, we must show that $\partial_A$ is indeed $\epsilon$-natural. However, by Lemma \ref{eplem}.(\ref{eplem.ep}) and (\ref{eplem.oc}), $\epsilon_{\oc A}$ and $\oc\left( \mathsf{D}[\varphi_A] \right)$ are both $\epsilon$-natural. Then by Lemma \ref{eplem}.(\ref{eplem.comp}), their composite $\partial_A = \epsilon_{\oc A} \circ \oc\left( \mathsf{D}[\varphi_A] \right) $ is $\epsilon$-natural. Next we show the naturality of $\partial$. So for an $\epsilon$-natural map $f: A\to B$, we compute: 
\begin{align*}
\partial_B \circ \oc(f \times f) &=~ \epsilon_{\oc(B)} \circ \oc\left( \mathsf{D}[\varphi_B] \right) \circ \oc(f \times f) \\
&=~ \epsilon_{\oc(B)} \circ \oc\left(  \mathsf{D}[\varphi_B] \circ (f \times f) \right) \tag{$\oc$ is a functor} \\
&=~\epsilon_{\oc(B)} \circ  \oc\left(  \mathsf{D}[\varphi_B \circ f] \right) \tag{$f$ is $\mathsf{D}$-linear and Lem \ref{linlem}.(\ref{linlem.pre})} \\
&=~\epsilon_{\oc(B)} \circ  \oc\left(  \mathsf{D}[\oc(f) \circ \varphi_A] \right) \tag{Naturality of $f$} \\
&=~\epsilon_{\oc(B)} \circ  \oc\left(  \oc(f) \circ \mathsf{D}[\varphi_A] \right) \tag{$\oc(f)$ is $\mathsf{D}$-linear and Lem \ref{linlem}.(\ref{linlem.post})} \\
&=~\epsilon_{\oc(B)} \circ \oc\oc(f) \circ \oc\left( \mathsf{D}[\varphi_A] \right) \tag{$\oc$ is a functor} \\
&=~ \oc(f) \circ \epsilon_{\oc(A)} \circ \oc\left( \mathsf{D}[\varphi_A] \right) \tag{Naturality of $\epsilon_{\oc(-)}$} \\
&=~\oc(f) \circ \partial_A 
\end{align*}
So $\partial$ is a natural transformation. Now we must show that $\partial$ satisfies the six axioms of a differential combinator transformation. Note that the calculations below are similar to the ones in the proof of Proposition \ref{prop1}.
\begin{enumerate}[{\bf [dc.1]}] 
\item Here we use \textbf{[CD.2]}: 
\begin{align*}
\partial_A \circ \oc(\iota_0) &=~\epsilon_{\oc(A)} \circ \oc\left( \mathsf{D}[\varphi_A] \right) \circ \oc(\iota_0) \\
&=~ \epsilon_{\oc(A)} \circ \oc\left( \mathsf{D}[\varphi_A] \circ \iota_0 \right)   \tag{$\oc$ is a functor} \\
&=~ \epsilon_{\oc(A)} \circ \oc(0) \tag{\textbf{[CD.2]}} \\
&=~ 0 \tag{Lem.\ref{abstractCLAClem}.(\ref{ep.zero})}
\end{align*}
\item Here we use \textbf{[CD.2]}: 
\begin{align*}
\partial_A \circ \oc(1_A \times \nabla_A) &=~\epsilon_{\oc(A)} \circ \oc\left( \mathsf{D}[\varphi_A] \right) \circ \oc(1_A \times \nabla_A) \\
&=~\epsilon_{\oc(A)} \circ \oc\left( \mathsf{D}[\varphi_A] \circ (1_A \times \nabla_A) \right)   \tag{$\oc$ is a functor} \\ 
&=~\epsilon_{\oc(A)} \circ \oc\left( \mathsf{D}[\varphi_A] \circ (1_A \times\pi_0)  +  \mathsf{D}[\varphi_A] \circ (1_A \times\pi_1)  \right) \tag{\textbf{[CD.2]}} \\
&=~\epsilon_{\oc(A)} \circ \epsilon_{\oc(A)} \circ \oc\left( \mathsf{D}[\varphi_A] \circ (1_A \times\pi_0) \right) + \epsilon_{\oc(A)} \circ \oc\left( \mathsf{D}[\varphi_A] \circ (1_A \times\pi_1)  \right)  \tag{Lem.\ref{abstractCLAClem}.(\ref{ep.sum})} \\
&=~\epsilon_{\oc(A)} \circ \epsilon_{\oc(A)} \circ \oc\left( \mathsf{D}[\varphi_A]\right) \circ \oc\left (1_A \times\pi_0 \right) + \epsilon_{\oc(A)} \circ \oc\left( \mathsf{D}[\varphi_A] \right) \circ \oc\left(1_A \times\pi_1  \right) \tag{$\oc$ is a functor} \\ 
&=~\partial_A \circ \oc(1_A \times \pi_0) + \partial_A \circ \oc(1_A \times \pi_1) \\
&=~ \partial_A \circ \left( \oc(1_A \times \pi_0) + \oc(1_A \times \pi_0) \right) 
\end{align*}
\item Here we use \textbf{[CD.3]} and the fact that since $\epsilon_A$ is $\epsilon$-natural, that it is also $\mathsf{D}$-linear: 
\begin{align*}
\epsilon_A \circ \partial_A &=~\epsilon_A \circ \epsilon_{\oc(A)} \circ \oc\left( \mathsf{D}[\varphi_A] \right) \\
&=~ \epsilon_A \circ \oc(\epsilon_A) \circ \oc\left( \mathsf{D}[\varphi_A] \right) \tag{Abstract coKleisli structure identity} \\
&=~ \epsilon_A \circ \oc\left(\epsilon_A \circ \mathsf{D}[\varphi_A] \right)  \tag{$\oc$ is a functor} \\
&=~\epsilon_A \circ \oc\left( \mathsf{D}\left[ \epsilon_A \circ \varphi_A \right] \right) \tag{$\epsilon$ is $\mathsf{D}$-linear and Lem \ref{linlem}.(\ref{linlem.post})} \\
&=~\epsilon_A \circ \oc\left( \mathsf{D}\left[ 1_A \right] \right) \tag{Abstract coKleisli structure identity} \\
&=~\epsilon_A \circ \oc\left( \pi_1 \right) \tag{\textbf{[CD.3]}} \\
&=~\pi_1 \circ \epsilon_{A \times A} \tag{$\pi_1$ is $\epsilon$-natural}
\end{align*}
\item Here we use \textbf{[CD.5]}: 
\begin{align*}
\beta_A \circ \partial_A &=~\oc(\varphi_A) \circ \partial_A \\
&=~\oc(\varphi_A) \circ \epsilon_{\oc(A)} \circ \oc\left( \mathsf{D}[\varphi_A] \right) \\
&=~ \epsilon_{\oc\oc(A)} \circ \oc\oc(\varphi_A) \circ \oc\left( \mathsf{D}[\varphi_A] \right)  \tag{Naturality of $\epsilon_{\oc(-)}$} \\
&=~\epsilon_{\oc\oc(A)} \circ \oc\left( \oc(\varphi_A) \circ \mathsf{D}[\varphi_A] \right)   \tag{$\oc$ is a functor} \\
&=~\epsilon_{\oc\oc(A)} \circ \oc\left( \mathsf{D}\left[ \oc(\varphi_A) \circ \varphi_A \right] \right) \tag{$\oc(\varphi_A)$ is $\mathsf{D}$-linear and Lem \ref{linlem}.(\ref{linlem.post})} \\
&=~\epsilon_{\oc\oc(A)} \circ \oc\left( \mathsf{D}\left[  \varphi_{\oc(A)} \circ \varphi_A \right] \right) \tag{Naturality of $\varphi$} \\ 
&=~\epsilon_{\oc\oc(A)} \circ \oc\left( \mathsf{D}\left[  \varphi_{\oc(A)} \right] \circ  \left\langle \varphi_A \circ \pi_0, \mathsf{D}[\varphi_A] \right \rangle  \right) \tag{\textbf{[CD.5]}} \\ 
&=~\epsilon_{\oc\oc(A)} \circ \oc\left( \mathsf{D}\left[  \varphi_{\oc(A)} \right] \right) \circ  \oc\left( \left\langle \varphi_A \circ \pi_0, \mathsf{D}[\varphi_A] \right \rangle  \right)\tag{$\oc$ is a functor} \\
&=~\partial_{\oc(A)} \circ  \oc\left( \left\langle \varphi_A \circ \pi_0, \mathsf{D}[\varphi_A] \right \rangle  \right) \\
&=~\partial_{\oc(A)} \circ \oc\left( \left\langle \oc(\pi_0) \circ \varphi_{A \times A}, \mathsf{D}[\varphi_A] \right \rangle  \right)  \tag{Naturality of $\varphi$} \\ 
&=~ \partial_{\oc(A)} \circ \oc\left( \left\langle \oc(\pi_0) \circ \varphi_{A \times A}, 1_{\oc(A)} \mathsf{D}[\varphi_A] \right \rangle  \right) \\
&=~ \partial_{\oc(A)} \circ \oc\left( \left\langle \oc(\pi_0) \circ \varphi_{A \times A}, \epsilon_{\oc(A)} \circ \varphi_{\oc(A)} \mathsf{D}[\varphi_A] \right \rangle  \right) \tag{Abstract coKleisli structure identity} \\
&=~ \partial_{\oc(A)} \circ \oc\left( \left\langle \oc(\pi_0) \circ \varphi_{A \times A}, \epsilon_{\oc(A)} \circ \oc\left( \mathsf{D}[\varphi_A] \right) \circ \varphi_{A \times A} \right \rangle  \right)\tag{Naturality of $\varphi$} \\
&=~\partial_{\oc(A)} \circ \oc\left( \left\langle \oc(\pi_0), \epsilon_{\oc(A)} \circ \oc\left( \mathsf{D}[\varphi_A] \right) \right \rangle  \circ \varphi_{A \times A}  \right) \\
&=~\partial_{\oc(A)} \circ \oc\left( \left\langle \oc(\pi_0), \partial_A \right \rangle  \circ \varphi_{A \times A}  \right) \\
&=~\partial_{\oc(A)} \circ \oc\left( \left\langle \oc(\pi_0), \partial_A \right \rangle \right)  \circ \oc\left( \varphi_{A \times A}  \right) \tag{$\oc$ is a functor} \\
&=~\partial_{\oc(A)} \circ \oc\left( \left \langle \oc(\pi_0) , \partial_A \right \rangle \right) \circ \beta_{A \times A}
\end{align*}
\end{enumerate}
For the remaining two axioms, it'll be useful to compute $\partial_{A} \circ \partial_{A \times A}$: 
\begin{align*}
\partial_{A} \circ \partial_{A \times A}  &=~\epsilon_{\oc(A)} \circ \oc\left( \mathsf{D}[\varphi_A] \right) \circ \partial_{A \times A} \\
&=~\epsilon_{\oc(A)} \circ \oc\left( \mathsf{D}[\varphi_A] \right) \circ \epsilon_{\oc(A \times A)} \circ \oc\left( \mathsf{D}[\varphi_{A \times A}] \right) \\
&=~\epsilon_{\oc(A)} \circ  \epsilon_{\oc\oc(A)} \circ \oc\oc\left( \mathsf{D}[\varphi_A] \right) \circ \oc\left( \mathsf{D}[\varphi_{A \times A}] \right)   \tag{Naturality of $\epsilon_{\oc(-)}$} \\
&=~\epsilon_{\oc(A)} \circ  \epsilon_{\oc\oc(A)} \circ \oc\left( \oc\left( \mathsf{D}[\varphi_A] \right) \circ \mathsf{D}[\varphi_{A \times A}] \right) \tag{$\oc$ is a functor} \\
&=~\epsilon_{\oc(A)} \circ  \epsilon_{\oc\oc(A)} \circ \oc\left(  \mathsf{D}\left[ \oc\left( \mathsf{D}[\varphi_A] \right) \circ \varphi_{A \times A} \right] \right)  \tag{$\oc\left( \mathsf{D}[\varphi_A] \right)$ is $\mathsf{D}$-linear and Lem \ref{linlem}.(\ref{linlem.post})} \\
&=~\epsilon_{\oc(A)} \circ  \epsilon_{\oc\oc(A)} \circ \oc\left(  \mathsf{D}\left[ \varphi_{\oc(A)} \circ \mathsf{D}[\varphi_A] \right] \right) \tag{Naturality of $\varphi$} \\
&=~\epsilon_{\oc(A)} \circ  \epsilon_{\oc\oc(A)} \circ \oc\left(  \mathsf{D}\left[ \varphi_{\oc(A)} \right] \circ \left \langle \mathsf{D}[\varphi_A] \circ\pi_0, \mathsf{D}\left[ \mathsf{D}[\varphi_A] \right] \right \rangle  \right) \tag{\textbf{[CD.5]}} \\
&=~\epsilon_{\oc(A)} \circ  \epsilon_{\oc\oc(A)} \circ \oc\left(  \mathsf{D}\left[ \varphi_{\oc(A)} \right] \right) \circ \oc\left(\left \langle \mathsf{D}[\varphi_A] \circ\pi_0, \mathsf{D}\left[ \mathsf{D}[\varphi_A] \right] \right \rangle  \right) \tag{$\oc$ is a functor} \\
&=~\epsilon_{\oc(A)} \circ \oc(\epsilon_{\oc(A)}) \circ \oc\left(  \mathsf{D}\left[ \varphi_{\oc(A)} \right] \right) \circ \oc\left(\left \langle \mathsf{D}[\varphi_A] \circ\pi_0, \mathsf{D}\left[ \mathsf{D}[\varphi_A] \right] \right \rangle  \right)  \tag{Abstract coKleisli structure identity} \\
&=~\epsilon_{\oc(A)} \circ \oc\left(\epsilon_{\oc(A)} \circ  \mathsf{D}\left[ \varphi_{\oc(A)} \right] \right) \circ \oc\left(\left \langle \mathsf{D}[\varphi_A] \circ\pi_0, \mathsf{D}\left[ \mathsf{D}[\varphi_A] \right] \right \rangle  \right) \tag{$\oc$ is a functor} \\
&=~\epsilon_{\oc(A)} \circ \oc\left(  \mathsf{D}\left[ \epsilon_{\oc(A)} \circ \varphi_{\oc(A)} \right] \right) \circ \oc\left(\left \langle \mathsf{D}[\varphi_A] \circ\pi_0, \mathsf{D}\left[ \mathsf{D}[\varphi_A] \right] \right \rangle  \right) \tag{$\epsilon$ is $\mathsf{D}$-linear and Lem \ref{linlem}.(\ref{linlem.post})} \\
&=~\epsilon_{\oc(A)} \circ \oc\left(  \mathsf{D}\left[ 1_{\oc(A)} \right] \right) \circ \oc\left(\left \langle \mathsf{D}[\varphi_A] \circ\pi_0, \mathsf{D}\left[ \mathsf{D}[\varphi_A] \right] \right \rangle  \right) \tag{Abstract coKleisli structure identity} \\
&=~\epsilon_{\oc(A)} \circ \oc\left( \pi_1 \right) \circ \oc\left(\left \langle \mathsf{D}[\varphi_A] \circ\pi_0, \mathsf{D}\left[ \mathsf{D}[\varphi_A] \right] \right \rangle  \right) \tag{\textbf{[CD.3]}} \\
&=~\epsilon_{\oc(A)} \circ \oc\left( \pi_1 \circ \left \langle \mathsf{D}[\varphi_A] \circ\pi_0, \mathsf{D}\left[ \mathsf{D}[\varphi_A] \right] \right \rangle  \right) \tag{$\oc$ is a functor} \\
&=~\epsilon_{\oc(A)} \circ \oc\left( \mathsf{D}\left[ \mathsf{D}[\varphi_A] \right]  \right)
\end{align*}
So we have that: 
\begin{equation}\label{partial2}\begin{gathered} 
\partial_{A} \circ \partial_{A \times A} = \epsilon_{\oc(A)} \circ \oc\left( \mathsf{D}\left[ \mathsf{D}[\varphi_A] \right]  \right)
 \end{gathered}\end{equation}
 So now we can easily show \textbf{[dc.5]} and \textbf{[dc.6]}, 
\begin{enumerate}[{\bf [dc.1]}] 
\setcounter{enumi}{4}
\item Here we use \textbf{[CD.6]}, \textbf{[CD.2]}, and the above identity: 
\begin{align*}
\partial_{A} \circ \partial_{A \times A} \circ \oc(\ell_A) &=~ \epsilon_{\oc(A)} \circ \oc\left( \mathsf{D}\left[ \mathsf{D}[\varphi_A] \right]  \right) \circ \oc(\ell_A) \tag{partial2} \\
&=~ \epsilon_{\oc(A)} \circ \oc\left( \mathsf{D}\left[ \mathsf{D}[\varphi_A] \right]  \circ \ell_A  \right)  \tag{$\oc$ is a functor} \\
&=~ \epsilon_{\oc(A)} \circ \oc\left( \mathsf{D}[\varphi_A]   \right) \tag{\textbf{[CD.6]}} \\
&=~ \partial_A 
\end{align*}
\item Here we use \textbf{[CD.7]}:
\begin{align*}
\partial_{A} \circ \partial_{A \times A} \circ \oc(c_A) &=~ \epsilon_{\oc(A)} \circ \oc\left( \mathsf{D}\left[ \mathsf{D}[\varphi_A] \right]  \right) \circ \oc(c_A) \tag{partial2} \\
&=~ \epsilon_{\oc(A)} \circ \oc\left( \mathsf{D}\left[ \mathsf{D}[\varphi_A] \right]  \circ c_A  \right)  \tag{$\oc$ is a functor} \\
&=~ \epsilon_{\oc(A)} \circ \oc\left( \mathsf{D}\left[ \mathsf{D}[\varphi_A] \right]  \right)  \tag{\textbf{[CD.7]}} \\
&=~\partial_{A} \circ \partial_{A \times A}  \tag{partial2}
\end{align*}
\end{enumerate}
So we conclude that $\partial$ is a differential combinator transformation, and therefore $(\oc, \beta, \epsilon, \partial)$ is a Cartesian differential comonad. It remains to show that $\mathsf{G}_\epsilon$ and $\mathsf{G}^{-1}_\epsilon$ commute with the differential combinator. First, for any map ${f: A \to B}$ in $\mathbb{X}$, we compute: 
\begin{align*}
 \llbracket \mathsf{D}\left[ \mathsf{G}_\epsilon(f) \right] \rrbracket &=~\llbracket \mathsf{G}_\epsilon(f)  \rrbracket \circ \partial_A \\
 &=~\epsilon_{B} \circ \oc(f) \circ \partial_A \\
 &=~\epsilon_{B} \circ \oc(f) \circ \epsilon_{\oc(A)} \circ \oc\left( \mathsf{D}[\varphi_A]   \right) \\
  &=~\epsilon_{B} \circ \epsilon_{\oc(B)} \circ \oc\oc(f) \circ \oc\left(\mathsf{D}[\varphi_A]  \right) \tag{Naturality of $\epsilon_{\oc(-)}$} \\
 &=~\epsilon_{B} \circ \epsilon_{\oc(B)} \circ \oc\left( \oc(f) \circ\mathsf{D}[\varphi_A]  \right)  \tag{$\oc$ is a functor} \\
 &=~\epsilon_{B} \circ \epsilon_{\oc(B)} \circ \oc\left( \mathsf{D}\left[  \oc(f) \circ \varphi_A \right]  \right)  \tag{$\oc(f)$ is $\mathsf{D}$-linear and Lem \ref{linlem}.(\ref{linlem.post})} \\ 
 &=~\epsilon_{B} \circ \epsilon_{\oc(B)} \circ \oc\left( \mathsf{D}\left[  \varphi_B \circ f \right]  \right) \tag{Naturality of $\varphi$} \\
 &=~\epsilon_{B} \circ \epsilon_{\oc(B)} \circ \oc\left( \mathsf{D}\left[  \varphi_B \right] \circ \langle f \circ \pi_0, \mathsf{D}[f] \rangle  \right) \tag{\textbf{[CD.5]}} \\
  &=~\epsilon_{B} \circ \oc(\epsilon_B) \circ \oc\left( \mathsf{D}\left[  \varphi_B \right] \circ \langle f \circ \pi_0, \mathsf{D}[f] \rangle  \right)  \tag{Abstract coKleisli structure identity} \\
 &=~\epsilon_{B} \circ \oc\left( \epsilon_B \circ \mathsf{D}\left[  \varphi_B \right] \circ \langle f \circ \pi_0, \mathsf{D}[f] \rangle \right)   \tag{$\oc$ is a functor} \\
 &=~\epsilon_{B}  \circ \oc\left(  \mathsf{D}\left[ \epsilon_B \circ  \varphi_B \right] \circ \langle f \circ \pi_0, \mathsf{D}[f] \rangle  \right)  \tag{$\epsilon$ is $\mathsf{D}$-linear and Lem \ref{linlem}.(\ref{linlem.post})} \\ 
 &=~\epsilon_{B}  \circ \oc\left(  \mathsf{D}\left[ 1_B \right] \circ \langle f \circ \pi_0, \mathsf{D}[f] \rangle  \right) \tag{Abstract coKleisli structure identity} \\
 &=~\epsilon_{B}  \circ \oc\left(  \pi_1 \circ \langle f \circ \pi_0, \mathsf{D}[f] \rangle \right)\tag{\textbf{[CD.3]}} \\
 &=~\epsilon_B \circ \oc\left( \mathsf{D}[f] \right) \\
 &=~\llbracket \mathsf{G}_\epsilon(\mathsf{D}[f] ) \rrbracket
\end{align*}
Next for any $\epsilon$-natural coKleisli map $\llbracket f \rrbracket: \oc(A) \to B$, note that $\llbracket f \rrbracket$ is then $\mathsf{D}$-linear in $\mathbb{X}$, so we compute: 
\begin{align*}
 \mathsf{D}\left[ \mathsf{G}^{-1}_\epsilon\left( \llbracket f \rrbracket \right) \right] &=~ \mathsf{D}\left[ \llbracket f \rrbracket  \circ \varphi_A \right] \\
 &=~ \llbracket f \rrbracket \circ \mathsf{D}[\varphi_A] \tag{$\llbracket f \rrbracket$ is $\mathsf{D}$-linear and Lem \ref{linlem}.(\ref{linlem.post})} \\
 &=~\llbracket f \rrbracket \circ 1_{\oc(A)} \circ  \mathsf{D}[\varphi_A] \\
 &=~\llbracket f \rrbracket \circ \epsilon_{\oc(A)} \circ \varphi_{\oc(A)} \circ  \mathsf{D}[\varphi_A]  \tag{Abstract coKleisli structure identity} \\
 &=~\llbracket f \rrbracket \circ  \epsilon_{\oc(A)} \circ \oc\left( \mathsf{D}[\varphi_A]   \right) \circ \varphi_{A \times A} \tag{Naturality of $\varphi$} \\
 &=~\llbracket f \rrbracket \circ \partial_A \circ \varphi_{A \times A} \\
 &=~ \llbracket \mathsf{D}[f] \rrbracket \circ \varphi_{A \times A} \\
 &=~\mathsf{G}^{-1}_\epsilon\left( \llbracket \mathsf{D}[f] \rrbracket \right) 
\end{align*}
So we conclude that $\mathbb{X}$ and $\epsilon\text{-}\mathsf{nat}[\mathbb{X}]_\oc$ are isomorphic as Cartesian differential categories. 
\end{proof} 

It is important to note that while $\epsilon$-natural maps are assumed to be $\mathsf{D}$-linear, the converse is not necessarily true. It turns out that all $\mathsf{D}$-linear maps are $\epsilon$-natural precisely when the Cartesian differential comonad has a $\mathsf{D}$-linear unit. 

\begin{lemma} Let $\mathbb{X}$ be a Cartesian differential abstract coKleisli category with differential combinator $\mathsf{D}$ and abstract coKleisli structure $(\oc, \varphi, \epsilon)$. Define the natural transformation $\eta_A: A \to \oc(A)$ as follows: 
\begin{equation}\label{etadef2}\begin{gathered}\eta_A := \xymatrixcolsep{5pc}\xymatrix{ A \ar[r]^-{\mathsf{L}[\varphi_A]} & \oc(A) 
  } \end{gathered}\end{equation}
where $\mathsf{L}$ is defined as in Lemma \ref{linlem}.(\ref{linlemimportant1}). Then the following are equivalent:   
\begin{enumerate}[{\em (i)}]
\item $\epsilon\text{-}\mathsf{nat}[\mathbb{X}] = \mathsf{D}\text{-}\mathsf{lin}[\mathbb{X}]$, that is, every $\mathsf{D}$-linear map is $\epsilon$-natural; 
\item For every object $A$, $\eta_A$ is $\epsilon$-natural; 
\item $\eta$ is a $\mathsf{D}$-linear unit for $(\oc, \beta, \epsilon, \partial)$. 
\end{enumerate}
\end{lemma}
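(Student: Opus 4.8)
The plan is to prove the cycle $(i) \Rightarrow (ii) \Rightarrow (iii) \Rightarrow (i)$, after first recording three facts about $\eta_A = \mathsf{L}[\varphi_A] = \mathsf{D}[\varphi_A] \circ \iota_1$ that hold with no extra hypotheses. First, unwinding the definition \eqref{partialdef2} of $\partial$ and using functoriality of $\oc$ gives
\begin{equation*}
\partial_A \circ \oc(\iota_1) = \epsilon_{\oc(A)} \circ \oc\big(\mathsf{D}[\varphi_A]\big) \circ \oc(\iota_1) = \epsilon_{\oc(A)} \circ \oc(\eta_A). \tag{$\star$}
\end{equation*}
Second, since $\epsilon_A$ is $\epsilon$-natural (Lemma \ref{eplem}.(\ref{eplem.ep})) hence $\mathsf{D}$-linear, Lemma \ref{linlem}.(\ref{linlem.post}) and the abstract coKleisli identity $\epsilon_A \circ \varphi_A = 1_A$ give $\epsilon_A \circ \mathsf{D}[\varphi_A] = \mathsf{D}[1_A] = \pi_1$ by \textbf{[CD.3]}, so that $\epsilon_A \circ \eta_A = \pi_1 \circ \iota_1 = 1_A$; that is, \textbf{[du.1]} holds unconditionally. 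Third, $\eta$ is natural with respect to every $\mathsf{D}$-linear map: for $\mathsf{D}$-linear $f \colon A \to B$, since $\oc(f)$ is $\mathsf{D}$-linear we use Lemma \ref{linlem}.(\ref{linlem.post}), naturality of $\varphi$, and \textbf{[CD.5]} to rewrite $\oc(f) \circ \mathsf{D}[\varphi_A] = \mathsf{D}[\varphi_B \circ f] = \mathsf{D}[\varphi_B] \circ \langle f \circ \pi_0, \mathsf{D}[f]\rangle$; precomposing with $\iota_1$ and using that $f$ is additive (Lemma \ref{linlem}.(\ref{linlem.add})) together with $\mathsf{L}[f] = f$ (Lemma \ref{linlem}.(\ref{linlemimportant2})) collapses $\langle f \circ \pi_0, \mathsf{D}[f]\rangle \circ \iota_1$ to $\iota_1 \circ f$, yielding
\begin{equation*}
\oc(f) \circ \eta_A = \eta_B \circ f. \tag{$\dagger$}
\end{equation*}

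The first two implications are then quick. For $(i) \Rightarrow (ii)$: $\eta_A = \mathsf{L}[\varphi_A]$ is $\mathsf{D}$-linear by Lemma \ref{linlem}.(\ref{linlemimportant1}), so under $(i)$ it is $\epsilon$-natural. For $(ii) \Rightarrow (iii)$ the crucial observation is that $(\star)$ turns \textbf{[du.2]} into exactly the $\epsilon$-naturality square of $\eta_A$: its right-hand side is $\partial_A \circ \oc(\iota_1) = \epsilon_{\oc(A)} \circ \oc(\eta_A)$, so \textbf{[du.2]} reads $\eta_A \circ \epsilon_A = \epsilon_{\oc(A)} \circ \oc(\eta_A)$, which is precisely the statement that $\eta_A$ is $\epsilon$-natural. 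Hence $(ii)$ gives \textbf{[du.2]}; combined with \textbf{[du.1]} and the fact that $(\dagger)$ makes $\eta$ a genuine natural transformation of the comonad $(\oc, \beta, \epsilon)$ on $\epsilon\text{-}\mathsf{nat}[\mathbb{X}]$ once its components are $\epsilon$-natural, this shows $\eta$ is a $\mathsf{D}$-linear unit. (This argument is reversible, giving $(iii) \Rightarrow (ii)$ as well.)

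The substance is in $(iii) \Rightarrow (i)$. Let $f \colon A \to B$ be an arbitrary $\mathsf{D}$-linear map; we must show it is $\epsilon$-natural, i.e. $\epsilon_B \circ \oc(f) = f \circ \epsilon_A$. The idea is to sandwich $f$ between $\eta$ and $\epsilon$ and slide $\oc(f)$ across using $(\dagger)$. Starting from $f \circ \epsilon_A$, insert $\epsilon_B \circ \eta_B = 1_B$ via \textbf{[du.1]}, apply $(\dagger)$ to replace $\eta_B \circ f$ by $\oc(f) \circ \eta_A$, use \textbf{[du.2]} with $(\star)$ to rewrite $\eta_A \circ \epsilon_A = \epsilon_{\oc(A)} \circ \oc(\eta_A)$, push $\oc(f)$ through $\epsilon_{\oc(A)}$ by naturality of $\epsilon_{\oc(-)}$, fold $\oc(f) \circ \eta_A$ back to $\oc(\eta_B \circ f)$ by $(\dagger)$ and functoriality, and finally collapse $\epsilon_{\oc(B)} \circ \oc(\eta_B) = \eta_B \circ \epsilon_B$ (again \textbf{[du.2]}/$(\star)$) and $\epsilon_B \circ \eta_B = 1_B$ (\textbf{[du.1]}). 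This chain terminates at $\epsilon_B \circ \oc(f)$, so $f$ is $\epsilon$-natural; since the reverse inclusion $\epsilon\text{-}\mathsf{nat}[\mathbb{X}] \subseteq \mathsf{D}\text{-}\mathsf{lin}[\mathbb{X}]$ is built into Definition \ref{def:abCDC}, this yields $(i)$.

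I expect the main obstacle to be this third implication, and specifically the bookkeeping in the sandwich computation: one must be disciplined about which maps are genuinely natural (only $\epsilon_{\oc(-)}$, $\varphi$, $\partial$, $\beta$) versus only $\epsilon$-natural or only natural along $\mathsf{D}$-linear maps ($\epsilon_A$ itself and $\eta$), since confusing these is exactly the kind of error the abstract coKleisli formalism is designed to expose. The preliminary identities $(\star)$ and $(\dagger)$ are what make the computation go through cleanly, so establishing them carefully up front is the key move.
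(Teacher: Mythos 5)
Your proposal is correct, and for the decisive implication it takes a genuinely different route from the paper. The two easy implications coincide with the paper's: $(i) \Rightarrow (ii)$ is identical, and your observation $(\star)$ --- that $\partial_A \circ \oc(\iota_1) = \epsilon_{\oc(A)} \circ \oc(\eta_A)$, so that \textbf{[du.2]} \emph{is} the $\epsilon$-naturality square of $\eta_A$ --- is a cleaner packaging of exactly the computations the paper runs for $(ii) \Leftrightarrow (iii)$ (the paper likewise verifies \textbf{[du.1]} without using $(ii)$, as you note). The divergence is at $(iii) \Rightarrow (i)$: there the paper does no computation, instead invoking Proposition \ref{etaFlem1} (giving $\epsilon\text{-}\mathsf{nat}[\mathbb{X}] \cong \mathsf{D}\text{-}\mathsf{lin}\left[\epsilon\text{-}\mathsf{nat}[\mathbb{X}]_\oc\right]$) together with Proposition \ref{propab1} (giving $\mathbb{X} \cong \epsilon\text{-}\mathsf{nat}[\mathbb{X}]_\oc$ as Cartesian differential categories) to conclude $\epsilon\text{-}\mathsf{nat}[\mathbb{X}] \cong \mathsf{D}\text{-}\mathsf{lin}[\mathbb{X}]$, and then asserting that this isomorphism is in fact an equality. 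You instead prove the inclusion $\mathsf{D}\text{-}\mathsf{lin}[\mathbb{X}] \subseteq \epsilon\text{-}\mathsf{nat}[\mathbb{X}]$ directly. Your key strengthening $(\dagger)$ --- naturality of $\eta$ along \emph{all} $\mathsf{D}$-linear maps, where the paper only establishes it along $\epsilon$-natural maps --- is sound: $\oc(f)$ is always $\epsilon$-natural by Lemma \ref{eplem}.(\ref{eplem.oc}), hence $\mathsf{D}$-linear by Definition \ref{def:abCDC}, and your collapse of $\langle f \circ \pi_0, \mathsf{D}[f]\rangle \circ \iota_1$ to $\iota_1 \circ f$ correctly uses additivity and $\mathsf{L}[f]=f$ (the paper reaches the same identity one step faster via Lemma \ref{linlem}.(\ref{linlem.pre}), avoiding \textbf{[CD.5]}). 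Your sandwich chain $f \circ \epsilon_A = \epsilon_B \circ \eta_B \circ f \circ \epsilon_A = \cdots = \epsilon_B \circ \oc(f)$ then type-checks at every stage and uses only \textbf{[du.1]}, \textbf{[du.2]} with $(\star)$, $(\dagger)$, naturality of $\epsilon_{\oc(-)}$ (which, unlike $\epsilon_A$ itself, is genuinely natural by the definition of an abstract coKleisli structure), and functoriality. What your route buys is self-containedness: it avoids the coKleisli machinery entirely and replaces the paper's glossed ``it is straightforward to work out that this isomorphism is in fact an equality'' with an honest diagram chase --- in effect the abstract-coKleisli analogue of Corollary \ref{etacor2}. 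What the paper's route buys is brevity, by reusing propositions already proven. One cosmetic slip in your closing remarks: you list $\partial$ among the genuinely natural transformations, but Proposition \ref{propab1} only establishes its naturality along $\epsilon$-natural (equivalently, in your setting, $\mathsf{D}$-linear) maps; fortunately your computation never uses naturality of $\partial$ at all, so nothing breaks.
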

\begin{proof} We first show that $\eta$ is indeed a natural transformation. So for a $\epsilon$-natural map $f: A \to B$, we compute: 
\begin{align*}
    \oc(f) \circ \eta_A &=~ \oc(f) \circ \mathsf{L}[\varphi_A] \\
    &=~ \oc(f) \circ \mathsf{D}[\varphi_A] \circ \iota_1 \\
    &=~ \mathsf{D}[\oc(f) \circ \varphi_A] \circ \iota_1 \tag{$\oc(f)$ is $\mathsf{D}$-linear and Lem \ref{linlem}.(\ref{linlem.post})} \\
    &=~ \mathsf{D}[\varphi_B \circ f] \circ \iota_1 \tag{Naturality of $\varphi$} \\
    &=~ \mathsf{D}[\varphi_B] \circ (f \times f) \circ \iota_1 \tag{$f$ is $\mathsf{D}$-linear and Lem \ref{linlem}.(\ref{linlem.pre})} \\
    &=~  \mathsf{D}[\varphi_B]\circ \iota_1 \circ f \\
    &=~ \eta_B \circ f
\end{align*}
So $\eta$ is indeed a natural transformation. 

For $(i) \Rightarrow (ii)$, first note that by definition, every $\epsilon$-natural map is $\mathsf{D}$-linear. So also assuming the converse that every $\mathsf{D}$-linear map is $\epsilon$-natural does indeed imply that $\epsilon\text{-}\mathsf{nat}[\mathbb{X}] = \mathsf{D}\text{-}\mathsf{lin}[\mathbb{X}]$. Now suppose that this is the case. By Lemma \ref{linlem}.(\ref{linlemimportant1}), $\mathsf{L}[\varphi_A]$ is $\mathsf{D}$-linear, and so by assumption, $\mathsf{L}[\varphi_A]$ is also $\epsilon$-natural. Next, for $(ii) \Rightarrow (iii)$, we must show that $\eta$ satisfies both $\mathsf{D}$-linear unit axioms:
\begin{enumerate}[{\bf [du.1]}] 
\item Here we use \textbf{[CD.3]}: 
\begin{align*}
    \epsilon_A \circ \eta_A &=~ \epsilon_A \circ \mathsf{L}[\varphi_A] \\
    &=~ \epsilon_A \circ \mathsf{D}[\varphi_A] \circ \iota_1 \\
    &=~  \mathsf{D}[\epsilon_A \circ \varphi_A] \circ \iota_1\tag{$\varepsilon$ is $\mathsf{D}$-linear and Lem \ref{linlem}.(\ref{linlem.post})} \\
    &=~ \mathsf{D}[1_A] \circ \iota_1 \\
    &=~ \pi_1 \circ \iota_1 \\
    &=~ 1_A
\end{align*}
\item Here we use that $\eta_A$ is assumed to be $\epsilon$-natural: 
\begin{align*}
     \eta_A \circ \epsilon_A &=~ \epsilon_{\oc(A)} \circ \oc(\eta_A) \tag{$\eta_A$ is $\epsilon$-natural} \\
     &=~ \epsilon_{\oc(A)} \circ \oc\left( \mathsf{L}\left[ \varphi_A \right] \right) \\
     &=~ \epsilon_{\oc(A)} \circ \oc\left( \mathsf{D}\left[ \varphi_A \right] \circ \iota_1 \right) \\
     &=~ \epsilon_{\oc(A)} \circ \oc\left( \mathsf{D}\left[ \varphi_A \right] \right) \circ \oc(\iota_1) \tag{$\oc$ is a functor} \\
     &=~ \partial_A \circ \oc(\iota_1) 
\end{align*}
\end{enumerate}
So we conclude that $\eta$ is a $\mathsf{D}$-linear unit. Lastly, for $(iii) \Rightarrow (i)$, by Proposition \ref{etaFlem1}, we have that $\epsilon\text{-}\mathsf{nat}[\mathbb{X}] \cong \mathsf{D}\text{-}\mathsf{lin}\left[\epsilon\text{-}\mathsf{nat}[\mathbb{X}]_\oc \right]$. By Proposition \ref{propab1}, $\mathbb{X}$ and $\epsilon\text{-}\mathsf{nat}[\mathbb{X}]_\oc$ are isomorphic as Cartesian differential categories, which implies that we also have that $\mathsf{D}\text{-}\mathsf{lin}[\mathbb{X}] \cong \mathsf{D}\text{-}\mathsf{lin}\left[\epsilon\text{-}\mathsf{nat}[\mathbb{X}]_\oc \right]$. Therefore, $\epsilon\text{-}\mathsf{nat}[\mathbb{X}] \cong \mathsf{D}\text{-}\mathsf{lin}[\mathbb{X}]$. However, it is straightforward to work out that this isomorphism is in fact an equality and so $\epsilon\text{-}\mathsf{nat}[\mathbb{X}] = \mathsf{D}\text{-}\mathsf{lin}[\mathbb{X}]$. 
\end{proof}

We turn our attention to the converse of Proposition \ref{propab1}. We will now explain how every coKleisli category of a Cartesian differential comonad is a Cartesian differential abstract coKleisli category. To do so, let us first quickly review how every coKleisli category is an abstract coKleisli category. 

\begin{lemma}\label{cokleisliabstractlem} \cite[Proposition 2.6.3]{blute2015cartesian} Let $(\oc, \delta, \varepsilon)$ be a comonad on a category $\mathbb{X}$. Then define: 
\begin{enumerate}[{\em (i)}]
\item The endofunctor $\oc_\oc: \mathbb{X}_\oc \to \mathbb{X}_\oc$ on objects as $\oc_\oc(A) = \oc(A)$ and on a coKleisli map $\llbracket f \rrbracket: \oc(A) \to B$ as the following composite: 
\begin{align*}
\llbracket \oc_\oc(f) \rrbracket :=   \xymatrixcolsep{3pc}\xymatrix{\oc\oc (A) \ar[r]^-{\varepsilon_{\oc(A)}} & \oc(A) \ar[r]^-{\delta_A} & \oc\oc(A) \ar[r]^-{\oc\left( \llbracket f \rrbracket \right)} & \oc(B) } && \llbracket \oc_\oc(f) \rrbracket = \oc\left( \llbracket f \rrbracket \right) \circ \delta_A \circ \varepsilon_{\oc(A)}
\end{align*}
\item The family of coKleisli maps $\llbracket \epsilon_A \rrbracket: \oc\oc(A) \to A$ as the following composite: 
\begin{align*}
\llbracket \epsilon_A \rrbracket :=   \xymatrixcolsep{3pc}\xymatrix{\oc\oc (A) \ar[r]^-{\varepsilon_{\oc(A)}} & \oc(A) \ar[r]^-{\varepsilon_A} & A } && \llbracket \epsilon_A \rrbracket = \varepsilon_A \circ \varepsilon_{\oc(A)}
\end{align*}
\end{enumerate}
Then the coKleisli category $\mathbb{X}_\oc$ is an abstract coKleisli category with abstract coKleisli structure $(\oc_\oc, \varphi, \epsilon)$, where $\varphi$ is defined as in (\ref{varphidef}). Furthermore, 
\begin{enumerate}[{\em (i)}]
\item \label{lemepnatcok} A coKleisli map $\llbracket f \rrbracket: \oc(A) \to B$ is $\epsilon$-natural if and only if $\llbracket f \rrbracket \circ \varepsilon_{\oc(A)} = \llbracket f \rrbracket \circ \oc(\varepsilon_A)$;
\item \label{lemepvarep} For every map $f: A \to B$ in $\mathbb{X}$, $\llbracket \mathsf{F}_\oc(f) \rrbracket: \oc(A) \to B$ is $\epsilon$-natural;
\item There is a functor $\mathsf{F}_\epsilon: \mathbb{X} \to \epsilon\text{-}\mathsf{nat}[\mathbb{X}_\oc]$ which is defined on objects as $\mathsf{F}_\epsilon(A) = A$ and on maps ${f: A \to B}$ as $\llbracket \mathsf{F}_\epsilon(f) \rrbracket = f \circ \varepsilon_A = \llbracket \mathsf{F}_{\oc}(f) \rrbracket$, and such that the following diagram commutes: 
  \[  \xymatrixcolsep{5pc}\xymatrix{ \mathbb{X} \ar[dr]_-{\mathsf{F}_\epsilon} \ar[rr]^-{\mathsf{F}_\oc} && \mathbb{X}_\oc  \\
  & \epsilon\text{-}\mathsf{nat}[\mathbb{X}_\oc] \ar[ur]_-{\mathsf{U}}
  } \]
\end{enumerate}
\end{lemma}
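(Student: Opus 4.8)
The plan is to translate each required equation into the base category $\mathbb{X}$ via the interpretation brackets and then discharge it using only the comonad identities \eqref{comonadeq} together with the naturality of $\delta$ and $\varepsilon$; the bookkeeping is of exactly the kind already used in Theorem \ref{thm1} and Proposition \ref{prop1}. The observation that organizes everything is that all three pieces of data are images of base maps under the coKleisli adjunction. Writing $\mathsf{G}_\oc$ for the assignment sending an object $A$ to $\oc(A)$ and a coKleisli map $\llbracket f\rrbracket\colon \oc(A)\to B$ to the base map $\oc(\llbracket f\rrbracket)\circ\delta_A\colon \oc(A)\to\oc(B)$, unwinding the definitions gives $\llbracket\oc_\oc(f)\rrbracket = \oc(\llbracket f\rrbracket)\circ\delta_A\circ\varepsilon_{\oc(A)} = \llbracket \mathsf{F}_\oc(\mathsf{G}_\oc(\llbracket f\rrbracket))\rrbracket$ and $\llbracket\epsilon_A\rrbracket = \varepsilon_A\circ\varepsilon_{\oc(A)} = \llbracket\mathsf{F}_\oc(\varepsilon_A)\rrbracket$. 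Thus $\oc_\oc = \mathsf{F}_\oc\circ\mathsf{G}_\oc$, and $\epsilon$ is $\mathsf{F}_\oc$ applied to the comonad counit.

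First I would check that $\oc_\oc$ is a functor. The cleanest route is to note that $\mathsf{G}_\oc$ is the cofree functor of the standard coKleisli adjunction, so that $\oc_\oc = \mathsf{F}_\oc\circ\mathsf{G}_\oc$ is automatically functorial; concretely, preservation of composition reduces to the base equality $\oc\oc(\llbracket f\rrbracket)\circ\oc(\delta_A)\circ\delta_A = \delta_B\circ\oc(\llbracket f\rrbracket)\circ\delta_A$, which follows from coassociativity $\oc(\delta_A)\circ\delta_A = \delta_{\oc(A)}\circ\delta_A$ and the naturality of $\delta$, while preservation of identities uses $\oc(\varepsilon_A)\circ\delta_A = 1_{\oc(A)}$. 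Here I would warn that one must carry the idempotent $\delta_A\circ\varepsilon_{\oc(A)}$ through the entire calculation rather than cancelling it prematurely, since it is not epic. Naturality of $\varphi$ is then immediate: for $\llbracket f\rrbracket\colon\oc(A)\to B$ both $\llbracket\varphi_B\circ f\rrbracket$ and $\llbracket\oc_\oc(f)\circ\varphi_A\rrbracket$ collapse to $\oc(\llbracket f\rrbracket)\circ\delta_A$ using $\llbracket\varphi\rrbracket=1$ and $\varepsilon_{\oc(A)}\circ\delta_A = 1_{\oc(A)}$. Naturality of $\epsilon_{\oc_\oc(-)}$ I would obtain by unfolding its coKleisli naturality square into a base identity and reducing it through the naturality of $\varepsilon_{\oc(-)}$ and the comonad laws.

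Next I would verify the three squares of \eqref{abstracteq} by unfolding coKleisli composites. For the first, $\llbracket\epsilon_A\circ\varphi_A\rrbracket = \varepsilon_A\circ\varepsilon_{\oc(A)}\circ\delta_A = \varepsilon_A = \llbracket 1_A\rrbracket$. For the second, one computes $\llbracket\oc_\oc(\varphi_A)\rrbracket = \delta_A\circ\varepsilon_{\oc(A)}$, and then composing with $\epsilon_{\oc(A)}$ and simplifying via $\oc(\varepsilon_{\oc(A)})\circ\delta_{\oc(A)} = 1$, the naturality of $\varepsilon$, and $\varepsilon_{\oc(A)}\circ\delta_A = 1$ yields $\varepsilon_{\oc(A)} = \llbracket 1_{\oc(A)}\rrbracket$. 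For the third, both composites reduce, after pushing through the naturality of $\varepsilon$ and the counit laws, to the common base map $\varepsilon_A\circ\varepsilon_{\oc(A)}\circ\varepsilon_{\oc\oc(A)}$, so they agree. Each of these is a short chase in the comonad identities.

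Finally, for the three supplementary claims: unfolding the definition of $\epsilon$-naturality for $\llbracket f\rrbracket$ and simplifying both composites gives item (i), namely that $\llbracket f\rrbracket$ is $\epsilon$-natural precisely when $\llbracket f\rrbracket\circ\varepsilon_{\oc(A)} = \llbracket f\rrbracket\circ\oc(\varepsilon_A)$. Item (ii) is then immediate from (i): since $\llbracket\mathsf{F}_\oc(f)\rrbracket = f\circ\varepsilon_A$, the required equation becomes $f\circ\varepsilon_A\circ\varepsilon_{\oc(A)} = f\circ\varepsilon_A\circ\oc(\varepsilon_A)$, which holds by the naturality of $\varepsilon$ applied to $\varepsilon_A$. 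Item (iii) follows formally: (ii) shows $\mathsf{F}_\epsilon$ lands in $\epsilon\text{-}\mathsf{nat}[\mathbb{X}_\oc]$, it is a functor because $\mathsf{F}_\oc$ is, and the triangle $\mathsf{U}\circ\mathsf{F}_\epsilon = \mathsf{F}_\oc$ is immediate from the definitions. I expect the main obstacle to be the functoriality of $\oc_\oc$ together with the unfolding in item (i): both demand disciplined index tracking through repeated applications of $\oc$, and in particular keeping the idempotent $\delta_A\circ\varepsilon_{\oc(A)}$ intact until the comonad identities can be brought to bear.
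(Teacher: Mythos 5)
Your proof is correct; note, though, that the paper offers no argument of its own for this lemma --- it is imported as a citation from \cite[Proposition 2.6.3]{blute2015cartesian} (the dual of F\"{u}hrmann's thunk-force results) --- so your verification fills a gap rather than paralleling an in-text proof. All the computations you commit to in detail check out against the comonad laws (\ref{comonadeq}): both sides of the $\varphi$-naturality square collapse to $\oc(\llbracket f \rrbracket) \circ \delta_A$; the three squares of (\ref{abstracteq}) reduce in the base category to $\varepsilon_A$, $\varepsilon_{\oc(A)}$, and $\varepsilon_A \circ \varepsilon_{\oc(A)} \circ \varepsilon_{\oc\oc(A)}$ respectively; and unfolding $\epsilon$-naturality gives $\llbracket \epsilon_B \circ \oc_\oc(f) \rrbracket = \llbracket f \rrbracket \circ \varepsilon_{\oc(A)}$ and $\llbracket f \circ \epsilon_A \rrbracket = \llbracket f \rrbracket \circ \oc(\varepsilon_A)$, which is exactly item (i), with (ii) and (iii) following as you say. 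Two refinements. First, a terminological slip that does not affect the mathematics: your $\mathsf{G}_\oc$ is the \emph{left} adjoint in the coKleisli adjunction $\mathsf{G}_\oc \dashv \mathsf{F}_\oc$, since $\mathbb{X}\left(\mathsf{G}_\oc(A), B\right) = \mathbb{X}(\oc(A),B) = \mathbb{X}_\oc\left(A, \mathsf{F}_\oc(B)\right)$; hence $\oc_\oc = \mathsf{F}_\oc \circ \mathsf{G}_\oc$ is the \emph{monad} this adjunction induces on $\mathbb{X}_\oc$, with unit precisely $\varphi$ --- which delivers the functoriality of $\oc_\oc$ and the naturality of $\varphi$ formally, as you intended, even if ``cofree functor'' is not quite the right name. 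Second, you can eliminate your longest unexpanded chase, the naturality of $\epsilon_{\oc_\oc(-)}$: since $\llbracket \oc_\oc(f) \rrbracket = \llbracket \mathsf{F}_\oc\left( \mathsf{G}_\oc(\llbracket f \rrbracket)\right) \rrbracket$ lies in the image of $\mathsf{F}_\oc$, your item (ii) (whose proof uses only the naturality of the counit $\varepsilon$ and not the naturality of $\epsilon_{\oc_\oc(-)}$, so there is no circularity) shows that every $\oc_\oc(f)$ is $\epsilon$-natural, and the $\epsilon$-naturality square for $\oc_\oc(f)$ is literally the required naturality square $\epsilon_{\oc(B)} \circ \oc_\oc\left(\oc_\oc(f)\right) = \oc_\oc(f) \circ \epsilon_{\oc(A)}$, so that verification comes for free.
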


A natural question to ask is when the subcategory of $\epsilon$-natural maps of a coKleisli category is isomorphic to the base category. The answer is when the comonad is exact (for monads, this is called the equalizer requirement \cite[Definition 8]{fuhrmann1999direct}). 

\begin{lemma}\label{lemexact}  \cite[Dual of Theorem 9]{fuhrmann1999direct} Let $(\oc, \delta, \varepsilon)$ be a comonad on a category $\mathbb{X}$. Then the following are equivalent: 
\begin{enumerate}[{\em (i)}]
\item $\mathsf{F}_\epsilon: \mathbb{X} \to \epsilon\text{-}\mathsf{nat}[\mathbb{X}_\oc]$ is an isomorphism; 
\item The comonad $(\oc, \delta, \varepsilon)$ is exact \cite[Secion 2.6]{blute2015cartesian}, that is, for every object $A$, the following is a coequalizer diagram: 
  \[  \xymatrixcolsep{5pc}\xymatrix{\oc\oc(A) \ar@<1ex>[r]^{\varepsilon_{\oc(A)}} \ar@<-1ex>[r]_{\oc(\varepsilon_A)} & \oc(A) \ar[r]^-{\varepsilon_A} & A } \]
\end{enumerate}
\end{lemma}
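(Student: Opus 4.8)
The plan is to exploit the fact that $\mathsf{F}_\epsilon$ is the identity on objects, so that it is an isomorphism of categories precisely when it is fully faithful; the statement then reduces to unfolding full faithfulness hom-set by hom-set and recognizing it as the universal property of the proposed coequalizer. Concretely, $\mathsf{F}_\epsilon$ acts on a map $h \colon A \to B$ of $\mathbb{X}$ by $\llbracket \mathsf{F}_\epsilon(h) \rrbracket = h \circ \varepsilon_A$, and by Lemma \ref{cokleisliabstractlem}.(\ref{lemepvarep}) this indeed lands in the $\epsilon$-natural maps. So for fixed objects $A$ and $B$ I would study the function
\[ \mathbb{X}(A,B) \longrightarrow \epsilon\text{-}\mathsf{nat}[\mathbb{X}_\oc](A,B), \qquad h \longmapsto h \circ \varepsilon_A, \]
and show it is a bijection for all $A,B$ if and only if the comonad is exact.

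The key translation is provided by Lemma \ref{cokleisliabstractlem}.(\ref{lemepnatcok}): a coKleisli map $\llbracket g \rrbracket \colon \oc(A) \to B$ is $\epsilon$-natural exactly when $g \circ \varepsilon_{\oc(A)} = g \circ \oc(\varepsilon_A)$. Hence $\epsilon\text{-}\mathsf{nat}[\mathbb{X}_\oc](A,B)$ is literally the set of maps $g \colon \oc(A) \to B$ in $\mathbb{X}$ that coequalize the parallel pair $\varepsilon_{\oc(A)}, \oc(\varepsilon_A) \colon \oc\oc(A) \to \oc(A)$. I would first record, using naturality of $\varepsilon$ applied to the map $\varepsilon_A$, that $\varepsilon_A \circ \oc(\varepsilon_A) = \varepsilon_A \circ \varepsilon_{\oc(A)}$, so $\varepsilon_A$ itself always coequalizes this pair; thus the fork $\oc\oc(A) \rightrightarrows \oc(A) \xrightarrow{\varepsilon_A} A$ is a genuine candidate coequalizer, and the only question is its universal property. (It is also worth noting that $\delta_A$ is a common section of the pair by the comonad identities, so the pair is reflexive, though this is not needed.)

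With this identification in hand, the equivalence becomes a direct matching of universal properties. The function $h \mapsto h \circ \varepsilon_A$ being surjective onto the coequalizing maps is exactly the existence clause of the coequalizer property of $\varepsilon_A$ (every $g$ with $g \circ \varepsilon_{\oc(A)} = g \circ \oc(\varepsilon_A)$ factors as $g = h \circ \varepsilon_A$), and its injectivity is exactly the uniqueness clause (here one uses that any map of the form $h \circ \varepsilon_A$ automatically coequalizes the pair, so an equality $h \circ \varepsilon_A = h' \circ \varepsilon_A$ falls under the uniqueness assertion). Since the universal property of a coequalizer is quantified over all test objects $B$, the condition ``$h \mapsto h \circ \varepsilon_A$ is bijective for every $A$ and $B$'' is precisely ``$\varepsilon_A$ is the coequalizer of $(\varepsilon_{\oc(A)}, \oc(\varepsilon_A))$ for every $A$'', i.e.\ exactness.

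I expect no genuine obstacle beyond careful bookkeeping: the substance of the argument is the dualization of F\"{u}hrmann's equalizer characterization \cite[Theorem 9]{fuhrmann1999direct}, and everything hinges on the characterization of $\epsilon$-natural coKleisli maps from Lemma \ref{cokleisliabstractlem}.(\ref{lemepnatcok}). The one point demanding attention is keeping the quantifiers straight — that the ``for all $B$'' built into the coequalizer universal property coincides with the ``for all codomains $B$'' in the full faithfulness of $\mathsf{F}_\epsilon$ — together with the fact, already supplied by Lemma \ref{cokleisliabstractlem}, that $\mathsf{F}_\epsilon$ is a well-defined functor into $\epsilon\text{-}\mathsf{nat}[\mathbb{X}_\oc]$, so that no further naturality or functoriality verifications are required.
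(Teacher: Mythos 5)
Your proof is correct. There is no in-paper argument to compare it against: the paper states this lemma purely as a citation (the dual of F\"{u}hrmann's Theorem 9), so your write-up is in effect a self-contained dualization of the cited result, and it goes through. The three hinges all check out. First, since $\mathsf{F}_\epsilon$ is the identity on objects, it is an isomorphism of categories exactly when each hom-set function $h \mapsto h \circ \varepsilon_A$ from $\mathbb{X}(A,B)$ to $\epsilon\text{-}\mathsf{nat}[\mathbb{X}_\oc](A,B)$ is a bijection; the inverse assignments then automatically assemble into a functor, so no separate construction of an inverse is needed. Second, Lemma \ref{cokleisliabstractlem}.(\ref{lemepnatcok}) does identify $\epsilon\text{-}\mathsf{nat}[\mathbb{X}_\oc](A,B)$ with the set of maps $g \colon \oc(A) \to B$ in $\mathbb{X}$ coequalizing the pair $(\varepsilon_{\oc(A)}, \oc(\varepsilon_A))$, and naturality of $\varepsilon$ instantiated at the map $\varepsilon_A$ gives the fork identity $\varepsilon_A \circ \varepsilon_{\oc(A)} = \varepsilon_A \circ \oc(\varepsilon_A)$, so $\varepsilon_A$ is a legitimate candidate coequalizer. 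Third, your matching of surjectivity with the existence clause and injectivity with the uniqueness clause is handled at the one delicate spot: injectivity for all $B$ is a priori the assertion that $\varepsilon_A$ is epic, and your observation that $h \circ \varepsilon_A$ automatically coequalizes the pair is precisely what lets an equality $h \circ \varepsilon_A = h' \circ \varepsilon_A$ be read as two factorizations of a single coequalizing cocone, so that the coequalizer's uniqueness clause applies and, conversely, hom-bijectivity for all $B$ reproduces the full universal property. The quantifier bookkeeping you flag (all test objects $B$ in the universal property versus all codomains $B$ in full faithfulness) matches as you say, and the well-definedness and functoriality of $\mathsf{F}_\epsilon$ are indeed already supplied by Lemma \ref{cokleisliabstractlem}. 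What your route buys over the paper's is a proof readable without consulting F\"{u}hrmann and without dualizing his thunk-force formalism; the cost is only the bookkeeping you have already done.
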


In the case of an exact comonad, the base category can be recovered from the coKleisli category using the subcategory of $\epsilon$-natural maps. For abstract coKleisli categories, note that the comonad from Lemma \ref{lem:ep-com} is always exact. 

For a comonad on the category with finite products, the coKleisli category is a Cartesian abstract coKleisli category. 

\begin{lemma} \cite[Section 2.6]{blute2015cartesian} Let $(\oc, \delta, \varepsilon)$ be a comonad on a category $\mathbb{X}$ with finite products. Then the coKleisli category $\mathbb{X}_\oc$ is a Cartesian abstract coKleisli category with abstract coKleisli structure as defined in Lemma \ref{cokleisliabstractlem}. 
\end{lemma}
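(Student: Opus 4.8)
The plan is to check the three defining requirements of a Cartesian abstract coKleisli category in turn: that $\mathbb{X}_\oc$ carries an abstract coKleisli structure, that it has finite products, and that the product projection maps are $\epsilon$-natural with respect to that structure. The first requirement is exactly the content of Lemma \ref{cokleisliabstractlem}, which equips $\mathbb{X}_\oc$ with the abstract coKleisli structure $(\oc_\oc, \varphi, \epsilon)$; the second is Lemma \ref{cokleisliproduct}, which provides the finite product structure on $\mathbb{X}_\oc$ whenever $\mathbb{X}$ has finite products. Thus the only genuinely new thing to verify is the compatibility condition, namely that the coKleisli projections $\llbracket \pi_0 \rrbracket$ and $\llbracket \pi_1 \rrbracket$ are $\epsilon$-natural.

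First I would recall from Lemma \ref{cokleisliproduct}.(\ref{cokleisliproduct.F}) that the coKleisli projections are precisely the images of the base-category projections under the inclusion functor, that is, $\llbracket \pi_0 \rrbracket = \llbracket \mathsf{F}_\oc(\pi_0) \rrbracket$ and $\llbracket \pi_1 \rrbracket = \llbracket \mathsf{F}_\oc(\pi_1) \rrbracket$. Then I would invoke Lemma \ref{cokleisliabstractlem}.(\ref{lemepvarep}), which asserts that for any map $f$ in $\mathbb{X}$ the coKleisli map $\llbracket \mathsf{F}_\oc(f) \rrbracket$ is $\epsilon$-natural. Applying this with $f = \pi_0$ and $f = \pi_1$ immediately gives that both coKleisli projections are $\epsilon$-natural, which completes the verification of the definition.

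There is essentially no obstacle here: the statement is an assembly of results already established, and the only point requiring a moment's care is to use the correct endofunctor, namely $\oc_\oc$ on $\mathbb{X}_\oc$ (as in Lemma \ref{cokleisliabstractlem}) rather than $\oc$ on $\mathbb{X}$, when unwinding what $\epsilon$-naturality means for a coKleisli map. Once it is observed that the coKleisli projections arise from $\mathsf{F}_\oc$, their $\epsilon$-naturality is automatic, so no direct diagram chase against the definition of $\epsilon$-natural maps is needed.
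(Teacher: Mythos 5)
Your proposal is correct, and it is precisely the intended argument: the paper omits the proof (citing \cite[Section 2.6]{blute2015cartesian}), and the verification amounts to assembling Lemma \ref{cokleisliabstractlem} (abstract coKleisli structure), Lemma \ref{cokleisliproduct} (finite products), and the fact that $\llbracket \pi_j \rrbracket = \llbracket \mathsf{F}_\oc(\pi_j) \rrbracket$ is $\epsilon$-natural by Lemma \ref{cokleisliabstractlem}.(\ref{lemepvarep}), exactly as you do. Your remark that $\epsilon$-naturality is taken with respect to $\oc_\oc$ on $\mathbb{X}_\oc$ is the right point of care, and invoking the established $\epsilon$-naturality of $\mathsf{F}_\oc$-images correctly avoids any direct diagram chase.
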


For a comonad on a Cartesian left additive category, the coKleisli category is a Cartesian left additive abstract coKleisli category. 

\begin{lemma}  Let $(\oc, \delta, \varepsilon)$ be a comonad on a Cartesian left additive category $\mathbb{X}$. Then the coKleisli category $\mathbb{X}_\oc$ is a Cartesian left additive abstract coKleisli category with abstract coKleisli structure as defined in Lemma \ref{cokleisliabstractlem} and Cartesian left additive structure as defined in Lemma \ref{cokleisliCLAC}. 
\end{lemma}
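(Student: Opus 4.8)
The plan is to reduce the statement to the two genuinely new requirements in the definition of a Cartesian left additive abstract coKleisli category, since all the remaining structure has already been assembled in the preceding lemmas. Indeed, a Cartesian left additive category has finite products, so the previous lemma already gives that $\mathbb{X}_\oc$ is a Cartesian abstract coKleisli category with the abstract coKleisli structure $(\oc_\oc, \varphi, \epsilon)$ of Lemma \ref{cokleisliabstractlem}, and Lemma \ref{cokleisliCLAC} already gives that $\mathbb{X}_\oc$ is a Cartesian left additive category with the stated additive structure. Thus it remains only to check the two compatibility conditions: that every zero map is $\epsilon$-natural, and that the sum of two $\epsilon$-natural coKleisli maps is again $\epsilon$-natural.

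For both checks I would use the explicit characterization of $\epsilon$-naturality in a coKleisli category provided by Lemma \ref{cokleisliabstractlem}.(\ref{lemepnatcok}): a coKleisli map $\llbracket f \rrbracket : \oc(A) \to B$ is $\epsilon$-natural if and only if $\llbracket f \rrbracket \circ \varepsilon_{\oc(A)} = \llbracket f \rrbracket \circ \oc(\varepsilon_A)$, together with the fact (Lemma \ref{cokleisliCLAC}) that sums and zeros in $\mathbb{X}_\oc$ are computed exactly as in the base category $\mathbb{X}$. For the zero map $\llbracket 0 \rrbracket = 0 : \oc(A) \to B$, both composites $0 \circ \varepsilon_{\oc(A)}$ and $0 \circ \oc(\varepsilon_A)$ equal $0$ by the left additive axiom $0 \circ x = 0$, so the zero map is $\epsilon$-natural. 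For the sum, supposing $\llbracket f \rrbracket$ and $\llbracket g \rrbracket$ are $\epsilon$-natural and using that precomposition distributes over sums, I would compute
\begin{align*}
(\llbracket f \rrbracket + \llbracket g \rrbracket) \circ \varepsilon_{\oc(A)} &= \llbracket f \rrbracket \circ \varepsilon_{\oc(A)} + \llbracket g \rrbracket \circ \varepsilon_{\oc(A)} \\
&= \llbracket f \rrbracket \circ \oc(\varepsilon_A) + \llbracket g \rrbracket \circ \oc(\varepsilon_A) \\
&= (\llbracket f \rrbracket + \llbracket g \rrbracket) \circ \oc(\varepsilon_A),
\end{align*}
where the middle equality invokes the $\epsilon$-naturality of $f$ and $g$. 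Since $\llbracket f + g \rrbracket = \llbracket f \rrbracket + \llbracket g \rrbracket$, this exhibits $\llbracket f+g \rrbracket$ as $\epsilon$-natural.

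There is no serious obstacle here; the entire content is bookkeeping. The only point requiring a little care is to pair the correct description of the coKleisli additive structure with the correct characterization of $\epsilon$-naturality — once these are in place, both conditions collapse to the single left additive axiom that precomposition preserves sums and zeros. Assembling these observations completes the proof that $\mathbb{X}_\oc$ is a Cartesian left additive abstract coKleisli category.
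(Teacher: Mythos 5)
Your proposal is correct and follows essentially the same route as the paper's proof: both reduce the statement to showing that zero maps are $\epsilon$-natural and that sums of $\epsilon$-natural maps are $\epsilon$-natural, and both verify these via the characterization $\llbracket f \rrbracket \circ \varepsilon_{\oc(A)} = \llbracket f \rrbracket \circ \oc(\varepsilon_A)$ of Lemma \ref{cokleisliabstractlem}.(\ref{lemepnatcok}) combined with the fact that precomposition preserves sums and zeros. The computations you give match the paper's verbatim.
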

\begin{proof} We must show that zero maps are $\epsilon$-natural, and that the sum of $\epsilon$-natural maps is again $\epsilon$-natural. We will make use of Lemma \ref{cokleisliabstractlem}.(\ref{lemepnatcok}). Starting with zero maps, we compute: 
\begin{align*}
\llbracket 0 \rrbracket \circ \varepsilon_{\oc(A)} &=~ 0 \circ \varepsilon_{\oc(A)} \\
&=~ 0 \\
&=~ 0 \circ \oc(\varepsilon_A) \\
&=~ \llbracket 0 \rrbracket \circ \oc(\varepsilon_A) 
\end{align*}
So $\llbracket 0 \rrbracket \circ \varepsilon_{\oc(A)} = \llbracket 0 \rrbracket \circ \oc(\varepsilon_A) $, so by Lemma \ref{cokleisliabstractlem}.(\ref{lemepnatcok}), we conclude that $\llbracket 0 \rrbracket$ is $\epsilon$-natural. Next, suppose that ${\llbracket f \rrbracket: \oc(A) \to B}$ and $\llbracket g \rrbracket: \oc(A) \to B$ are both $\epsilon$-natural, then we compute 
\begin{align*}
\llbracket f+g \rrbracket \circ \varepsilon_{\oc(A)} &=~ \left( \llbracket f \rrbracket + \llbracket g \rrbracket \right) \circ \varepsilon_{\oc(A)} \\
&=~ \llbracket f \rrbracket \circ \varepsilon_{\oc(A)} +  \llbracket g \rrbracket \circ \varepsilon_{\oc(A)} \\
&=~  \llbracket f \rrbracket \circ \oc(\varepsilon_A)  +  \llbracket g \rrbracket \circ \oc(\varepsilon_A) \tag{$\llbracket f \rrbracket$ and $\llbracket g \rrbracket$ are $\epsilon$-nat. + Lem.\ref{cokleisliabstractlem}.(\ref{lemepnatcok}) } \\
&=~ \left( \llbracket f \rrbracket + \llbracket g \rrbracket \right) \circ \oc(\varepsilon_A) \\
&=~ \llbracket f+g \rrbracket \circ  \oc(\varepsilon_A)
\end{align*}
So $\llbracket f+g \rrbracket \circ \varepsilon_{\oc(A)} = \llbracket f+g \rrbracket \circ  \oc(\varepsilon_A)$, and so by Lemma \ref{cokleisliabstractlem}.(\ref{lemepnatcok}) it follows that $\llbracket f+g \rrbracket$ is $\epsilon$-natural. Therefore, we conclude that $\mathbb{X}_\oc$ is a Cartesian left additive abstract coKleisli category. 
\end{proof}

We will now show that for a Cartesian differential comonad, its coKleisli category is a Cartesian differential abstract coKleisli category. 

\begin{proposition}\label{propabcok} Let $(\oc, \delta, \varepsilon)$ be a Cartesian differential comonad on a category $\mathbb{X}$ with finite biproducts. Then $\mathbb{X}_\oc$ is a Cartesian differential abstract coKleisli category with Cartesian differential structure defined in Theorem \ref{thm1} and abstract coKleisli structure $(\oc_\oc, \varphi, \epsilon)$ as defined in Lemma \ref{cokleisliabstractlem}. 
\end{proposition}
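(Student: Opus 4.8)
The plan is to verify the three requirements bundled in Definition \ref{def:abCDC}, two of which are already in hand. Theorem \ref{thm1} shows that $\mathbb{X}_\oc$ is a Cartesian differential category with the stated combinator $\mathsf{D}$, and the preceding lemmas show that $\mathbb{X}_\oc$ is a Cartesian left additive abstract coKleisli category with abstract coKleisli structure $(\oc_\oc, \varphi, \epsilon)$ as in Lemma \ref{cokleisliabstractlem} (using that a category with finite biproducts is Cartesian left additive). So the only genuinely new thing to establish is the compatibility clause of Definition \ref{def:abCDC}: that every $\epsilon$-natural coKleisli map is $\mathsf{D}$-linear.

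To prove this, I would translate both notions into their base-category characterizations. By Lemma \ref{cokleisliabstractlem}.(\ref{lemepnatcok}), a coKleisli map $\llbracket f \rrbracket : \oc(A) \to B$ is $\epsilon$-natural if and only if $\llbracket f \rrbracket \circ \varepsilon_{\oc(A)} = \llbracket f \rrbracket \circ \oc(\varepsilon_A)$, and by Theorem \ref{thm1}.(\ref{thm1.lin}) it is $\mathsf{D}$-linear if and only if $\llbracket f \rrbracket \circ \partial_A \circ \oc(\iota_1) = \llbracket f \rrbracket$. Writing $p_A := \partial_A \circ \oc(\iota_1) : \oc(A) \to \oc(A)$, the whole task reduces to showing $\llbracket f \rrbracket \circ p_A = \llbracket f \rrbracket$ whenever $\llbracket f \rrbracket$ is $\epsilon$-natural.

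The crux — and the step I expect to be the main obstacle — is to bridge the two conditions by producing a single auxiliary map $k : \oc(A) \to \oc\oc(A)$ whose two composites $\varepsilon_{\oc(A)} \circ k$ and $\oc(\varepsilon_A) \circ k$ recover exactly the two sides appearing in the $\mathsf{D}$-linearity condition. Concretely I would take $k := \oc(p_A) \circ \delta_A$ and check two identities. First, $\varepsilon_{\oc(A)} \circ k = p_A$, which uses naturality of $\varepsilon$ (so that $\varepsilon_{\oc(A)} \circ \oc(p_A) = p_A \circ \varepsilon_{\oc(A)}$) together with the comonad law $\varepsilon_{\oc(A)} \circ \delta_A = 1_{\oc(A)}$. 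Second, $\oc(\varepsilon_A) \circ k = 1_{\oc(A)}$, which uses functoriality of $\oc$ and the identity $\varepsilon_A \circ p_A = \varepsilon_A$ — itself a consequence of the linear rule \textbf{[dc.3]} ($\varepsilon_A \circ \partial_A = \pi_1 \circ \varepsilon_{A \times A}$), naturality of $\varepsilon$ ($\varepsilon_{A \times A} \circ \oc(\iota_1) = \iota_1 \circ \varepsilon_A$), and the biproduct identity $\pi_1 \circ \iota_1 = 1_A$ — followed by the comonad law $\oc(\varepsilon_A) \circ \delta_A = 1_{\oc(A)}$.

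Given these two identities, the conclusion is immediate: for any $\epsilon$-natural $\llbracket f \rrbracket$,
\[
\llbracket f \rrbracket \circ p_A = \llbracket f \rrbracket \circ \varepsilon_{\oc(A)} \circ k = \llbracket f \rrbracket \circ \oc(\varepsilon_A) \circ k = \llbracket f \rrbracket,
\]
where the middle equality is precisely the $\epsilon$-naturality of $\llbracket f \rrbracket$. Hence $\llbracket f \rrbracket$ is $\mathsf{D}$-linear. The real difficulty lies entirely in guessing the correct $k$; once $k = \oc(p_A) \circ \delta_A$ is written down, the verification is routine comonad and differential-combinator-transformation bookkeeping. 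With the compatibility clause established, all conditions of Definition \ref{def:abCDC} hold, and therefore $\mathbb{X}_\oc$ is a Cartesian differential abstract coKleisli category.
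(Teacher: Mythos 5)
Your proposal is correct, and its skeleton matches the paper's: everything in Definition \ref{def:abCDC} except the compatibility clause is inherited from Theorem \ref{thm1} and the lemmas immediately preceding the proposition, and the clause itself is reduced, via Theorem \ref{thm1}.(\ref{thm1.lin}) and Lemma \ref{cokleisliabstractlem}.(\ref{lemepnatcok}), to showing $\llbracket f \rrbracket \circ p_A = \llbracket f \rrbracket$ for $p_A = \partial_A \circ \oc(\iota_1)$ whenever $\llbracket f \rrbracket$ is $\epsilon$-natural. Where you genuinely diverge is in the bridging map fed to the $\epsilon$-naturality equation. The paper runs a single rewriting chain: it inserts $1_{\oc(A)} = \oc(\varepsilon_A) \circ \delta_A$, pushes $\varepsilon_A$ through $\iota_1$ and then through $\partial$ by \emph{naturality of $\partial$}, which exhibits the left-hand side as $\llbracket f \rrbracket \circ \oc(\varepsilon_A) \circ k'$ with $k' := p_{\oc(A)} \circ \delta_A$; after the $\epsilon$-naturality swap, \textbf{[dc.3]} at the object $\oc(A)$ (plus naturality of $\varepsilon$, a biproduct identity, and a counit law) collapses $\varepsilon_{\oc(A)} \circ k'$ to $1_{\oc(A)}$. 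Your witness is $k = \oc(p_A) \circ \delta_A$ instead, with the two composites mirrored: $\varepsilon_{\oc(A)} \circ k = p_A$ by naturality of $\varepsilon$ at $p_A$, and $\oc(\varepsilon_A) \circ k = 1_{\oc(A)}$ via the sub-identity $\varepsilon_A \circ p_A = \varepsilon_A$, which is \textbf{[dc.3]} applied at $A$ — in fact this sub-identity is, up to the change of object, exactly the closing lines of the paper's chain, since the paper establishes $\varepsilon_{\oc(A)} \circ p_{\oc(A)} \circ \delta_A = 1_{\oc(A)}$ the same way. The net effect is that your argument never invokes naturality of $\partial$ (nor the naturality of $\iota_1$ in the base), using only naturality of $\varepsilon$, the two counit laws, one instance of \textbf{[dc.3]}, and $\pi_1 \circ \iota_1 = 1_A$; the paper trades one application of $\varepsilon$-naturality for one of $\partial$-naturality. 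Both are sound and of comparable length. Yours is somewhat more modular: the two claims about $k$ isolate precisely where $\epsilon$-naturality acts, and the identity $\varepsilon_A \circ p_A = \varepsilon_A$ is independently meaningful — note that $p_A = \llbracket \mathsf{L}[\varphi_A] \rrbracket$ by Lemma \ref{Lvarphi}, so what you are really showing is that $\epsilon$-natural maps absorb the linearization idempotent, which dovetails nicely with the split-idempotent formulation of the $\mathsf{D}$-linear unit in Definition \ref{def:Dunit}.
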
 	
\begin{proof} We must show that every $\epsilon$-natural map is $\mathsf{D}$-linear. To do so, we make use of both Theorem \ref{thm1}.(\ref{thm1.lin}) and Lemma \ref{cokleisliabstractlem}.(\ref{lemepnatcok}). So suppose that $\llbracket f \rrbracket: \oc(A) \to B$ is $\epsilon$-natural, then we compute: 
\begin{align*}
\llbracket f \rrbracket \circ \partial_A \circ \oc(\iota_1) &=~\llbracket f \rrbracket \circ \partial_A \circ \oc(\iota_1) \circ 1_{\oc(A)} \\
&=~ \llbracket f \rrbracket \circ \partial_A \circ \oc(\iota_1) \circ \oc(\varepsilon_A) \circ \delta_A \tag{Comonad Identity} \\
&=~ \llbracket f \rrbracket \circ \partial_A \circ \oc(\iota_1 \circ \varepsilon_A) \circ \delta_A \tag{$\oc$ is a functor} \\
&=~ \llbracket f \rrbracket \circ \partial_A \circ \oc\left( (\varepsilon_A \times \varepsilon_A) \circ \iota_1 \right) \circ \delta_A \tag{Naturality of $\iota_1$} \\
&=~ \llbracket f \rrbracket \circ \partial_A \circ \oc(\varepsilon_A \times \varepsilon_A) \circ \oc(\iota_1) \circ \delta_A \tag{$\oc$ is a functor} \\
&=~ \llbracket f \rrbracket \circ \oc(\varepsilon_A) \circ \partial_{\oc(A)} \circ \oc(\iota_1) \circ \delta_A \tag{Naturality of $\partial$} \\
&=~ \llbracket f \rrbracket \circ \varepsilon_{\oc(A)} \circ \partial_{\oc(A)} \circ \oc(\iota_1) \circ \delta_A \tag{$\llbracket f \rrbracket$ is $\epsilon$-nat. + Lem.\ref{cokleisliabstractlem}.(\ref{lemepnatcok}) } \\
&=~ \llbracket f \rrbracket \circ \pi_1 \circ \varepsilon_{\oc(A) \times \oc(A)} \circ \oc(\iota_1) \circ \delta_A \tag{\textbf{[dc.3]}} \\
&=~ \llbracket f \rrbracket \circ \pi_1 \circ \iota_1 \circ \varepsilon_{\oc(A)} \circ \delta_A \tag{Naturality of $\varepsilon$} \\
&=~ \llbracket f \rrbracket \circ 1_{\oc(A)} \circ 1_{\oc(A)} \tag{Biproduct Idenity + Comonad Identity} \\
&=~  \llbracket f \rrbracket 
\end{align*}
So $\llbracket f \rrbracket \circ \partial_A \circ \oc(\iota_1) = \llbracket f \rrbracket$, and so by Theorem \ref{thm1}.(\ref{thm1.lin}), it follows that $\llbracket f \rrbracket$ is $\mathsf{D}$-linear. Therefore, we conclude that $\mathbb{X}_\oc$ is a Cartesian differential abstract coKleisli category. 
\end{proof}

We conclude this section by showing that for a Cartesian differential comonad with a $\mathsf{D}$-linear unit, the underlying comonad is exact and that a coKleisli map is $\mathsf{D}$-linear if and only if it $\epsilon$-natural. 

\begin{lemma}\label{etaexact} Let $(\oc, \delta, \varepsilon, \partial)$ be a Cartesian differential comonad on a category $\mathbb{X}$ with finite biproducts. Then the following are equivalent: 
\begin{enumerate}[{\em (i)}]
\item $(\oc, \delta, \varepsilon, \partial)$ has a $\mathsf{D}$-linear unit $\eta_A : A \to \oc (A)$;
\item The comonad $(\oc, \delta, \varepsilon)$ is exact and for each object $A$, the $\mathsf{D}$-linear map $\llbracket \mathsf{L}[\varphi_A] \rrbracket: \oc(A) \to \oc(A)$ is $\epsilon$-natural. 
\end{enumerate}
\end{lemma}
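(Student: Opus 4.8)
The plan is to package both conditions in terms of the two identity-on-objects functors $\mathsf{F}_\epsilon \colon \mathbb{X} \to \epsilon\text{-}\mathsf{nat}[\mathbb{X}_\oc]$ and $\mathsf{F}_{\mathsf{D}\text{-}\mathsf{lin}} \colon \mathbb{X} \to \mathsf{D}\text{-}\mathsf{lin}[\mathbb{X}_\oc]$, both of which send a map $f$ to $f \circ \varepsilon_A$. Since every $\epsilon$-natural coKleisli map is $\mathsf{D}$-linear (Proposition \ref{propabcok}), there is an inclusion $\iota \colon \epsilon\text{-}\mathsf{nat}[\mathbb{X}_\oc] \hookrightarrow \mathsf{D}\text{-}\mathsf{lin}[\mathbb{X}_\oc]$ and a factorization $\mathsf{F}_{\mathsf{D}\text{-}\mathsf{lin}} = \iota \circ \mathsf{F}_\epsilon$. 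Now existence of a $\mathsf{D}$-linear unit is equivalent to $\mathsf{F}_{\mathsf{D}\text{-}\mathsf{lin}}$ being an isomorphism (Proposition \ref{etaFlem1}), and exactness is equivalent to $\mathsf{F}_\epsilon$ being an isomorphism (Lemma \ref{lemexact}), so the whole lemma reduces to comparing these two isomorphism claims, with the $\epsilon$-naturality of $\llbracket \mathsf{L}[\varphi_A] \rrbracket$ supplying the bridge. Throughout I would rewrite $\partial_A \circ \oc(\iota_1)$ as $\llbracket \mathsf{L}[\varphi_A] \rrbracket$ via Lemma \ref{Lvarphi}, test $\mathsf{D}$-linearity through the identity $\llbracket f \rrbracket \circ \partial_A \circ \oc(\iota_1) = \llbracket f \rrbracket$ from Theorem \ref{thm1}.(\ref{thm1.lin}), and test $\epsilon$-naturality through $\llbracket f \rrbracket \circ \varepsilon_{\oc(A)} = \llbracket f \rrbracket \circ \oc(\varepsilon_A)$ from Lemma \ref{cokleisliabstractlem}.(\ref{lemepnatcok}).

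For $(i) \Rightarrow (ii)$, suppose a $\mathsf{D}$-linear unit $\eta$ exists. The $\epsilon$-naturality of $\llbracket \mathsf{L}[\varphi_A] \rrbracket$ falls out immediately: by \textbf{[du.2]} and Lemma \ref{Lvarphi} it equals $\eta_A \circ \varepsilon_A$, and its two test composites $\eta_A \circ \varepsilon_A \circ \varepsilon_{\oc(A)}$ and $\eta_A \circ \varepsilon_A \circ \oc(\varepsilon_A)$ agree by the naturality identity $\varepsilon_A \circ \varepsilon_{\oc(A)} = \varepsilon_A \circ \oc(\varepsilon_A)$. For exactness I would verify the universal property of the coequalizer of $\varepsilon_{\oc(A)}, \oc(\varepsilon_A) \colon \oc\oc(A) \rightrightarrows \oc(A)$ along $\varepsilon_A$ directly: the two composites with $\varepsilon_A$ coincide by naturality of $\varepsilon$; and given $h \colon \oc(A) \to Z$ coequalizing the pair, its coequalizing condition $h \circ \varepsilon_{\oc(A)} = h \circ \oc(\varepsilon_A)$ is exactly the $\epsilon$-naturality of the coKleisli map $\llbracket h \rrbracket$, so $h$ is $\mathsf{D}$-linear by Proposition \ref{propabcok}, giving $h \circ \partial_A \circ \oc(\iota_1) = h$. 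Setting $k := h \circ \eta_A$ and using \textbf{[du.2]} then yields $k \circ \varepsilon_A = h \circ \partial_A \circ \oc(\iota_1) = h$, while uniqueness of $k$ follows because $\varepsilon_A$ is split epic with section $\eta_A$ by \textbf{[du.1]}.

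For $(ii) \Rightarrow (i)$, exactness makes $\mathsf{F}_\epsilon$ an isomorphism (Lemma \ref{lemexact}), so by the factorization $\mathsf{F}_{\mathsf{D}\text{-}\mathsf{lin}} = \iota \circ \mathsf{F}_\epsilon$ together with Proposition \ref{etaFlem1} it suffices to show $\iota$ is an isomorphism, i.e.\ that every $\mathsf{D}$-linear coKleisli map is $\epsilon$-natural. Given $\mathsf{D}$-linear $\llbracket f \rrbracket$, I would compute $\llbracket f \rrbracket \circ \varepsilon_{\oc(A)} = \llbracket f \rrbracket \circ \partial_A \circ \oc(\iota_1) \circ \varepsilon_{\oc(A)}$, apply the assumed $\epsilon$-naturality of $\llbracket \mathsf{L}[\varphi_A] \rrbracket = \partial_A \circ \oc(\iota_1)$ to swap $\varepsilon_{\oc(A)}$ for $\oc(\varepsilon_A)$, and collapse again by $\mathsf{D}$-linearity to reach $\llbracket f \rrbracket \circ \oc(\varepsilon_A)$; this is the $\epsilon$-naturality test. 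Hence $\epsilon\text{-}\mathsf{nat}[\mathbb{X}_\oc] = \mathsf{D}\text{-}\mathsf{lin}[\mathbb{X}_\oc]$, so $\iota$ is (an equality, hence) an isomorphism, $\mathsf{F}_{\mathsf{D}\text{-}\mathsf{lin}}$ is an isomorphism, and a $\mathsf{D}$-linear unit exists.

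The main obstacle I anticipate is the exactness half of $(i) \Rightarrow (ii)$: the other steps are one-line rewrites, whereas this one requires exhibiting and checking a genuine coequalizer. The crucial observation that unlocks it is that the coequalizing hypothesis on a test map $h$ is \emph{literally} the $\epsilon$-naturality condition of $\llbracket h \rrbracket$, which Proposition \ref{propabcok} upgrades to $\mathsf{D}$-linearity so that \textbf{[du.2]} becomes applicable. The only real bookkeeping risk is keeping the base-category composition straight from the coKleisli composition when interpreting $\llbracket h \rrbracket$ and $\llbracket \mathsf{L}[\varphi_A] \rrbracket$, which the bracket notation is designed to make transparent.
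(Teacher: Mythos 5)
Your proof is correct. The direction $(i) \Rightarrow (ii)$ is essentially the paper's argument: both exploit the key observation that the coequalizing condition $h \circ \varepsilon_{\oc(A)} = h \circ \oc(\varepsilon_A)$ on a test map is literally the $\epsilon$-naturality criterion of Lemma \ref{cokleisliabstractlem}.(\ref{lemepnatcok}), upgraded to $\mathsf{D}$-linearity by Proposition \ref{propabcok}, with uniqueness coming from the section $\eta_A$ of $\varepsilon_A$ provided by \textbf{[du.1]}; the only cosmetic differences are that the paper factors the test map through Corollary \ref{etacor1} where you invoke \textbf{[du.2]} directly, and that it obtains the $\epsilon$-naturality of $\llbracket \mathsf{L}[\varphi_A] \rrbracket$ from the identification $\llbracket \mathsf{L}[\varphi_A] \rrbracket = \llbracket \mathsf{F}_\oc(\eta_A) \rrbracket$ together with Lemma \ref{cokleisliabstractlem}.(\ref{lemepvarep}), where you check the two test composites by hand using naturality of $\varepsilon$. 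Where you genuinely diverge is $(ii) \Rightarrow (i)$: the paper constructs $\eta_A$ explicitly as the unique map induced through the coequalizer satisfying $\eta_A \circ \varepsilon_A = \llbracket \mathsf{L}[\varphi_A] \rrbracket$ and then verifies naturality, \textbf{[du.1]}, and \textbf{[du.2]} by direct computation, repeatedly using that $\varepsilon_A$ is epic; you instead argue at the level of the functors, using Lemma \ref{lemexact} to make $\mathsf{F}_\epsilon$ an isomorphism, the three-step computation $\llbracket f \rrbracket \circ \varepsilon_{\oc(A)} = \llbracket f \rrbracket \circ \partial_A \circ \oc(\iota_1) \circ \varepsilon_{\oc(A)} = \llbracket f \rrbracket \circ \partial_A \circ \oc(\iota_1) \circ \oc(\varepsilon_A) = \llbracket f \rrbracket \circ \oc(\varepsilon_A)$ to conclude $\epsilon\text{-}\mathsf{nat}[\mathbb{X}_\oc] = \mathsf{D}\text{-}\mathsf{lin}[\mathbb{X}_\oc]$, and then Proposition \ref{etaFlem1} to extract the unit from the isomorphism $\mathsf{F}_{\mathsf{D}\text{-}\mathsf{lin}} = \iota \circ \mathsf{F}_\epsilon$. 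Your route is shorter on the page because the verification of naturality and of the two unit axioms is outsourced to the already-proved Proposition \ref{etaFlem1}, and it isolates the coincidence of $\mathsf{D}$-linear and $\epsilon$-natural maps under hypothesis $(ii)$, a fact the paper records only afterwards (in Corollary \ref{etacor2}, under hypothesis $(i)$); the paper's construction, in exchange, exhibits $\eta$ concretely from the couniversal property without passing through the inverse functor. The two constructions agree, since $\mathsf{F}^{-1}_{\mathsf{D}\text{-}\mathsf{lin}}\left( \llbracket \mathsf{L}[\varphi_A] \rrbracket \right)$ is exactly the map the coequalizer induces.
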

\begin{proof} For $(i) \Rightarrow (ii)$, suppose that $(\oc, \delta, \varepsilon, \partial)$ has a $\mathsf{D}$-linear unit $\eta_A : A \to \oc (A)$. By the comonad identities, we have that $\delta_A$ is a common section of $\varepsilon_{\oc(A)}$ and $\oc(\varepsilon_A)$, that is, $\varepsilon_{\oc(A)} \circ \delta_A =1_{\oc(A)} = \oc(\varepsilon_A) \circ \delta_A$. By \textbf{[du.1]}, we also have that $\eta_A$ is a section of $\varepsilon_A$, that is, $\varepsilon_A \circ \eta_A$. As such, this implies that the following diagram is a split coequalizer: 
  \[  \xymatrixcolsep{5pc}\xymatrix{\oc\oc(A) \ar@<1ex>[r]^{\varepsilon_{\oc(A)}} \ar@<-1ex>[r]_{\oc(\varepsilon_A)} & \oc(A) \ar[r]^-{\varepsilon_A} & A } \]
  However, every split coequalizer diagram is also an equalizer diagram. So we conclude that $(\oc, \delta, \varepsilon)$ is exact. Next we must show that $\llbracket \mathsf{L}[\varphi_A] \rrbracket$ is $\epsilon$-natural. Note that as was shown in the proof of Proposition \ref{etaFlem1}, $\llbracket \mathsf{L}[\varphi_A] \rrbracket = \eta_A \circ \varepsilon_A = \llbracket \mathsf{F}_\oc(\eta_A) \rrbracket$. Then by Lemma \ref{cokleisliabstractlem}.(\ref{lemepvarep}), it follows that $\llbracket \mathsf{L}[\varphi_A] \rrbracket$ is $\epsilon$-natural. 

For $(ii) \Rightarrow (i)$, suppose that $(\oc, \delta, \varepsilon)$ is exact and $\llbracket \mathsf{L}[\varphi_A] \rrbracket$ is $\epsilon$-natural. By Lemma \ref{cokleisliabstractlem}.(\ref{lemepnatcok}), the latter implies that $\llbracket \mathsf{L}[\varphi_A] \rrbracket \circ \varepsilon_{\oc(A)} = \llbracket \mathsf{L}[\varphi_A] \rrbracket \circ \oc(\varepsilon_A)$. Therefore, by the couniversal property of the coequalizer, there exists a unique map $\eta_A: A \to \oc(A)$ such that the following diagram commutes: 
  \[  \xymatrixcolsep{5pc}\xymatrix{\oc\oc(A) \ar@<1ex>[r]^{\varepsilon_{\oc(A)}} \ar@<-1ex>[r]_{\oc(\varepsilon_A)} & \oc(A) \ar[dr]_-{\llbracket \mathsf{L}[\varphi_A] \rrbracket} \ar[r]^-{\varepsilon_A} & A \ar@{-->}[d]^-{\exists \oc ~ \eta_A}  \\
  && \oc(A)} \]
To show that $\eta$ is a $\mathsf{D}$-linear unit, we will make use of Lemma \ref{Lvarphi} and of the fact that $\varepsilon_A$ is epic (which is a consequence of the coequalizer assumption). Starting with naturality, for any map $f: A \to B$, we compute: 
\begin{align*}
\oc(f) \circ \eta_A \circ \varepsilon_A &=~ \oc(f) \circ \llbracket \mathsf{L}[\varphi_A] \rrbracket \\
&=~ \oc(f) \circ \partial_A \circ \oc(\iota_1) \tag{Lemma \ref{Lvarphi}} \\
&=~ \partial_B \circ \oc(f \times f) \circ \oc(\iota_1) \tag{Naturality of $\partial$} \\
&=~ \partial_B \circ \oc\left( (f \times f) \circ \iota_1 \right) \tag{$\oc$ is a functor} \\
&=~ \partial_B \circ \oc( \iota_1 \circ f) \tag{Naturality of $\iota_1$} \\
&=~ \partial_B \circ \oc(\iota_1) \circ \oc(f) \tag{$\oc$ is a functor} \\
&=~ \llbracket \mathsf{L}[\varphi_B] \rrbracket \circ \oc(f) \\
&=~ \eta_B \circ \varepsilon_B \circ \oc(f) \\ 
&=~ \eta_B \circ f \circ \varepsilon_A \tag{Naturality of $\varepsilon$} 
\end{align*}
So $\oc(f) \circ \eta_A \circ \varepsilon_A = \eta_B \circ f \circ \varepsilon_A$. Since $\varepsilon_A$ is epic, it follows that $\oc(f) \circ \eta_A = \eta_B \circ f$. Therefore, $\eta$ is a natural transformation. Next we show that $\eta$ satisfies the two axioms of a $\mathsf{D}$-linear unit: 
\begin{enumerate}[{\bf [du.1]}] 
\item Here we use \textbf{[dc.3]}: 
\begin{align*}
    \varepsilon_A \circ \eta_A \circ \varepsilon_A &=~ \varepsilon_A \circ \llbracket \mathsf{L}[\varphi_A] \rrbracket \\
    &=~ \varepsilon_A \circ \partial_A \circ \oc(\iota_1) \tag{Lemma \ref{Lvarphi}} \\
    &=~ \pi_1 \circ \varepsilon_{A \times A} \circ \oc(\iota_1) \tag{\textbf{[dc.3]}} \\
    &=~ \pi_1 \circ \iota_1  \circ \varepsilon_A  \tag{Naturality of $\varepsilon$} \\
    &=~ 1_A \circ \varepsilon_A \tag{Biproduct Identity}\\ 
    &=~ \varepsilon_A
\end{align*}
So $\varepsilon_A \circ \eta_A \circ \varepsilon_A = \varepsilon_A$. Since $\varepsilon_A$ is epic, it follows that $\varepsilon_A \circ \eta_A = 1_A$. 
\item Automatic by Lemma \ref{Lvarphi} since: 
\begin{align*}
     \eta_A \circ \varepsilon_A &=~\llbracket \mathsf{L}[\varphi_A] \rrbracket \\
     &=~\partial_A \circ \oc(\iota_1) \tag{Lemma \ref{Lvarphi}} \\
\end{align*}
\end{enumerate}
So we conclude that $\eta$ is a $\mathsf{D}$-linear unit. 
\end{proof}

\begin{corollary}\label{etacor2} Let $(\oc, \delta, \varepsilon, \partial)$ be a Cartesian differential comonad with a $\mathsf{D}$-linear unit $\eta$ on a category $\mathbb{X}$ with finite biproducts. Then for a coKleisli map $\llbracket f \rrbracket: \oc(A) \to B$, $\llbracket f \rrbracket$ is $\mathsf{D}$-linear in $\mathbb{X}_\oc$ if and only if $\llbracket f \rrbracket$ is $\epsilon$-natural in $\mathbb{X}_\oc$. As such, we have the following chain of isomorphisms: $\mathbb{X} \cong \epsilon\text{-}\mathsf{nat}[\mathbb{X}_\oc] \cong  \mathsf{D}\text{-}\mathsf{lin}[\mathbb{X}_\oc]$
\end{corollary}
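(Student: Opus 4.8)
The plan is to establish the biconditional by treating its two implications separately and then to assemble the displayed chain of isomorphisms from the equivalences already recorded in Proposition \ref{etaFlem1}, Lemma \ref{etaexact}, and Lemma \ref{lemexact}. The key observation is that one of the two implications requires no hypothesis on the unit at all, while the other is precisely where the $\mathsf{D}$-linear unit enters.

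For the implication that $\epsilon$-natural implies $\mathsf{D}$-linear, no new work is needed: this holds for the coKleisli category of \emph{any} Cartesian differential comonad and is exactly the content of Proposition \ref{propabcok}, which does not invoke a $\mathsf{D}$-linear unit. So I would simply cite it. For the converse, I would route through Corollary \ref{etacor1}. Suppose $\llbracket f \rrbracket : \oc(A) \to B$ is $\mathsf{D}$-linear. Since $(\oc, \delta, \varepsilon, \partial)$ has a $\mathsf{D}$-linear unit, condition $(ii)$ of Corollary \ref{etacor1} supplies a map $g : A \to B$ in $\mathbb{X}$ with $\llbracket f \rrbracket = g \circ \varepsilon_A = \llbracket \mathsf{F}_\oc(g) \rrbracket$. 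By Lemma \ref{cokleisliabstractlem}.(\ref{lemepvarep}), every coKleisli map of the form $\llbracket \mathsf{F}_\oc(g) \rrbracket$ is $\epsilon$-natural, so $\llbracket f \rrbracket$ is $\epsilon$-natural. Combined with the preceding paragraph, the two classes of maps coincide; since $\epsilon\text{-}\mathsf{nat}[\mathbb{X}_\oc]$ and $\mathsf{D}\text{-}\mathsf{lin}[\mathbb{X}_\oc]$ share the same objects (all of $\mathbb{X}_\oc$) and now the same maps, they are in fact \emph{equal} as subcategories, which already furnishes the middle isomorphism.

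It remains to produce $\mathbb{X} \cong \epsilon\text{-}\mathsf{nat}[\mathbb{X}_\oc]$. Here I would first invoke Lemma \ref{etaexact}: the existence of a $\mathsf{D}$-linear unit forces the underlying comonad $(\oc, \delta, \varepsilon)$ to be exact. Feeding exactness into Lemma \ref{lemexact} yields that $\mathsf{F}_\epsilon : \mathbb{X} \to \epsilon\text{-}\mathsf{nat}[\mathbb{X}_\oc]$ is an isomorphism. Independently, Proposition \ref{etaFlem1} gives that $\mathsf{F}_{\mathsf{D}\text{-}\mathsf{lin}} : \mathbb{X} \to \mathsf{D}\text{-}\mathsf{lin}[\mathbb{X}_\oc]$ is an isomorphism. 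Chaining these, or equivalently combining the equality of subcategories with either isomorphism, produces the desired $\mathbb{X} \cong \epsilon\text{-}\mathsf{nat}[\mathbb{X}_\oc] \cong \mathsf{D}\text{-}\mathsf{lin}[\mathbb{X}_\oc]$.

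The argument is essentially an assembly of earlier results, so there is no computational obstacle. The only point deserving care is bookkeeping: one should check that $\mathsf{F}_\epsilon$, $\mathsf{F}_{\mathsf{D}\text{-}\mathsf{lin}}$, and the inclusions of the two subcategories are mutually compatible over the common forgetful functor $\mathsf{U}$ into $\mathbb{X}_\oc$, so that the displayed isomorphisms genuinely compose rather than merely existing abstractly. This compatibility is immediate from the defining formulas $\llbracket \mathsf{F}_\epsilon(g)\rrbracket = \llbracket \mathsf{F}_{\mathsf{D}\text{-}\mathsf{lin}}(g)\rrbracket = g \circ \varepsilon_A$, both of which agree with $\llbracket \mathsf{F}_\oc(g)\rrbracket$, so all three functors restrict the same inclusion $\mathbb{X} \to \mathbb{X}_\oc$ and the chain is coherent.
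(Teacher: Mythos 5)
Your proof is correct and follows essentially the same route as the paper: Proposition \ref{propabcok} for the direction that $\epsilon$-natural implies $\mathsf{D}$-linear, Corollary \ref{etacor1} together with Lemma \ref{cokleisliabstractlem}.(\ref{lemepvarep}) for the converse (the paper uses condition (iii) of Corollary \ref{etacor1} and then exhibits the factorization $\llbracket f \rrbracket = \llbracket \mathsf{F}_\oc(\llbracket f \rrbracket \circ \eta_A) \rrbracket$, which is exactly the map $g$ your condition (ii) supplies), and Proposition \ref{etaFlem1} plus Lemmas \ref{etaexact} and \ref{lemexact} for the chain of isomorphisms. Your closing remarks that the two subcategories are in fact equal and that the functors are compatible over $\mathsf{U}$ are accurate refinements the paper leaves implicit.
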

\begin{proof} By Proposition \ref{propabcok}, we already have that every $\epsilon$-natural map is $\mathsf{D}$-linear. So suppose that $\llbracket f \rrbracket$ is $\mathsf{D}$-linear. By Corollary \ref{etacor1}, since $\llbracket f \rrbracket$ is $\mathsf{D}$-linear, we have that $\llbracket f \rrbracket \circ \eta_A \circ \varepsilon_A = \llbracket f \rrbracket$. However, this also implies that $\llbracket f \rrbracket = \llbracket \mathsf{F}_\oc\left( \llbracket f \rrbracket \circ \eta_A \right) \rrbracket$. Then by Lemma \ref{cokleisliabstractlem}.(\ref{lemepvarep}), it follows that $\llbracket f \rrbracket$ is $\epsilon$-natural. By Proposition \ref{etaFlem1}, we have that $\mathbb{X} \cong \mathsf{D}\text{-}\mathsf{lin}[\mathbb{X}_\oc]$, while by Lemma \ref{etaexact} and Lemma \ref{lemexact}, we have that $\mathbb{X} \cong \epsilon\text{-}\mathsf{nat}[\mathbb{X}_\oc]$. \hfill \end{proof}

\section{Example: Reduced Power Series}\label{sec:PWex}

In this section we construct a Cartesian differential comonad (in the opposite category) based on \emph{reduced} formal power series, which therefore induces a Cartesian differential category of \emph{reduced} formal power series. To the extent of the authors' knowledge, this is a new observation.  This is an interesting and important non-trivial example of a Cartesian differential comonad which does not arise from a differential category. Unsurprisingly, the differential combinator will reflect the standard differentiation of arbitrary multivariable power series. However, the problem with arbitrary power series lies with composition. Indeed, famously, power series with degree 0 coefficients, also called constant terms, cannot be composed, since in general this results in an infinite non-converging sum in the base field. Thus, multivariable formal power series do not form a category, since their composition may be undefined. \emph{Reduced} formal power series are power series with no constant term. These can be composed \cite[Section 4.1]{brewer2014algebraic} and thus, we obtain a Lawvere theory of reduced power series. The total derivative of a reduced power series is again reduced, and therefore, we obtain a Cartesian differential category of reduced power series. Futhermore, this Cartesian differential category of reduced power series is in fact a subcategory of the opposite category of the Kleisli category of the coCartesian differential monad $\mathsf{P}$, the free reduced power series algebra monad, which can be seen as the free complete algebra functor induced by the operad of commutative algebras \cite[Section 1.4.4]{Fresse98}. Lastly, it is worth mentioning that, while in this section we will work with vector spaces over a field, we note that all the constructions easily generalize to the category of modules over a commutative (semi)ring.

Let $\mathbb{F}$ be a field. For an $\mathbb{F}$-vector space $V$, define $\mathsf{P}(V)$ as follows: 
\[ \mathsf{P}(V)= \prod^{\infty}_{n=1} \left(V^{\otimes n}\right)_{S(n)}, \]
where $(V^{\otimes n})_{S(n)}$ denotes the vector space of symmetrized $n$-tensors, that is, classes of tensors of length $n$ under the action of the symmetric group which permutes the factors in $V^{\otimes n}$. An arbitrary element $\mathfrak{t} \in \mathsf{P}(V)$ is then an infinite ordered list $\mathfrak{t} = \left( \mathfrak{t}(n) \right)^\infty_{n=1}$ where $\mathfrak{t}(n) \in (V^{\otimes n})_{S(n)}$. Therefore, an arbitrary element of $ \mathsf{P}(V)$ can be written in the following form: 
\[ \mathfrak{t} = \left( \mathfrak{t}(n) \right)^\infty_{n=1} = \left( \sum \limits^m_{i=1} v_{(n,i,1)}  \hdots  v_{(n,i,n)} \right)^\infty_{n=1}  , \]
where $v_{(n,k,1)}  \hdots  v_{(n,k,n)}$ denotes the class of $v_{(n,k,1)} \otimes \hdots \otimes v_{(n,k,n)}\in V^{\otimes n}$ under the action of the symmetric group. If $X$ is basis of $V$, then $\mathsf{P}(V) \cong \mathbb{F}\llbracket X \rrbracket_+$ \cite[Section 1.4.4]{Fresse98}, where $\mathbb{F}\llbracket X \rrbracket_+$ is the non-unital associative ring of reduced power series over $X$, that is, power series over $X$ with no constant/degree $0$ term. Therefore, $\mathsf{P}(V)$ is a non-unital associative $\mathbb{F}$-algebra. The algebra structure is induced by concatenation of classes of tensors:
\[*:v_{1} \dots v_{n}\otimes w_1\dots w_{k}\mapsto v_{1} \dots v_{n}w_1\dots w_{k}\]
 which provides a commutative, associative multiplication $*:(V^{\otimes n})_{S(n)}\otimes (V^{\otimes k})_{S(k)}\to(V^{\otimes n+k})_{S(n+k)}$. We will sometimes abbreviate a  juxtaposition of a finite family $(v_i)_{i =1}^n$ of elements of $V$ (resp. a concatenation of a finite family of symmetrized tensors $(\underline v_i)_{i=1}^n$ with $\underline v_i\in (V^{\otimes n_i})_{S(n_i)}$) by:
 \begin{align*}
     \prod_{i\in I}v_i=v_1\hdots v_i, &&\left(\mbox{resp. }\prod_{i\in I}\underline v_i=\underline v_1*\hdots*\underline v_{n}\right).
 \end{align*}

It is worth pointing out that $\mathsf{P}(V)$ does not have a unit element. More specifically, $\mathsf{P}(V)$ will not come equipped with a natural map of type $\mathbb{F} \to \mathsf{P}(V)$. So $\mathsf{P}(V)$ will not induce an algebra modality, and therefore will not induce a differential category structure on $\mathbb{F}\text{-}\mathsf{VEC}^{op}$.

The monad structure of $\mathsf{P}$ corresponds to the composition of power series, which is closely related to the composition of polynomials. Define the functor $\mathsf{P}: \mathbb{F}\text{-}\mathsf{VEC} \to \mathbb{F}\text{-}\mathsf{VEC}$ as mapping an $\mathbb{F}$-vector space $V$ to $\mathsf{P}(V)$, as defined above, and mapping an $\mathbb{F}$-linear map $f: V \to W$ to the $\mathbb{F}$-linear map ${\mathsf{P}(f): \mathsf{P}(V) \to \mathsf{P}(W)}$ defined on elements $\mathfrak t$ as above by: 
\begin{align*}
 \mathsf{P}(f)(\mathfrak{t}) = \left( \sum \limits^m_{i=1} f(v_{(n,i,1)}) \hdots f(v_{(n,i,n)}) \right)^\infty_{n=1}.
\end{align*}
Define the monad unit $\eta_V: V \to \mathsf{P}(V)$ by: 
\begin{align*}
   \eta_V(v) = (v, 0, 0, \hdots).
\end{align*}
From a power series point of view, if $X$ is a basis of $V$, $\eta_V$ maps a basis element $x \in X$ to its associated monomial of degree $1$. For the monad multiplication, let us first consider an element $\mathfrak{s} \in \mathsf{P}\mathsf{P}(V)$, which is a list of symmetrized tensor products of lists of symmetrized tensor products, $\mathfrak{s} = \left( \mathfrak{s}(n) \right)^\infty_{n=1}$, $\mathfrak{s}(n) \in \left((\mathsf{P}(V))^{\otimes n}\right)_{S(n)}$ and thus, $\mathfrak{s}(n)$ is of the form: 
\[\mathfrak{s}(n) = \sum \limits^m_{i=1} \mathfrak{s}(n)_{(i,1)}\hdots \mathfrak{s}(n)_{(i,n)} \]
for some $\mathfrak{s}(n)_{(i,j)} \in \mathsf{P}(V)$. Now for every partition of $n$ not involving $0$, that is, for every $n_1+ \hdots + n_k = n$ with $n_j \geq 1$, define $\mathfrak{s}(n_1, \hdots, n_k) \in (V^{\otimes n})_{S(n)}$ as follows:
\[\mathfrak{s}(n_1, \hdots, n_k) = \sum \limits^m_{i=1}\mathfrak{s}(k)_{(i,1)}(n_1)*\hdots* \mathfrak{s}(k)_{(i,k)}(n_k),  \ \]
where $*$ is the concatenation multiplication defined above. Define the monad multiplication $\mu_V: \mathsf{P}\mathsf{P}(V) \to \mathsf{P}(V)$ as follows: 
\begin{align*}
 \mu_V(\mathfrak{s}) =  \left(\sum\limits_{k=1}^n \sum \limits_{n_1 + \hdots + n_k=n} \mathfrak{s}(n_1, \hdots, n_k) \right)^\infty_{n=1} &&
 \mu_V(\mathfrak{s})(n) = \sum\limits_{k=1}^n \sum \limits_{n_1 + \hdots + n_k=n} \mathfrak{s}(n_1, \hdots, n_k)
\end{align*}
This monad multiplication corresponds to the composition of multivariable reduced power series, as defined explicitly in \cite[Section 4.1]{brewer2014algebraic}. 

\begin{lemma}\label{lem:powmonad}\cite[Section 1.4.3]{Fresse98}
$(\mathsf{P}, \mu, \eta)$ is a monad on $\mathbb{F}\text{-}\mathsf{VEC}$.   
\end{lemma}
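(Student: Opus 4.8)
The plan is to recognize $(\mathsf{P}, \mu, \eta)$ as the free reduced power series algebra monad, i.e.\ the free complete commutative non-unital algebra monad of \cite[Section 1.4.3]{Fresse98}, so that its monad structure is exactly operadic substitution for the commutative operad, which on the power series side is the composition of reduced multivariable power series as in \cite[Section 4.1]{brewer2014algebraic}. The monad laws are then instances of the associativity and unitality of this substitution. Concretely I would verify the axioms directly, and the first thing to establish is that every structure map is well defined: in the formula for $\mu_V(\mathfrak{s})(n)$ the outer index $k$ ranges only up to $n$ and every composition $n_1 + \dots + n_k = n$ has all $n_j \geq 1$, so each graded component is a \emph{finite} sum of symmetrized tensors. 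This finiteness is precisely what the reduced hypothesis (no degree-$0$ term) buys us, and it is the reason $\mathsf{P}$ lands in the product $\prod_{n\geq 1}$ and $\mu$ is well defined at all; I would flag it at the outset.

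Next I would dispatch the routine parts. Functoriality of $\mathsf{P}$ is immediate, since $\mathsf{P}(f)$ is induced by the $S(n)$-equivariant maps $f^{\otimes n}$ and hence respects identities and composition and descends to the symmetrized tensors. Naturality of $\eta$ holds because both $\mathsf{P}(f)\circ\eta_V$ and $\eta_W\circ f$ place $f(v)$ in degree $1$ and $0$ elsewhere, while naturality of $\mu$ follows from the fact that $\mathsf{P}(f)$ is an algebra homomorphism for the concatenation product $*$, so it commutes with forming the terms $\mathfrak{s}(n_1,\dots,n_k)$. The two unit laws are then short direct computations. For the right unit $\mu_V\circ\mathsf{P}(\eta_V)=1_{\mathsf{P}(V)}$, applying $\eta_V$ to each tensor factor concentrates everything in degree $1$, so only the composition $n = 1 + \dots + 1$ survives and returns $\mathfrak{t}(n)$; for the left unit $\mu_V\circ\eta_{\mathsf{P}(V)}=1_{\mathsf{P}(V)}$, the element $\eta_{\mathsf{P}(V)}(\mathfrak{t})$ is concentrated in outer degree $1$, so only $k=1$ contributes and again returns $\mathfrak{t}(n)$.

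The main obstacle is associativity, $\mu_V\circ\mathsf{P}(\mu_V)=\mu_V\circ\mu_{\mathsf{P}(V)}$, which is the associativity of triple substitution. Here I would fix $n$ and expand both composites into sums of iterated concatenations indexed by nested compositions of $n$: on one side one first partitions $n$ and then refines each block, while on the other side one groups into blocks-of-blocks and then concatenates. The content of the proof is a bijection between these two sets of nested compositions that matches the corresponding symmetrized tensor terms, using commutativity and associativity of $*$ to identify the two orderings. This is exactly the associativity of the operadic composition of $\Com$ transported to the completed setting, so for a high-level argument it suffices to invoke \cite[Section 1.4.3]{Fresse98}; the honest elementary proof is the bookkeeping over compositions of $n$, which is where essentially all the work lies.
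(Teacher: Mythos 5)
Your proposal is correct and takes essentially the same route as the paper, which offers no proof of this lemma at all beyond the citation to \cite[Section 1.4.3]{Fresse98}, where $(\mathsf{P},\mu,\eta)$ arises as the free complete (non-unital) algebra monad over the operad $\Com$ and the monad laws are instances of associativity and unitality of operadic substitution. Your supplementary direct verification is sound — in particular the observation that reducedness (all parts $n_j \geq 1$ in a composition of $n$) is what makes each graded component of $\mu_V(\mathfrak{s})$ a finite sum, and the unit-law and nested-composition arguments are the standard ones — so you have in fact supplied more detail than the paper does.
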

We now introduce the differential combinator transformation for $\mathsf{P}$, which will correspond to differentiating power series. 
Define $\partial_V: \mathsf{P}(V) \to \mathsf{P}(V \times V)$ by setting: 
\begin{align*}
\partial_V(\mathfrak{t})(n) =  \sum \limits^m_{i=1}\sum_{j=1}^n \left((v_{(n,i,1)},0)  \hdots \widehat{(v_{(n,i,j)},0)}\hdots (v_{(n,i,n)},0)\right) (0,v_{n,i,j}), 
\end{align*}
where $\mathfrak t$ is an arbitrary element of $\mathsf P(V)$ as above and $\widehat{(v_{(n,i,j)},0)}$ indicates the omission of the factor $(v_{(n,i,j)},0)$ in the product. With our conventions, we can abbreviate:
\begin{align*}
\partial_V(\mathfrak{t})(n) =  \sum \limits^m_{i=1}\sum_{j=1}^n \left(\prod_{k\neq j}(v_{(n,i,k)},0) \right) (0,v_{n,i,j}).
\end{align*}
If $X$ is a basis of $V$, the differential combinator transformation can described as a map ${\partial_V: \mathbb{F}\llbracket X \rrbracket_+ \to \mathbb{F}\llbracket X \sqcup X \rrbracket_+}$ which maps a reduced power series $\mathfrak{t}(\vec x)$ to its sum of its partial derivatives: 
\[ \partial_V(\mathfrak t(\vec x)) = \sum \limits_{x_i \in \vec x} \frac{\partial \mathfrak{t}(\vec x)}{\partial x_i} x^\ast_i \]
where $x^\ast_i$ denotes the element $x_i$ in the second copy of $X$ in the disjoint union $X \sqcup X$. Note that even if $\mathfrak{t}(\vec x)$ depends on an infinite list of variables, $\partial_V(\mathfrak{t}(\vec x))$ is well-defined as a formal power series. 

It is worth insisting on the fact that $\partial$ cannot be induced by a deriving transformation in the sense of Example \ref{ex:diffcat}. Indeed, as a map, $\partial$ does not factor through a map $\mathsf P(V)\to\mathsf P(V)\otimes V$. Note that a power series could have infinite partial derivatives and, since infinite sums and $\otimes$ are generally incompatible, the derivative of a power series could not be described as an element of $\mathsf{P}(V) \otimes V$. Moreover, we already noted the lack of unit: a differential operator of type $\mathsf{P}(V) \to \mathsf{P}(V) \otimes V$ would not be able to properly derive degree 1 monomials without a unit argument to put in the $\mathsf{P}(V)$ component.


\begin{proposition}\label{prop:powpartial} $\partial$ is a differential combinator transformation on $(\mathsf{P}, \mu, \eta)$.
\end{proposition}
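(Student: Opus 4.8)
The plan is to verify that $\partial$ satisfies the six axioms \textbf{[dc.1]} through \textbf{[dc.6]} in the \emph{dual} form appropriate for a Cartesian differential monad (as described in Example \ref{ex:CDM}), so all arrows and composites are reversed relative to Definition \ref{def:cdcomonad}. Since the formula for $\partial_V$ is given explicitly on a spanning set of symmetrized tensors, the natural strategy is to compute both sides of each axiom on a generic symmetrized tensor $v_1 \hdots v_n \in (V^{\otimes n})_{S(n)}$ and check equality term by term; the ordered-list/degree-wise bookkeeping of $\mathsf{P}$ means it suffices to work in a fixed degree $n$ throughout. Before starting the axioms I would record naturality of $\partial$, which is immediate since $\partial_V$ is defined by a formula that commutes with applying an $\mathbb{F}$-linear map $f$ factorwise.

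First I would dispatch the easy axioms. The dual zero rule \textbf{[dc.1]} asks that $\mathsf{P}(\pi_0) \circ \partial_V = 0$ (killing the ``primed'' variable by projecting it away): since every summand of $\partial_V(\mathfrak t)(n)$ contains exactly one factor $(0,v_{(n,i,j)})$ living in the second copy, applying $\pi_0$ sends that factor to $0$, annihilating the whole product — this is where the reduced (no constant term) structure matters, because the surviving factors still form a genuine degree-$n$ term. The dual linear rule \textbf{[dc.3]} relating $\partial_V$ to $\eta$ and $\pi_1$ checks that feeding a single variable through $\partial$ and projecting onto the primed copy recovers the identity in degree $1$; this is a direct one-line computation on $\eta_V(v) = (v,0,0,\hdots)$. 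The dual additive rule \textbf{[dc.2]} amounts to the Leibniz/derivation-style additivity of the partial-derivative operator in the primed variable and follows by splitting the sum $\sum_j$ according to the two projections $\pi_0,\pi_1$ after $\oc(1\times\nabla)$; this is bookkeeping on which factor carries the primed tensor.

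The genuinely substantive axioms are the chain rule \textbf{[dc.4]} and the two second-order axioms, the lift rule \textbf{[dc.5]} and the symmetry rule \textbf{[dc.6]}. For \textbf{[dc.4]} I would compute $\partial_V \circ \mu_V$ on an element $\mathfrak s \in \mathsf{P}\mathsf{P}(V)$ and compare it with the composite built from $\mu$, $\partial_{\mathsf P(V)}$, and the pairing $\langle \mathsf P(\pi_0), \partial\rangle$; the computation reduces to the classical fact that differentiating a composite power series produces a sum over which ``inner slot'' the derivative lands in, matched against the partition sum defining $\mu$ — essentially the Faà di Bruno/chain-rule combinatorics, but only the first-order part, so each partition term contributes one derivative factor. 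For \textbf{[dc.5]} and \textbf{[dc.6]} I would use the explicit description of $\ell_A$ and $c_A$ together with the fact that applying $\partial$ twice produces a double sum $\sum_{j}\sum_{k\neq j}$ over ordered pairs of distinct slots; the lift rule then says that selecting the ``diagonal'' copies via $\ell$ collapses the double differentiation back to a single one, and the symmetry rule says the double sum is invariant under swapping the two primed directions, which holds because the summation $\sum_{j\neq k}$ is manifestly symmetric in the two omitted factors.

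I expect the chain rule \textbf{[dc.4]} to be the main obstacle: unwinding $\mu_V$ (with its sum over all compositions $n_1+\hdots+n_k = n$) and then applying $\partial$ produces a large indexed sum, and the bijection matching it to $\partial_{\mathsf P(V)}\circ\mathsf P(\langle \mathsf P(\pi_0),\partial_V\rangle)\circ \delta$ requires carefully tracking which symmetrized slot the primed variable ends up in after concatenation. My plan would be to reduce to monomials by linearity and symmetry, reindex the double sum so that differentiating a composite corresponds to choosing one inner factor to differentiate and leaving the others as ``$\mathsf P(\pi_0)$'' copies, and then observe that the two sides agree summand-by-summand; the key lemma to isolate is that $\partial$ is a derivation for the concatenation product $*$ in the primed variable, which makes the interaction with $\mu_V$ transparent.
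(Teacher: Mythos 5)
Your overall strategy is the same as the paper's: verify the dualized axioms on a spanning set of symmetrized tensors and extend by linearity, with naturality immediate from the factorwise formula, \textbf{[dc.1]}--\textbf{[dc.3]} by direct bookkeeping, the chain rule by expanding $\mu$ over compositions $n_1+\cdots+n_k=n$ and matching summands (the paper grinds this out directly rather than isolating your Leibniz lemma $\partial_V(\mathfrak{a} * \mathfrak{b}) = \partial_V(\mathfrak{a}) * \mathsf{P}(\iota_0)(\mathfrak{b}) + \mathsf{P}(\iota_0)(\mathfrak{a}) * \partial_V(\mathfrak{b})$, which is true and would indeed streamline \textbf{[dc.4]}), and the two second-order axioms by computing $\partial_{V\times V}\circ \partial_V$ once. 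But there is one genuine error in your description of that second-order computation, and it sits exactly where the lift rule is decided. Applying $\partial_{V\times V}$ to a term $\bigl(\prod_{k\neq j}(v_k,0)\bigr)(0,v_j)$ sums over \emph{all} $n$ slots, not only the unprimed ones: besides the distinct-slot double sum you describe, there is a diagonal contribution, for each $j$, of the form $\bigl(\prod_{k\neq j}(v_k,0,0,0)\bigr)(0,0,0,v_j)$, coming from the outer derivative landing on the already-primed factor $(0,v_j)$. These diagonal terms are precisely what survives: in the dual lift rule $\mathsf{P}(\pi_0\times\pi_1)\circ\partial_{V\times V}\circ\partial_V=\partial_V$, the map $\pi_0\times\pi_1$ kills every distinct-slot term (each contains a factor sent to $(0,0)$) and carries the diagonal terms back to $\partial_V(\mathfrak{t})$. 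With only your $\sum_{j}\sum_{k\neq j}$ present, the left-hand side would be $0$ and the lift rule would fail; your phrase about selecting the diagonal copies via $\ell$ cannot be executed because your formula contains no diagonal terms to select. The symmetry rule \textbf{[dc.6]} is affected too, though harmlessly: one must additionally observe that the diagonal part is fixed by $\mathsf{P}(c_V)$, which holds since $c_V(0,0,0,v)=(0,0,0,v)$.

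A second, smaller slip: having announced that you would dualize everything, your chain-rule composite $\partial_{\mathsf{P}(V)}\circ\mathsf{P}(\langle \mathsf{P}(\pi_0),\partial_V\rangle)\circ\delta$ is still the comonad form. In the monad setting the identity to check is $\partial_V\circ\mu_V=\mu_{V\times V}\circ\mathsf{P}\bigl(\bigl[\mathsf{P}(\iota_0),\partial_V\bigr]\bigr)\circ\partial_{\mathsf{P}(V)}$, with the copairing $\bigl[\mathsf{P}(\iota_0),\partial_V\bigr]$ out of the biproduct and no $\delta$ anywhere. This is notational rather than mathematical, but as written the displayed composite does not typecheck.
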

\begin{proof}
First, it is straightforward to see that $\partial$ is indeed a natural transformation. 
We must show that $\partial$ satisfies the dual of the six axioms \textbf{[dc.1]} to \textbf{[dc.6]} from Definition \ref{def:cdcomonad}. Throughout this proof, it is sufficient to do the calculations on an arbitrary element $\mathfrak t \in \mathsf P(V)$. We can further assume that for all $n\in\mathbb{N}$, $\mathfrak t(n)=v_{(n,1)}*\hdots *v_{(n,n)}$ (the integer $m$ as above is equal to 1), and then extend by linearity.
\begin{enumerate}[{\bf [dc.1]}] 
\item One has, for all $n>0$:
			\begin{align*}
				\mathsf P(\pi_0)\circ \p_V(\t)(n)&= \mathsf P(\pi_0)\left(\sum_{j=1}^n \left(\prod_{k\neq j}(v_{(n,k)},0) \right) (0,v_{(n,j)})\right) =\sum_{j=1}^n\left( \prod_{k\neq j}v_{(n,k)}\right)0=0.
			\end{align*}
So $\mathsf{P}(\pi_0) \circ \partial_V = 0$.			
			\item Let $\Delta: V \to V \times V$ be the diagonal map. One computes that:
			\begin{align*}
			 	\mathsf P(V\times \Delta)\circ\p_V(\t)(n)&=\mathsf P(V\times \Delta)\left(\sum_{j=1}^n \left(\prod_{k\neq j}(v_{(n,k)},0) \right) (0,v_{(n,j)})\right),\\
			 	&=\sum_{j=1}^n\left(\prod_{k\neq j}(v_{n,k},0,0)\right)(0,v_{(n,j)},v_{(n,j)}),\\
			 		&=\left(\sum_{j=1}^n\left(\prod_{k\neq j}(v_{n,k},0,0)\right)(0,v_{(n,j)},0)\right)+\left(\sum_{j=1}^n\left(\prod_{k\neq j}(v_{n,k},0,0)\right)(0,0,v_{(n,j)})\right),\\
			 &=\left(\mathsf P\mathsf P(V\times\iota_0)+\mathsf P(V\times\iota_1)\right)\left(\sum_{j=1}^n \left(\prod_{k\neq j}(v_{(n,k)},0) \right) (0,v_{(n,j)})\right),\\
			 	&=\left(\mathsf P(V\times\iota_0)+\mathsf P(V\times\iota_1)\right)\circ\p_V(\t)(n).
			 \end{align*}
So $\mathsf{P}(1_V \times \Delta_V) \circ \partial_V = \left(\mathsf{P}(1_V \times \iota_0) + \mathsf{P}(1_V \times \iota_1) \right) \circ  \partial_V$.			 
			 \item This is a straightforward verification. So $\partial_V \circ \eta_V = \eta_{V \times V} \circ \iota_1$.
			 \item Let $\s$ be an element in $\mathsf P(\mathsf P(V))$ such that
			 \[
			 	\s(k)=\s(k)_{1}\dots\s(k)_{k},
			 \]
			 with, for all $j\in\{1,\dots,k\}, n>0$,
			 \[
			 	\s(k)_{j}(n)=v_{(k,j,n,1)}\dots v_{(k,j,n,n)},
			 \]
			 where $v_{(k,j,n,l)}\in V$ for all $k,j,n,l$. Note that elements of this type span $\mathsf P(\mathsf P(V))$. We then can make the computation on $\mathfrak s$ and extend by linearity.

			 On one hand, for all $n>0$,
			 \begin{align*}
			     \mu_{\mathsf P}(\s)(n)&=\sum\limits_{k=1}^n \sum \limits_{n_1 + \hdots + n_k=n} \mathfrak{s}(n_1, \hdots, n_k),\\
			     &=\sum\limits_{k=1}^n \sum \limits_{n_1 + \hdots + n_k=n}\mathfrak s(k)_1(n_1)*\hdots* \mathfrak s(k)_k(n_k),\\
			     &=\sum\limits_{k=1}^n \sum \limits_{n_1 + \hdots + n_k=n}v_{(k,1,n_1,1)}\hdots v_{(k,1,n_1,n_1)}\hdots v_{(k,k,n_k,1)}\hdots v_{(k,k,n_k,n_k)},
			 \end{align*}
			 and so,
			 \begin{multline*}
			 	\p_V\circ\mu_{\mathsf P}(\s)(n)=\\\sum\limits_{k=1}^n \sum \limits_{n_1 + \hdots + n_k=n}\sum_{j=1}^{k}\sum_{l=1}^{n_j}(v_{(k,1,n_1,1)},0)\hdots (v_{(k,1,n_1,n_1)},0)\hdots \widehat{(v_{(k,j,n_j,l)},0)}\hdots (v_{(k,k,n_k,n_k)},0)\ (0,v_{(k,j,n_j,l)})
			 \end{multline*}
			 On the other hand, for all $k>0$,
			 \[
			 	\p_{\mathsf P(V)}(\s)(k)=\sum_{j=1}^k\mathfrak (s(k)_1,0)\hdots \widehat{(s(k)_j,0)}\hdots (s(k)_k,0)\ (0,s(k)_j),
			 \]
			So if $[-,-]$ is the pairing operator for the coproduct, we have that: 
			 \[
			 	\mathsf P\left(\left[\mathsf P(\iota_0),\p_V\right]\right)\circ\p_{\mathsf P(V)}(\s)(k)=\sum_{j=1}^k\mathsf P(\iota_0)(\s(k)_{1})\hdots \widehat{\mathsf P(\iota_0)(\s(k)_{j})}\hdots \mathsf P(\iota_0)(\s(k)_{k})\ \partial_{V}(\s(k)_j),
			 \]
			 and so, for all $n>0$,
            \begin{align*}
			 &	\mu_{\mathsf P(V)}\circ\mathsf P\left(\left[\mathsf P(\iota_0),\p_V\right]\right)\circ\p_{\mathsf P(V)}(\s)(n)=\sum_{k=1}^n\sum_{n_1+\dots+n_k=n}\mathsf P\left(\left[\mathsf P(\iota_0),\p_V\right]\right)\circ\p_{\mathsf P(V)}(\s)(n_1,\dots,n_k)\\
			 	&=\sum_{k=1}^n\sum_{n_1+\dots+n_k=n}\sum_{j=1}^k\mathsf P(\iota_0)(\s(k)_{1})*\hdots* \widehat{\mathsf P(\iota_0)(\s(k)_{j})}*\hdots* \mathsf P(\iota_0)(\s(k)_{k})\ *\partial_{V}(\s(k)_j),\\
			 	&=\sum\limits_{k=1}^n \sum \limits_{n_1 + \hdots + n_k=n}\sum_{j=1}^{k}\sum_{l=1}^{n_j}(v_{(k,1,n_1,1)},0)\hdots(v_{(k,1,n_1,n_1)},0)\hdots \widehat{(v_{(k,j,n_j,l)},0)}\hdots (v_{(k,k,n_k,n_k)},0)\ (0,v_{(k,j,n_j,l)})
			 \end{align*}
So $\partial_V \circ \mu_V = \mu_{V \times V} \circ \mathsf{P}\left( [\mathsf{P}(\iota_0), \partial_V ] \right) \circ \partial_{\mathsf{P}(V)}$. 	
\end{enumerate}
In order to prove the two last identities, let us compute $\partial_{V\times V}(\partial_V(\t))$. For all $n>0$,
\begin{align*}
&\p_{V\times V}(\p_V(\t))(n)=\\
			&\sum_{j_0\neq j_1\in\{1,\dots,n\}}\left(\prod_{k\neq j_0,j_1}(v_{(n,i,k)},0,0,0) \right) (0,v_{n,i,j_0},0,0)(0,0,v_{n,i,j_1},0)+\sum_{j=1}^n\left(\prod_{k\neq j}(v_{(n,i,k)},0,0,0) \right) (0,0,0,v_{n,i,j})
\end{align*}
\begin{enumerate}[{\bf [dc.1]}] 
	\setcounter{enumi}{4}
\item Using our computation for $\partial_{V\times V}(\partial_V(\t))$, we get, for all $n>0$:
			\begin{multline*}
				\mathsf P(\pi_0\times\pi_1)\circ\p_{V\times V}(\p_V(\t))(n)=\\\sum_{j_0\neq j_1\in\{1,\dots,n\}}\left(\prod_{k\neq j_0,j_1}(v_{(n,i,k)},0) \right) (0,0)(0,0)+\sum_{j=1}^n\left(\prod_{k\neq j}(v_{(n,i,k)},0) \right) (0,v_{n,i,j})=\partial_V(\t)
			\end{multline*}
			so $\mathsf P(\pi_0\times\pi_1)\circ\p_{V\times V}(\p_V(\t))=\p_V(\t)$.
			\item Our computation of $\partial_{V\times V}(\partial_V(\t))$ clearly shows that this element is fixed under the action of $\mathsf P(c_V)$. So $\mathsf{P}(c_V) \circ \partial_{V \times V} \circ \partial_V = \partial_{V \times V} \circ \partial_V$.
\end{enumerate}
So we conclude that $\partial$ is a differential combinator transformation. \end{proof}

Thus $(\mathsf{P}, \mu, \eta, \partial)$ is a coCartesian differential monad and so the opposite of its Kleisli category is a Cartesian differential category (which we summarize in Corollary \ref{cor:POW} below) which, as we will explain below, captures power series differentiation. This coCartesian differential monad also comes equipped with a $\mathsf{D}$-linear counit. Define $\varepsilon_V: \mathsf{P}(V) \to V$ as simply the projection onto $V$: 
\[ \varepsilon_V\left( \mathfrak{t} \right) = \mathfrak{t}(1) \]
From a power series point of view, $\varepsilon$ projects out the degree 1 coefficients of a reduced power series. 

\begin{lemma}\label{lem:powepsilon} $\varepsilon$ is a $\mathsf{D}$-linear counit of $(\mathsf{P}, \mu, \eta, \partial)$.
\end{lemma}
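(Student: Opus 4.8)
The plan is to first make explicit what the dual of Definition \ref{def:Dunit} demands of a $\mathsf{D}$-linear counit for the monad $(\mathsf{P}, \mu, \eta, \partial)$. Dualizing the linear rule \textbf{[du.1]} leaves the equation unchanged in form, so it requires $\varepsilon_V \circ \eta_V = 1_V$. Dualizing the linearization rule \textbf{[du.2]} reverses the composite and turns the injection $\oc(\iota_1)$ into the second projection; concretely it requires $\eta_V \circ \varepsilon_V = \mathsf{P}(\pi_1) \circ \partial_V$ as endomorphisms of $\mathsf{P}(V)$, where $\pi_1 : V \times V \to V$ is the second projection. Before checking these I would record naturality of $\varepsilon$: since $\mathsf{P}(f)$ acts factorwise it preserves the degree-$1$ component, so $\varepsilon_W \circ \mathsf{P}(f) = f \circ \varepsilon_V$ for every $f : V \to W$.

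The linear rule is immediate: as $\eta_V(v) = (v,0,0,\dots)$ has degree-$1$ part $v$, we get $\varepsilon_V(\eta_V(v)) = \eta_V(v)(1) = v$, i.e. $\varepsilon_V \circ \eta_V = 1_V$.

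For the linearization rule I would evaluate both sides on an arbitrary $\mathfrak{t} \in \mathsf{P}(V)$ and, by linearity, reduce to the case $\mathfrak{t}(n) = v_{(n,1)} \cdots v_{(n,n)}$. On the right, applying $\mathsf{P}(\pi_1)$ to the explicit formula for $\partial_V(\mathfrak{t})$ sends each factor $(v_{(n,k)},0)$ to $0$ and the distinguished factor $(0,v_{(n,j)})$ to $v_{(n,j)}$. For $n \geq 2$ every summand therefore acquires a tensor factor equal to $0$ and vanishes, whereas for $n = 1$ the empty product of $(v,0)$-factors leaves only the single factor, which maps to $v_{(1,1)} = \mathfrak{t}(1)$. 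Hence $\mathsf{P}(\pi_1)\bigl(\partial_V(\mathfrak{t})\bigr) = (\mathfrak{t}(1),0,0,\dots)$. On the left, $\eta_V(\varepsilon_V(\mathfrak{t})) = \eta_V(\mathfrak{t}(1)) = (\mathfrak{t}(1),0,0,\dots)$, so the two agree and \textbf{[du.2]} holds.

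The genuinely delicate points are bookkeeping rather than computation: getting the dual axioms right — in particular that $\iota_1$ dualizes to the second projection $\pi_1$, matching the dualization already used for \textbf{[dc.3]} in Proposition \ref{prop:powpartial} — and correctly reading the $n=1$ term of $\partial_V$ inside the non-unital algebra $\mathsf{P}(V)$, where the empty product must be interpreted as leaving the single factor $(0,v_{(1,1)})$ intact. Once these are settled, the verification reduces to the short calculation above, and we conclude that $\varepsilon$ is a $\mathsf{D}$-linear counit.
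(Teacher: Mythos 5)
Your proof is correct, and since the paper leaves this lemma as an exercise (``a straightforward verification''), your elementwise check is precisely the verification intended: naturality of $\varepsilon$, the dual linear rule $\varepsilon_V \circ \eta_V = 1_V$, and the dual linearization rule $\eta_V \circ \varepsilon_V = \mathsf{P}(\pi_1) \circ \partial_V$, with the dualization convention ($\iota_1 \leftrightarrow \pi_1$, $\eta \leftrightarrow \varepsilon$) matching the one used for \textbf{[dc.1]}--\textbf{[dc.6]} in Proposition \ref{prop:powpartial}. Your degree bookkeeping is also right: every summand of $\mathsf{P}(\pi_1)\left(\partial_V(\mathfrak{t})\right)(n)$ with $n \geq 2$ contains a factor $\pi_1(v_{(n,k)},0) = 0$ and so vanishes, while the $n=1$ term (empty product) yields $\mathfrak{t}(1)$, giving $(\mathfrak{t}(1),0,0,\hdots) = \eta_V(\varepsilon_V(\mathfrak{t}))$ as required.
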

\begin{proof} The proof, which is a straightforward verification, is left to the reader as an exercise.
\end{proof}

Therefore, the subcategory of $\mathsf{D}$-linear maps of the opposite category of the Kleisli category of $\mathsf{P}$ is isomorphic to the opposite category of $\mathbb{F}\text{-}\mathsf{VEC}$. We summarize these results as follows:  

\begin{corollary}\label{cor:POW} $(\mathsf{P}, \mu, \eta, \partial)$ is a Cartesian differential comonad on $\mathbb{F}\text{-}\mathsf{VEC}^{op}$ with $\mathsf{D}$-linear unit $\varepsilon$. Therefore $\mathbb{F}\text{-}\mathsf{VEC}^{op}_\mathsf{P}$ is a Cartesian differential category and $\mathsf{D}\text{-}\mathsf{lin}\left[ \mathbb{F}\text{-}\mathsf{VEC}^{op}_{\mathsf{P}} \right] \cong \mathbb{F}\text{-}\mathsf{VEC}^{op}$.
\end{corollary}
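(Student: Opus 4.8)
The plan is to assemble the pieces already in hand and push them through the duality established in Example \ref{ex:CDM}; the only genuine content of this corollary is bookkeeping, since the three preceding results (Lemma \ref{lem:powmonad}, Proposition \ref{prop:powpartial}, and Lemma \ref{lem:powepsilon}) together with the general theory of Section \ref{sec:CDComonad} deliver everything.

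First I would note that $\mathbb{F}\text{-}\mathsf{VEC}$ has finite biproducts, given by direct sums of vector spaces, and that finite biproducts are a self-dual notion, so $\mathbb{F}\text{-}\mathsf{VEC}^{op}$ also has finite biproducts. Next, recalling that a monad $(\mathsf{S}, \mu, \eta)$ on a category $\mathbb{X}$ is exactly a comonad on $\mathbb{X}^{op}$, and that by Example \ref{ex:CDM} a differential combinator transformation on a monad is precisely a differential combinator transformation on the corresponding comonad on the opposite category, Lemma \ref{lem:powmonad} and Proposition \ref{prop:powpartial} combine to say exactly that $(\mathsf{P}, \mu, \eta, \partial)$ is a Cartesian differential monad on $\mathbb{F}\text{-}\mathsf{VEC}$, equivalently a Cartesian differential comonad on $\mathbb{F}\text{-}\mathsf{VEC}^{op}$. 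This proves the first sentence of the corollary.

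For the second sentence, I would apply the dual of Theorem \ref{thm1}, as recorded in Example \ref{ex:CDM}: the coKleisli category of this comonad on $\mathbb{F}\text{-}\mathsf{VEC}^{op}$, which is precisely $\mathbb{F}\text{-}\mathsf{VEC}^{op}_\mathsf{P}$ (the opposite of the Kleisli category of $\mathsf{P}$), is a Cartesian differential category. For the final isomorphism, Lemma \ref{lem:powepsilon} says $\varepsilon$ is a $\mathsf{D}$-linear counit of the monad, which under the same dualization is a $\mathsf{D}$-linear unit of the comonad on $\mathbb{F}\text{-}\mathsf{VEC}^{op}$; applying the dual of Proposition \ref{etaFlem1} then yields $\mathsf{D}\text{-}\mathsf{lin}\left[ \mathbb{F}\text{-}\mathsf{VEC}^{op}_{\mathsf{P}} \right] \cong \mathbb{F}\text{-}\mathsf{VEC}^{op}$.

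The argument is essentially immediate, so there is no serious obstacle; the one point requiring a little care is the dualization itself. One must confirm that the coKleisli category of the comonad on $\mathbb{F}\text{-}\mathsf{VEC}^{op}$ really is the opposite of the Kleisli category of the monad $\mathsf{P}$ on $\mathbb{F}\text{-}\mathsf{VEC}$, and that the biproduct structure, the six axioms \textbf{[dc.1]}--\textbf{[dc.6]}, and the two $\mathsf{D}$-linear unit axioms \textbf{[du.1]}--\textbf{[du.2]} all translate correctly between the monad and comonad pictures. Since Example \ref{ex:CDM} sets up exactly this correspondence, nothing new needs to be verified here.
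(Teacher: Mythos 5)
Your proposal is correct and matches the paper's own treatment: the paper states this corollary as a summary of Lemma \ref{lem:powmonad}, Proposition \ref{prop:powpartial}, and Lemma \ref{lem:powepsilon}, invoking exactly the dualization of Example \ref{ex:CDM} together with the duals of Theorem \ref{thm1} and Proposition \ref{etaFlem1}. Your added care about the monad/comonad translation and the self-duality of biproducts is sound but, as you say, already packaged in Example \ref{ex:CDM}, so nothing further is needed.
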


The Cartesian differential category $\mathbb{F}\text{-}\mathsf{VEC}^{op}_\mathsf{P}$ can be interpreted as the category whose objects are $\mathbb{F}$-vector spaces and whose maps are reduced power series between them. As a result, focusing on the finite-dimensional vector spaces, specifically $\mathbb{F}^n$, one obtains a Cartesian differential category of reduced power series over finite variables. We describe this category in detail. 

\begin{example} \normalfont \label{ex:CDCPOW} Let $\mathbb{F}$ be a field. Define the category $\mathbb{F}\text{-}\mathsf{POW}_{red}$ whose object are $n \in \mathbb{N}$, where a map ${\mathfrak{P}: n \to m}$ is a $m$-tuple of reduced power series (i.e. power series with no degree $0$ coefficients) in $n$ variables, that is, $\mathfrak{P} = \langle \mathfrak{p}_1(\vec x), \hdots, \mathfrak{p}_m(\vec x) \rangle$ with $\mathfrak{p}_i(\vec x) \in \mathbb{F}\llbracket x_1, \hdots, x_n\rrbracket_+$. The identity maps $1_n: n \to n$ are the tuples $1_n = \langle x_1, \hdots, x_n \rangle$ and where composition is given by multivariable power series substitution \cite[Section 4.1]{brewer2014algebraic}. $\mathbb{F}\text{-}\mathsf{POW}_{red}$ is a Cartesian left additive category where the finite product structure is given by $n \times m = n +m$ with projection maps ${\pi_0: n \times m \to n}$ and ${\pi_1: n \times m \to m}$ defined as the tuples $\pi_0 = \langle x_1, \hdots, x_n \rangle$ and $\pi_1 = \langle x_{n+1}, \hdots, x_{n+m} \rangle$, and where the additive structure is defined coordinate-wise via the standard sum of power series. $\mathbb{F}\text{-}\mathsf{POW}_{red}$ is also a Cartesian differential category where the differential combinator is given by the standard differentiation of power series, that is, for a map ${\mathfrak{P}: n \to m}$, with $\mathfrak{P} = \langle \mathfrak{p}_1(\vec x), \hdots, \mathfrak{p}_m(\vec x) \rangle$, its derivative $\mathsf{D}[\mathfrak{P}]: n \times n \to m$ is defined as the tuple of the sum of the partial derivatives of the power series $\mathfrak{p}_i(\vec x)$:
\begin{align*}
\mathsf{D}[\mathfrak{P}](\vec x, \vec y) := \left( \sum \limits^n_{i=1} \frac{\partial \mathfrak{p}_1(\vec x)}{\partial x_i} y_i, \hdots, \sum \limits^n_{i=1} \frac{\partial \mathfrak{p}_n(\vec x)}{\partial x_i} y_i \right) && \sum \limits^n_{i=1} \frac{\partial \mathfrak{p}_j (\vec x)}{\partial x_i} y_i \in \mathbb{F}\llbracket x_1, \hdots, x_n, y_1, \hdots, y_n \rrbracket_+
\end{align*} 
It is important to note that even if $\mathfrak{p}_j(\vec x)$ has terms of degree 1, every partial derivative $\frac{\partial \mathfrak p_j(\vec x)}{\partial x_i} y_i$ will still be reduced (even if $\frac{\partial \mathfrak p_j(\vec x)}{\partial x_i}$ has a degree 0 term), and thus the differential combinator $\mathsf{D}$ is indeed well-defined. A map ${\mathfrak{P}: n \to m}$ is $\mathsf{D}$-linear if it of the form: 
\begin{align*}
\mathfrak{P} = \left \langle \sum \limits^{n}_{i=0} r_{i,1}x_{i}, \hdots, \sum \limits^{n}_{i=0} r_{i,m}x_{i} \right \rangle && r_{i,j} \in \mathbb{F}
\end{align*}
Thus $\mathsf{D}\text{-}\mathsf{lin}[\mathbb{F}\text{-}\mathsf{POW}_{red}]$ is equivalent to $\mathbb{F}\text{-}\mathsf{LIN}$ (as defined in Example \ref{ex:CDCPOLY}). We note that this example generalize to the category of reduced formal power over an arbitrary commutative (semi)ring.
\end{example}

Observe that we also have the following chain of isomorphisms: 
\begin{align*}
\mathbb{F}\text{-}\mathsf{POW}_{red}(n,1) = \mathbb{F}\llbracket x_1, \hdots, x_n\rrbracket_+ \cong \mathsf{P}(\mathbb{F}^n) \cong \mathbb{F}\text{-}\mathsf{VEC} \left(\mathbb{F}, \mathsf{P}(\mathbb{F}^n) \right) = \mathbb{F}\text{-}\mathsf{VEC}_{\mathsf{P}}\left(\mathbb{F}, \mathbb{F}^n \right) = \mathbb{F}\text{-}\mathsf{VEC}^{op}_{\mathsf{P}}\left(\mathbb{F}^n, \mathbb{F} \right) 
\end{align*}
which then implies that $\mathbb{F}\text{-}\mathsf{POW}_{red}(n,m) \cong \mathbb{F}\text{-}\mathsf{VEC}^{op}_{\mathsf{P}}\left(\mathbb{F}^n, \mathbb{F}^m \right)$. Thus $\mathbb{F}\text{-}\mathsf{POW}_{red}$ is isomorphic to the full subcategory of $\mathbb{F}\text{-}\mathsf{VEC}^{op}_{\mathsf{P}}$ whose objects are the finite dimensional $\mathbb{F}$-vector spaces. In the finite dimensional case, the differential combinator transformation corresponds precisely to the differential combinator on $\mathbb{F}\text{-}\mathsf{POW}_{red}$:
\[\partial_{\mathbb{F}^n}(\mathfrak{p}(\vec x)) = \mathsf{D}[\mathfrak{p}](\vec x, \vec y)\] 
Therefore, $\mathbb{F}\text{-}\mathsf{POW}_{red}$ is a sub-Cartesian differential category of $\mathbb{F}\text{-}\mathsf{VEC}^{op}_{\mathsf{P}}$, where the latter allows for power series over infinite variables. 
	
\section{Example: Divided Power Algebras}\label{secpuisdiv}
In this section, we show that the free divided power algebra monad is a coCartesian differential monad, and therefore, we obtain a Cartesian differential category of divided power polynomials \cite[Section 12]{roby68}. Divided power algebras were introduced by Cartan \cite{cartan1954puissances} to study the homology of Eilenberg-MacLane spaces with coefficients in a prime field of positive characteristic. Such structures appear notably on the homotopy of simplicial algebras \cite{cartan1954puissances, fresse2000}, and in the study of $D$-modules and crystalline cohomology \cite{berthelot74}. The free divided power algebra monad $\mathsf{\Gamma}$ was first introduced by Roby in \cite{roby65} and generalized in the context of operads by Fresse in \cite{fresse2000}. Much as for reduced power series, the composition of divided power polynomials is only well-defined when they are reduced, that is, have no constant term. More generally, the study of divided power algebras has been widely developed in the non-unital setting \cite{fresse2000}. Since the monad we study encodes a structure of non-unital algebras, this provides another example of a Cartesian differential comonad which is not induced by a differential category. We begin by reviewing the definition of a divided power algebra.

\begin{definition}\label{defipuisdiv} Let $\mathbb{F}$ be a field. 
    \textbf{A divided power algebra} \cite[Section 2]{cartan1954puissances} over $\mathbb{F}$ is a commutative associative (non-unital) $\mathbb{F}$-algebra $(A,*)$, where $A$ is the underlying $\mathbb{F}$-vector space and $\ast$ is the $\mathbb{F}$-bilinear multiplication, which comes equipped with a divided power structure, that is, a family of functions $(-)^{[n]}: A \to A$, $a \mapsto a^{[n]}$, indexed by strictly positive integers $n$, such that the following identities hold:  
    \begin{enumerate}[{\bf [dp.1]}]
\item\label{relComlambda} $(\lambda a)^{[n]}=\lambda^na^{[n]}$ for all $a\in A$ and $\lambda\in\mathbb{F}$.
\item\label{relComrepet} $a^{[m]}*a^{[n]}=\binom{m+n}{m}a^{[m+n]}$ for all $a\in A$.
\item\label{relComsomme} $(a+b)^{[n]}=a^{[n]}+\big(\sum_{l=1}^{n-1}a^{[l]}*b^{[n-l]}\big)+b^{[n]}$ for all $a\in A$, $b\in A$.
\item\label{relComunit} $a^{[1]}=a$ for all $a\in A$.
\item\label{relComcomp1} $(a*b)^{[n]}=n!a^{[n]}*b^{[n]}=a^{*n}*b^{[n]}=a^{[n]}*b^{*n}$ for all $a\in A$, $b\in A$.
\item\label{relComcomp2} $(a^{[n]})^{[m]}=\frac{(mn)!}{m!(n!)^m}a^{[mn]}$ for all $a\in A$.
\end{enumerate}
The function $(-)^{[n]}$ is called the $n$-th divided power operation. 
\end{definition}

When the base field $\mathbb{F}$ is of characteristic $0$, the only divided power structure on a commutative associative algebra $(A,*)$ is given by $a^{[n]}=\frac{a^{*n}}{n!}$, which justifies the name ``divided powers''. Therefore, in the characteristic $0$ case, a divided power algebra is simply a commutative associative (non-unital) algebra. However, in general, for non-zero characteristics, the two notions diverge. Examples of divided power algebras include the homology of Eilenberg-MacLane spaces \cite[Section 5 and 8]{cartan1954puissances}, the homotopy of simplicial commutative algebras \cite[Th{\'e}or{\`e}me 1]{cartan1954puissances}, and all Zinbiel algebras (which we review in the next section) \cite[Theorem 3.4]{dokas09}. Furthermore, there exists a notion of free divided power algebras, which we review now.

Let $\mathbb{F}$ be a field. For an $\mathbb{F}$-vector space $V$, define $\mathsf{\Gamma}_n(V)=(V^{\otimes n})^{S(n)} \subseteq V^{\otimes n}$ as the subspace of tensors of length $n$ of $V$ which are fixed under the action of the symmetric group $S(n)$, that is, invariant under all $n$-permutations $\sigma \in S(n)$. Categorically speaking, $\mathsf{\Gamma}_n(V)$ is the joint equalizer of the $n$-permutations. 
Define $\mathsf{\Gamma}(V)$ as follows: 
\[\mathsf{\Gamma}(V)=\bigoplus_{n=1}^{\infty}\mathsf{\Gamma}_n(V)=V\oplus \mathsf{\Gamma}_2(V)\oplus \mathsf{\Gamma}_3(V) \oplus \hdots
\]
The vector space $\mathsf{\Gamma}(V)$ is endowed with a divided power algebra structure, and is the free divided power algebra over $V$ \cite[Section 2]{cartan1954puissances}. Explicitly, the divided power operations and the product are defined on generators $v,w\in V$ by:
\[
v^{[n]}=v^{\otimes n}\qquad v*w=v\otimes w+w\otimes v.
\]
An arbitrary element of $\mathsf{\Gamma}(V)$ can then be expressed as a finite sum of divided power monomials \cite[Section 4]{cartan7relations}, which are elements of the form: 
\[ v_1^{[r_1]}*\hdots* v_n^{[r_n]} \]
for $v_1,\hdots,v_n\in V$, where $\ast$ is the multiplication of $\mathsf{\Gamma}(V)$, and $(-)^{[r_j]}$ are the divided power operations. Note that this decomposition in monomials is not unique in general. Later on, we will define the differential combinator on monomials. In order to check that this combinator is well defined, one can use the explicit form of such a monomial: 
\[ v_1^{[r_1]}*\hdots* v_n^{[r_n]} = \sum_{\sigma\in S(n)/S(r_1,\hdots,r_n)}\sigma (v_1^{\otimes r_1}\otimes\hdots\otimes v_n^{\otimes r_n}),\]
where $S(r_1,\hdots,r_n) = S(r_1)\times\hdots\times S(r_p)$ is the Young subgroup of the symmetric group $S(r_1+\hdots+r_p)$.

Free divided power algebras induce a monad $\mathsf{\Gamma}$ on $\mathbb{F}\text{-}\mathsf{VEC}$. Note that it is sufficient to define the monad structure maps on divided power monomials and then extend by linearity. Define the endofunctor $\mathsf{\Gamma}: \mathbb{F}\text{-}\mathsf{VEC} \to \mathbb{F}\text{-}\mathsf{VEC}$ which sends a $\mathbb{F}$-vector space $V$ to its free divided power algebra $\mathsf{\Gamma}(V)$, and which sends an $\mathsf{F}$-linear map $f:V\to W$ to the $\mathsf{F}$-linear map $\mathsf{\Gamma}(f):\mathsf{\Gamma}(V)\to \mathsf{\Gamma}(W)$ defined on divided powers monomials as follows: 
\[\mathsf{\Gamma}(f)(v_1^{[r_1]}*\hdots* v_p^{[r_n]})=(f(v_1))^{[r_1]}*\hdots* (f(v_n))^{[r_n]}\]
which we then extend by linearity. The monad unit $\eta_V:V\to \mathsf{\Gamma}(V)$ is the injection map of $V$ into $\mathsf{\Gamma}(V)$:
\[\eta_V(v)=v^{[1]}.\]
Note that, with this notation, the zero element of $\Gamma(V)$ will here be denoted by $0^{[1]}$. The monad multiplication ${\mu_V:\mathsf{\Gamma}(\mathsf{\Gamma}(V))\to\mathsf{\Gamma}(V)}$ is defined as follows on divided power monomials of divided power monomials, using \textbf{[dp.5]} and \textbf{[dp.6]}: 
\begin{align*}
   &\mu_V\left((v_{1,1}^{[q_{1,1}]}*\hdots*v_{1,k_1}^{[q_{1,k_1}]})^{[r_1]}*\hdots*(v_{p,1}^{[q_{p,1}]}*\hdots*v_{p,k_p}^{[q_{p,k_p}]})^{[r_p]}\right)\\
   &=~\left(\prod_{i=1}^p\frac{1}{r_i!}\prod_{j=1}^{k_i}\frac{(r_iq_{i,j})!}{q_{i,j}!^{r_i}}\right)v_{1,1}^{[r_1q_{1,1}]}*\hdots*v_{1,k_1}^{[r_1q_{1,k_1}]}*\hdots*v_{p,k_p}^{[r_pq_{p,k_p}]} 
\end{align*}
which we then extend by linearity. 
Note that the functor $\mathsf{\Gamma}$, and the monad structure we described, can be constructed from the operad of commutative (non-unital) algebras \cite[Proposition 1.2.3]{fresse2000}. Furthermore, note that the algebras of the monad $\mathsf{\Gamma}$ are precisely the dividied power algebras \cite[Section 10, Th{\'e}or{\`e}me 1 and 2]{roby65}. 


\begin{lemma}\cite[Proposition 1.2.3]{fresse2000}\label{defGamma}
$(\mathsf{\Gamma},\mu,\eta)$ is a monad. 
\end{lemma}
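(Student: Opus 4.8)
The plan is to obtain the monad structure formally from a free--forgetful adjunction rather than to verify the monad identities by hand on divided power monomials, reducing the work to checking that the explicitly given $\eta$ and $\mu$ coincide with the adjunction data.

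First I would recall that divided power algebras over $\mathbb{F}$ organise into a category $\mathsf{DPA}$, whose morphisms are the non-unital $\mathbb{F}$-algebra maps that commute with every divided power operation $(-)^{[n]}$, and that there is an evident forgetful functor $U : \mathsf{DPA} \to \mathbb{F}\text{-}\mathsf{VEC}$. The substance of \cite[Expos{\'e} 7, Section 2]{cartan54} and \cite[Section 10, Th{\'e}or{\`e}me 1 and 2]{roby65} is that $\mathsf{\Gamma}(V)$ carries a divided power structure enjoying the universal property of the free divided power algebra on $V$: writing $F(V)$ for this algebra, for every $A \in \mathsf{DPA}$ and every $\mathbb{F}$-linear map $g : V \to U(A)$ there is a unique morphism $\widetilde{g} : F(V) \to A$ of divided power algebras with $U(\widetilde{g}) \circ \eta_V = g$, where $\eta_V(v) = v^{[1]}$. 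This is exactly the statement that $F \dashv U$ with unit $\eta$, and by construction $\mathsf{\Gamma} = U F$ as an endofunctor of $\mathbb{F}\text{-}\mathsf{VEC}$; in particular the functoriality formula for $\mathsf{\Gamma}(f)$ is just $U(\widetilde{\eta_W \circ f})$, which one reads off the universal property.

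Next I would invoke the standard principle that any adjunction $F \dashv U$ induces a monad $(U F, \eta, U \varepsilon F)$, where $\varepsilon$ is the counit. Concretely $\varepsilon_A = \widetilde{1_{U(A)}} : F(U(A)) \to A$, so the induced multiplication is $\mu_V = U(\varepsilon_{F(V)}) : \mathsf{\Gamma}\mathsf{\Gamma}(V) \to \mathsf{\Gamma}(V)$. With this, the associativity law $\mu_V \circ \mu_{\mathsf{\Gamma}(V)} = \mu_V \circ \mathsf{\Gamma}(\mu_V)$ and the unit laws $\mu_V \circ \eta_{\mathsf{\Gamma}(V)} = 1_{\mathsf{\Gamma}(V)} = \mu_V \circ \mathsf{\Gamma}(\eta_V)$ are immediate consequences of the triangle identities, requiring no computation on monomials. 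The only thing that must still be matched is that this $\mu_V = U(\varepsilon_{F(V)})$ agrees with the explicit multiplication displayed before the statement; since both are divided power morphisms out of $\mathsf{\Gamma}\mathsf{\Gamma}(V)$, it suffices by the universal property to compare them on divided power monomials of divided power monomials.

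The main obstacle is therefore this last combinatorial check: confirming that applying $\widetilde{1}$ to a generator $(v_{i,1}^{[q_{i,1}]}*\cdots*v_{i,k_i}^{[q_{i,k_i}]})^{[r_i]}$ produces precisely the coefficient $\prod_{i}\frac{1}{r_i!}\prod_{j}\frac{(r_i q_{i,j})!}{q_{i,j}!^{\,r_i}}$ appearing in the displayed formula. Here \textbf{[dp.6]} evaluates a single iterated power $(a^{[n]})^{[m]} = \tfrac{(mn)!}{m!(n!)^m} a^{[mn]}$ and \textbf{[dp.5]} distributes an outer power over a product; applying these across every factor and every partition, together with the Young-subgroup description of the monomials, is where the product of factorials materialises, and care is needed to track the normalising $\tfrac{1}{r_i!}$ consistently. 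As these are exactly the identities catalogued in \cite[Section 10]{roby65}, I would cite that computation rather than reproduce it, so that the proof rests on the cited universal property plus a one-line appeal to the fact that an adjunction induces a monad.
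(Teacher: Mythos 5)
Your argument is correct, but it reaches the lemma by a genuinely different route than the paper, which offers no verification at all: it simply cites Fresse's operadic construction \cite[Proposition 1.2.3]{fresse2000}, in which $\mathsf{\Gamma}$ arises as the divided-power functor associated to the operad of non-unital commutative algebras and the monad laws come with that general machinery, while Roby \cite{roby65} is invoked separately only to identify the $\mathsf{\Gamma}$-algebras with divided power algebras. You instead make Roby's freeness statement the primary input, get functoriality, naturality, and the unit/associativity laws formally from the free--forgetful adjunction $F \dashv U$, and reduce everything to identifying the displayed $\mu_V$ with $U(\varepsilon_{F(V)})$. That identification is the real content, and your coefficient check does come out right: iterating \textbf{[dp.5]} gives $(a_1 * \cdots * a_k)^{[r]} = (r!)^{k-1}\, a_1^{[r]} * \cdots * a_k^{[r]}$, and combining with \textbf{[dp.6]} yields $\left(v_1^{[q_1]} * \cdots * v_k^{[q_k]}\right)^{[r]} = \frac{1}{r!}\prod_{j=1}^{k} \frac{(r q_j)!}{(q_j!)^{r}}\, v_1^{[r q_1]} * \cdots * v_k^{[r q_k]}$, exactly the normalization in the displayed formula for $\mu_V$. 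Two refinements are worth noting. First, you assert that both maps are divided power morphisms out of $\mathsf{\Gamma}\mathsf{\Gamma}(V)$, but you have not shown this for the explicitly displayed $\mu$; this is harmless, because the check you actually propose — agreement on all divided power monomials of divided power monomials — already forces equality of the two linear maps on a spanning set, so the morphism property of the explicit $\mu$ is a conclusion rather than a hypothesis. Second, in positive characteristic the intermediate form $(a*b)^{[n]} = n!\, a^{[n]} * b^{[n]}$ degenerates when $n! = 0$ in $\mathbb{F}$, so the computation should be run through the integral forms $a^{*n} * b^{[n]}$ together with the integrality of $\frac{(rq)!}{r!\,(q!)^r}$, which is what makes the displayed coefficient well defined over any field; the computations of Roby that you cite are characteristic-free, so your appeal to them covers this. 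As for what each approach buys: yours eliminates any by-hand associativity verification and makes monadicity over divided power algebras manifest, whereas the paper's operadic citation hands over the explicit formula for $\mu$ directly — a formula the paper genuinely needs, since the proof of Proposition \ref{Gammacomb} manipulates it term by term.
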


Observe that $\mathsf{\Gamma}$ will not be an algebra modality since $\mathsf{\Gamma}(V)$ is non-unital. Therefore, $\mathsf{\Gamma}$ will provide an example of Cartesian differential comonad which is not induced from a differential category structure. We now define the differential combinator transformation for $\mathsf{\Gamma}$. Define $\partial_V:\mathsf{\Gamma}(V)\to\mathsf{\Gamma}(V\times V)$ as follows on divided power monomials: 
\[ \partial_V(v_1^{[r_1]}*\hdots* v_n^{[r_n]})=\sum_{i=1}^n(v_1,0)^{[r_1]}*\hdots* (v_i,0)^{[r_i-1]} *\hdots* (v_n,0)^{[r_n]} * (0,v_i)^{[1]} \]
which we then extend by linearity. If $r_i=1$, we use the following convention: 
	\[(v_1,0)^{[r_1]}*\hdots* (v_i,0)^{[r_i-1]} *\hdots* (v_n,0)^{[r_n]} * (0,v_i)^{[1]}\!=\!(v_1,0)^{[r_1]}*\hdots* (v_{i-1},0)^{[r_{i-1}]}* (v_{i+1},0)^{[r_{i+1}]} *\hdots* (v_n,0)^{[r_n]} * (0,v_i)^{[1]}\] 
We will see below that $\partial$ corresponds to taking the sum of the partial derivatives of divided power polynomials. Note that a consequence of the lack of a unit in $\mathsf{\Gamma}(V)$ is that $\partial_V$ does not factor through a map $\mathsf{\Gamma}(V) \to \mathsf{\Gamma}(V) \otimes V$ since such a map would be undefined on the divided power monomials of degree 1, $v^{[1]}$. 

\begin{proposition}\label{Gammacomb} $\partial$ is a differential combinator transformation for $(\mathsf{\Gamma},\mu,\eta)$. 
\end{proposition}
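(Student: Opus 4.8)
The plan is to mirror the proof of Proposition~\ref{prop:powpartial}, verifying the dual forms of \textbf{[dc.1]}--\textbf{[dc.6]} by direct computation on a divided power monomial $v_1^{[r_1]} * \cdots * v_n^{[r_n]}$ and extending by linearity, since such monomials span $\mathsf{\Gamma}(V)$. The engine of every computation is that $\partial_V$ is a Leibniz-type operator implementing the divided-power ``power rule'': it differentiates one factor at a time, lowering $v_i^{[r_i]}$ to $(v_i,0)^{[r_i-1]} * (0,v_i)^{[1]}$ (with coefficient $1$, which is precisely the feature that makes divided powers differentiable in all characteristics) while sending the remaining factors $v_j^{[r_j]}$ to $(v_j,0)^{[r_j]}$. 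First I would record that $\partial$ is natural, which is immediate from this description together with the definition of $\mathsf{\Gamma}$ on morphisms.

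For the dual of \textbf{[dc.1]}, I would observe that each summand of $\partial_V(\cdots)$ carries a factor $(0,v_i)^{[1]}$; applying $\mathsf{\Gamma}(\pi_0)$ replaces it with $0^{[1]}=0$, killing the whole product, so $\mathsf{\Gamma}(\pi_0)\circ\partial_V=0$. For the dual of \textbf{[dc.2]}, applying $\mathsf{\Gamma}(1_V\times\Delta_V)$ turns $(0,v_i)^{[1]}$ into $(0,v_i,v_i)^{[1]}$, which splits as $(0,v_i,0)^{[1]}+(0,0,v_i)^{[1]}$ by the degree-$1$ case of \textbf{[dp.3]}; distributing yields exactly $\left(\mathsf{\Gamma}(1_V\times\iota_0)+\mathsf{\Gamma}(1_V\times\iota_1)\right)\circ\partial_V$. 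The dual of \textbf{[dc.3]} is the base case $n=1$, $r_1=1$: the stated convention for $r_i=1$ gives $\partial_V(v^{[1]})=(0,v)^{[1]}=\eta_{V\times V}(\iota_1(v))$.

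For the dual of \textbf{[dc.5]} and \textbf{[dc.6]} I would, as in the power series proof, first compute the ``second derivative'' $\partial_{V\times V}\circ\partial_V$ explicitly in $\mathsf{\Gamma}\big((V\times V)\times(V\times V)\big)$. It decomposes into terms where two distinct factors are lowered (producing entries in the two middle coordinates) plus terms where a single factor is lowered into the fourth coordinate. Then \textbf{[dc.5]} follows because $\mathsf{\Gamma}(\pi_0\times\pi_1)$ retains only the first and fourth coordinates, annihilating the two-factor terms and returning $\partial_V$; and \textbf{[dc.6]} follows because the explicit expression is manifestly invariant under the coordinate swap induced by $c_V$.

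The main obstacle is the dual chain rule \textbf{[dc.4]}, $\partial_V\circ\mu_V=\mu_{V\times V}\circ\mathsf{\Gamma}\big([\mathsf{\Gamma}(\iota_0),\partial_V]\big)\circ\partial_{\mathsf{\Gamma}(V)}$, which I would verify on a divided power monomial of divided power monomials. Unlike the power series case, where the monad multiplication is coefficient-free concatenation, here $\mu_V$ introduces the factorial coefficients $\frac{1}{r_i!}\prod_j\frac{(r_iq_{i,j})!}{q_{i,j}!^{r_i}}$ coming from \textbf{[dp.5]} and \textbf{[dp.6]}. The crux is therefore to expand both sides into sums of divided power monomials and check that these combinatorial coefficients match term by term after differentiation lowers exactly one inner variable; the bookkeeping of which factor is omitted, together with how lowering a single divided power interacts with the multiplication coefficients, is the delicate point, and the coefficient-$1$ power rule is exactly what makes the two sides agree.
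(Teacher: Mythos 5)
Your proposal takes essentially the same route as the paper's proof: checking naturality and the duals of \textbf{[dc.1]}--\textbf{[dc.4]} directly on divided power monomials (with the chain rule \textbf{[dc.4]} reduced, exactly as in the paper, to matching the factorial coefficients produced by $\mu$ via \textbf{[dp.2]}, \textbf{[dp.5]}, \textbf{[dp.6]} against the coefficient-one lowering $v^{[r]} \mapsto (v,0)^{[r-1]}*(0,v)^{[1]}$, a bookkeeping that does go through), and then computing $\partial_{V \times V} \circ \partial_V$ explicitly once to read off \textbf{[dc.5]} from $\mathsf{\Gamma}(\pi_0 \times \pi_1)$ and \textbf{[dc.6]} from invariance under $\mathsf{\Gamma}(c_V)$. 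The only slip is cosmetic: your description of the double derivative as ``two \emph{distinct} factors lowered'' should also allow lowering the same factor twice when $r_i \geq 2$ (the paper's $r_k - \delta_{k,i} - \delta_{k,j}$ convention), but this changes nothing, since all such terms are equally annihilated by $\mathsf{\Gamma}(\pi_0 \times \pi_1)$ and fixed by $\mathsf{\Gamma}(c_V)$.
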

\begin{proof} Throughout this proof, it is sufficient to do the calculations on divided power monomials and then extend by linearity. First, clearly $\partial$ is a natural transformation. We now need to show that $\partial$ satisfies the dual of the six axioms {\bf[dc.1]} to {\bf[dc.6]} from Definition \ref{def:cdcomonad}. 

		\begin{enumerate}[{\bf [dc.1]}]
			\item Here we use the fact that multiplication by zero gives zero: 
			\begin{align*}
\mathsf{\Gamma}(\pi_0) \left( \partial_V(v_1^{[r_1]}*\hdots* v_n^{[r_n]}) \right) &=~  \mathsf{\Gamma}(\pi_0) \left( \sum_{i=1}^n (v_1,0)^{[r_1]}*\hdots* (v_i,0)^{[r_i-1]} *\hdots* (v_n,0)^{[r_n]} * (0,v_i)^{[1]} \right) \\
&=~ \sum_{i=1}^n \left(\pi_0(v_1,0) \right)^{[r_1]}*\hdots* \left(\pi_0(v_i,0)\right)^{[r_i-1]} *\hdots* \left(\pi_0(v_n,0)\right)^{[r_n]} * \left(\pi_0(0,v_i)\right)^{[1]} \\
&=~ \sum_{i=1}^n v_1^{[r_1]}*\hdots* v_i^{[r_i-1]} *\hdots* v_n^{[r_n]} * 0^{[1]} \\
&=~ 0^{[1]}
\end{align*}
so $\mathsf{\Gamma}(\pi_0)\circ \partial_V=0$
\item Here we use the fact that, by \textbf{[dp.3]}, $(v+w)^{[1]} = v^{[1]} + w^{[1]}$, and the multilinearity of the multiplication: 
{\footnotesize\begin{align*}
 &\mathsf{\Gamma}(1_V \times \Delta_V) \left( \partial_V(v_1^{[r_1]}*\hdots* v_n^{[r_n]}) \right)=  \mathsf{\Gamma}(1_V \times \Delta_V) \left( \sum_{i=1}^n (v_1,0)^{[r_1]}*\hdots* (v_i,0)^{[r_i-1]} *\hdots* (v_n,0)^{[r_n]} * (0,v_i)^{[1]} \right) \\
&=~ \sum_{i=1}^n (v_1,\Delta_V(0))^{[r_1]}*\hdots* (v_i,\Delta_V(0))^{[r_i-1]} *\hdots* (v_n,\Delta_V(0))^{[r_n]} * (0,\Delta_V(v_i))^{[1]} \\
&=~ \sum_{i=1}^n (v_1,0,0)^{[r_1]}*\hdots* (v_i,0,0)^{[r_i-1]} *\hdots* (v_n,0,0)^{[r_n]} * (0,v_i,v_i)^{[1]} \\
&=~ \sum_{i=1}^n (v_1,0,0)^{[r_1]}*\hdots* (v_i,0,0)^{[r_i-1]} *\hdots* (v_n,0,0)^{[r_n]} * \left( (0,v_i,0) + (0,0,v_i) \right)^{[1]} \\
&=~ \sum_{i=1}^n (v_1,0,0)^{[r_1]}*\hdots* (v_i,0,0)^{[r_i-1]} *\hdots* (v_n,0,0)^{[r_n]} * (0,v_i,0)^{[1]} \\
&+~ \sum_{i=1}^n (v_1,0,0)^{[r_1]}*\hdots* (v_i,0,0)^{[r_i-1]} *\hdots* (v_n,0,0)^{[r_n]} * (0,0,v_i)^{[1]} \\
&=~ \sum_{i=1}^n (v_1,\iota_0(0))^{[r_1]}*\hdots* (v_i,\iota_0(0))^{[r_i-1]} *\hdots* (v_n,\iota_0(0))^{[r_n]} * (0,\iota_0(v_i))^{[1]} \\
&+~ \sum_{i=1}^n (v_1,\iota_1(0))^{[r_1]}*\hdots* (v_i,\iota_1(0))^{[r_i-1]} *\hdots* (v_n,\iota_1(0))^{[r_n]} * (0,\iota_1(v_i))^{[1]} \\
&=~ \sum_{i=1}^n \mathsf{\Gamma}(1_V \times \iota_0)\left( (v_1,0)^{[r_1]}*\hdots* (v_i,0)^{[r_i-1]} *\hdots* (v_n,0)^{[r_n]} * (0,v_i)^{[1]} \right) \\
&+~ \sum_{i=1}^n \mathsf{\Gamma}(1_V \times \iota_1)\left( (v_1,0)^{[r_1]}*\hdots* (v_i,0)^{[r_i-1]} *\hdots* (v_n,0)^{[r_n]} * (0,v_i)^{[1]} \right) \\
&=~ \mathsf{\Gamma}(1_V\times\iota_0)\left( \partial_V(v_1^{[r_1]}*\hdots* v_n^{[r_n]}) \right) + \mathsf{\Gamma}(1_V\times\iota_1)\left( \partial_V(v_1^{[r_1]}*\hdots* v_n^{[r_n]}) \right) \\
&=~ \left(\mathsf{\Gamma}(1_V\times\iota_0)+\mathsf{\Gamma}(1_V\times\iota_1)\right) \left( \partial_V(v_1^{[r_1]}*\hdots* v_n^{[r_n]}) \right)
\end{align*}}%
So $\Gamma(V\times\Delta_V)\circ \partial_V =\left(\mathsf{\Gamma}(1_V \times\iota_0)+\mathsf{\Gamma}(1_V \times\iota_1)\right)\circ\partial_V$.
\item This is a straightforward. So $\partial_V\circ\eta_V=\eta_{V\times V}\circ\iota_1$.
			\item Carefully using the divided power structure axiom {\bf [dp.2]} when expanding out divided power monomials of divided power monomials, we compute:
{\scriptsize \begin{align*}
  & \mu_{V \times V}\left( \mathsf{\Gamma}\left( \left[\mathsf{\Gamma}(\iota_0), \partial_V\right] \right) \left(  \partial_{\mathsf{\Gamma}(V)}\left((v_{1,1}^{[q_{1,1}]}*\hdots*v_{1,k_1}^{[q_{1,k_1}]})^{[r_1]}*\hdots*(v_{p,1}^{[q_{p,1}]}*\hdots*v_{p,k_p}^{[q_{p,k_p}]})^{[r_p]}\right)  \right)  \right) \\
    &=~  \mu_{V \times V} \left( \mathsf{\Gamma} \left[\mathsf{\Gamma}(\iota_0), \partial_V\right] \left( \sum^{p}_{i=1}   \left( (v_{1,1}^{[q_{1,1}]}*\hdots*v_{1,k_1}^{[q_{1,k_1}]}), 0 \right)^{[r_1]}*\hdots*\left( (v_{i,1}^{[q_{i,1}]}*\hdots*v_{i,k_{i}}^{[q_{i,k_{i}}]}), 0 \right)^{[r_{i} - 1]}* \hdots \right. \right. \\
&~\left. \left. *\left((v_{p,1}^{[q_{p,1}]}*\hdots*v_{p,k_p}^{[q_{p,k_p}]}), 0 \right)^{[r_p]} * \left(0, (v_{i,1}^{[q_{i,1}]}*\hdots*v_{i,k_{i}}^{[q_{i,k_{i}}]}) \right)^{[1]} \right) \right) \\
    &=~  \mu_{V \times V} \left(  \sum^{p}_{i=1}   \left(  \iota_0(v_{1,1})^{[q_{1,1}]}*\hdots*\iota_0(v_{1,k_1})^{[q_{1,k_1}]}  \right)^{[r_1]}*\hdots* \left(  \iota_0(v_{i,1})^{[q_{i,1}]}*\hdots*\iota_0(v_{i,k_{i}})^{[q_{i,k_{i}}]} \right)^{[r_{i} - 1]}* \hdots  \right. \\
&~ \left. *\left( \iota_0(v_{p,1})^{[q_{p,1}]}*\hdots*\iota_0(v_{p,k_p})^{[q_{p,k_p}]} \right)^{[r_p]} * \left( \partial_V\left( v_{i,1}^{[q_{i,1}]}*\hdots*v_{i,k_{i}}^{[q_{i,k_{i}}]} \right)\right)^{[1]}  \right) \\
    &=~  \mu_{V \times V} \left(  \sum^{p}_{i=1}   \left(  (v_{1,1},0)^{[q_{1,1}]}*\hdots*(v_{1,k_1},0)^{[q_{1,k_1}]}  \right)^{[r_1]}*\hdots* \left(  (v_{i,1},0)^{[q_{i,1}]}*\hdots*(v_{i,k_{i}},0)^{[q_{i,k_{i}}]} \right)^{[r_{i} - 1]}* \hdots  \right. \\
&~ \left. *\left( (v_{p,1},0)^{[q_{p,1}]}*\hdots*(v_{p,k_p},0)^{[q_{p,k_p}]} \right)^{[r_p]} * \right.\\
&~ \left. \left( \sum^{k_{i}}_{j=1}  (v_{i,1},0)^{[q_{i,1}]}*\hdots *(v_{i,j},0)^{[q_{i,j} -1]} * \hdots*(v_{i,k_{i}},0)^{[q_{i,k_{i}}]} * (0, v_{i,j})^{[1]} \right)^{[1]}  \right) \\
&=~ \mu_{V \times V} \left( \sum^{p}_{i=1} \sum^{k_{i}}_{j=1} \left(  (v_{1,1},0)^{[q_{1,1}]}*\hdots*(v_{1,k_1},0)^{[q_{1,k_1}]}  \right)^{[r_1]}*\hdots* \left(  (v_{i,1},0)^{[q_{i,1}]}*\hdots*(v_{i,k_{i}},0)^{[q_{i,k_{i}}]} \right)^{[r_{i} - 1]}* \hdots  \right.  \\
&~ \left. *\left( (v_{p,1},0)^{[q_{p,1}]}*\hdots*(v_{p,k_p},0)^{[q_{p,k_p}]} \right)^{[r_p]} *  \left(  (v_{i,1},0)^{[q_{i,1}]}*\hdots *(v_{i,j},0)^{[q_{i,j} -1]} * \hdots*(v_{i,k_{i}},0)^{[q_{i,k_{i}}]} * (0, v_{i,j})^{[1]} \right)^{[1]} \right) \\
&=~ \sum^{p}_{i=1} \sum^{k_{i}}_{j=1} \left(\prod_{i_0=1}^p\frac{1}{r_{i_0}!}\prod_{j_0=1}^{k_{i_0}}\frac{(r_{i_0}q_{i_0,j_0})!}{q_{i_0,j_0}!^{r_{i_0}}}\right) (v_{1,1},0)^{[r_1q_{1,1}]}*\hdots*(v_{i,j},0)^{[r_{i}q_{i,j} - 1]}*\hdots*(v_{p,k_p},0)^{[r_pq_{p,k_p}]} * (0, v_{i,j})^{[1]} \\
&=~ \partial_V\left( \left(\prod_{i_0=1}^p\frac{1}{r_{i_0}!}\prod_{j_0=1}^{k_{i_0}}\frac{(r_{i_0}q_{i_0,j_0})!}{q_{i_0,j_0}!^{r_{i_0}}}\right) v_{1,1}^{[r_1q_{1,1}]}*\hdots*v_{1,k_1}^{[r_1q_{1,k_1}]}*\hdots*v_{p,k_p}^{[r_pq_{p,k_p}]} \right) \\
&=~ \partial_V\left( \mu_V\left((v_{1,1}^{[q_{1,1}]}*\hdots*v_{1,k_1}^{[q_{1,k_1}]})^{[r_1]}*\hdots*(v_{p,1}^{[q_{p,1}]}*\hdots*v_{p,k_p}^{[q_{p,k_p}]})^{[r_p]}\right)  \right)
\end{align*}}%

So $\partial_V\circ\mu_V=\mu_{V \times V}\circ\mathsf{\Gamma}\left[\mathsf{\Gamma}(\iota_0), \partial_V\right]\circ \partial_{\mathsf{\Gamma}(V)}$
	     \end{enumerate}	
For the last two axioms, it will be useful to first compute $\partial_{V \times V} \circ \partial_V$ separately. So we compute: 
{\footnotesize \begin{align*}
   & \partial_{V \times V} \left( \partial_V(v_1^{[r_1]}*\hdots* v_n^{[r_n]}) \right) =~  \partial_{V \times V} \left( \sum_{i=1}^n (v_1,0)^{[r_1]}*\hdots* (v_i,0)^{[r_i-1]} *\hdots* (v_n,0)^{[r_n]} * (0,v_i)^{[1]} \right) \\
&=~ \sum_{i=1}^n \sum_{j=1}^n (v_1,0,0,0)^{[r_1]}*\hdots* (v_j,0,0,0)^{[r_j-1]} *\hdots* (v_i,0,0,0)^{[r_i-1]} *\hdots* (v_n,0,0,0)^{[r_n]} * (0,v_i,0,0)^{[1]} * (0,0,v_j,0)^{[1]}  \\
&+~ \sum_{i=1}^n (v_1,0,0,0)^{[r_1]}*\hdots* (v_i,0,0,0)^{[r_i-1]} *\hdots* (v_n,0,0,0)^{[r_n]} * (0,0,0,v_i)^{[1]} \\
\end{align*}}%
Thus we have that: 
\begin{align*}
    \partial_{V \times V} \left( \partial_V(v_1^{[r_1]}*\hdots* v_n^{[r_n]}) \right) &=~ \sum_{i=1}^n \sum_{j=1}^n \left( \prod \limits^n_{k=1} (v_k, 0, 0, 0)^{[r_k - \delta_{k,i} - \delta_{k,j}]}\right) * (0,v_i,0,0)^{[1]} * (0,0,v_j,0)^{[1]} \\&+ \sum_{i=1}^n (v_1,0,0,0)^{[r_1]}*\hdots* (v_i,0,0,0)^{[r_i-1]} *\hdots* (v_n,0,0,0)^{[r_n]} * (0,0,0,v_i)^{[1]}  
\end{align*}
where $\delta_{x,y} = 0$ if $x \neq y$ and $\delta_{x,y} = 1$ if $x = y$, and we used $\prod$ to denote a $*$-product of a family of elements. Note that there is a slight abuse of notation here in the case that $r_i=1$ and when $i=j=i$, potentially giving $r_i-\delta_{i,i}-\delta_{j,j}=-1$. However, when $r_i=1$, the term $(v_i, 0)^{[r_i-1]}$ vanishes, and so, this is not a problem. 

\begin{enumerate}[{\bf [dc.1]}]
			\setcounter{enumi}{4}
			\item Here we use the fact that the first part of $\partial_{V \times V} \circ \partial_V$ vanishes under $\mathsf{\Gamma}(\pi_0\times\pi_1)$ (since we are multiplying by zero), while the second part becomes $\partial_V$ under $\mathsf{\Gamma}(\pi_0\times\pi_1)$: 
\begin{align*}
&\mathsf{\Gamma}(\pi_0\times\pi_1) \left( \partial_{V \times V} \left( \partial_V(v_1^{[r_1]}*\hdots* v_n^{[r_n]}) \right)\right) \\
&=~ \mathsf{\Gamma}(\pi_0\times\pi_1) \left( \sum_{i=1}^n \sum_{j=1}^n \left( \prod \limits^n_{k=1} (v_k, 0, 0, 0)^{[r_k - \delta_{k,i} - \delta_{k,j}]}\right) * (0,v_i,0,0)^{[1]} * (0,0,v_j,0)^{[1]}\right) \\
&+~\mathsf{\Gamma}(\pi_0\times\pi_1) \left(  \sum_{i=1}^n (v_1,0,0,0)^{[r_1]}*\hdots* (v_i,0,0,0)^{[r_i-1]} *\hdots* (v_n,0,0,0)^{[r_n]} * (0,0,0,v_i)^{[1]}  \right) \\
&=~  \sum_{i=1}^n \sum_{j=1}^n  \left( \prod \limits^n_{k=1} \left(  \pi_0(v_k, 0), \pi_1(0, 0)\right)^{[r_k - \delta_{k,i} - \delta_{k,j}]}\right) * \left( \pi_0(0,v_i) \pi_1(0,0)\right)^{[1]} * \left( \pi_0(0,0), \pi_1(v_j,0)\right)^{[1]} \\
&+~  \sum_{i=1}^n  \left( \pi_0(v_1,0), \pi_1(0,0)\right)^{[r_1]}*\hdots* \left( \pi_0(v_i,0), \pi_1(0,0)\right)^{[r_i-1]} *\\
&~~~~~~\hdots* \left( \pi_0(v_n,0), \pi_1(0,0)\right)^{[r_n]} * \left( \pi_0(0,0), \pi_1(0,v_i)\right)^{[1]}  \\
&=~  \sum_{i=1}^n \sum_{j=1}^n  \left( \prod \limits^n_{k=1} \left(  v_k,0 \right)^{[r_k - \delta_{k,i} - \delta_{k,j}]}\right) * \left( 0,0\right)^{[1]} * \left( 0,0\right)^{[1]} \\
&+~  \sum_{i=1}^n  \left( v_1,0\right)^{[r_1]}*\hdots* \left( v_i,0\right)^{[r_i-1]} *\hdots* \left( v_n,0\right)^{[r_n]} * \left( 0,v_i\right)^{[1]}  \\
&=~ 0^{[1]} + \partial_V(v_1^{[r_1]}*\hdots* v_n^{[r_n]})\\
&=~ \partial_V(v_1^{[r_1]}*\hdots* v_n^{[r_n]})
\end{align*}			
So $\mathsf{\Gamma}(\pi_0\times\pi_1)\circ \partial_{V\times V}\circ \partial_V=\partial_V$.

\item Here we use commutativity of the multiplication, which will essentially swap the order of the $i$ and $j$ index in the first part of $\partial_{V \times V} \circ \partial_V$, while the second part is unchanged: 
\begin{align*}
&\mathsf{\Gamma}(c_V) \left( \partial_{V \times V} \left( \partial_V(v_1^{[r_1]}*\hdots* v_n^{[r_n]}) \right)\right) \\
&=~ \mathsf{\Gamma}(c_V) \left( \sum_{i=1}^n \sum_{j=1}^n \left( \prod \limits^n_{k=1} (v_k, 0, 0, 0)^{[r_k - \delta_{k,i} - \delta_{k,j}]}\right) * (0,v_i,0,0)^{[1]} * (0,0,v_j,0)^{[1]}\right) \\
&+~\mathsf{\Gamma}(c_V) \left(  \sum_{i=1}^n (v_1,0,0,0)^{[r_1]}*\hdots* (v_i,0,0,0)^{[r_i-1]} *\hdots* (v_n,0,0,0)^{[r_n]} * (0,0,0,v_i)^{[1]}  \right) \\
&=~  \sum_{i=1}^n \sum_{j=1}^n  \left( \prod \limits^n_{k=1} \left( c_V(v_k, 0, 0, 0)\right)^{[r_k - \delta_{k,i} - \delta_{k,j}]}\right) * \left( c_V(0,v_i,0,0)\right)^{[1]} * \left( (\pi_0\times\pi_1)(0,0,v_j,0)\right)^{[1]} \\
&+~  \sum_{i=1}^n  \left( c_V(v_1,0,0,0)\right)^{[r_1]}*\hdots* \left( c_V(v_i,0,0,0)\right)^{[r_i-1]} *\hdots* \left( c_V(v_n,0,0,0)\right)^{[r_n]} * \left( c_V(0,0,0,v_i)\right)^{[1]}  \\
 &=~ \sum_{i=1}^n \sum_{j=1}^n \left( \prod \limits^n_{k=1} (v_k, 0, 0, 0)^{[r_k - \delta_{k,i} - \delta_{k,j}]}\right) * (0,0,v_i,0)^{[1]} * (0,v_j,0,0)^{[1]} 
 \\&+ \sum_{i=1}^n (v_1,0,0,0)^{[r_1]}*\hdots* (v_i,0,0,0)^{[r_i-1]} *\hdots* (v_n,0,0,0)^{[r_n]} * (0,0,0,v_i)^{[1]}  \\
 &=~  \partial_{V \times V} \left( \partial_V(v_1^{[r_1]}*\hdots* v_n^{[r_n]}) \right)
\end{align*}			    
So $\mathsf{\Gamma}(c_V)\circ \partial_{V\times V}\circ \partial_V=\partial_{V\times V}\circ \partial_V$.
        \end{enumerate}
So we conclude that $\partial$ is a differential combinator transformation. \end{proof}

Thus, $(\mathsf{\Gamma},\mu,\eta,\partial)$ is a coCartesian differential monad and so the opposite of its Kleisli category is a Cartesian differential category (which we summarize in Corollary \ref{cor:DIV} below) which, as we will explain below, captures differentiation of divided power polynomials. The coCartesian differential monad $(\mathsf{\Gamma},\mu,\eta,\partial)$ also comes equipped with a $\mathsf D$-linear counit. Define $\varepsilon_V:\mathsf{\Gamma}(V)\to V$ as follows on divided power monomials: 
	\[\varepsilon_V(v_1^{[r_1]}*\hdots* v_n^{[r_n]})=\begin{cases}
	v_1,&\mbox{if }n=1, r_n=1,\\
	0&\mbox{otherwise.}
	\end{cases}\]
which we extend by linearity. Thus $\varepsilon_V$ picks out the divided power monomials of degree 1, $v^{[1]}$ for all $v \in V$, while mapping the rest to zero. 

	\begin{lemma}
	    $\varepsilon$ is a $\mathsf D$-linear counit of $(\mathsf{\Gamma},\mu,\eta,\partial)$.
	\end{lemma}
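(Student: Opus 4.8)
Since this is the dual notion, $\varepsilon$ must be a $\mathsf{D}$-linear counit, which by Example \ref{ex:CDM} means verifying the duals of axioms \textbf{[du.1]} and \textbf{[du.2]} from Definition \ref{def:Dunit}. Dualizing, the two required identities are: the linear rule $\varepsilon_V \circ \eta_V = 1_V$, and the linearization rule $\partial_V \circ \mathsf{\Gamma}(\iota_1)$ followed by... more precisely, that $\varepsilon_V \circ \partial_V \circ \mathsf{\Gamma}(\iota_1) = \mathsf{\Gamma}(\iota_1) \circ \varepsilon_V$ in the appropriate dualized form — so I should carefully transpose the diagrams of Definition \ref{def:Dunit} to the monad setting, reversing all arrows and swapping $\eta \leftrightarrow \varepsilon$, $\partial$ keeping its monad type $\mathsf{\Gamma}(V) \to \mathsf{\Gamma}(V\times V)$. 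The dual of \textbf{[du.1]} becomes $\varepsilon_V \circ \eta_V = 1_V$ and the dual of \textbf{[du.2]} becomes the statement that $\eta_V \circ \varepsilon_V = \mathsf{\Gamma}(\pi_1) \circ \partial_V$, making $\mathsf{\Gamma}(\pi_1)\circ\partial_V$ a split idempotent via $\varepsilon_V$ and $\eta_V$.

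First I would verify the linear rule. Since $\eta_V(v) = v^{[1]}$ is a divided power monomial of degree $1$ with $n=1, r_1=1$, applying $\varepsilon_V$ gives exactly $v$ by the first branch of its definition; hence $\varepsilon_V \circ \eta_V = 1_V$ immediately. This is a one-line check on the defining formulas.

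The substantive step is the dual linearization rule, which I would prove by evaluating both sides on an arbitrary divided power monomial $v_1^{[r_1]} * \hdots * v_n^{[r_n]}$ and extending by linearity. On the right-hand side, I would first apply $\partial_V$ to obtain $\sum_{i=1}^n (v_1,0)^{[r_1]} * \hdots * (v_i,0)^{[r_i-1]} * \hdots * (v_n,0)^{[r_n]} * (0,v_i)^{[1]}$, then apply $\mathsf{\Gamma}(\pi_1)$, which kills every factor of the form $(v_k,0)$ (sending it to $0^{[r_k]}$) unless the monomial has total degree $1$ in the first coordinate consumed. The key observation is that $\mathsf{\Gamma}(\pi_1)$ of such a term is nonzero precisely when all the $(v_k,0)$ factors disappear, i.e.\ when $n=1$ and $r_1=1$, in which case the surviving term is $(0,v_1)^{[1]} \mapsto v_1^{[1]}$; otherwise the projection produces a monomial containing a factor $0^{[r]}$ with $r\geq 1$, which vanishes. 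Comparing with $\eta_V \circ \varepsilon_V$, which sends $v_1^{[r_1]}*\hdots*v_n^{[r_n]}$ to $v_1^{[1]}$ when $n=1,r_1=1$ and to $0^{[1]}$ otherwise, the two sides agree.

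The main obstacle will be the bookkeeping in the linearization step: one must be careful that $\mathsf{\Gamma}(\pi_1)$ applied to a factor $(v_k,0)^{[r_k]}$ gives $0^{[r_k]}$, and that a product containing any such factor is genuinely zero in $\mathsf{\Gamma}(V\times V)$ — this relies on $0^{[r]}$ being the zero element for $r\geq 1$ (via \textbf{[dp.1]} with $\lambda=0$), together with the convention that the zero element of $\Gamma(V)$ is denoted $0^{[1]}$. I would state this vanishing explicitly as a small lemma-in-passing and then the degree count determining when the single surviving term occurs is routine. Since the authors have already signalled this verification is ``straightforward,'' I would present the linear rule in one line and the linearization rule as the evaluation above, noting that extension by linearity completes the argument.
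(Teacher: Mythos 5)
Your verification is correct and is precisely the ``straightforward'' check the paper leaves as an exercise: after the initial false start in your plan, the identities you actually verify --- $\varepsilon_V \circ \eta_V = 1_V$ and $\mathsf{\Gamma}(\pi_1) \circ \partial_V = \eta_V \circ \varepsilon_V$ --- are the correct duals of \textbf{[du.1]} and \textbf{[du.2]}, and your monomial computation (including the vanishing of $0^{[r]}$ for $r \geq 1$ via \textbf{[dp.1]} with $\lambda = 0$, and the care taken with the $r_i = 1$ convention) goes through. The only omission is the naturality of $\varepsilon$, which Definition \ref{def:Dunit} requires but which is immediate since $\mathsf{\Gamma}(f)$ preserves the grading on divided power monomials, so that $\varepsilon_W \circ \mathsf{\Gamma}(f) = f \circ \varepsilon_V$.
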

	\begin{proof} The proof is straightforward, and so we leave this as an excercise for the reader. 
	\end{proof}
	
Thus the subcategory of $\mathsf{D}$-linear maps of the opposite category of the Kleisli category of $\mathsf{\Gamma}$ is isomorphic to the opposite category of $\mathbb{F}\text{-}\mathsf{VEC}$. Summarizing, we obtain the following statement: 

\begin{corollary}\label{cor:DIV}$(\mathsf{\Gamma}, \mu, \eta, \partial)$ is a Cartesian differential comonad on $\mathbb{F}\text{-}\mathsf{VEC}^{op}$ with $\mathsf{D}$-linear unit $\varepsilon$. Therefore $\mathbb{F}\text{-}\mathsf{VEC}^{op}_\mathsf{\Gamma}$ is a Cartesian differential category and $\mathsf{D}\text{-}\mathsf{lin}\left[ \mathbb{F}\text{-}\mathsf{VEC}^{op}_{\mathsf{\Gamma}} \right] \cong \mathbb{F}\text{-}\mathsf{VEC}^{op}$.
\end{corollary}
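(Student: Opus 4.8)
The plan is to assemble the pieces already established for $\mathsf{\Gamma}$ and then invoke the duality recorded in Example \ref{ex:CDM}. By Lemma \ref{defGamma}, $(\mathsf{\Gamma}, \mu, \eta)$ is a monad on $\mathbb{F}\text{-}\mathsf{VEC}$, and by Proposition \ref{Gammacomb}, $\partial$ is a differential combinator transformation for it, meaning the dual diagrams of Definition \ref{def:cdcomonad} commute. Hence $(\mathsf{\Gamma}, \mu, \eta, \partial)$ is a Cartesian differential monad in the sense of Example \ref{ex:CDM}. Since finite products, coproducts, and biproducts coincide in $\mathbb{F}\text{-}\mathsf{VEC}$ and biproducts are self-dual, the category $\mathbb{F}\text{-}\mathsf{VEC}^{op}$ has finite biproducts, and a monad on $\mathbb{F}\text{-}\mathsf{VEC}$ is precisely a comonad on $\mathbb{F}\text{-}\mathsf{VEC}^{op}$. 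Reading $\partial_V : \mathsf{\Gamma}(V) \to \mathsf{\Gamma}(V \times V)$ backwards, it becomes a natural transformation $\mathsf{\Gamma}(V \times V) \to \mathsf{\Gamma}(V)$ in $\mathbb{F}\text{-}\mathsf{VEC}^{op}$ of exactly the type required, and the dual axioms verified in Proposition \ref{Gammacomb} are precisely \textbf{[dc.1]}--\textbf{[dc.6]} read in $\mathbb{F}\text{-}\mathsf{VEC}^{op}$. Therefore $(\mathsf{\Gamma}, \mu, \eta, \partial)$ is a Cartesian differential comonad on $\mathbb{F}\text{-}\mathsf{VEC}^{op}$.

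Next I would apply Theorem \ref{thm1} to this comonad on $\mathbb{F}\text{-}\mathsf{VEC}^{op}$ to conclude that the coKleisli category $\mathbb{F}\text{-}\mathsf{VEC}^{op}_{\mathsf{\Gamma}}$ is a Cartesian differential category, with differential combinator given by precomposition with $\partial$. This is exactly the assertion that the opposite of the Kleisli category of the Cartesian differential monad $\mathsf{\Gamma}$ is a Cartesian differential category, as recorded in Example \ref{ex:CDM}.

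Finally, for the identification of the $\mathsf{D}$-linear maps, I would invoke the preceding lemma that $\varepsilon$ is a $\mathsf{D}$-linear counit of $(\mathsf{\Gamma}, \mu, \eta, \partial)$, i.e.\ that the dual diagrams of Definition \ref{def:Dunit} commute. Dualizing, $\varepsilon$ is a $\mathsf{D}$-linear unit of the Cartesian differential comonad $(\mathsf{\Gamma}, \mu, \eta, \partial)$ on $\mathbb{F}\text{-}\mathsf{VEC}^{op}$. Proposition \ref{etaFlem1} then gives that $\mathsf{F}_{\mathsf{D}\text{-}\mathsf{lin}} : \mathbb{F}\text{-}\mathsf{VEC}^{op} \to \mathsf{D}\text{-}\mathsf{lin}[\mathbb{F}\text{-}\mathsf{VEC}^{op}_{\mathsf{\Gamma}}]$ is an isomorphism, whence $\mathsf{D}\text{-}\mathsf{lin}[\mathbb{F}\text{-}\mathsf{VEC}^{op}_{\mathsf{\Gamma}}] \cong \mathbb{F}\text{-}\mathsf{VEC}^{op}$, as desired.

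Since every ingredient has already been proved in Lemma \ref{defGamma}, Proposition \ref{Gammacomb}, and the $\mathsf{D}$-linear counit lemma, there is no genuine obstacle left; the only point requiring care is the bookkeeping of the duality, namely checking that the arrows of $\partial$ and $\varepsilon$ are reversed correctly when passing from $\mathbb{F}\text{-}\mathsf{VEC}$ to $\mathbb{F}\text{-}\mathsf{VEC}^{op}$, and that the self-dual finite-biproduct hypothesis of Definition \ref{def:cdcomonad} is indeed available on the opposite category so that Theorem \ref{thm1} and Proposition \ref{etaFlem1} apply verbatim.
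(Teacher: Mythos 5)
Your proposal is correct and takes essentially the same route as the paper: the corollary is obtained by assembling Lemma \ref{defGamma}, Proposition \ref{Gammacomb}, and the $\mathsf{D}$-linear counit lemma, then dualizing as in Example \ref{ex:CDM} so that Theorem \ref{thm1} and Proposition \ref{etaFlem1} apply to the resulting comonad on $\mathbb{F}\text{-}\mathsf{VEC}^{op}$. Your extra remarks on the self-duality of biproducts and the reversal of the arrows of $\partial$ and $\varepsilon$ are exactly the bookkeeping the paper leaves implicit.
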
	
	
	The Kleisli category $\mathbb{F}\text{-}\mathsf{VEC}_{\mathsf{\Gamma}}$ is closely related to the notion of (reduced) divided power polynomials. For a set $X$, we denote by $\mathbb F\lceil X\rceil$ the ring of reduced divided power polynomials over the set $X$, which is by definition the free divided power algebra over the $\mathbb{F}$-vector space with basis $X$ \cite[Section 12]{roby68}. In other words, a reduced divided polynomial with variables in $X$ is an $\mathbb{F}$-linear composition of commutative monomials of the type $x_1^{[k_1]}\hdots x_n^{[k_n]}$ where $x_1,\hdots,x_n$ is a tuple of $n$ different elements of $X$ and $k_1,\hdots,k_n$ are strictly positive integers. By reduced, we mean that these polynomials do not have degree 0 terms. Multiplication is given by concatenation, multilinearity and the relation {\bf[dp.2]} of Definition \ref{defipuisdiv}. Composition of divided polynomials can be deduced from the relations {\bf[dp.1]}, {\bf[dp.3]}, {\bf[dp.5]} and {\bf[dp.6]} of \ref{defipuisdiv}. For example, if $p(x)=x^{[n]}$, and $q(y,z)=y^{[m]}z$, then:
	\[p(q(y,z))=(y^{[m]}z)^{[n]}=n!(y^{[m]})^{[n]}z^{[n]}=\frac{(mn)!}{(m!)^n}y^{[mn]}z^{[n]}.\]
	We can define a notion of formal partial derivation for divided polynomials. For $x\in X$, define the linear map ${\frac{d}{dx}:\mathbb F\lceil X\rceil\!\to \mathbb F\lceil X\rceil\oplus \mathbb{F}}$ on monomials (which we then extend by linearity). 
	 For all monomial $m=x_1^{[k_1]}\hdots x_n^{[k_n]}$, $\frac{d}{dx}(m)=0$ if  $x\neq x_i$ for all $i\in\{1,\hdots,n\}$, $\frac{d}{dx}(m)=x_1^{[k_1]}\hdots x_{j-1}^{[k_{j-1}]}x_{j}^{[k_{j}-1]}x_{j+1}^{[k_{j+1}]}\hdots x_n^{[k_n]}$ if $x=x_j$ and $k_j>1$, $\frac{d}{dx}(m)=x_1^{[k_1]}\hdots x_{j-1}^{[k_{j-1}]}x_{j+1}^{[k_{j+1}]}\hdots x_n^{[k_n]}$ if $x=x_j$, $k_j=1$, and $n>1$, and finally, $\frac{d}{dx}(x)=1_{\mathbb{F}}$ where $1_{\mathbb{F}}\in\mathbb{F}$ is a generator of the second term of the direct sum $\mathbb F\lceil X\rceil\oplus \mathbb{F}$ given by the unit of $\mathbb{F}$. We note that, in the case where $X$ is a singleton, these definitions correspond to the notion of derivation for formal divided power series, also called Hurwitz series, as defined by Keigher and Pritchard in \cite{keigher2000}. We can restrict to the finite dimensional case and obtain a sub-Cartesian differential category of $\mathbb{F}\text{-}\mathsf{VEC}^{op}_{\mathsf{\Gamma}}$ which is isomorphic to the Lawvere theory of reduced divided power polynomials.

	\begin{example} \normalfont \label{ex:CDCdiv} Let $\mathbb{F}$ be a field. Define the category $\mathbb{F}\text{-}\mathsf{DPOLY}$ whose object are $n \in \mathbb{N}$, where a map ${P: n \to m}$ is a $m$-tuple of reduced divided polynomials in $n$ variables, that is, $P=\langle p_1(\vec x),\hdots,p_m(\vec x)\rangle$ with $p_i(\vec x)\in \mathbb F\lceil x_1,\hdots,x_n\rceil$.
	The identity maps ${1_n: n \to n}$ are the tuples $1_n = \langle x_1^{[1]}, \hdots, x_n^{[1]} \rangle$ and composition is given by divided power polynomial substitution as explained above. $\mathbb{F}\text{-}\mathsf{DPOLY}$ is a Cartesian left additive category where the finite product structure is given by $n \times m = n +m$ with projection maps ${\pi_0: n \times m \to n}$ and ${\pi_1: n \times m \to m}$ defined as the tuples $\pi_0 = \langle x_1^{[1]}, \hdots, x_n^{[1]} \rangle$ and $\pi_1 = \langle x_{n+1}^{[1]}, \hdots, x_{n+m}^{[1]} \rangle$, and where the additive structure is defined coordinate-wise via the standard sum of divided power polynomials. $\mathbb{F}\text{-}\mathsf{DPOLY}$ is also a Cartesian differential category where for a map ${P: n \to m}$, with $P = \langle p_1(\vec x), \hdots, p_m(\vec x) \rangle$, its derivative $\mathsf{D}[P]: n \times n \to m$ is defined as the tuple of the sum of the partial derivatives of the divided power polynomials $p_i(\vec x)$:
\begin{align*}
\mathsf{D}[P](\vec x, \vec y) := \left( \sum \limits^n_{i=1} \frac{dp_1(\vec x)}{d x_i} y_i^{[1]}, \hdots, \sum \limits^n_{i=1} \frac{dp_n(\vec x)}{d x_i} y_i^{[1]} \right) \\
\sum \limits^n_{i=1} \frac{dp_j(\vec x)}{d x_i} y_i^{[1]} \in \mathbb F\lceil x_1,\hdots,x_n,y_1,\hdots,y_n\rceil
\end{align*} 
It is important to note that even if $p_j(\vec x)$ has terms of degree 1, every partial derivative $\frac{dp_j(\vec x)}{d x_i} y_i^{[1]}$ will still be reduced (even if $\frac{dp_j(\vec x)}{d x_i}$ may have a degree 0 term), and thus, the differential combinator $\mathsf{D}$ is indeed well-defined. A map ${P: n \to m}$ is $\mathsf{D}$-linear if it of the form: 
\begin{align*}
P = \left \langle \sum \limits^{n}_{i=0} \lambda_{i,1}x_{i}^{[1]}, \hdots, \sum \limits^{n}_{i=0} \lambda_{i,m}x_{i}^{[1]} \right \rangle && \lambda_{i,j} \in \mathbb{F}
\end{align*}
Thus, $\mathsf{D}\text{-}\mathsf{lin}[\mathbb{F}\text{-}\mathsf{DPOLY}]$ is equivalent to $\mathbb{F}\text{-}\mathsf{LIN}$ (as defined in Example \ref{ex:CDCPOLY}).
\end{example} 

We have the following chain of isomorphisms: 
\begin{align*}
\mathbb{F}\text{-}\mathsf{DPOLY}(n,1) = \mathbb F\lceil x_1,\hdots,x_n\rceil \cong \mathsf{\Gamma}(\mathbb{F}^n) \cong \mathbb{F}\text{-}\mathsf{VEC} \left(\mathbb{F}, \mathsf{\Gamma}(\mathbb{F}^n) \right) = \mathbb{F}\text{-}\mathsf{VEC}_{\mathsf{\Gamma}}\left(\mathbb{F}, \mathbb{F}^n \right) = \mathbb{F}\text{-}\mathsf{VEC}^{op}_{\mathsf{\Gamma}}\left(\mathbb{F}^n, \mathbb{F} \right) 
\end{align*}
which then implies that $\mathbb{F}\text{-}\mathsf{DPOLY}(n,m) \cong \mathbb{F}\text{-}\mathsf{VEC}^{op}_{\mathsf{\Gamma}}\left(\mathbb{F}^n, \mathbb{F}^m \right)$. Therefore, $\mathbb{F}\text{-}\mathsf{DPOLY}$ is indeed isomorphic to the full subcategory of $\mathbb{F}\text{-}\mathsf{VEC}^{op}_{\mathsf{\Gamma}}$ whose objects are the finite dimensional $\mathbb{F}$-vector spaces. In the finite dimensional case, the differential combinator transformation corresponds precisely to the differential combinator on $\mathbb{F}\text{-}\mathsf{DPOLY}$:
\[\partial_{\mathbb{F}^n}(p(\vec x)) = \mathsf{D}[p](\vec x, \vec y)\] 
Thus, $\mathbb{F}\text{-}\mathsf{DPOLY}$ is a sub-Cartesian differential category of $\mathbb{F}\text{-}\mathsf{VEC}^{op}_{\mathsf{P}}$, where the latter allows for divided power polynomials over infinite variables (but will still only depend on a finite number of them). 

\section{Example: Zinbiel Algebras}\label{sec:ZAex}
In this section, we show that the free Zinbiel algebra monad is a coCartesian differential monad, and therefore we construct a Cartesian differential category based on non-commutative polynomials equipped with the half-shuffle product. Zinbiel algebras were introduced by Loday in \cite{loday95}, as Koszul dual to the classical notion of Leibniz algebra. Zinbiel algebras were further studied by Dokas \cite{dokas09}, who shows that they are closely related to divided power algebras. The free Zinbiel algebra is given by the non-unital shuffle algebra. Therefore, this example corresponds to differentiating non-commutative polynomials with a type of polynomial composition defined using the Zinbiel product. Due to the strangeness of the composition, the differential combinator transformation is very different from previous examples. Nevertheless, this is yet another interesting Cartesian differential comonad which does not arise as a differential category. Furthermore, it is worth mentioning that, while the (unital) shuffle algebra has been previously studied in a generalization of differential categories in \cite{bagnol2016shuffle}, the Zinbiel algebra perspective was not considered. In future work, it would be interesting to study the link between these two notions.  

\begin{definition} Let $\mathbb{F}$ be a field. A \textbf{Zinbiel algebra} \cite[Definition 1.2]{loday95} over $\mathbb{F}$, also called \textbf{dual Leibniz algebra}, is an $\mathbb{F}$-vector space $A$ equipped with a bilinear operation $<$ satisfying:
\[((a<b)<c)=(a<(b<c))+(a<(c<b)).\]
for all $a,b,c\in A$.   
\end{definition}

It is important to insist on the fact that the bilinear product $<$ is not assumed to be associative, commutative, or have a distinguished unit element. That said, it is interesting to point out that any Zinbiel algebra is equipped with an associative and commutative bilinear product $*$ defined as $a*b=a<b+b<a$. Thus, a Zinbiel algebra is also a non-unital commutative, associative algebra \cite[Proposition 1.5]{loday95}. The underlying vector space of free Zinbiel algebras is the same as for free non-unital tensor algebras. Readers familiar with the latter will note that the tensor algebra is non-commutative when the multiplication is given by concatenation. However, there is another possible multiplication which is commutative, called the shuffle product. The tensor algebra equipped with the shuffle product is called the shuffle algebra. Furthermore, it turns out that the shuffle product is the commutative associative multiplication $*$ one obtains from the free Zinbiel algebra. Thus, the free Zinbiel algebra and the shuffle algebra are the same object. For the purposes of this paper, we only need to work with the Zinbiel product $<$. 

Let $\mathbb F$ be a field. For an $\mathbb F$-vector space $V$, define $\mathsf{Zin}(V)$ as follows: 
\[\mathsf{Zin}(V)=\bigoplus_{n=1}^{\infty}V^{\otimes n}=V\oplus (V \otimes V) \oplus (V \otimes V \otimes V) \oplus \hdots
\]
$\mathsf{Zin}(V)$ is the free Zinbiel algebra over $V$ \cite[Proposition 1.8]{loday95} with Zinbiel product $<$ defined on pure tensors by:
\[(v_1\otimes\hdots\otimes v_n) < (w_1\otimes\hdots\otimes w_m)=\sum_{\sigma\in S(n+m)/S(n)\times S(m)}v_1\otimes \sigma\cdot(v_2\otimes \hdots\otimes v_n\otimes w_1\otimes \hdots\otimes w_m)\]
which we then extend by linearity. Thus, $\mathsf{Zin}(V)$ is spanned by words of elements of $V$. Free Zinbiel algebras induce a monad $\mathsf{Zin}$ on $\mathbb{F}\text{-}\mathsf{VEC}$. 

Similar to previous examples, note that it is sufficient to define the monad structure maps on pure tensors and then extend by linearity. Define the endofunctor $\mathsf{Zin}: \mathbb{F}\text{-}\mathsf{VEC} \to \mathbb{F}\text{-}\mathsf{VEC}$ which sends an $\mathbb{F}$-vector space $V$ to its free Zinbiel algebra $\mathsf{Zin}(V)$, and which sends an $\mathbb{F}$-linear map $f:V\to W$ to the $\mathbb{F}$-linear map ${\mathsf{Zin}(f):\mathsf{Zin}(V)\to \mathsf{Zin}(W)}$ defined on pure tensors as follows: 
\[\mathsf{Zin}(f)\left( v_0 \otimes \hdots \otimes v_n \right) = f(v_0) \otimes \hdots \otimes f(v_n) \]
which we then extend by linearity. Note that $\mathsf{Zin}(f)$ is a Zinbiel algebra morphism, so we have that:
\[\mathsf{Zin}(f)(\mathfrak{v} < \mathfrak{w}) = \mathsf{Zin}(f)(\mathfrak{v}) < \mathsf{Zin}(f)(\mathfrak{w})\]
for all $\mathfrak{v}, \mathfrak{w} \in \mathsf{Zin}(V)$. The monad unit $\eta_V:V\to \mathsf{Zin}(V)$ is the injection of $V$ into $\mathsf{Zin}(V)$,
\[\eta_V(v)=v,\]
and the monad multiplication $\mu_V:\mathsf{Zin}\mathsf{Zin}(V) \to\mathsf {Zin}(V)$ is defined on pure tensors by taking their Zinbiel product starting from the right. That is, $\mu_V$ is defined on a pure tensor $\mathfrak{v_1} \otimes \hdots\otimes \mathfrak{v_n} \in \mathsf{Zin}\mathsf{Zin}(V)$, where ${\mathfrak{v_1}, \hdots, \mathfrak{v_n} \in \mathsf{Zin}(V)}$, by:
\begin{align*}
   \mu_V\left( \mathfrak{v_1} \otimes \hdots\otimes \mathfrak{v_n} \right)= \mathfrak{v_1} < \left( \hdots \left( \mathfrak{v_{n-1}} < \mathfrak{v_n} \right) \hdots \right)
\end{align*}
which we then extend by linearity. Unsurprisingly, the algebras of the monad $\mathsf{Zin}$ are precisely the Zinbiel algebras. 

\begin{lemma}\cite[Proposition 1.8]{loday95}\label{proploday}
    $(\mathsf{Zin},\mu,\eta)$ is a monad. 
\end{lemma}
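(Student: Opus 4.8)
The plan is to realize $(\mathsf{Zin}, \mu, \eta)$ as the monad arising from the free--forgetful adjunction between $\mathbb{F}\text{-}\mathsf{VEC}$ and the category of non-unital Zinbiel algebras, so that the monad axioms come for free from general adjunction theory once I check that the explicitly given $\eta$ and $\mu$ are the adjunction's unit and multiplication. By \cite[Proposition 1.8]{loday95}, $\mathsf{Zin}(V)$ is the free Zinbiel algebra on $V$: for every Zinbiel algebra $A$ and linear map $g\colon V \to A$ there is a unique Zinbiel morphism $\bar g\colon \mathsf{Zin}(V) \to A$ with $\bar g \circ \eta_V = g$. The unit $\eta_V(v) = v$ is exactly the universal inclusion, and the monad multiplication of such an adjunction is $U\epsilon_{\mathsf{Zin}(V)}$, where the counit $\epsilon_A = \overline{1_A}\colon \mathsf{Zin}(A) \to A$ is the unique Zinbiel morphism extending the identity on the underlying space of $A$.

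The one genuinely combinatorial ingredient is the identity
\[
v_1 \otimes \cdots \otimes v_n = v_1 < \bigl(v_2 < (\cdots < (v_{n-1} < v_n)\cdots)\bigr)
\]
in $\mathsf{Zin}(V)$, expressing each basis word as the right-nested Zinbiel product of its letters. First I would prove this by induction on $n$ using the defining shuffle formula for $<$: the right factor $v_2 < (\cdots)$ already equals the word $v_2 \otimes \cdots \otimes v_n$ by induction, and multiplying on the left by the single letter $v_1$ fixes $v_1$ in the first position while the shuffle over the (empty) remainder of the left factor collapses to the single term $v_1 \otimes \cdots \otimes v_n$. Applying the Zinbiel morphism $\epsilon_A$ to this identity shows $\epsilon_A(a_1 \otimes \cdots \otimes a_n) = a_1 < (\cdots < a_n)$ computed in $A$; specializing $A = \mathsf{Zin}(V)$ identifies $U\epsilon_{\mathsf{Zin}(V)}$ with the stated $\mu_V$. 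Thus $\mu$ and $\eta$ are precisely the structure maps of the adjunction monad, and the three monad laws hold automatically.

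If instead one prefers a self-contained verification, the two unit laws are immediate from the same word identity: $\mu_V \circ \eta_{\mathsf{Zin}(V)} = 1$ because the product of a single element is itself, and $\mu_V \circ \mathsf{Zin}(\eta_V) = 1$ because $\mathsf{Zin}(\eta_V)$ turns a word into the list of its letters (now as degree-one elements) and $\mu_V$ reassembles it via the right-nested product, recovering the original word. The main obstacle is associativity, $\mu_V \circ \mu_{\mathsf{Zin}(V)} = \mu_V \circ \mathsf{Zin}(\mu_V)$, which amounts to showing that the right-nested Zinbiel product is independent of how a nested list of lists is bracketed; this is exactly where the Zinbiel axiom $(a<b)<c = a<(b<c) + a<(c<b)$ is needed, via an induction that repeatedly rewrites left-nested occurrences $(\mathfrak{v}<\mathfrak{w})<\mathfrak{u}$ into right-nested form. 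For this reason I would favour the adjunction argument, which offloads the associativity bookkeeping onto the universal property rather than verifying it by hand.
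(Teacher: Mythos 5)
Your proof is correct and takes essentially the same route as the paper, which does not verify the monad laws by hand but cites Loday's Proposition 1.8 --- precisely the freeness of $\mathsf{Zin}(V)$ that powers your free--forgetful adjunction argument. Your identification of the stated $\mu_V$ with $U\epsilon_{\mathsf{Zin}(V)}$ via the right-nested word identity $v_1\otimes\cdots\otimes v_n = v_1<\bigl(v_2<(\cdots<v_n)\cdots\bigr)$ supplies the one nontrivial bridging detail that the paper leaves implicit, and your induction establishing that identity is sound.
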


It is worth noting the link between divided power algebras and Zinbiel algebra. Any Zinbiel algebra is endowed with a divided power algebra structure \cite[Theorem 3.4]{dokas09}, and this results in an inclusion of the free divided power algebra into the free Zinbiel algebra, $\mathsf{\Gamma}(V)\to \mathsf{Zin}(V)$ \cite[Section 5]{dokas09}. As such, this inclusion can be extended to a monic monad morphism $\mathsf{\Gamma} \Rightarrow \mathsf{Zin}$. Similar to the other examples, due to a lack of unit, $\mathsf{Zin}$ will not be an algebra modality and therefore this will result in another example of a Cartesian differential comonad which does not come from a differential category. 

We can now define the differential combinator transformation for $\mathsf{Zin}$. Define $\partial_V: \mathsf{Zin}(V) \to \mathsf{Zin}(V \times V)$ on pure tensors as follows: 
\[ \partial_V(v_1\otimes v_2 \hdots\otimes v_n)=(0,v_1)\otimes(v_2,0)\otimes\hdots\otimes(v_n,0) \]
which we then extend by linearity. Note that this differential combinator transformation is quite different from the other examples in appearance. Below, we will explain how this differential combinator transformation corresponds to a sum of partial derivative for non-commutative polynomials with the multiplication given by the Zinbiel product. Before proving that $\partial$ is indeed a differential combinator transformation, we prove the following useful identity: 

\begin{lemma}\label{lemmaZin} For all $\mathfrak{v}_1,\mathfrak{v}_2\in\mathsf{Zin}(V)$, the following equality holds: 
	\[
		\partial_V(\mathfrak{v}_1<\mathfrak{v}_2)=\partial_V(\mathfrak{v}_1)<\mathsf{Zin}(\iota_0)(\mathfrak{v}_2),
	\]
\end{lemma}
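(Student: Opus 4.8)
The plan is to reduce immediately to pure tensors, since both sides of the claimed identity are bilinear in $\mathfrak{v}_1$ and $\mathfrak{v}_2$: the left-hand side because $<$ is bilinear and $\partial_V$ is linear, and the right-hand side because $<$ is bilinear while both $\partial_V$ and $\mathsf{Zin}(\iota_0)$ are linear. So I would fix pure tensors $\mathfrak{v}_1 = v_1 \otimes \cdots \otimes v_n$ and $\mathfrak{v}_2 = w_1 \otimes \cdots \otimes w_m$ and verify the equality on these generators.

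The key structural observation is that both operations appearing in the statement single out the first letter of the left-hand word in exactly the same way. Writing $\mathfrak{v}_1 = v_1 \otimes \mathfrak{v}_1'$ with $\mathfrak{v}_1' = v_2 \otimes \cdots \otimes v_n$ (empty when $n=1$), the definition of the half-shuffle gives $\mathfrak{v}_1 < \mathfrak{v}_2 = v_1 \otimes \mathrm{sh}(\mathfrak{v}_1', \mathfrak{v}_2)$, where $\mathrm{sh}$ denotes the full (commutative) shuffle product, i.e. the associated product $\ast = \mathrm{sh}$ of the free Zinbiel algebra. Likewise, from the definition of $\partial_V$ on pure tensors one reads off the factorization $\partial_V(u_1 \otimes \cdots \otimes u_k) = (0, u_1) \otimes \mathsf{Zin}(\iota_0)(u_2 \otimes \cdots \otimes u_k)$, since $\partial_V$ tags the first letter by $\iota_1$ and every subsequent letter by $\iota_0$.

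With these two facts in hand the computation is short. For the left-hand side I would apply $\partial_V$ to $v_1 \otimes \mathrm{sh}(\mathfrak{v}_1', \mathfrak{v}_2)$ using the factorization, obtaining $(0, v_1) \otimes \mathsf{Zin}(\iota_0)\!\left(\mathrm{sh}(\mathfrak{v}_1', \mathfrak{v}_2)\right)$; then, because $\mathsf{Zin}(\iota_0)$ is a morphism of Zinbiel algebras and hence preserves the associated product $\ast = \mathrm{sh}$, this equals $(0,v_1) \otimes \mathrm{sh}\!\left(\mathsf{Zin}(\iota_0)(\mathfrak{v}_1'), \mathsf{Zin}(\iota_0)(\mathfrak{v}_2)\right)$. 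For the right-hand side, the first letter of $\partial_V(\mathfrak{v}_1)$ is $(0, v_1)$ and its tail is $\mathsf{Zin}(\iota_0)(\mathfrak{v}_1')$, so the half-shuffle definition gives $\partial_V(\mathfrak{v}_1) < \mathsf{Zin}(\iota_0)(\mathfrak{v}_2) = (0,v_1) \otimes \mathrm{sh}\!\left(\mathsf{Zin}(\iota_0)(\mathfrak{v}_1'), \mathsf{Zin}(\iota_0)(\mathfrak{v}_2)\right)$. The two expressions coincide.

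The only delicate points, and where I expect the main bookkeeping to lie, are the degenerate case $n=1$ (where $\mathfrak{v}_1'$ is empty, the half-shuffle $\mathfrak{v}_1 < \mathfrak{v}_2$ reduces to $v_1 \otimes \mathfrak{v}_2$, and one must check the factorization of $\partial_V$ still reads correctly) and the precise matching of the index set of the shuffle sum when the first letter is extracted, that is, confirming that the cosets $S(n+m)/S(n)\times S(m)$ in the definition of $<$, after fixing $v_1$ at the front, correspond exactly to the shuffles of $\mathfrak{v}_1'$ with $\mathfrak{v}_2$. Once this identification is made explicit, the equality of the two shuffle sums is immediate, and no further computation is required. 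As an alternative, entirely elementary route one can bypass the morphism property of $\mathsf{Zin}(\iota_0)$ and instead expand both sides directly as sums over shuffle permutations, exhibiting a term-by-term bijection; this is more transparent but requires carrying the permutation indices through the calculation.
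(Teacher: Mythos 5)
Your proposal is correct, and its computational core matches the paper's: reduce to pure tensors and exploit the fact that both $<$ and $\partial_V$ single out the first letter of the left word while tagging every remaining letter by $\iota_0$. The packaging is genuinely different, though. The paper expands $\mathfrak{v}_1<\mathfrak{v}_2$ directly as the permutation sum $v_1\otimes\sum_\sigma\sigma(\cdots)$, applies $\partial_V$ termwise (the tagging visibly commutes with each $\sigma$), and then recognizes the resulting sum as $\partial_V(\mathfrak{v}_1)<\mathsf{Zin}(\iota_0)(\mathfrak{v}_2)$ straight from the definition of $<$; it never invokes the morphism property of $\mathsf{Zin}(\iota_0)$ in this lemma (that fact is only used later, in the chain-rule axiom of Proposition \ref{difcombzin}). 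You instead isolate two factorizations, $\mathfrak{v}_1<\mathfrak{v}_2=v_1\otimes(\mathfrak{v}_1'\ast\mathfrak{v}_2)$ and $\partial_V(u_1\otimes\cdots\otimes u_k)=(0,u_1)\otimes\mathsf{Zin}(\iota_0)(u_2\otimes\cdots\otimes u_k)$, and delegate the commutation of tagging with shuffling to the statement that $\mathsf{Zin}(\iota_0)$ preserves the shuffle product $\ast$ — which the paper's remark that $\mathsf{Zin}(f)$ is a Zinbiel algebra morphism does license, since $\ast$ is the symmetrization of $<$. This buys freedom from explicit coset bookkeeping (your ``alternative elementary route'' is precisely the paper's proof), at the cost of the two side conditions you rightly flag: the degenerate case $n=1$, where the empty tail must be handled by convention because $\mathsf{Zin}(V)$ contains no empty word, and the index-set identification, where your caution is warranted — the paper's displayed formula for $<$ writes the coset space as $S(n+m)/S(n)\times S(m)$ even though the shuffled tail has only $n-1+m$ letters, and the intended sum is over the $(n-1,m)$-shuffles, i.e.\ exactly the shuffles of $\mathfrak{v}_1'$ with $\mathfrak{v}_2$ appearing in your factorization. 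With those two points made explicit, your argument is complete and slightly more structural than the paper's.
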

\begin{proof} It suffices to prove this identity on pure tensors. Assume that $\mathfrak{v}_1=v_0\otimes\hdots\otimes v_n$ and $\mathfrak{v}_2=w_1\otimes\hdots\otimes w_m$. Then:
	\begin{align*}
&\partial_V \left( (v_0\otimes\hdots\otimes v_n) < (w_1\otimes\hdots\otimes w_m) \right)=~\partial_V\left(v_0\otimes\left(\sum_{\sigma\in S(n+m)/S(n)\times S(m)}\sigma(v_1\otimes\hdots\otimes v_n\otimes w_1\otimes\hdots\otimes w_m)\right)\right)\\
		&=~\sum_{\sigma\in S(n+m)/S(n)\times S(m)}(0,v_0)\otimes\sigma\left((v_1,0)\otimes\hdots\otimes(v_p,0)\otimes(w_1,0)\otimes\hdots\otimes(w_r,0)\right)\\
&=~ \left((0,v_0)\otimes (v_2, 0) \hdots\otimes (v_n,0) \right) < \left( (w_1,0) \otimes\hdots\otimes (w_m,0) \right) \\
&=~ \partial_V(v_0\otimes\hdots\otimes v_n)  < \left( \iota_0(w_1) \otimes\hdots\otimes \iota_0(w_m) \right) \\
&=~ \partial_V(v_0\otimes\hdots\otimes v_n)  < \mathsf{Zin}(\iota_0)(w_1\otimes\hdots\otimes w_m)
	\end{align*}
Thus the desired identity holds. 	
\end{proof}

\begin{proposition}\label{difcombzin}
	$\partial$ is a differential combinator transformation for $(\mathsf{Zin},\mu,\eta)$. 
	\end{proposition}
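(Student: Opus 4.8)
The plan is to check that $\partial$ is natural (which is immediate, since both $\partial_V$ and $\mathsf{Zin}(f)$ act letterwise on words) and then to verify, on an arbitrary pure tensor $v_1 \otimes \cdots \otimes v_n$ and extending by linearity, the dual of each of the six axioms \textbf{[dc.1]}--\textbf{[dc.6]} of Definition \ref{def:cdcomonad}, following the same template as Propositions \ref{prop:powpartial} and \ref{Gammacomb}. The first three axioms are short direct computations. For \textbf{[dc.1]}, applying $\mathsf{Zin}(\pi_0)$ to $\partial_V(v_1 \otimes \cdots \otimes v_n) = (0,v_1)\otimes(v_2,0)\otimes\cdots\otimes(v_n,0)$ turns the leading letter into $\pi_0(0,v_1) = 0$, so the whole word vanishes and $\mathsf{Zin}(\pi_0)\circ\partial_V = 0$. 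For \textbf{[dc.2]}, I would apply $\mathsf{Zin}(1_V \times \Delta_V)$, rewrite the leading letter $(0,v_1,v_1)$ as $(0,v_1,0)+(0,0,v_1)$, and split using multilinearity of the tensor product in its first factor; the two resulting summands are precisely $\mathsf{Zin}(1_V\times\iota_0)$ and $\mathsf{Zin}(1_V\times\iota_1)$ applied to $\partial_V(v_1\otimes\cdots\otimes v_n)$. For \textbf{[dc.3]}, both $\partial_V\circ\eta_V$ and $\eta_{V\times V}\circ\iota_1$ send $v$ to the length-one word $(0,v)$.

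The main obstacle is the chain rule \textbf{[dc.4]}, i.e. $\partial_V\circ\mu_V = \mu_{V\times V}\circ\mathsf{Zin}\!\left([\mathsf{Zin}(\iota_0),\partial_V]\right)\circ\partial_{\mathsf{Zin}(V)}$, and here Lemma \ref{lemmaZin} does the heavy lifting. Writing $\mathfrak{s} = \mathfrak{v}_1\otimes\cdots\otimes\mathfrak{v}_n \in \mathsf{Zin}\mathsf{Zin}(V)$, the left-hand side is $\partial_V(\mathfrak{v}_1 < (\cdots < \mathfrak{v}_n))$; applying Lemma \ref{lemmaZin} once peels off the leftmost factor as $\partial_V(\mathfrak{v}_1) < \mathsf{Zin}(\iota_0)(\mathfrak{v}_2 < \cdots < \mathfrak{v}_n)$, and since $\mathsf{Zin}(\iota_0)$ is a Zinbiel algebra morphism this equals the right-nested product $\partial_V(\mathfrak{v}_1) < (\mathsf{Zin}(\iota_0)(\mathfrak{v}_2) < (\cdots < \mathsf{Zin}(\iota_0)(\mathfrak{v}_n)))$. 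On the right-hand side, $\partial_{\mathsf{Zin}(V)}(\mathfrak{s}) = (0,\mathfrak{v}_1)\otimes(\mathfrak{v}_2,0)\otimes\cdots\otimes(\mathfrak{v}_n,0)$, and since the copairing satisfies $[\mathsf{Zin}(\iota_0),\partial_V](0,\mathfrak{v}_1) = \partial_V(\mathfrak{v}_1)$ and $[\mathsf{Zin}(\iota_0),\partial_V](\mathfrak{v}_j,0) = \mathsf{Zin}(\iota_0)(\mathfrak{v}_j)$, applying $\mathsf{Zin}([\mathsf{Zin}(\iota_0),\partial_V])$ yields the word $\partial_V(\mathfrak{v}_1)\otimes\mathsf{Zin}(\iota_0)(\mathfrak{v}_2)\otimes\cdots\otimes\mathsf{Zin}(\iota_0)(\mathfrak{v}_n)$; finally $\mu_{V\times V}$ forms the right-nested Zinbiel product of these letters, reproducing the left-hand side. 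The delicate points are the identification of the copairing's action on each letter and the compatibility of the right-nested bracketing of $\mu$ with Lemma \ref{lemmaZin}; the degenerate case $n=1$ should be checked separately but is immediate.

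For the last two axioms I would first record the explicit value of $\partial_{V\times V}\circ\partial_V$: identifying $(V\times V)\times(V\times V)$ with $V^4$, one gets
\[ \partial_{V\times V}\big(\partial_V(v_1\otimes\cdots\otimes v_n)\big) = (0,0,0,v_1)\otimes(v_2,0,0,0)\otimes\cdots\otimes(v_n,0,0,0). \]
For the lift rule \textbf{[dc.5]}, the map $\pi_0\times\pi_1$ sends $(a,b,c,d)\mapsto(a,d)$, so it takes $(0,0,0,v_1)\mapsto(0,v_1)$ and $(v_j,0,0,0)\mapsto(v_j,0)$, recovering $\partial_V(v_1\otimes\cdots\otimes v_n)$ and hence $\mathsf{Zin}(\pi_0\times\pi_1)\circ\partial_{V\times V}\circ\partial_V = \partial_V$. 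For the symmetry rule \textbf{[dc.6]}, $c_V$ sends $(a,b,c,d)\mapsto(a,c,b,d)$, swapping only the two middle coordinates, which are zero in every letter above; hence $\mathsf{Zin}(c_V)$ fixes $\partial_{V\times V}\circ\partial_V$, giving \textbf{[dc.6]}. Assembling these verifications shows that $\partial$ is a differential combinator transformation.
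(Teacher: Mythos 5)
Your proposal is correct and follows essentially the same route as the paper's proof: naturality is noted as immediate, the six dual axioms are verified on pure tensors and extended by linearity, Lemma \ref{lemmaZin} (together with the fact that $\mathsf{Zin}(\iota_0)$ is a Zinbiel morphism and the letterwise action of the copairing) carries the chain rule \textbf{[dc.4]}, and the explicit formula $(0,0,0,v_1)\otimes(v_2,0,0,0)\otimes\cdots\otimes(v_n,0,0,0)$ for $\partial_{V\times V}\circ\partial_V$ handles \textbf{[dc.5]} and \textbf{[dc.6]} exactly as in the paper.
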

	\begin{proof} It suffices to prove the necessary identities on pure tensors. First, $\partial$ is clearly a natural transformation. We will now show that $\partial$ satisfies the dual of the six axioms {\bf[dc.1]} to {\bf[dc.6]} from Definition \ref{def:cdcomonad}. 

		\begin{enumerate}[{\bf [dc.1]}]
			\item Here we use the fact that tensoring with zero gives zero: 
\begin{align*}
\mathsf{Zin}(\pi_0)\left( \partial_V(v_1 \otimes \hdots \otimes v_n) \right) &=~ \mathsf{Zin}(\pi_0)((0,v_1) \otimes (v_2,0) \otimes \hdots \otimes (v_n,0)) \\
&=~ \pi_0(0,v_1) \otimes \pi_0(v_2,0) \otimes \hdots \otimes \pi_0(v_n,0) \\
&=~ 0 \otimes v_2 \otimes \hdots \otimes v_n \\
&=~ 0
\end{align*}			
so $\mathsf{Zin}(\pi_0)\circ\partial_V=0$.
			\item Here we use the multilinerity of the tensor product: 
\begin{align*}
\mathsf{Zin}(1_V \times \Delta_V)\left( \partial_V(v_1 \otimes \hdots \otimes v_n) \right) &=~ \mathsf{Zin}(1_V \times \Delta_V)((0,v_1) \otimes (v_2,0) \otimes \hdots \otimes (v_n,0)) \\
&=~ (0,\Delta_V(v_1)) \otimes (v_2,\Delta_V(0)) \otimes \hdots \otimes (v_n,\Delta_V(0)) \\
&=~ (0,v_1,v_1) \otimes (v_2,0,0) \otimes \hdots \otimes (v_n,0,0) \\
&=~ \left( (0,v_1,0) + (0,0,v_1) \right) \otimes (v_2,0,0) \otimes \hdots \otimes (v_n,0,0) \\
&=~ (0,v_1,0) \otimes (v_2,0,0) \otimes \hdots \otimes (v_n,0,0) \\
&+~ (0,0,v_1) \otimes (v_2,0,0) \otimes \hdots \otimes (v_n,0,0) \\
&=~ (0,\iota_0(v_1)) \otimes (v_2,\iota_0(0)) \otimes \hdots \otimes (v_n,\iota_0(0)) \\
&+~  (0,\iota_1(v_1)) \otimes (v_2,\iota_1(0)) \otimes \hdots \otimes (v_n,\iota_1(0)) \\
&=~ \mathsf{Zin}(1_V \times \iota_0)((0,v_1) \otimes (v_2,0) \otimes \hdots \otimes (v_n,0)) \\
&+~ \mathsf{Zin}(1_V \times \iota_1)((0,v_1) \otimes (v_2,0) \otimes \hdots \otimes (v_n,0)) \\
&=~ \mathsf{Zin}(1_V \times \iota_0)\left( \partial_V(v_1 \otimes \hdots \otimes v_n) \right) \\
&+~ \mathsf{Zin}(1_V \times \iota_1)\left( \partial_V(v_1 \otimes \hdots \otimes v_n) \right) \\
&=~ \left( \mathsf{Zin}(1_V \times \iota_0) + \mathsf{Zin}(1_V \times \iota_1) \right)\left( \partial_V(v_1 \otimes \hdots \otimes v_n) \right)
\end{align*}			
So $\mathsf{Zin}(1_V\times\Delta_V)\circ\partial_V=(\mathsf{Zin}(1_V\times\iota_0)+\mathsf{Zin}(1_V,\iota_1))\circ\partial_V$.

\item This is a straightforward verification. So $\partial_v\circ\eta_V=\eta_{V\times V}\circ\iota_1$.
\item Let $\mathfrak{v}_1\otimes\hdots\otimes \mathfrak{v}_k$ be a pure tensor in $\mathsf{Zin}(\mathsf{Zin}(V))$, with $\mathfrak{v}_1,\hdots,\mathfrak{v}_k\in\mathsf{Zin}(V)$. Using Lemma \ref{lemmaZin}, and the fact that $\mathsf{Zin}(\iota_0)$ is a Zinbiel algebra morphism, we compute: 
	\begin{align*}
	\partial_V\left( \mu_V(\mathfrak{v}_1\otimes\hdots\otimes\mathfrak{v}_k) \right)&=~\partial_V(\mathfrak{v}_1<(\hdots<(\mathfrak{v}_k)\hdots))\\
				&=~\partial_V(\mathfrak{v}_1)<\mathsf{Zin}(\iota_0)\left(\mathfrak{v}_2<(\hdots<\mathfrak{v}_n)\hdots)\right)\\	
&=~ \mu_{V \times V}\left( \partial_V(\mathfrak{v}_1) \otimes \mathsf{Zin}(\iota_0)(\mathfrak{v}_2) \otimes \hdots \otimes \mathsf{Zin}(\iota_0)(\mathfrak{v}_n) \right) \\
&=~ \mu_{V \times V}\left( \left[\mathsf{Zin}(\iota_0), \partial_V\right](0,\mathfrak{v}_1) \otimes \left[\mathsf{Zin}(\iota_0), \partial_V\right](\mathfrak{v}_2,0) \otimes \hdots \otimes \left[\mathsf{Zin}(\iota_0), \partial_V\right](\mathfrak{v}_n,0) \right) \\
&=~ \mu_{V \times V}\left( \mathsf{Zin}\left( \left[\mathsf{Zin}(\iota_0), \partial_V\right] \right) \left( (0,\mathfrak{v}_1) \otimes (\mathfrak{v}_2,0) \otimes \hdots \otimes (\mathfrak{v}_n,0)
 \right) \right) \\
 &=~ \mu_{V \times V}\left( \mathsf{Zin}\left( \left[\mathsf{Zin}(\iota_0), \partial_V\right] \right) \left( \partial_{\mathsf{Zin}(V)} (\mathfrak{v}_1 \otimes \mathfrak{v}_2 \otimes \hdots \otimes \mathfrak{v}_n)
 \right) \right) 
			\end{align*}
So $\partial_V \circ \mu_V = \mu_{V \times V} \circ  \mathsf{Zin}\left( \left[\mathsf{Zin}(\iota_0), \partial_V\right] \right)\circ  \partial_{\mathsf{Zin}(V)}$.
\item This is straightforward: 
\begin{align*}
&\mathsf{Zin}(\pi_0 \times \pi_1)\left( \partial_{V \times V} \left( \partial_V(v_1 \otimes \hdots \otimes v_n) \right) \right)=~ \mathsf{Zin}(\pi_0 \times \pi_1)\left( \partial_{V \times V}((0,v_1) \otimes (v_2,0) \otimes \hdots \otimes (v_n,0)) \right) \\
&=~ \mathsf{Zin}(\pi_0 \times \pi_1)\left( (0,0,0,v_1) \otimes (v_2,0,0,0) \otimes \hdots \otimes (v_n,0,0,0) \right) \\
&=~ (\pi_0(0,0),\pi_1(0,v_1)) \otimes (\pi_0(v_2,0),\pi_1(0,0)) \otimes \hdots \otimes (\pi_0(v_n,0),\pi_1(0,0)) \\
&=~ (0,v_1) \otimes (v_2,0) \otimes \hdots \otimes (v_n,0) \\
&=~ \partial_V(v_1 \otimes \hdots \otimes v_n)
\end{align*}			
So $\mathsf{Zin}(\pi_0\times\pi_1)\circ\partial_{V\times V}\circ\partial_V=\partial_V$.
\item This is again straightforward: 
\begin{align*}
\mathsf{Zin}(c_V)\left( \partial_{V \times V} \left( \partial_V(v_1 \otimes \hdots \otimes v_n) \right) \right) &=~ \mathsf{Zin}(c_V)\left( \partial_{V \times V}((0,v_1) \otimes (v_2,0) \otimes \hdots \otimes (v_n,0)) \right) \\
&=~ \mathsf{Zin}(c_V)\left( (0,0,0,v_1) \otimes (v_2,0,0,0) \otimes \hdots \otimes (v_n,0,0,0) \right) \\
&=~ c_V(0,0,0,v_1) \otimes c_V(v_2,0,0,0) \otimes \hdots \otimes c_V(v_n,0,0,0) \\
&=~ (0,0,0,v_1) \otimes (v_2,0,0,0) \otimes \hdots \otimes (v_n,0,0,0) \\ 
&=~ \partial_{V \times V}((0,v_1) \otimes (v_2,0) \otimes \hdots \otimes (v_n,0)) \\ 
&=~ \partial_{V \times V} \left( \partial_V(v_1 \otimes \hdots \otimes v_n) \right)
\end{align*}
So $\mathsf{Zin}(c_V)\circ\partial_{V\times V}\circ\partial_V=\partial_{V\times V}\circ\partial_V$.
		\end{enumerate}
So we conclude that $\partial$ is a differential combinator transformation. 		
	\end{proof}
	
Thus, $(\mathsf{Zin},\mu,\eta,\partial)$ is a coCartesian differential monad, and so the opposite of its Kleisli category is a Cartesian differential category (which we summarize in Corollary \ref{cor:ZIN} below). As we will explain below, this Cartesian differential category corresponds to differentiating non-commutative polynomials. The coCartesian differential monad $(\mathsf{Zin},\mu,\eta,\partial)$ also comes equipped with a $\mathsf D$-linear counit. Define $\varepsilon_V:\mathsf{Zin}(V)\to V$ on pure tensors as follows: 
	\[\varepsilon_V(v_1\otimes\hdots\otimes v_n)=\begin{cases}
	v_1&\mbox{if }n=1,\\
	0&\mbox{otherwise.}
	\end{cases}\]
which we extend by linearity. In other words, $\varepsilon_V$ projects out the $V$ component of $\mathsf{Zin}(V)$. 

	\begin{lemma}
	    $\varepsilon$ is a $\mathsf D$-linear counit of $(\mathsf{Zin},\mu,\eta,\partial)$.
	\end{lemma}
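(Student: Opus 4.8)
The plan is to verify directly that $\varepsilon$ satisfies the two conditions dual to \textbf{[du.1]} and \textbf{[du.2]} of Definition \ref{def:Dunit}, which is what is required of a $\mathsf{D}$-linear counit according to Example \ref{ex:CDM}. First I would transcribe these axioms into the monad setting: dualizing the triangle and the square of Definition \ref{def:Dunit}, and using that in a biproduct category $\iota_1$ dualizes to $\pi_1$ (exactly as the dual of \textbf{[dc.3]} produced $\iota_1$ from $\pi_1$ in Proposition \ref{difcombzin}), they become the linear rule $\varepsilon_V \circ \eta_V = 1_V$ and the linearization rule $\mathsf{Zin}(\pi_1) \circ \partial_V = \eta_V \circ \varepsilon_V$. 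As with every other verification in this section, it suffices to check both identities on pure tensors and extend by linearity; the naturality of $\varepsilon$ is immediate from its componentwise definition.

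For the linear rule I would simply compute $\varepsilon_V\big(\eta_V(v)\big) = \varepsilon_V(v) = v$, since $\eta_V(v) = v$ is a length-one tensor and $\varepsilon_V$ acts as the identity on the $V$-summand.

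The linearization rule is the only identity needing a case split. Evaluating on a pure tensor $v_1 \otimes \cdots \otimes v_n$, recall $\partial_V(v_1 \otimes \cdots \otimes v_n) = (0,v_1)\otimes(v_2,0)\otimes\cdots\otimes(v_n,0)$. For $n=1$ one gets $\mathsf{Zin}(\pi_1)\big((0,v_1)\big) = \pi_1(0,v_1) = v_1$, agreeing with $\eta_V\big(\varepsilon_V(v_1)\big) = v_1$. For $n \geq 2$, applying $\mathsf{Zin}(\pi_1)$ produces $v_1 \otimes 0 \otimes \cdots \otimes 0 = 0$ because tensoring with the zero vector vanishes, which matches $\eta_V\big(\varepsilon_V(v_1 \otimes \cdots \otimes v_n)\big) = \eta_V(0) = 0$ since $\varepsilon_V$ kills tensors of length at least two.

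The computations themselves are trivial, so the hard part is purely bookkeeping: correctly dualizing the axioms, and in particular recognizing that it must be the \emph{second} projection $\pi_1$ appearing in the linearization rule --- matching the second coordinate into which $\partial_V$ deposits $v_1$ --- so that $\mathsf{Zin}(\pi_1)$ recovers $v_1$ in length one and annihilates everything of higher length. Once this is pinned down, both axioms fall out immediately and we conclude that $\varepsilon$ is a $\mathsf{D}$-linear counit.
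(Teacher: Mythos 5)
Your proof is correct and is exactly the direct verification the paper has in mind — it explicitly leaves this lemma as an exercise, calling it straightforward. You dualize \textbf{[du.1]} and \textbf{[du.2]} correctly (in particular, $\oc(\iota_1)$ does become $\mathsf{Zin}(\pi_1)$, matching the second coordinate where $\partial_V$ places $v_1$), and the check on pure tensors, including the $n=1$ versus $n\geq 2$ case split and the observation that tensoring with $0$ annihilates the higher-length terms, is exactly right.
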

	\begin{proof}The proof is straightforward, and so we leave this as an excercise for the reader. 
%
\end{proof}
	
Thus the subcategory of $\mathsf{D}$-linear maps of the opposite category of the Kleisli category of $\mathsf{Zin}$ is isomorphic to the opposite category of $\mathbb{F}\text{-}\mathsf{VEC}$. Summarizing, we obtain the following statement: 

\begin{corollary}\label{cor:ZIN} $(\mathsf{Zin}, \mu, \eta, \partial)$ is a Cartesian differential comonad on $\mathbb{F}\text{-}\mathsf{VEC}^{op}$ with $\mathsf{D}$-linear unit $\varepsilon$. Therefore, $\mathbb{F}\text{-}\mathsf{VEC}^{op}_\mathsf{Zin}$ is a Cartesian differential category and $\mathsf{D}\text{-}\mathsf{lin}\left[ \mathbb{F}\text{-}\mathsf{VEC}^{op}_{\mathsf{Zin}} \right] \cong \mathbb{F}\text{-}\mathsf{VEC}^{op}$.
\end{corollary}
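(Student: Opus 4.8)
The plan is to assemble this corollary directly from the three results established immediately above, together with the dualized machinery of Section~\ref{sec:CDComonad}. There is essentially no new computation to perform: Lemma~\ref{proploday}, Proposition~\ref{difcombzin}, and the preceding lemma on $\varepsilon$ already supply every ingredient, and the corollary is merely the assertion that these ingredients fit together in the expected way. So the proof I would write is a short chain of citations rather than a calculation.

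First I would observe that $\mathbb{F}\text{-}\mathsf{VEC}$ is a category with finite biproducts, so the framework of Cartesian differential monads (Example~\ref{ex:CDM}) applies. By Lemma~\ref{proploday}, $(\mathsf{Zin}, \mu, \eta)$ is a monad, and by Proposition~\ref{difcombzin}, $\partial$ is a differential combinator transformation on it; hence $(\mathsf{Zin}, \mu, \eta, \partial)$ is a Cartesian differential monad on $\mathbb{F}\text{-}\mathsf{VEC}$. Viewing this same data in $\mathbb{F}\text{-}\mathsf{VEC}^{op}$ turns the monad into a comonad, so that $(\mathsf{Zin}, \mu, \eta, \partial)$ is a Cartesian differential comonad on $\mathbb{F}\text{-}\mathsf{VEC}^{op}$ in the sense of Definition~\ref{def:cdcomonad}. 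Applying the dual of Theorem~\ref{thm1} then yields that the coKleisli category $\mathbb{F}\text{-}\mathsf{VEC}^{op}_{\mathsf{Zin}}$ --- equivalently, the opposite of the Kleisli category of the monad --- is a Cartesian differential category, with differential combinator induced by precomposition with $\partial$.

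For the final isomorphism, I would invoke the $\mathsf{D}$-linear counit $\varepsilon$ supplied by the preceding lemma, which in the opposite category $\mathbb{F}\text{-}\mathsf{VEC}^{op}$ is precisely a $\mathsf{D}$-linear unit for the comonad. By the dual of Proposition~\ref{etaFlem1}, the existence of such a unit is equivalent to $\mathsf{F}_{\mathsf{D}\text{-}\mathsf{lin}}$ being an isomorphism, which identifies $\mathsf{D}\text{-}\mathsf{lin}[\mathbb{F}\text{-}\mathsf{VEC}^{op}_{\mathsf{Zin}}]$ with $\mathbb{F}\text{-}\mathsf{VEC}^{op}$; this gives the claimed equivalence.

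The only place that demands care --- and the main obstacle, such as it is --- is the bookkeeping of the dualization: one must check that passing to $\mathbb{F}\text{-}\mathsf{VEC}^{op}$ sends the monad multiplication and unit to a comonad comultiplication and counit of the correct variance, and that the natural transformation $\partial_V : \mathsf{Zin}(V) \to \mathsf{Zin}(V \times V)$ (which points "the monad way") is exactly a morphism $\oc(A \times A) \to \oc(A)$ in the opposite category, so that the six diagrams verified in Proposition~\ref{difcombzin} are literally the opposites of \textbf{[dc.1]}--\textbf{[dc.6]}. Once this correspondence is made explicit, the corollary follows with no further work.
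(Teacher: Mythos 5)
Your proposal is correct and is essentially identical to the paper's own argument: the paper offers no separate proof of Corollary \ref{cor:ZIN}, presenting it precisely as you do, namely as a summary assembling Lemma \ref{proploday}, Proposition \ref{difcombzin}, and the preceding lemma on $\varepsilon$, with the dualization handled by the Cartesian differential monad formalism of Example \ref{ex:CDM} together with the dual statements of Theorem \ref{thm1} (via Proposition \ref{prop1}) and Proposition \ref{etaFlem1}. Your care about variance --- that $\partial_V\colon \mathsf{Zin}(V)\to\mathsf{Zin}(V\times V)$ is literally a map $\oc(A\times A)\to\oc(A)$ in $\mathbb{F}\text{-}\mathsf{VEC}^{op}$ because finite products there are biproducts --- is exactly the bookkeeping the paper delegates to Example \ref{ex:CDM}, so no gap remains.
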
		

The Kleisli category $\mathbb{F}\text{-}\mathsf{VEC}_{\mathsf{Zin}}$ is closely related to non-commutative polynomials. For a set $X$, let $\mathbb F\langle X\rangle$ be the set of non-commutative polynomials and $\mathbb F\langle X\rangle_+$ be the set of reduced non-commutative polynomials, that is, those without any constant terms. As a vector space, $\mathbb F\langle X\rangle_+$ over a set $X$ is isomorphic to the underlying vector space of the free Zinbiel algebra over the free vector space generated by $X$. Thus, to distinguish between polynomials and non-commutative polynomials, we will use the tensor product $\otimes$. For example, $xy=yx$ is the commutative polynomial, while $x \otimes y$ and $y\otimes x$ are two different non-commutative polynomials. Composition in the Kleisli category corresponds to using the Zinbiel product $<$ to define a new kind of substitution of non-commutative polynomials. For example, if $p(x,y)=x\otimes y$ and $q(u,v)=u\otimes v\otimes u$, then
\[q(p(x,y),v)=(x\otimes y)<(v<(x\otimes y))=(x\otimes y)<(v\otimes x\otimes y)=(x\otimes y\otimes v\otimes x\otimes y)+(x\otimes v\otimes y\otimes x\otimes y)+2(x\otimes v\otimes x\otimes y\otimes y).\]
We will use the term Zinbiel polynomials to refer to reduced non-commutative polynomials with the Zinbiel product and the Zinbiel substitution. We are now in a position to define partial derivatives on non-commutative polynomials. For $x \in X$, define ${\frac{d}{dx}:\mathbb F\langle X\rangle\to \mathbb F\langle X\rangle}$ as follows on Zinbiel monomials (which we then extend by linearity): 
\[\frac{d(x_1\otimes x_2\otimes\hdots\otimes x_n)}{dx}=\begin{cases}x_2\otimes\hdots\otimes x_n,&\mbox{if }x_1=x,\\
0,&\mbox{otherwise}.
\end{cases}
\]
and with the convention that $\frac{d(x)}{dx} =1$. 
We can restrict to the finite-dimensional case and obtain a sub-Cartesian differential category of $\mathbb{F}\text{-}\mathsf{VEC}^{op}_{\mathsf{Zin}}$ which is isomorphic to the Lawvere theory of Zinbiel polynomials, and where the differential combinator is defined using their partial derivatives.

\begin{example} \normalfont \label{ex:CDCzin} Let $\mathbb{F}$ be a field. Define the category $\mathbb{F}\text{-}\mathsf{ZIN}$ whose object are natural numbers $n \in \mathbb{N}$, where a map ${P: n \to m}$ is an $m$-tuple of reduced non-commutative polynomials in $n$ variables, that is, $P = \langle {p}_1(\vec x), \hdots, {p}_m(\vec x) \rangle$ with ${p}_i(\vec x) \in \mathbb{F}\langle x_1, \hdots, x_n\rangle_+$. The identity maps $1_n: n \to n$ are the tuples $1_n = \langle x_1, \hdots, x_n \rangle$ and where composition is given by Zinbiel substitution, as defined above. $\mathbb{F}\text{-}\mathsf{ZIN}$ is a Cartesian left additive category where the finite product structure is given by $n \times m = n +m$ with projection maps ${\pi_0: n \times m \to n}$ and ${\pi_1: n \times m \to m}$ defined as the tuples $\pi_0 = \langle x_1, \hdots, x_n \rangle$ and $\pi_1 = \langle x_{n+1}, \hdots, x_{n+m} \rangle$, and where the additive structure is defined coordinate wise via the standard sum of non-commutative polynomials. $\mathbb{F}\text{-}\mathsf{ZIN}$ is also a Cartesian differential category where the differential combinator is given by the differentiation of Zinbiel polynomial given above, that is, for a map ${P: n \to m}$, with $P = \langle p_1(\vec x), \hdots, p_m(\vec x) \rangle$, its derivative $\mathsf{D}[P]: n \times n \to m$ is defined as the tuple of the sum of the partial derivatives of the Zinbiel polynomials $p_i(\vec x)$:
\begin{align*}
\mathsf{D}[P](\vec x, \vec y) := \left( \sum \limits^n_{i=1} y_i\otimes\frac{d{p}_1(\vec x)}{d x_i}, \hdots,\sum \limits^n_{i=1} y_i\otimes\frac{d {p}_n(\vec x)}{d x_i} \right) && \sum \limits^n_{i=1} y_i\otimes\frac{d {p}_j (\vec x)}{d x_i} \in \mathbb{F}\langle x_1, \hdots, x_n, y_1, \hdots, y_n \rangle_+
\end{align*} 
It is important to note that even if $p_i(\vec x)$ has terms of degree 1, every partial derivative $y_i\otimes\frac{d p_j(\vec x)}{d x_i} $ will still be reduced. Indeed, the polynomial $y_i\otimes 1\in \mathbb{F}\langle x_1, \hdots, x_n, y_1, \hdots, y_n \rangle$ is identified with the reduced polynomial $y_i\in \mathbb{F}\langle x_1, \hdots, x_n, y_1, \hdots, y_n \rangle_+$, and so, for example, $y_i\otimes\frac{d(x)}{x}=y_i$.
Thus, the differential combinator $\mathsf{D}$ is indeed well-defined. A map ${P: n \to m}$ is $\mathsf{D}$-linear if it of the form: 
\begin{align*}
P = \left \langle \sum \limits^{n}_{i=0} r_{i,1}x_{i}, \hdots, \sum \limits^{n}_{i=0} r_{i,m}x_{i} \right \rangle && r_{i,j} \in \mathbb{F}
\end{align*}
Thus $\mathsf{D}\text{-}\mathsf{lin}[\mathbb{F}\text{-}\mathsf{ZIN}]$ is equivalent to $\mathbb{F}\text{-}\mathsf{LIN}$ (as defined in Example \ref{ex:CDCPOLY}). We note that this example generalize to the category of Zinbiel polynomials over an arbitrary commutative (semi)ring.
\end{example}

We also have the following chain of isomorphisms: 
\begin{align*}
\mathbb{F}\text{-}\mathsf{ZIN}(n,1) = \mathbb \mathbb{F}\langle x_1, \hdots, x_n, y_1, \hdots, y_n \rangle_+ \cong \mathsf{\Gamma}(\mathbb{F}^n) \cong \mathbb{F}\text{-}\mathsf{VEC} \left(\mathbb{F}, \mathsf{Zin}(\mathbb{F}^n) \right) = \mathbb{F}\text{-}\mathsf{VEC}_{\mathsf{Zin}}\left(\mathbb{F}, \mathbb{F}^n \right) = \mathbb{F}\text{-}\mathsf{VEC}^{op}_{\mathsf{Zin}}\left(\mathbb{F}^n, \mathbb{F} \right) 
\end{align*}
which then implies that $\mathbb{F}\text{-}\mathsf{Zin}(n,m) \cong \mathbb{F}\text{-}\mathsf{VEC}^{op}_{\mathsf{Zin}}\left(\mathbb{F}^n, \mathbb{F}^m \right)$. Therefore, $\mathbb{F}\text{-}\mathsf{Zin}$ is isomorphic to the full subcategory of $\mathbb{F}\text{-}\mathsf{VEC}^{op}_{\mathsf{\Gamma}}$ whose objects are the finite dimensional $\mathbb{F}$-vector spaces. In the finite dimensional case, the differential combinator transformation corresponds precisely to the differential combinator on $\mathbb{F}\text{-}\mathsf{ZIN}$:
\[\partial_{\mathbb{F}^n}(p(\vec x)) = \mathsf{D}[p](\vec x, \vec y)\] 
Thus, $\mathbb{F}\text{-}\mathsf{ZIN}$ is a sub-Cartesian differential category of $\mathbb{F}\text{-}\mathsf{VEC}^{op}_{\mathsf{P}}$, where the latter allows for Zinbiel polynomials over infinite variables (but will still only depend on a finite number of them). 

We conclude this section by noting that the inclusion $\mathsf{\Gamma} \Rightarrow \mathsf{Zin}$ is not compatible with the differential combinators. For instance, let $V$ be the vector space spanned by $x$ and $y$, and let $\p^{\mathsf{\Gamma}}$ and $\p^{\mathsf{Zin}}$ denote the differential combinator transformation for the respective monad. Denote by $p(x,y)=x^{[1]}*y^{[1]}\in \mathsf{\Gamma}(V)$. On one hand, the injection ${\mathsf{\Gamma}(V)\to \mathsf{Zin}(V)}$ identifies $p(x,y)$ to the non-commutative polynomial $p(x,y)=x\otimes y+y\otimes x$, and so, $\p^{\mathsf{Zin}}_V(p)(x,y,x^*,y^*)=x^*\otimes y+y^*\otimes x$. On the other hand, $\p^{\mathsf{\Gamma}}_V(p)(x,y,x^*,y^*)=(x^*)^{[1]}*y^{[1]}+(y^*)^{[1]}*x^{[1]}$, which the injection $\mathsf{\Gamma}(V\times V)\to \mathsf{Zin}(V\times V)$ identifies to the non-commutative polynomial $\p^{\mathsf{\Gamma}}_V(p)(x,y,x^*,y^*)=x^*\otimes y+y\otimes x^*+y^*\otimes x+x\otimes y^*$. 

\section{Future Work}\label{conclusion}

Beyond finding and constructing new interesting examples of Cartesian differential comonads, and therefore also new examples of Cartesian differential categories, there are many other interesting possibilities for future work with Cartesian differential comonads. We conclude this paper by listing some of these potential ideas: 
\begin{enumerate}[{\em (i)}]
    \item In \cite{garner2020cartesian}, it was shown that every Cartesian differential category embeds into the coKleisli category of a differential (storage) category \cite[Theorem 8.7]{garner2020cartesian}. In principle, this already implies that every Cartesian differential category embeds into the coKleisli category of a Cartesian differential comonad. However, Cartesian differential comonads can be defined without the need for a symmetric monoidal structure. Thus, it is reasonable to expect that there is a finer (and possibly simpler) embedding of a Cartesian differential category into the coKleisli category of a Cartesian differential comonad. 
    \item In this paper, we studied the (co)Kleisli categories of (co)Cartesian differential (co)monads. A natural follow-up question to ask is: what can we say about the (co)Eilenberg-Moore categories of (co)Cartesian differential (co)monads? As discussed in \cite{cockett_et_al:LIPIcs:2020:11660}, for differential categories the answer is tangent categories \cite{cockett2014differential}. Indeed, the Eilenberg-Moore category of any codifferential category is always a tangent category \cite[Theorem 22]{cockett_et_al:LIPIcs:2020:11660}, while the coEilenberg-Moore category of a differential (storage) category with sufficient limits is a (representable) tangent category \cite[Theorem 27]{cockett_et_al:LIPIcs:2020:11660}. As such, it is reasonable to expect the same to be true for (co)Cartesian differential (co)monads, that is, that the (co)Eilenberg-Moore category of (co)Cartesian differential (co)monad is a tangent category by generalizing the constructions found in \cite{cockett_et_al:LIPIcs:2020:11660}. 
    \item An important part of the theory of calculus is integration, specifically its relationship to differentiation given by antiderivatives and the Fundamental Theorems of Calculus. Integration and antiderivatives have found their way into the theory of differential categories \cite{cockett-lemay2019, ehrhard2017introduction} and Cartesian differential categories \cite{COCKETT201845}. In future work, it would therefore be of interest to define integration and antiderivatives for (co)Cartesian differential (co)monads. We conjecture that integration in this setting would be captured by an \emph{integral combinator transformation}, which should be a natural transformation of the opposite type of the differential combinator transformation, that is, of type $\int_A: \oc(A) \to \oc(A \times A)$. The axioms of an integral combinator transformation should be analogue to the axioms of an integral combinator \cite[Section 5]{COCKETT201845} in the coKleisli category.  Some of the examples presented in this paper seem to come equipped with an integral combinator transformation. For example, there is a well-established notion of integration for power series which should induce integral combinator transformations in an obvious way. In the case of divided power polynomial, there is a notion of integration in the one-variable case (see \cite{keigher2000} for the integration of formal divided power series in one variable). However, it is unclear to us how integration for multivariable divided power polynomials would be defined, which is necessary if we wish to construct an integral combinator transformation. In the case of Zinbiel algebras, we conjecture that the natural transformation $\int_V: \mathsf{Zin}(V\times V)\to \mathsf{Zin}(V)$ defined as: 
	\[\int (a_{1,0},a_{1,1}) \otimes \hdots \otimes (a_{n,0},a_{n,1})=\sum_{f:\lbrace 1, \hdots, n \rbrace \to\{0,1\}}a_{1,f(1)} \otimes \hdots \otimes a_{n,f(n)}\]
is a candidate for an integral combinator transformation (in the dual sense). In a differential category, one way to build an integration operator is via the notion of antiderivatives \cite[Definition 6.1]{cockett-lemay2019}, which is the assumption that a canonical natural transformation $\mathsf{K}_A: \oc(A) \to \oc(A)$ be a natural isomorphism. Another goal for future work would be to generalize antiderivatives (in the differential category sense) for Cartesian differential comonads. 
\end{enumerate}
In conclusion, there are many potential interesting paths to take for future work with Cartesian differential comonads.

\bibliographystyle{plain}      
\bibliography{diffcomonadbib}   
\vfill
Sacha {\scshape Ikonicoff}, {\scshape PIMS--CNRS} Postdoctoral Associate, University of Calgary\\
e-mail: \url{sacha.ikonicoff@ucalgary.ca} 

~

\noindent Jean-Simon Pacaud {\scshape Lemay}, {\scshape NSERC}  Postdoctoral Fellow, Mount Allison University\\
e-mail: \url{jsplemay@gmail.com}
\end{document}